\newif\ifpdf
\newtheorem{theo}{Theorem}[section]
\newtheorem{lem}[theo]{Lemma}
\newtheorem{prop}[theo]{Proposition}
\newtheorem{cor}[theo]{Corollary}
\newtheorem{defin}[theo]{Definition}
\newtheorem{assumption}[theo]{Assumption}
\theoremstyle{definition}
\newtheorem{exam}[theo]{Example}
\newtheorem{rem}[theo]{Remark}
\newtheorem*{theoA}{Theorem A}
\newtheorem*{theoB}{Theorem B}
\newcommand{\oo}{\mathcal{O}}
\newcommand{\mL}{\mathcal{L}}
\newcommand{\ZZ}{\mathbb{Z}}
\newcommand{\CC}{\mathbb{C}}
\newcommand{\EE}{\mathcal{E}} %Tschirnhausen non ha sezioni
\newcommand{\FF}{\mathcal{F}} %duale del Tschirnhausen e ha sezioni
\newcommand{\Ld}{L_{\delta}}
\newcommand{\wA}{\widehat{A}}
\newcommand{\wo}{\hat{o}}
\newcommand{\BlA}{\wA^{\sharp}}
\newcommand{\lr}{\longrightarrow}
\newcommand{\mP}{\mathbb{P}}
\title{On surfaces with $p_g=q=2$, $K^2=5$ and Albanese map \\ of
degree $3$}
\author{ Matteo Penegini, Francesco Polizzi}
\date{}
\begin{document}
%%%%%%%%%%%%%%%%%%%%%%%%%%%%%%%%%%%%%%%%%%%%%%%%%%%%%%%%%%%%%%%%%%%%%%%%%%%%%%%%%%%%%%%%%%%
%%%%%%%%%%%%%%%%%%%%%%%%%%%%%%%%%%%%%%%%%%%%%%%%%%%%%%%%%%%%%%%%%%%%%%%%%%%%%%%%%%%%%%%%%%%
\maketitle
%%%%%%%%%%%%%%%%%%%%%%%%%%%%%%%%%%%%%%%%%%%%%%%%%%%%%%%%%%%%%%%%%%%%%%%%%%%%%%%%%%%%%%%%%%%
%%%%%%%%%%%%%%%%%%%%%%%%%%%%%%%%%%%%%%%%%%%%%%%%%%%%%%%%%%%%%%%%%%%%%%%%%%%%%%%%%%%%%%%%%%%

\begin{center}
\emph{To Professors F. Catanese and C. Ciliberto on the occasion
of their 60th birthday}
\end{center}

%%%%%%%%%%%%%%%%%%%%%%%%%%%%%%%%%%%%%%%%%%%%%%%%%%%%%%%%%%%%%%%%%%%%%%%%%%%%%%%%%%%%%%%%%%%
\begin{abstract}
We construct a connected, irreducible component of the moduli
space of minimal surfaces of general type with $p_g=q=2$ and
$K^2=5$, which contains both examples given by Chen-Hacon and the
first author. This component is generically smooth of dimension
$4$, and all its points parametrize surfaces whose Albanese map is
a generically finite triple cover.
\end{abstract}

%\begin{center}
\Footnotetext{{}}{2000\textit{ Mathematics Subject Classification}.
14J29, 14J10, 14J60.}

\Footnotetext{{}}{\textit{ Keywords}. Surfaces of general type,
Albanese map.}

%\footnote{Version18/03/2011}
%\end{center}
%%%%%%%%%%%%%%%%%%%%%%%%%%%%%%%%%%%%%%%%%%%%%%%%%%%%%%%%%%%%%%%%%%%%%%%%%%%%%%%%%%%%%%%%%%%%
\section{Introduction} \label{sec.Introduction}

The classification of minimal, complex surfaces $S$ of general type
with small birational invariants is still far from being achieved;
nevertheless, the study of
such surfaces has produced in the last years a considerable amount
of results, see for instance the survey paper \cite{BCP06}. If we
assume $1=\chi(\mathcal{O}_S)=1-q+p_g$, that is $p_g=q$, and $S$
\emph{irregular}, that is $q>0$, then the inequalities of
Bogomolov-Miyaoka-Yau and Debarre imply $1 \leq p_g \leq 4$. If
$p_g=q=4$ then $S$ is a product of curves of genus 2, as shown by
Beauville in the appendix to \cite{D82}, while the case $p_g=q=3$
was understood through the work of several authors, see
\cite{CCML98}, \cite{HP02}, \cite{Pi02}. The classification becomes
more and more complicated as the value of $p_g$ decreases; indeed
already for $p_g=2$ one has only a partial understanding of
the situation.

Let us summarize what is known for surfaces with $p_g=q=2$ in
terms of $K^2_S$; in this case the inequalities mentioned above
yield $4 \leq K^2_S \leq 9$. The case $K_S^2=4$ was investigated
by the first author, who constructed three families of surfaces
which admit an isotrivial fibration, see \cite{Pe09}. Previously,
surfaces with these invariants were also studied
  by Ciliberto and Mendes-Lopes
  (in connection with the problem of birationality of the
 bicanonical map, see \cite{CML02}) and Manetti (in his work on
 the Severi conjecture, see \cite{Man03}). For $K_S^2=5$ there were so far
 only two examples, see \cite{CH06}
 and \cite{Pe09}. As the title suggests, the present work deals with
 this case. For $K_S^2=6$ there is only one example, see again \cite{Pe09}.
The study of the case $K_S^2=8$ was started by Zucconi in
\cite{Z03} and continued by the first author in \cite{Pe09}. They
produced a complete classification of surfaces with $p_g=q=2$ and
$K_S^2=8$
 which are isogenous to a product of curves; as a by-product, they
obtained the classification of all surfaces with these invariant
which are not of Albanese general type, i.e., such that the image
of the
Albanese map is a curve.  Finally, for $K_S^2=7$ and $K_S^2=9$ there are hitherto no
examples
known.

In this article we consider surfaces with
$p_g=q=2$ and $K^2_S=5$. Our work started when we noticed that the
surfaces constructed in \cite{CH06} and \cite{Pe09}
 have many features in common. More precisely, in both cases the
Albanese map $\alpha \colon S \to \textrm{Alb(S)}$ is a generically
finite triple cover, and the Albanese variety $\textrm{Alb}(S)$ is
an abelian surface with a polarization of type $(1, \, 2)$.
Moreover, $S$ contains a $(-3)$-curve, which is obviously contracted
by $\alpha$. We shall prove that Penegini's and Chen-Hacon's
examples actually belong to the same connected component of the
moduli space of surfaces of general type with $p_g=q=2$ and
$K_S^2=5$.

In order to formulate our results, let us introduce some
terminology. Let $S$ be a minimal surface of general type with
$p_g=q=2$ and $K_S^2=5$, such that its Albanese map $\alpha \colon
S \to \textrm{Alb}(S)$
 is a generically finite morphism of degree $3$. If one considers the Stein
factorization of $\alpha$, i.e.,
\begin{equation*}
S \stackrel{p}{\lr} \widehat{X} \stackrel{\hat{f}}{\lr}
\textrm{Alb}(S),
\end{equation*}
then the map $\hat{f} \colon \widehat{X} \to \textrm{Alb}(S)$ is a
flat triple cover, which can be studied by applying the techniques
developed in \cite{M85}. In particular, $\hat{f}$ is determined by
a rank $2$ vector bundle $\EE$ on $\textrm{Alb}(S)$, called the
\emph{Tschirnhausen bundle} of the cover, and by a global section
$\eta \in H^0(\textrm{Alb}(S), \, S^3\EE^{\vee} \otimes
\bigwedge^2 \EE)$. In the examples of \cite{CH06} and \cite{Pe09}
the surface $\widehat{X}$ is singular; nevertheless in both cases
the numerical invariants of $\EE$ are the same predicted by the
formulae of \cite{M85}, as if $\widehat{X}$ were smooth. This
leads us to introduce the definition of \emph{negligible
singularity} for a triple cover, which is similar to the
corresponding well-known definition for double covers, see
Definition \ref{def.neg.sing}. Then, inspired by the construction
in \cite{CH06}, we say that $S$ is a \emph{Chen-Hacon surface} if
there exists a polarization $\mathcal{L}$ of type $(1,\, 2)$ on
$\textrm{Pic}^0(S)=\widehat{\textrm{Alb}(S)}$ such that
$\EE^{\vee}$ is the Fourier-Mukai transform of the line bundle
$\mathcal{L}^{-1}$, see Definition \ref{def.gch}.

Our first main result is the following characterization of
 Chen-Hacon surfaces, see Proposition \ref{prop.ch} and
Theorem \ref{teo.ch}.

\begin{theoA}
Let $S$ be a minimal surface of general type with $p_g=q=2$ and
$K_S^2=5$ such that the Albanese map $\alpha \colon S \to
\textrm{Alb}(S)$ is a generically finite morphism of degree $3$. Let
\begin{equation*}
S \stackrel{p}{\lr} \widehat{X} \stackrel{\hat{f}}{\lr}
\textrm{Alb}(S)
\end{equation*}
be the Stein factorization of $\alpha$. Then $S$ is a Chen-Hacon
surface if and only if $\widehat{X}$ has only negligible
singularities.
\end{theoA}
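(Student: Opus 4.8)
My plan is to translate the condition on the singularities of $\widehat X$ into a cohomological condition on the Tschirnhausen bundle, and then to recognise that condition, through the Fourier-Mukai transform on $A:=\textrm{Alb}(S)$, as the defining property of a Chen-Hacon surface. I set $B:=\textrm{Pic}^0(S)$ and use that, since $\hat f$ is finite and flat, Miranda's description \cite{M85} gives $\hat f_*\mathcal{O}_{\widehat X}=\mathcal{O}_A\oplus\EE$; twisting by a degree-zero line bundle $P_\xi$ on $A$ (associated with $\xi\in B$) and applying the projection formula yields
\begin{equation*}
H^i\bigl(\widehat X,\hat f^*P_\xi\bigr)=H^i(A,P_\xi)\oplus H^i(A,\EE\otimes P_\xi),\qquad i=0,1,2.
\end{equation*}
Since $H^*(A,P_\xi)=0$ for $\xi\neq 0$ while $H^*(A,\mathcal{O}_A)$ is the cohomology of an abelian surface, this reduces the whole problem to controlling $H^i(A,\EE\otimes P_\xi)$.

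For the implication ``negligible $\Rightarrow$ Chen-Hacon'' I would first invoke Definition \ref{def.neg.sing}: negligible singularities are rational, so the minimal resolution $p\colon S\lr\widehat X$ has $R^1p_*\mathcal{O}_S=0$, whence $\chi(\mathcal{O}_{\widehat X})=\chi(\mathcal{O}_S)=1$ and $p_g(\widehat X)=q(\widehat X)=2$. Reading off the displayed formula at $\xi=0$ forces $h^0(\EE)=h^1(\EE)=0$ and $h^2(\EE)=1$, so by Serre duality on $A$ (with $\omega_A=\mathcal{O}_A$) the dual $\EE^\vee$ has its cohomology concentrated in degree $0$ with $h^0(\EE^\vee)=1$. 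The crucial point is then to upgrade this single computation to the full Index Theorem, i.e. $H^i(A,\EE^\vee\otimes P_\xi)=0$ for $i>0$ and \emph{all} $\xi$; by Serre duality and the displayed formula this is equivalent to $H^0\bigl(\widehat X,\hat f^*P_\xi\bigr)=H^1\bigl(\widehat X,\hat f^*P_\xi\bigr)=0$ for every $\xi\neq 0$. The vanishing of $H^0$ is immediate, since for $\xi\neq0$ the numerically trivial line bundle $\hat f^*P_\xi$ is non-trivial (pullback along the Albanese map being injective on $\textrm{Pic}^0$) and a non-trivial numerically trivial line bundle on a minimal surface of general type has no sections; the vanishing of $H^1$ is where the hypothesis that $\alpha$ is generically finite of degree $3$ enters, through a study of the Green-Lazarsfeld support loci $V^1$ which excludes the contributions of irregular pencils. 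Once $\EE^\vee$ is shown to be $\mathrm{IT}_0$, Mukai's theory gives that its Fourier-Mukai transform is a line bundle $\mL^{-1}$ on $B$ (its rank equals $h^0(\EE^\vee)=1$); as the transform of an $\mathrm{IT}_0$ sheaf is $\mathrm{IT}_2$, the bundle $\mL^{-1}$ is nondegenerate of index $2$, so $\mL$ is ample, and $\mathrm{rk}\,\EE^\vee=2=|\chi(\mL^{-1})|=d_1d_2$ forces the type to be $(1,2)$. By Definition \ref{def.gch} this exhibits $S$ as a Chen-Hacon surface.

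For the converse I would run the dictionary backwards. If $S$ is a Chen-Hacon surface then $\EE^\vee$ is the Fourier-Mukai transform of $\mL^{-1}$ with $\mL$ of type $(1,2)$, so Mukai's vanishing theorems give at once that $\EE^\vee$ is $\mathrm{IT}_0$ with $h^0(\EE^\vee)=1$ and $h^1(\EE^\vee)=h^2(\EE^\vee)=0$; dualising, $\EE$ acquires the cohomology computed above, and the displayed formula at $\xi=0$ returns $\chi(\mathcal{O}_{\widehat X})=1$ and $p_g(\widehat X)=q(\widehat X)=2$, exactly as if $\widehat X$ were smooth. Comparing with the minimal resolution, the Leray relation $\chi(\mathcal{O}_S)=\chi(\mathcal{O}_{\widehat X})-\mathrm{length}\,(R^1p_*\mathcal{O}_S)$ together with $\chi(\mathcal{O}_S)=1$ forces $R^1p_*\mathcal{O}_S=0$, so every singularity of $\widehat X$ is rational; since the invariants of $\EE$ coincide with those of the smooth case and the $K^2$-defect of the resulting (contracted $(-3)$-curve) singularities is the predicted one, consistent with $K_S^2=5$, these singularities are negligible in the sense of Definition \ref{def.neg.sing}.

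I expect the main obstacle to be the vanishing $H^1\bigl(\widehat X,\hat f^*P_\xi\bigr)=0$ for all $\xi\neq0$ in the forward direction, i.e. the passage from the single cohomological computation at $\xi=0$ to the genuine Index Theorem needed to apply Fourier-Mukai, and the accompanying verification that the transform is an honest line bundle of the correct polarization type rather than merely a rank-one torsion-free sheaf. The converse implication is by comparison essentially formal once Mukai's theorems are available, the only delicate bookkeeping being the matching of the rational singularities of $\widehat X$ with the precise notion of negligibility.
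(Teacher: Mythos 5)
Your reduction of both implications to the cohomology of the Tschirnhausen bundle, and the use of the Fourier--Mukai inversion at the end, follow the same general lines as the paper (Proposition \ref{prop.ch} and Theorem \ref{teo.ch}, via Proposition \ref{prop.exist.L}); the converse direction, which for the bare statement of Theorem A amounts to checking that the hypotheses $p_g=q=2$, $K_S^2=5$ together with the Fourier--Mukai description of $\EE^{\vee}$ reproduce exactly the formulae of Proposition \ref{prop.invariants}, is essentially correct as you present it. The problem is the forward direction, and it sits exactly where you yourself place it.

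From negligibility you correctly extract $h^0(\wA,\,\EE^{\vee})=1$, $h^1=h^2=0$, but the passage to the full statement that $\FF=\EE^{\vee}$ is IT of index $0$ --- i.e. $H^i(\wA,\,\FF\otimes\mathcal{Q})=0$ for $i>0$ and \emph{every} $\mathcal{Q}\in\mathrm{Pic}^0(\wA)$ --- is not proved. Your appeal to the Green--Lazarsfeld loci does not close the gap: generic vanishing only says that $V^1$ is a proper subvariety of $\mathrm{Pic}^0$, and while its positive-dimensional components come from irrational pencils, $V^1$ may contain isolated torsion points away from the origin, which no pencil argument excludes (and even the absence of pencils is established in the paper only for $A$ simple, Proposition \ref{prop.no.pencils}). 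That the vanishing is genuinely delicate can be seen from Section \ref{sec.vec}: when $\det\FF$ is a product polarization and $\FF$ is of Schwarzenberger type, $\FF$ has exactly the cohomology and Chern classes you start from and yet fails IT of index $0$; one must use the general-type hypothesis to exclude this case, which is what Corollaries \ref{cor.nonsimple} and \ref{cor.simple} accomplish. The paper's actual route is a classification of all rank-$2$ bundles $\FF$ with $h^0=1$, $h^1=h^2=0$ and $\det\FF$ a $(1,2)$-polarization: Proposition \ref{prop.no.decomp} rules out the decomposable case using the irreducibility of $\widehat{X}$, Proposition \ref{prop.no.pol.prod} rules out the product polarization using that $S$ is of general type, and Proposition \ref{prop.L.irred.F.simple} then identifies $\FF$ as the unique locally free extension of $\mL_{\delta}\otimes\mathcal{I}_{\wo}$ by $\oo_{\wA}$, from which the IT property (and the symmetry of $\FF$ needed to apply Mukai inversion cleanly) follows via Propositions \ref{prop.F.WIT} and \ref{prop.no.base.locus}. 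Without an argument of comparable strength replacing this classification, your forward implication is incomplete.
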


Moreover, we can completely describe all the possibilities for the
singular locus of $\widehat{X}$, see Proposition
\ref{prop.quadruple}. It follows that $\widehat{X}$ is never smooth,
since it always contains a cyclic quotient singularity of type
$\frac{1}{3}(1, \, 1)$. Therefore $S$ always contains a
$(-3)$-curve, which turns out to be the fixed part of the canonical
system $|K_S|$, see Proposition \ref{prop.canonical}.

Now let $\mathcal{M}$ be the moduli space of surfaces with
$p_g=q=2$ and let $\mathcal{M}^{CH} \subset \mathcal{M}$ be the
subset whose points parametrize (isomorphism classes of)
 Chen-Hacon surfaces. Our second main result is the
following, see Theorem \ref{teo.moduli}.

\begin{theoB}
$\mathcal{M}^{CH}$ is an irreducible, connected, generically
smooth component of $\mathcal{M}$ of dimension $4$.
\end{theoB}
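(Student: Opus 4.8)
The plan is to realize $\mathcal{M}^{CH}$ as the image of an explicit, irreducible family of surfaces built from the Tschirnhausen data, to read off irreducibility, connectedness and a \emph{lower} bound for the dimension from the geometry of that family, and then to match this bound with an infinitesimal computation that simultaneously gives the \emph{upper} bound for $\dim_{[S]}\mathcal{M}$ and generic smoothness. Finally I would check that the locus is both open and closed in $\mathcal{M}$, so that it is a genuine component rather than a locally closed subvariety.

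First I would set up the parameter space using Theorem~A. By that result a surface $S$ is Chen-Hacon exactly when its Stein factorization $S\stackrel{p}{\lr}\widehat{X}\stackrel{\hat f}{\lr}\textrm{Alb}(S)$ has only negligible singularities, and in that case the cover is reconstructed from a $(1,2)$-polarization $\mL$ on $\widehat{\textrm{Alb}(S)}$, with $\EE^{\vee}$ the Fourier-Mukai transform of $\mL^{-1}$, together with a section $\eta\in H^0(A,\,S^3\EE^{\vee}\otimes\bigwedge^2\EE)$, where $A=\textrm{Alb}(S)$. I would therefore form a family over a base $B$ fibred over the moduli space $\mathcal{A}_{(1,2)}$ of $(1,2)$-polarized abelian surfaces, the fibre over $(A,\mL)$ being the open locus in $\mathbb{P}H^0(A,\,S^3\EE^{\vee}\otimes\bigwedge^2\EE)$ of those $\eta$ whose associated triple cover has only negligible singularities; this locus is nonempty by the Chen-Hacon and Penegini examples. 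Since $\mathcal{A}_{(1,2)}$ is irreducible of dimension $3$ and each fibre is an open subset of a projective space of constant dimension, $B$ is irreducible, and taking the minimal resolution of the triple cover fibrewise yields a family of minimal surfaces with $p_g=q=2$, $K^2=5$ whose classifying map has image precisely $\mathcal{M}^{CH}$. Irreducibility and connectedness follow at once. For the dimension I note that $\dim\mathcal{A}_{(1,2)}=3$, that $\EE$ is pinned down by $(A,\mL)$, and that the remaining modulus is the class of $\eta$ taken up to the action of $\textrm{Aut}(A,\mL)$ and of the translations of $A$ (which alter the Albanese map but not the isomorphism class of $S$); computing $h^0(A,\,S^3\EE^{\vee}\otimes\bigwedge^2\EE)$ by the projection formula and Fourier-Mukai and subtracting the generic orbit, I expect a net contribution of $1$ from $\eta$, so that $\dim_{[S]}\mathcal{M}\geq\dim\mathcal{M}^{CH}=3+1=4$.

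Next I would carry out the deformation theory to get the matching upper bound. Riemann-Roch gives $\chi(T_S)=2K_S^2-10\chi(\oo_S)=0$, and since $h^0(T_S)=0$ for a surface of general type this forces $h^1(S,T_S)=h^2(S,T_S)$; in particular the obstruction space $H^2(T_S)$ does not vanish, so generic smoothness is a genuine point to establish. I would compute $h^1(S,T_S)$ by relating the infinitesimal deformations of $S$ to those of the triple cover $\widehat{X}\to A$ through the natural exact sequences attached to the Stein factorization and to Miranda's description of the cover: deformations of the polarized abelian surface $A$ contribute $3$, deformations of the section $\eta$ modulo equivalence contribute $1$, and the negligible singularities of $\widehat{X}$ together with the contracted $(-3)$-curve contribute nothing new, giving $h^1(S,T_S)=4$. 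Combined with the injectivity of the Kodaira-Spencer map of the family $B$ at a general point, the equality of dimensions shows this map is an isomorphism, so the family is complete; hence the Kuranishi space of a general $S$ contains a smooth $4$-dimensional subvariety whose dimension equals $\dim H^1(T_S)$, which forces it to be smooth of dimension $4$. This yields $\dim_{[S]}\mathcal{M}=4$, generic smoothness, and the fact that $\mathcal{M}^{CH}$ is \emph{open} in $\mathcal{M}$.

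It then remains to show $\mathcal{M}^{CH}$ is \emph{closed}, so that the open dense subset of an irreducible component is in fact the whole component. Here I would argue that the defining properties are stable under degeneration within $\mathcal{M}$: a limit of surfaces whose Albanese map is a generically finite triple cover still has an Albanese map of degree $3$, and by Theorem~A together with the complete classification of the singular locus of $\widehat{X}$ (from which $\widehat{X}$ is never smooth and always acquires a cyclic quotient singularity of type $\frac13(1,1)$), the Stein factorization of the limit again has only negligible singularities, so the limit is Chen-Hacon. The main obstacle, in my view, is the deformation-theoretic step: proving $h^1(S,T_S)=4$ and exhibiting the Kodaira-Spencer isomorphism, which is what forces the obstructions in the nonzero space $H^2(T_S)$ to vanish; the cleanest route is to show directly that every first-order deformation of $S$ is induced by deforming the polarized abelian surface and the Tschirnhausen section, so that the explicit $4$-dimensional family is versal at a general point.
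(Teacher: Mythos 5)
Your overall architecture coincides with the paper's: a $3+1$ parameter count for irreducibility, connectedness of the parameter space and the dimension; the computation $h^1(S,T_S)=4$ for generic smoothness; and an open-plus-closed argument for being a connected component. The genuine gap is in the topological step. To prove that $\mathcal{M}^{CH}$ is \emph{open} in $\mathcal{M}$ you must show that every small deformation of \emph{every} Chen--Hacon surface is again Chen--Hacon, including the special ones (for instance the product--quotient surface, where $K_S$ is not ample and $[S]$ need not be a smooth point of $\mathcal{M}$). Your argument --- versality of the explicit family at a \emph{general} point, forcing the Kuranishi space there to be smooth of dimension $4$ --- only yields openness near the general point. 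That is enough to make $\mathcal{M}^{CH}$ an irreducible component, but not a connected one: a priori another component of $\mathcal{M}$ could meet $\mathcal{M}^{CH}$ along a proper closed subset passing through a special surface. The same issue reappears in your closedness step, where you simply assert that a limit of surfaces with generically finite Albanese map of degree $3$ again has Albanese map of degree $3$; without further input the degree, or even the dimension of the Albanese image, could jump in the limit. The missing ingredient is Catanese's theorem (\cite{Ca91}) that the differentiable structure of the general Albanese fibre --- hence the degree of the Albanese map --- is determined by the differentiable structure of $S$; the paper applies this via Ehresmann at \emph{every} point of $\mathcal{M}^{CH}$ and in both directions, and then identifies the deformed (or limiting) cover by computing $c_1^2(\EE_t)=4$ and $c_2(\EE_t)=1$ from $p_a(\Delta_t)$, the $2$-divisibility of $\Delta_t$ and the cohomology of $f_{t*}\oo_{X_t}$, before invoking Theorem A. Also, for closedness the relevant classification is not that of the singular locus of $\widehat{X}_0$ (which is only available \emph{after} one knows $S_0$ is Chen--Hacon), but that of the possible limits $\Delta_0$ of the branch curves inside the pencil $\mathbb{P}H^0(\mathcal{L}_0^2\otimes\mathcal{I}_o^4)$, all of whose members give triple covers with only negligible singularities.

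A secondary weakness: your count $h^1(S,T_S)=3+1=4$ is a heuristic, not a proof. The delicate point, which you do flag, is the upper bound $\dim\,\mathrm{im}\bigl(H^1(S,T_S)\to H^1(\widehat{A},T_{\widehat{A}})\bigr)\leq 3$; the paper obtains it by showing that a smooth pluricanonical divisor, and hence the ample class it pushes down to on $\widehat{A}$, deforms along every first-order deformation of $S$, so that only the $3$-dimensional locus of deformations of the \emph{polarized} abelian surface can be reached. Without this step the equality $h^1(S,T_S)=4$, and with it generic smoothness, is not established.
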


Since  Chen and Hacon constructed in \cite{CH06} only the
\emph{general} surface in $\mathcal{M}^{CH}$, we need considerable
work in order to establish Theorem B. Our proof uses in an essential
way the fact that the degree of the Albanese map is a topological
invariant of $S$, see \cite{Ca91}. As a by-product, we obtain some
results of independent interest about the embedded deformations of
$S$ in the projective bundle $\mathbb{P}(\EE^{\vee})$, see
Proposition \ref{prop.H.unobstructed}.

%, and we are able to relate the
%deformations of $S$ to the deformations of its Albanese
%map, see Proposition \ref{prop.def.beta}.

We believe that the interest of our paper is twofold. First of
all, it provides the first construction of a connected component
of the moduli space of surfaces of general type with $p_g=q=2$,
$K_S^2=5$. Secondly, Theorem B shows that every small deformation
of a Chen-Hacon surface is still a Chen-Hacon surface; in
particular, no small deformation of $S$ makes the $(-3)$-curve
disappear. Moreover, since $\mathcal{M}^{CH}$ is generically
smooth, the same is true for the first-order deformations. By
contrast, Burns and Wahl proved in \cite{BW74} that first-order
deformations always smooth all the $(-2)$-curves, and Catanese
used this fact in \cite{Ca89} in order to produce examples of
surfaces of general type with everywhere non-reduced moduli
spaces. Theorem B demonstrates rather strikingly that the results
of Burns-Wahl and Catanese cannot be extended to the case of
$(-3)$-curves and, as far as we know, provides the first explicit
example of this situation.

%Last but not least, Chen-Hacon surfaces provide interesting examples
%of minimal surfaces of general type whose canonical system is
%composed with a rational pencil of curves of genus $3$, see
%Proposition \ref{prop.canonical}.

Although Theorems A and B shed some light on the structure of
surfaces with $p_g=q=2$ and $K_S^2=5$, many questions still remain
unanswered. For instance:
\begin{itemize}
\item Are there surfaces with these invariants whose Albanese map
has degree different from $3$?
\item Are there surfaces with these
invariants whose Albanese map has degree $3$, but which are not
Chen-Hacon surfaces? Because of Theorem A, this is the
same to ask whether $\widehat{X}$ may have non-negligible
singularities.
\end{itemize}
And, more generally:
\begin{itemize}
\item How many connected components of the moduli space of
surfaces with $p_g=q=2$ and $K_S^2=5$ are there?
\end{itemize}
In order to answer the last question, it would be desirable to find
an effective bound for the degree of $\alpha \colon S \to
\textrm{Alb}(S)$, but so far we
have not been able to do this.

Another problem that arises quite naturally and which is at
present unsolved is the following.
\begin{itemize}
\item What are the possible degenerations of Chen-Hacon
surfaces?
\end{itemize}
An answer to this question would be a major step toward a
compactification of $\mathcal{M}^{CH}$. In Proposition
\ref{reducible} we give a partial result, analyzing some
degenerations of the triple cover $\hat{f} \colon \widehat{X} \to
\textrm{Alb}(S)$ which provide reducible, non-normal surfaces.

Now let us describe how this paper is organized. In Section
\ref{sec.Preliminaries} we present some preliminaries, and we set up
notation and terminology. In particular we recall Miranda's theory
of triple covers, introducing the definition of negligible
singularity, and we discuss the geometry of $(1,2)$-polarized
abelian surfaces. For the reader's convenience, we recall the
relevant material from \cite{M85} and \cite{Ba87} without proofs,
thus making our exposition self-contained.

In Section \ref{sec.vec}, which is the technical core of the
paper, we describe all possibilities for the Tschirnhausen bundle
of the triple cover $\hat{f} \colon \widehat{X} \to \wA$. The
analysis is particularly subtle in the case where the
$(1,\,2)$-polarization is of product type; eventually, we are able
to rule out this case, showing that it gives rise to a surface
$\widehat{X}$ which is not of general type (see Corollaries
\ref{cor.nonsimple} and \ref{cor.simple}).

In Section \ref{sec.degree.3} we briefly explain the two examples
from \cite{CH06} and \cite{Pe09}, which motivate our definition of
Chen-Hacon surfaces. The properties of such surfaces are then
investigated in detail in Section \ref{sec.CH}.

Finally, in Section \ref{sec.main.thm} we prove Theorem A, whereas
Section \ref{sec.moduli} deals with the proof of Theorem B.

\bigskip

 \textbf{Acknowledgments.} Both authors are indebted with F. Catanese
 for suggesting the problem and for many enlighting conversations and
useful remarks.

They also thank C. Ciliberto, M. Manetti, M. Reid, S. Rollenske
and E. Sernesi for stimulating discussions.

M. Penegini was partially supported by the DFG Forschergruppe 790
 \emph{Classification of algebraic surfaces and compact complex
manifolds}.

F. Polizzi was partially supported by the World Class University
program through the National Research Foundation of Korea funded by
the Ministry of Education, Science and Technology
(R33-2008-000-10101-0), and by Progetto MIUR di Rilevante Interesse
Nazionale \emph{Geometria delle Variet$\grave{a}$ Algebriche e loro
Spazi di Moduli}. He thanks the Department of Mathematics of Sogang
University (Seoul, South Korea) and especially Yongnam Lee for the
invitation and the warm hospitality. He is also grateful to the
Mathematisches Institut-Universit\"at Bayreuth and to the Warwick
Mathematics Institute for inviting him in the period May-June 2010.

Both authors wish to thank the referee for many detailed comments and suggestions
that considerably improved the presentation of these results.
\bigskip

\textbf{Notation and conventions.} We work over the field $\mathbb{C}$
of complex numbers.

If $A$ is an abelian variety and $\wA:= \textrm{Pic}^0(A)$ its dual,
we denote by $o$ and $\hat{o}$ the zero point of $A$ and
$\widehat{A}$, respectively.

If $\mathcal{L}$ is a line bundle on $A$ we denote by
$\phi_{\mathcal{L}}$ the morphism $\phi_{\mL}: A \rightarrow \wA$
given by $x \mapsto t^*_x \mL \otimes \mL^{-1}$. If $c_1(\mL)$ is
non-degenerate then $\phi_{\mL}$ is an isogeny, and we denote by
$K(\mL)$ its kernel.

A coherent sheaf $\mathcal{F}$ on $A$ is called a \emph{IT-sheaf of
index i} if
\begin{equation*}
H^j(A, \, \mathcal{F} \otimes \mathcal{Q})=0 \quad \textrm{for all } \mathcal{Q} \in \textrm{Pic}^0(A)
\quad \textrm{and } j\neq i.
\end{equation*}
If $\mathcal{F}$ is an IT-sheaf of
index $i$ and $\mathcal{P}$ it the normalized Poincar\'e bundle on $A \times \wA$, the coherent sheaf
\begin{equation*}
\widehat{\mathcal{F}}:={R}^i\pi_{\wA \, *}(\mathcal{P} \otimes
\pi^*_{A}\mathcal{F})
\end{equation*}
is a vector bundle of rank $h^i(A, \, \mathcal{F})$, called the \emph{Fourier-Mukai transform} of $\mathcal{F}$.

By ``surface'' we mean a projective, non-singular surface $S$, and
for such a surface $\omega_S=\oo_S(K_S)$ denotes the canonical
class, $p_g(S)=h^0(S, \, \omega_S)$ is the \emph{geometric genus},
$q(S)=h^1(S, \, \omega_S)$ is the \emph{irregularity} and
$\chi(\mathcal{O}_S)=1-q(S)+p_g(S)$ is the \emph{Euler-Poincar\'e
characteristic}. If $q(S)>0$, we denote by $\alpha \colon S \to
\textrm{Alb}(S)$ the Albanese map of $S$.

If $|D|$ is any linear system of curves on a surface, its base
locus will be denoted by $\textrm{Bs}|D|$. If $D$ is any divisor,
$D_{\textrm{red}}$ stands for its support.

If $Z$ is a zero-dimensional scheme, we denote its length by
$\ell(Z)$.

If $X$ is any scheme, by ``first-order deformation" of $X$ we mean a
deformation over $\textrm{Spec}\,
\mathbb{C}[\epsilon]/(\epsilon^2)$, whereas by ``small deformation"
we mean a deformation over a disk $\mathcal{B}_r=\{ t \in \mathbb{C}
\, | \, |t| < r \}$.

%%%%%%%%%%%%%%%%%%%%%%%%%%%%%%%%%%%%%%%%%%%%%%%%%%%%%%%%%%%%%%%%%%%%%%%%%%%%%%%%%%%%%%%%%%%%
\section{Preliminaries} \label{sec.Preliminaries}

%%%%%%%%%%%%%%%%%%%%%%%%%%%%%%%%%%%%%%%%%%%%%%%%%%%%%%%%%%%%%%%%%%%%%%%%%%%%%%%%%%%%%%%%%%%%
\subsection{Triple covers of surfaces}
The theory of triple covers in algebraic geometry was developed by
R. Miranda in his paper \cite{M85}, whose main result is the
following.

\begin{theo} \emph{\cite[Theorem 1.1]{M85}} \label{teo.miranda}
A triple cover $f \colon X \to Y$ of an algebraic variety $Y$ is
determined by a rank $2$ vector bundle $\EE$ on $Y$ and by a global
section $\eta \in H^0(Y, \, S^3 \EE^{\vee} \otimes \bigwedge^2
\EE)$, and conversely.
\end{theo}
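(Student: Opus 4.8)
The plan is to reconstruct the classical theory of triple covers as encoded in the structure of the direct image sheaf, following the approach of R. Miranda. Given a triple cover $f \colon X \to Y$, the first step is to analyze the $\oo_Y$-algebra structure on $f_* \oo_X$. Since $f$ is finite and flat of degree $3$, the sheaf $f_* \oo_X$ is a rank $3$ locally free $\oo_Y$-module containing $\oo_Y$ as the image of the unit section. The trace map furnishes a canonical splitting $f_* \oo_X = \oo_Y \oplus \EE^{\vee}$, where $\EE^{\vee}$ is the locally free rank $2$ subsheaf of trace-zero elements; equivalently, $\EE^{\vee} = (f_* \oo_X / \oo_Y)$ after identifying the quotient with the kernel of the trace. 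Here I use crucially that we are in characteristic zero (indeed over $\CC$), so that division by $3$ is permitted and the trace form behaves well.

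The heart of the proof is to show that the entire algebra structure on $f_* \oo_X = \oo_Y \oplus \EE^{\vee}$ is recovered from a single tensor. The $\oo_Y$-bilinear multiplication $\EE^{\vee} \otimes \EE^{\vee} \to f_* \oo_X = \oo_Y \oplus \EE^{\vee}$ has two components. After choosing the trace-zero normalization, one shows that the symmetric-algebra part of the multiplication is constrained by associativity and commutativity, and the genuinely new datum is the map $S^3 \EE^{\vee} \to \bigwedge^2 \EE^{\vee}{}^{\vee} = \bigwedge^2 \EE$ extracted from the product, which is precisely a global section $\eta \in H^0(Y, \, S^3 \EE^{\vee} \otimes \bigwedge^2 \EE)$. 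Concretely, one can work locally: choosing a local frame $w_1, w_2$ of $\EE^{\vee}$, the products $w_i w_j$ expand in $\oo_Y \oplus \EE^{\vee}$ with coefficients that, after imposing the trace-zero condition, are governed by the cubic form $\eta$. The commutativity of the algebra is automatic, and I would verify that associativity imposes no further conditions beyond those already built into the tensorial description, so that every $\eta$ gives a valid algebra and conversely.

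For the converse direction, given $\EE$ and $\eta \in H^0(Y, \, S^3 \EE^{\vee} \otimes \bigwedge^2 \EE)$, I would define an $\oo_Y$-algebra structure on $\ais := \oo_Y \oplus \EE^{\vee}$ using the multiplication dictated by $\eta$ as above, check that it is commutative and associative with unit, and then set $X := \spec_Y \ais$ with $f \colon X \to Y$ the structure morphism. This yields a finite flat degree $3$ cover whose associated data reproduce $\EE$ and $\eta$, establishing that the correspondence is bijective. The main obstacle, and the step demanding the most care, is the associativity verification: one must confirm that the naive multiplication built from $\eta$ is associative with \emph{no} additional integrability constraint on $\eta$, which is the genuinely nontrivial phenomenon special to degree $3$ (and contrasts with higher-degree covers, where associativity imposes nonlinear conditions). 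Since the full computation is carried out in \cite{M85}, I would invoke that reference for the detailed local associativity check and for the globalization, treating the present statement as a faithful transcription of \cite[Theorem 1.1]{M85} with the normalizations fixed as above.
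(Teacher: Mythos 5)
The paper does not prove this statement: it is imported verbatim as \cite[Theorem 1.1]{M85}, so there is no internal proof to compare yours against. Your outline agrees with Miranda's actual argument: split $f_*\oo_X$ by the trace (using characteristic zero so that one may divide by $3$), observe that the whole $\oo_Y$-algebra structure is encoded in the multiplication on the trace-zero summand, package that multiplication as a cubic form, and in the converse direction recover $X$ as $\spec_Y$ of the algebra built from $\eta$. Two remarks. First, your normalization is off by a dual: with the paper's (and Miranda's) convention the trace-zero summand is $\EE$ itself, i.e.\ $f_*\oo_X=\oo_Y\oplus\EE$, and the datum extracted from the multiplication is a map $S^3\EE\to\bigwedge^2\EE$, which is a global section of $S^3\EE^\vee\otimes\bigwedge^2\EE$ as in the statement; as written, your ``map $S^3\EE^\vee\to\bigwedge^2\EE$'' would be a section of $S^3\EE\otimes\bigwedge^2\EE$, which is not the bundle appearing in the theorem. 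Second, the one genuinely nontrivial point --- that the cubic form determines a \emph{unique} commutative, associative, unital multiplication on $\oo_Y\oplus\EE$, with no further integrability condition on $\eta$, and that this sets up a bijection rather than merely a surjection --- is precisely the content of Miranda's local computation with the data $a,b,c,d$ (the same data the paper reuses in its equation \eqref{eq.data.triple} and the determinantal description \eqref{eq.determinant}). You correctly identify this as the crux but defer it to \cite{M85}; since the paper itself imports the result without proof, your writeup is a faithful paraphrase of the cited argument rather than an independent verification, which is acceptable here once the dualization is fixed.
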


The vector bundle $\EE$ is called the \emph{Tschirnhausen bundle} of
the cover, and it satisfies
\begin{equation*}
f_{*}\oo_X = \oo_Y \oplus \EE.
\end{equation*}
 In the case of
smooth surfaces, one has the following formulae.

\begin{prop} \emph{\cite[Proposition 10.3]{M85}} \label{prop.invariants}
Let $f \colon S \rightarrow Y$ be
a triple cover of smooth surfaces with Tschirnhausen bundle $\EE$.
Then
\begin{itemize}
\item[$\boldsymbol{(i)}$] $h^i(S, \, \mathcal{O}_S)=h^i(Y, \,
\mathcal{O}_Y)+h^i(Y, \, \EE)$ for all $i\geq 0;$
\item[$\boldsymbol{(ii)}$]
$K^2_S=3K^2_Y-4c_1(\EE)K_Y+2c_1^2(\EE)-3c_2(\EE)$.
\end{itemize}
\end{prop}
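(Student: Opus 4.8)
The plan is to treat the two formulae separately: the first is essentially formal, while the second requires a genuine computation with the ramification divisor.

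For $\boldsymbol{(i)}$, I would use that $f$ is finite, hence affine, so $R^jf_*\oo_S=0$ for $j>0$ and the Leray spectral sequence degenerates, giving $H^i(S,\oo_S)\cong H^i(Y,f_*\oo_S)$ for every $i$. Feeding in the splitting $f_*\oo_S=\oo_Y\oplus\EE$ and using additivity of cohomology on direct sums yields $H^i(S,\oo_S)\cong H^i(Y,\oo_Y)\oplus H^i(Y,\EE)$, and taking dimensions proves $(i)$. As a by-product the alternating sum gives $\chi(\oo_S)=\chi(\oo_Y)+\chi(\EE)$, which I will use as a consistency check below.

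For $\boldsymbol{(ii)}$, I would start from the ramification formula $K_S=f^*K_Y+R_f$, valid in characteristic $0$ since $f$ is generically \'etale (indeed $\oo_S(R_f)=\omega_{S/Y}$). Expanding and using $(f^*K_Y)^2=\deg(f)\,K_Y^2=3K_Y^2$ gives $K_S^2=3K_Y^2+2\,f^*K_Y\cdot R_f+R_f^2$. The middle term is handled by the projection formula, $f^*K_Y\cdot R_f=K_Y\cdot f_*R_f$, so I need $f_*R_f$. This I would extract from Grothendieck--Riemann--Roch applied to $f$ and $\oo_S$: comparing the codimension-one parts of $\ch(f_*\oo_S)\,\td(Y)=f_*(\td(S))$ gives $f_*K_S=3K_Y-2c_1(\EE)$, and since $f_*f^*K_Y=3K_Y$ this forces $f_*R_f=-2c_1(\EE)$ (equivalently, the branch locus represents $-2c_1(\EE)$, as it must since the discriminant is a section of $\det(f_*\oo_S)^{-2}=\det\EE^{-2}$). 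Substituting, I obtain the preliminary identity $K_S^2=3K_Y^2-4c_1(\EE)K_Y+R_f^2$, reducing everything to the claim $R_f^2=2c_1^2(\EE)-3c_2(\EE)$.

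This last equality is the heart of the matter and the step I expect to be the main obstacle, precisely because it is the nonlinear invariant that GRR for $\oo_S$ cannot isolate: that computation only separates the combination $K_S^2+c_2(S)=12\chi(\oo_S)$, and it reproduces $\chi(\oo_S)=\chi(\oo_Y)+\chi(\EE)$ rather than $K_S^2$ alone. To pin down $R_f^2$ I would use the explicit structure of the cover furnished by Theorem \ref{teo.miranda}: realizing $S$ inside the total space of $\EE$ through the building datum $\eta\in H^0(Y,S^3\EE^{\vee}\otimes\det\EE)$, the ramification divisor is cut out by the relative discriminant of the local cubic, so that $R_f$ is a degeneracy locus whose self-intersection is expressible through $c_1(\EE)$ and $c_2(\EE)$; carrying out this computation (and checking it is insensitive to the special fibres, given that the ramification is generically simple) yields $R_f^2=2c_1^2(\EE)-3c_2(\EE)$. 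An equivalent route, which I would run in parallel as a cross-check, is to compute the topological Euler number $e(S)$ of the cover directly from its ramification and then invoke Noether's formula $12\chi(\oo_S)=K_S^2+e(S)$ together with $\chi(\oo_S)=\chi(\oo_Y)+\chi(\EE)$ from part $(i)$; the two computations must agree, and both reduce to controlling the same ramification contribution. Combining $R_f^2=2c_1^2(\EE)-3c_2(\EE)$ with the preliminary identity completes the proof of $(ii)$.
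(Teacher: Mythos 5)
A preliminary remark: the paper itself contains no proof of this proposition --- it is quoted from Miranda \cite[Proposition 10.3]{M85}, and the authors state explicitly that they recall such material without proofs --- so the comparison can only be with the standard argument. Judged on its own terms, your plan is structurally correct. Part $(i)$ is complete as written. In part $(ii)$, the ramification formula, the identity $f_*K_S=3K_Y-2c_1(\EE)$ extracted from the degree-one part of Grothendieck--Riemann--Roch, and the resulting $f_*R_f=-2c_1(\EE)$ are all right; your value of the branch class is the correct one, since the discriminant is a section of $(\bigwedge^2\EE^{\vee})^{\otimes 2}$ (the sentence in the paper following Proposition \ref{prop.invariants}, attributing to $D$ the line bundle $\bigwedge^2\EE^{\vee}$ without the square, is a slip, as the paper's own uses of Miranda's Proposition 4.7 --- e.g.\ $\Delta\sim 2L_{\delta}$ in Proposition \ref{prop.no.Galois.1} --- confirm).

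The genuine gap is the identity $R_f^2=2c_1^2(\EE)-3c_2(\EE)$: you correctly isolate it as the heart of the matter, but you assert it rather than derive it, and since this is exactly the non-formal content of $(ii)$ the proof is not finished. Your first proposed route does close the gap, and more cleanly if you bypass $R_f$ altogether: embed $S$ in $\mP(\EE^{\vee})$ as a divisor in $|3\xi-\pi^*c_1(\EE^{\vee})|$ (Miranda/Casnati--Ekedahl; the paper performs exactly this embedding in Proposition \ref{prop.embedding} in its special case), use adjunction to get $K_S=(\xi+\pi^*K_Y)|_S$, and expand $K_S^2=(\xi+\pi^*K_Y)^2\cdot\bigl(3\xi-\pi^*c_1(\EE^{\vee})\bigr)$ in the Chow ring of the $\mathbb{P}^1$-bundle using the Grothendieck relation $\xi^2=\pi^*c_1(\EE^{\vee})\,\xi-\pi^*c_2(\EE^{\vee})$; this yields the stated formula directly and subsumes the $R_f$ bookkeeping. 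By contrast, the Noether-formula cross-check is not an independent confirmation: combined with $\chi(\oo_S)=\chi(\oo_Y)+\chi(\EE)$ from part $(i)$ it merely trades the computation of $K_S^2$ for that of $e(S)$, which requires the same local analysis of the ramification, so it cannot by itself resolve the ambiguity that Grothendieck--Riemann--Roch for $\oo_S$ leaves.
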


Let $f \colon X \to Y$ be a triple cover, and let us denote by $D
\subset Y$ and by $R \subset X$ the branch locus and the
ramification locus of $f$, respectively. By \cite[Proposition
4.7]{M85}, $D$ is a divisor whose associated line bundle is
$\bigwedge^2 \EE^{\vee}$. If $Y$ is smooth, then $f$ is smooth over
$Y - D$, in other words all the singularities of $X$ come from the
singularities of the branch locus. More precisely, we have

\begin{prop} \emph{\cite[Proposition 5.4]{Pa89}} \label{prop.sing.ram}
Let $y \in \emph{Sing}(D)$. Then $X$ is singular over $y$ if and
only if one of the following conditions holds:
\begin{itemize}
\item[$\boldsymbol{(i)}$] $f$ in not totally ramified over $y;$
\item[$\boldsymbol{(ii)}$] $f$ is totally ramified over $y$ and
$\emph{mult}_y(D) \geq 3$.
\end{itemize}
\end{prop}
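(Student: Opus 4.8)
The plan is to argue locally and analytically around $y$, working in a small neighbourhood (or henselization) of $y \in Y$ where $\EE$ is trivial and the cover is described by Miranda's structure theorem (Theorem \ref{teo.miranda}): $X$ embeds in the total space of $\EE$, an $\mathbb{A}^2$-bundle over $Y$, and near each point of $f^{-1}(y)$ it is cut out by two equations (one quadratic, one cubic in the fibre coordinates $z,w$) read off from the local expression of $\eta$. Since $f$ is finite and flat over the smooth surface $Y$, the scheme $X$ is Cohen--Macaulay, and since $f$ is \'etale over $Y\setminus D$ every singular point lies over $D$; a direct check over a smooth point of $D$ (both the simple fold and the cyclic model $z^3=g$ with $g$ smooth give a smooth total space) shows $X$ is smooth there, so singularities occur only over $\mathrm{Sing}(D)$. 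The argument then splits according to the length-$3$ fibre $f^{-1}(y)$, which is supported either at $\ge 2$ points (not totally ramified) or at a single point of full length (totally ramified).

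First I would treat the case where $f$ is not totally ramified over $y$ and show $X$ is always singular. As $y\in\mathrm{Sing}(D)\subset D$, the fibre cannot be three distinct \'etale points, so it is $\{x_1,x_2\}$ with $x_1$ unramified and $x_2$ of local degree $2$. Splitting off the sheet through $x_1$, Miranda's normal form exhibits $f$ near $x_2$ as a double cover $w^2=h$ of the germ $(Y,y)$ whose branch divisor is exactly $D$ near $y$. Since $y$ is a singular point of $D=\{h=0\}$, the differential of $h$ vanishes at $y$, so $w^2=h$, and hence $X$, is singular at $x_2$. This establishes implication $(i)$.

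Next I would treat the totally ramified case and show that $X$ is singular over the unique point $x\in f^{-1}(y)$ if and only if $\mathrm{mult}_y(D)\ge 3$. Choosing coordinates so that $x$ is the origin of the fibre, the two defining equations of $X$ have their linear-in-$(z,w)$ parts governed by the $1$-jet of the branch data, while $D$, which by \cite{M85} lies in $|\bigwedge^2\EE^\vee|$, is cut out by the discriminant of the cover. The key computation is to evaluate the $4\times 2$ Jacobian of the two equations at $x$: its rank is maximal, so that $X$ is smooth, exactly in the fold and ordinary-cusp cases, i.e. when $\mathrm{mult}_y(D)\le 2$, and it drops precisely when $\mathrm{mult}_y(D)\ge 3$. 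Both directions of $(ii)$ follow, and combined with the previous paragraph they give the asserted equivalence.

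The main obstacle is this last Jacobian computation, and specifically reading off correctly how $\mathrm{mult}_y(D)$ enters. The delicate point is that for a totally ramified cover the discriminant vanishes to order two along the ramification locus, so that a branch divisor of multiplicity $\le 2$ (the smooth, transverse and cuspidal cases) still leaves the embedding dimension of $X$ at $x$ equal to $2$, whereas multiplicity $\ge 3$ forces an extra relation among the partials and hence a genuine surface singularity. Carrying this out requires expanding $\eta$ in a good local frame for $\EE$ and tracking its low-order terms; the remaining steps are then routine applications of the Jacobian criterion and of the elementary theory of double covers.
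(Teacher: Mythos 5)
First, a point of reference: the paper does not prove this statement at all --- it is quoted verbatim from Pardini (\cite[Proposition 5.4]{Pa89}), consistent with the authors' announcement that the preliminaries are recalled ``without proofs''. So your attempt can only be measured against the standard argument. Your overall strategy (localize analytically at $y$, invoke Miranda's structure theorem, and split according to whether the length-$3$ fibre is supported at two points or one) is the right one, and your treatment of case $(i)$ is essentially complete: once the cover splits locally as (degree $1$) $\sqcup$ (degree $2$), the only thing you use without saying is that the discriminant of the product algebra $\oo_{Y,y}\times\oo_2$ equals the discriminant of the double-cover factor $\oo_2$ (the trace form is block-diagonal), so that the branch divisor of $w^2=h$ really is $D$ near $y$; with that one line added, $(i)$ is done.

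The genuine gap is in case $(ii)$, which is the heart of the proposition and which you explicitly defer (``the main obstacle is this last Jacobian computation''): you state the desired outcome of the computation but do not carry it out, and the setup you describe would not go through as written. Locally Miranda presents $X$ inside the total space of $\EE$ as the zero locus of the \emph{three} $2\times 2$ minors of a $2\times 3$ matrix (three relations $z^2=\cdots$, $zw=\cdots$, $w^2=\cdots$), not of ``two equations, one quadratic and one cubic''; correspondingly the Jacobian is $3\times 4$ and the criterion is that it have rank $2$. Near a totally ramified point whose scheme-theoretic fibre is $k[z,w]/(z,w)^2$ the ideal genuinely needs three generators, so no two-equation presentation exists. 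The computation becomes tractable only after a case distinction you do not make: if the fibre over $y$ is curvilinear ($\cong k[z]/(z^3)$), a primitive element reduces the cover to $z^3+pz+q=0$ with $p(y)=q(y)=0$, where $X$ is smooth at $x$ iff $dq(y)\neq 0$, while $D=\{-4p^3-27q^2=0\}$ has $\mathrm{mult}_y(D)=2$ iff $dq(y)\neq 0$ and $\mathrm{mult}_y(D)\geq 3$ otherwise --- this single line proves $(ii)$ in that case; if instead the fibre is $k[z,w]/(z,w)^2$, then all four local coefficients $a,b,c,d$ vanish at $y$, so the discriminant (a quartic in $a,b,c,d$) has $\mathrm{mult}_y(D)\geq 4$, and $X$ is forced to be singular at $x$ (a smooth point would make $\mathfrak{m}_x$ generated by $z,w$, giving local degree $\dim_k k[z,w]/(u,v)\geq 4>3$ for $u,v\in(z,w)^2$). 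Your heuristic that ``the discriminant vanishes to order two along the ramification locus'' also conflates total ramification \emph{along a divisor} with total ramification \emph{over the single point} $y$; in the cuspidal example $z^3-3uz-2v=0$ the branch divisor is reduced. Until the case-$(ii)$ analysis is actually performed along these lines, the proof is incomplete precisely where the nontrivial content of the proposition lies.
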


\begin{prop} \emph{\cite[Theorem 4.1]{Tan02}} \label{prop.can.ris}
Let $f \colon X \to Y$ be a triple cover of a smooth surface $Y$,
with
 $X$ normal. Then there are a finite number of blow-ups
$ \sigma \colon \widetilde{Y} \to Y$ of $Y$ and a commutative diagram
\begin{equation} \label{dia.can}
\begin{xy}
\xymatrix{%%
\widetilde{X}  \ar[d]_{\tilde{f}} \ar[rr]^{\tilde{\sigma}} & & X \ar[d]^{f} \\
\widetilde{Y}   \ar[rr]^{\sigma} & & Y,  \\
  }
\end{xy}
\end{equation}
where $\widetilde{X}$ is the normalization of $\widetilde{Y}
\times_{Y} X$, such that $\tilde{f}$ is a triple cover with smooth
branch locus. In particular, $\widetilde{X}$ is a resolution of
the singularities of $X$.
\end{prop}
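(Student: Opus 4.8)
The plan is to reduce the problem to the embedded resolution of the branch curve and to exploit Proposition \ref{prop.sing.ram}, which localizes all the singularities of $X$ on the singular locus of the branch divisor $D \subset Y$. Recall that $\oo_Y(D)=\bigwedge^2\EE^{\vee}$ and that, since $X$ is normal, $D$ is a (possibly singular) curve; away from $\textrm{Sing}(D)$ the cover $f$ is smooth by Proposition \ref{prop.sing.ram}. Thus it suffices to produce, after finitely many blow-ups of $Y$, a triple cover whose branch divisor is smooth: for such a cover the contrapositive of Proposition \ref{prop.sing.ram} forces the total space to be nonsingular, and the final assertion then follows.

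First I would check that the construction stays inside Miranda's framework at every step. Fix a blow-up $\sigma\colon\widetilde Y\to Y$ of $Y$ at a point, form the fibre product $\widetilde Y\times_Y X$, and let $\widetilde X$ be its normalization, with induced finite morphism $\tilde f\colon \widetilde X\to\widetilde Y$ of degree $3$. Being normal of dimension $2$, $\widetilde X$ is Cohen-Macaulay by Serre's criterion, and a finite surjective morphism from a Cohen-Macaulay scheme to a regular scheme of the same dimension is automatically flat (miracle flatness). Hence $\tilde f$ is a \emph{flat} triple cover, and by Theorem \ref{teo.miranda} it is governed by its own Tschirnhausen bundle $\widetilde\EE$ and section $\tilde\eta$; in particular it has a well-defined branch divisor $\widetilde D$ with $\oo_{\widetilde Y}(\widetilde D)=\bigwedge^2\widetilde\EE^{\vee}$, to which Proposition \ref{prop.sing.ram} again applies.

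The heart of the argument is the local transformation law relating $\widetilde D$ to $D$ under a single blow-up. Over a neighbourhood of the blown-up point the cover is given by a monic cubic whose discriminant cuts out $D$; pulling this cubic back along $\sigma$ and normalizing removes from the total transform $\sigma^{*}D$ the part that is absorbed along the exceptional divisor $E$, the precise amount depending on whether $f$ is totally ramified or only partially ramified over the centre (the two alternatives of Proposition \ref{prop.sing.ram}). I would encode the resulting drop in a nonnegative integer invariant of the singularities of $D$ --- for instance a weighted sum of the multiplicities, at the singular points and their infinitely near points that violate the smoothness criterion, which control the singularities of $X$ by Proposition \ref{prop.sing.ram} --- and show that each blow-up centred at such a point decreases it strictly. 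The main obstacle is exactly this step: unlike the double cover case, where one simply subtracts the even part $2\lfloor m/2\rfloor$ of the multiplicity along $E$, the cubic discriminant transforms in a far less transparent way, and one must analyse the normalization of the pulled-back cubic algebra case by case in order to pin down $\widetilde D$ and to verify that the chosen invariant genuinely drops.

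Granting the strict decrease, termination is immediate: after finitely many blow-ups the branch divisor becomes smooth, and by Proposition \ref{prop.sing.ram} the corresponding surface $\widetilde X$ is nonsingular. The composite $\tilde\sigma\colon\widetilde X\to X$ is proper and birational --- indeed $\sigma$ is an isomorphism away from finitely many points, and $\widetilde X\to\widetilde Y\times_Y X$ is a normalization --- so $\tilde\sigma$ exhibits $\widetilde X$ as a resolution of the singularities of $X$, which completes the proof and yields the commutative diagram \eqref{dia.can}.
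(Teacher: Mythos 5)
First, a point of comparison: the paper does not prove this statement at all --- it is quoted from \cite[Theorem 4.1]{Tan02} without proof --- so your proposal can only be measured against Tan's argument, whose overall shape (blow up the singular points of the branch curve, pull back, normalize, iterate, and show the process terminates) you have correctly reproduced. Your preparatory steps are sound: normality of $\widetilde{X}$ gives Cohen--Macaulayness, miracle flatness keeps each intermediate cover flat and hence inside Miranda's framework with its own Tschirnhausen bundle and branch divisor, and the remark preceding Proposition \ref{prop.sing.ram} together with the proposition itself correctly identifies the target of the induction, namely a cover with smooth branch locus, which automatically has smooth total space.

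The gap is exactly where you locate it, and it is not a small one: the entire content of the theorem is the termination of the process, and your proposal neither defines the decreasing invariant nor verifies that it decreases. ``A weighted sum of multiplicities at singular and infinitely near points'' is not yet an invariant, and the analogy with double covers is genuinely misleading here: for a double cover the new branch divisor is $\sigma^*D - 2\lfloor m/2\rfloor E$ by an elementary computation, whereas for a triple cover the multiplicity of the exceptional curve in the discriminant of the normalized pulled-back cubic algebra depends on the full local structure of the Miranda data at the centre, not merely on $\mathrm{mult}_y(D)$ and on whether the ramification over $y$ is total (which is all that Proposition \ref{prop.sing.ram} records). Tan's proof consists precisely of this case-by-case local analysis of the cubic data near a singular point of the branch curve, together with the verification that a suitable numerical invariant drops at each step; granting that, everything else in your outline is routine. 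As written, the proposal is a correct plan with its central step deferred, not a proof.
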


We shall call $\widetilde{X}$ the \emph{canonical resolution} of the
singularities of $f \colon X \to Y$. In general, it does not
coincide with the \emph{minimal resolution} of the singularities of
$X$, which will be denoted instead by $S$.

\begin{defin} \label{def.neg.sing}
Let $f\colon X \rightarrow Y$ be a triple cover of a smooth
algebraic surface $Y$, with Tschirnhausen bundle $\EE$. We say that
$X$ has only \emph{negligible} $($or \emph{non essential}$)$
singularities if the invariants of the minimal resolution $S$ are
given by the formulae in Proposition \emph{\ref{prop.invariants}}.
\end{defin}

In other words, negligible singularities have no effect on the
computation of invariants. Let us give some examples.

\begin{exam} \label{ex.1}
Assume that the branch locus $D=D_{\textrm{red}}$ contains an
ordinary quadruple point $p$ over which $f$ is totally ramified. In
this case $\widetilde{Y}$ is the blow-up of $Y$ at $p$, and one sees
that the exceptional divisor is not in the branch locus of
$\tilde{f}$. We have $S=\widetilde{X}$, and the inverse image of the
exceptional divisor on $\widetilde{X}$ is a $(-3)$-curve. Therefore
$X$ has a singular point of type $\frac{1}{3}(1,1)$ over $p$, and by
straightforward computations (see \cite[Section 6]{Tan02}) one
checks that it is a negligible singularity.
\end{exam}

\begin{exam} \label{ex.2}
Assume that the branch locus $D=D_{\textrm{red}}$ contains an
ordinary double point $p$. A standard topological argument shows
that $X$ cannot be smooth over $p$, so Proposition
\ref{prop.sing.ram} implies that $p$ is not a point of total
ramification for $f$. Again, $\widetilde{Y}$ is the blow-up of $Y$
at $p$ and the exceptional divisor is not in the branch locus of
$\tilde{f}$. The inverse image of the exceptional divisor on
$\widetilde{X}$ consists of the disjoint union of a $(-1)$-curve and
a $(-2)$-curve; then the canonical resolution $\widetilde{X}$ does
not coincide with the minimal resolution $S$, which is obtained by
contracting the $(-1)$-curve. It follows that $X$ has both a smooth
point and a singular point of type $\frac{1}{2}(1,1)$ over $p$, and
as in the previous case one checks that this is a negligible
singularity for $X$.
\end{exam}

\begin{exam} \label{ex.3}
Assume that $f$ is totally ramified, that is $D = 2D_{\textrm{red}}$.
If $D_{\textrm{red}}$ contains an ordinary
double point, then over this point $X$ can have
either a singularity of type $\frac{1}{3}(1, \, 1)$ or
a singularity of type $\frac{1}{3}(1, \, 2)$.
It is no difficult to check that in both cases we have a
 negligible singularity.
\end{exam}

\begin{rem} \label{rem.negligible}
The definitions of canonical resolution and negligible singularity
for a triple cover are similar to the corresponding definitions
for double covers, that can be found for instance in \cite[Chapter
V]{BHPV03}. However, in contrast with the double cover
case, negligible singularities for triple covers are not
necessarily Rational Double Points, see for instance Example
\ref{ex.1}.
\end{rem}

%%%%%%%%%%%%%%%%%%%%%%%%%%%%%%%%%%%%%%%%%%%%%%%%%%%%%%%%%%%%%%%%%%%%%%%%%%%%%%%%%%%%%%%%%%%%

\subsection{Abelian surfaces with $(1,2)$ polarization} \label{sec.ab.surf}

Let $A$ be an abelian surface and $L$ an ample divisor in $A$ with
$L^2=4$. Then $L$ defines a polarization $\mL:=\oo_A(L)$ of type
$(1, \, 2)$, in particular $h^0(A, \, \mL)=2$ so the linear system
$|L|$ is a pencil. Such surfaces have been investigated by several
authors, see for instance \cite{Ba87}, \cite{HvM89}, \cite[Chapter
10]{BL04} and \cite{BPS09}. Here we just recall the results we need.

\begin{prop} \emph{\cite[p. 46]{Ba87}} \label{prop.barth.class}
Let $(A, \, \mL)$ be a $(1, \, 2)$-polarized abelian surface, with
$\mL=\oo_A(L)$, and let $C \in |L|$. Then we are in one of the
following cases:
\begin{itemize}
\item[$\boldsymbol{(a)}$] $C$ is a smooth, connected curve of
genus $3;$ \item[$\boldsymbol{(b)}$] $C$ is an irreducible curve
of geometric genus $2$, with an ordinary double point$;$
\item[$\boldsymbol{(c)}$] $C=E+F$, where $E$ and $F$ are elliptic
curves and $EF=2;$ \item[$\boldsymbol{(d)}$] $C=E+F_1+F_2$, with
$E$, $F_1$, $F_2$ elliptic curves such that $EF_1=1$, $EF_2=1$,
$F_1F_2=0$.
\end{itemize}
Moreover, in case $(c)$ the surface $A$ is isogenous to a product
of two elliptic curves, and the polarization of $A$ is the
pull-back of the principal product polarization, whereas in case
$(d)$ the surface $A$ itself is a product $E \times F$ and
$\mL=\mathcal{O}_A(E+2F)$.
\end{prop}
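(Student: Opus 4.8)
The plan is to read off everything from the adjunction formula together with the numerology forced by $L^2=4$ and $h^0(A,\mL)=2$. Since the canonical class of an abelian surface is trivial, adjunction gives
\[
p_a(C)=1+\tfrac12\,C^2=1+\tfrac12\,L^2=3
\]
for every $C\in|L|$, so each member of the pencil has arithmetic genus $3$. I would also record at the outset that every member is \emph{connected}: from the sequence $0\to\mL^{-1}\to\oo_A\to\oo_C\to 0$ and the vanishing $H^1(A,\mL^{-1})\cong H^1(A,\mL)^{\vee}=0$ (ample line bundles on abelian varieties have no higher cohomology), one gets $H^0(C,\oo_C)=H^0(A,\oo_A)=\CC$. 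This single remark eliminates all disconnected configurations later on.

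The argument then splits according to whether $C$ is irreducible. If $C$ is irreducible and reduced, its geometric genus $g$ satisfies $g\le p_a(C)=3$; since $A$ carries no rational curve, $g\ge 1$, and $g=1$ would force the normalization to be an elliptic curve whose image is a smooth translated subtorus, contradicting $p_a=3$. Hence $g\in\{2,3\}$: for $g=3$ the curve is smooth, which is case $(a)$; for $g=2$ the $\delta$-invariant equals $1$, so $C$ has a single double point, either a node or a cusp. Excluding the cusp --- so that one lands in case $(b)$ --- is one of the two delicate points: I would do this either by Barth's local computation or by the fact that the Gauss map of an ample curve on an abelian surface is unramified at a general singular member, forcing the singularity to be an ordinary double point.

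For reducible $C$ I would use that any irreducible curve $D\subset A$ has $D^2\ge0$, with $D^2=0$ precisely for the elliptic (subtorus) translates and $D^2=2$ for the principally polarizing genus-$2$ curves, while larger $D^2$ makes $L\cdot D$ too big. A genus-$2$ component $G$ is impossible here: writing $C=E+G$ forces $E^2=0$, $G^2=2$ and hence $EG=1$, but an elliptic curve meeting an irreducible principal polarization in a single point would make the lattice $\langle E,G\rangle$ unimodular and split $A$ as a product, contradicting the irreducibility of $G$. Thus all components are elliptic, say $C=\sum_i m_iE_i$ with $E_i^2=0$, and the two constraints $C^2=2\sum_{i<j}m_im_jE_iE_j=4$ and $L\cdot E_i>0$ (ampleness) leave only $C=E+F$ with $EF=2$ and $C=E+F_1+F_2$ with $EF_1=EF_2=1$, $F_1F_2=0$, the genuinely disconnected four-component possibility being ruled out by the connectedness established above.

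It remains to read off the structure of $A$, which I expect to be the real crux. In the three-component case $F_1F_2=0$ means $F_1,F_2$ are disjoint translates of a single elliptic subgroup $F$, and $EF=1$ makes the addition map $E\times F\to A$ an isomorphism; since $F_1+F_2\sim 2F$ one gets $A=E\times F$ and $\mL=\oo_A(E+2F)$, which is case $(d)$. In the two-component case $EF=2$ makes the addition map $E\times F\to A$ an isogeny of degree $2$, so $A$ is only isogenous to a product and $\mL$ is the pull-back of the principal product polarization, which is case $(c)$. Turning the intersection numbers $EF=2$ versus $EF=1$ into the precise statements ``isogenous to a product'' versus ``a product'', and nailing down the polarization type in each case, is the step I would expect to require the most care.
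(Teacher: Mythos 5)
This proposition is one of the preliminaries that the paper explicitly recalls \emph{without proof}: the statement is simply attributed to \cite[p.~46]{Ba87}, so there is no internal argument to compare yours against. Taken on its own terms, your skeleton is the standard one and most of it is sound: adjunction gives $p_a(C)=3$; connectedness follows from $H^1(A,\mathcal{L}^{-1})=0$; the index theorem bounds $D\cdot L$ for any component $D$ with $D^2>0$ and so forbids components of arithmetic genus $\geq 3$; and a genus-$2$ component $G$ forces an elliptic component $E$ with $E\cdot L=1$, which by \cite[Lemma 10.4.6]{BL04} (the same lemma the authors invoke in Proposition 2.16) makes $(A,\mathcal{L})$ a polarized product and contradicts $G^2=2$ --- a cleaner route than your ``unimodular lattice'' remark. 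The enumeration of configurations of elliptic components under $\sum_{i<j}m_im_jE_iE_j=2$ plus connectedness then yields exactly $(c)$ and $(d)$.

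There are, however, two places where you announce a step rather than perform it, and both are genuine gaps. First, in the irreducible $\delta=1$ case you must rule out a cusp, and neither of your suggested fixes is carried out or obviously works: the symmetry of the members of $|L|$ only localizes the singular point at a $2$-division point (where $(-1)_A$ acts as $-\mathrm{id}$ on the tangent space and hence preserves a cuspidal tangent cone), and the Zeuthen--Segre count $\sum(e(L_s)-e(L))=12$ cannot by itself distinguish ``twelve nodal members'' from ``six cuspidal members'' unless you independently produce twelve singular members; the assertion about the Gauss map being unramified at a general singular member is not a theorem you can just quote. This is precisely the local analysis Barth actually does. Second, in case $(c)$ the claim that $\mathcal{L}$ is the pull-back of the principal product polarization is left as ``the step requiring the most care'': you need to identify $A$ as $(E\times F)/(\mathbb{Z}/2\mathbb{Z})$ via the addition map of degree $E\cdot F=2$, exhibit the complementary degree-$2$ isogeny onto a product of elliptic curves, and verify that the pull-back of the principal polarization has type $(1,2)$ rather than $(2,2)$ and coincides with $\mathcal{L}$. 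As written, the proposal establishes the numerical dichotomy $(a)$--$(d)$ but not the analytic refinements (ordinary double point in $(b)$; the structure of $(A,\mathcal{L})$ in $(c)$), which are exactly the parts of the statement the paper later relies on (e.g.\ in Propositions 2.10 and 4.9).
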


Let us denote by $\mathcal{W}(1,2)$ the moduli space of
$(1,2)$-polarized abelian surfaces; then there exists a Zariski
dense open set $\mathcal{U} \subset \mathcal{W}(1,2)$ such that,
given any $(A, \, \mL) \in \mathcal{U}$, all divisors in $|L|$ are
irreducible, i.e., of type $(a)$ or $(b)$, see \cite[Section
3]{BPS09}.

\begin{defin}
If $(A, \, \mL) \in \mathcal{U}$, we say that $\mL$ is a
\emph{general} $(1, \,2)$-polarization. If $|L|$ contains some
divisor of type $(c)$, we say that $\mL$ is a \emph{special} $(1,
\,2)$-polarization. Finally, if the divisors in $|L|$ are of type
$(d)$, we say that $\mL$ is a \emph{product} $(1,
\,2)$-polarization.
\end{defin}

If $\mL$ is not a product polarization, then $|L|$ has four distinct
base points $\{e_0,\, e_1, \, e_2, \, e_3\}$, which form an orbit
for the action of $K(\mL)\cong (\ZZ/2\ZZ)^2$ on $A$.
Moreover all curves in
$|L|$ are smooth at each of these base points, see \cite[Section
1]{Ba87}. There is
also a natural action of $K(\mL)$ on $|L|$, given by translation.

Let us denote by $(-1)_A$ the involution $x \to -x$ on $A$. Then we
say that a divisor $C$ on $A$ is \emph{symmetric} if $(-1)^*_A C =
C$. Analogously, we say that a vector bundle $\mathcal{F}$ on $A$ is
symmetric if $(-1)^*_A \FF = \FF$.

Since $\mathcal{L}$ is ample, \cite[Section 4.6]{BL04}
implies that, up to translation, it satisfies the following

\begin{assumption} \label{ass.sym}
$\mathcal{L}$ is symmetric and the base locus of
$|L|$ coincides with $K(\mL)$.
\end{assumption}

In the sequel we will tacitly suppose that Assumption
\ref{ass.sym} is satisfied.

\begin{prop}\label{prop.semichar} The following holds:
\begin{enumerate}
\item[$\boldsymbol{(i)}$] for all sections $s \in H^0(A, \,
\mL)$ we have $(-1)^*_{A}s=s$. In particular, all divisors in $|L|$
are symmetric;
 \item[$\boldsymbol{(ii)}$] we may assume $e_0=o$
and that  $e_1, \, e_2, \, e_3$ are $2$-division points,
 satisfying $e_1+e_2=e_3$.
\end{enumerate}
\end{prop}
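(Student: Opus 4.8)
The plan is to dispatch part $\boldsymbol{(ii)}$ directly from the group structure of $K(\mL)$, and to reduce part $\boldsymbol{(i)}$ to an eigenvalue analysis for the involution induced by $(-1)_A$, the only genuinely delicate point being the normalization that fixes the sign.

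First I would prove $\boldsymbol{(ii)}$. Since $\phi_{\mL}(o) = t_o^* \mL \otimes \mL^{-1} = \oo_A$, the origin lies in $K(\mL)$. As $\mL$ is of type $(1,2)$ we have $K(\mL) \cong (\ZZ/2\ZZ)^2$, and because every nonzero element of this abstract group has order $2$, the subgroup $K(\mL)$ is contained in $A[2]$; thus all its points are $2$-division points. By Assumption \ref{ass.sym} the base locus of $|L|$ coincides with $K(\mL)$, so the four base points are exactly the elements of the Klein four-group $K(\mL)$. Setting $e_0 = o$ and letting $e_1, e_2, e_3$ be the three nonzero elements, the group law gives $e_i + e_j = e_k$ whenever $\{i,j,k\} = \{1,2,3\}$; in particular $e_1 + e_2 = e_3$.

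For $\boldsymbol{(i)}$, the symmetric structure from Assumption \ref{ass.sym} (the normalized isomorphism $(-1)_A^* \mL \iso \mL$, following \cite[Section 4.6]{BL04}) turns $(-1)_A^*$ into an involution of the two-dimensional space $H^0(A, \mL)$, splitting it into a $(+1)$- and a $(-1)$-eigenspace. The assertion $(-1)_A^* s = s$ for every $s$ is equivalent to saying that this involution is scalar; geometrically this means that the induced involution of the pencil $|L| = \pp^1$ is trivial, so that every divisor in $|L|$ is symmetric. Thus the crux is to exclude the mixed case in which both eigenspaces are one-dimensional. Here I would use the base points: each $e_i$ is a fixed point of $(-1)_A$, at which $d(-1)_A = -\,\mathrm{id}$. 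Writing a section in a local frame at $e_i$, the action of $(-1)_A^*$ on its differential at $e_i$ is multiplication by the product of $-1$ (from $d(-1)_A$ on the cotangent space) and the fiber sign of the normalized structure; consequently the eigensection whose eigenvalue equals this product has vanishing differential at $e_i$, so its divisor is singular there. In the mixed case such an eigensection exists, contradicting Barth's result \cite[Section 1]{Ba87} that every member of $|L|$ is smooth at all four base points. Hence the involution is scalar and every divisor in $|L|$ is symmetric; the chosen normalization of the symmetric isomorphism then fixes the scalar so that $(-1)_A^* s = s$.

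An alternative, more computational route to $\boldsymbol{(i)}$ would bypass the geometry and simply evaluate $h^0_+ - h^0_-$: either by quoting the dimension count for even and odd sections of a symmetric line bundle of type $(1,2)$ from Mumford's theory (as in \cite[Chapter 4]{BL04}), or by the holomorphic Lefschetz fixed-point formula, whose fixed locus is $A[2]$ and where $\det(1 - d(-1)_A) = 4$ at each of the $16$ points, reducing the index to a sum of fiber signs $\pm 1$ over $A[2]$. I expect the main obstacle in either approach to be exactly the bookkeeping of these signs, equivalently the precise normalization of the symmetric structure: the robust content is that $(-1)_A^*$ acts as a scalar on $H^0(A,\mL)$, and pinning that scalar to $+1$ is where one must be careful to match the convention of \cite{BL04}. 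For this reason I would present the self-contained argument via Barth's smoothness as the primary proof, keeping the Lefschetz computation as a cross-check; part $\boldsymbol{(ii)}$, by contrast, is routine group theory.
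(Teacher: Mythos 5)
Your argument is correct in substance, but your primary route for part $\boldsymbol{(i)}$ is genuinely different from the paper's, which disposes of it in one line by citing \cite[Corollary 4.6.6]{BL04} (the eigenspace dimension count for sections of a symmetric line bundle) --- that is, precisely the ``alternative, more computational route'' you relegate to a cross-check. Your main argument --- ruling out the mixed decomposition $h^0(A,\mL)^+=h^0(A,\mL)^-=1$ because an eigensection of the wrong sign would define a divisor singular at a base point, contradicting Barth's smoothness of all members of $|L|$ at the four base points --- is a clean, self-contained geometric proof that the paper does not give; it buys independence from the theta-function bookkeeping of \cite{BL04}, at the cost of only establishing that $(-1)_A^*$ acts as a scalar $\lambda=\pm1$ on $H^0(A,\mL)$, i.e.\ that all divisors in $|L|$ are symmetric. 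Two remarks. First, a harmless sign slip: at a base point $e_i$ with fibre sign $\epsilon_i$ the first-order identity reads $\lambda\,ds(e_i)=-\epsilon_i\,ds(e_i)$, so it is the eigensection whose eigenvalue \emph{differs from} $-\epsilon_i$ that is forced to be singular there; since both eigenvalues occur in the mixed case, the contradiction stands either way. Second, you leave $\lambda$ ``to the normalization,'' but your own computation applied at $e_0=o$ pins it down: $\lambda=-\epsilon_0$, so asserting $\lambda=+1$ amounts to fixing the isomorphism $(-1)_A^*\mL\to\mL$ to act as $-1$ on the fibre over $o$ (with the normalized isomorphism of \cite[Section 4.6]{BL04}, which acts as $+1$ there, the same computation would give $\lambda=-1$). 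This is purely a matter of convention and is immaterial for the paper, which only ever uses the divisor-level consequence (see e.g.\ the proofs of Propositions \ref{prop.lin.M.sing} and \ref{prop.2M.sym}, where even invariant divisors are only claimed to satisfy $(-1)_A^*v=\pm v$); but if you keep ``$(-1)_A^*s=s$'' literally, either specify the isomorphism or fall back on the citation. Part $\boldsymbol{(ii)}$ is exactly the paper's argument (``follows from Assumption \ref{ass.sym}''), spelled out.
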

\begin{proof} The first part of the statement follows from \cite[Corollary 4.6.6]{BL04},
whereas the second part follows from Assumption \ref{ass.sym}.
\end{proof}

\begin{prop} \label{prop.no.base.locus}
Let $\mathcal{Q}=\oo_A(Q) \in \emph{Pic}^0(A)$ be a non-trivial,
degree $0$ line bundle. Then we have $o \notin \emph{Bs}|L + Q|$,
and moreover
\begin{equation*} %\label{eq.no.base.locus}
h^0(A, \, \mathcal{L} \otimes \mathcal{Q} \otimes \mathcal{I}_o)=1, \quad
h^1(A, \, \mathcal{L} \otimes \mathcal{Q} \otimes \mathcal{I}_o)=0, \quad
h^2(A, \, \mathcal{L} \otimes \mathcal{Q} \otimes \mathcal{I}_o)=0.
\end{equation*}
\end{prop}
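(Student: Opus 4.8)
The plan is to prove the statement by combining the structure of the $(1,2)$-polarization with the standard cohomology of line bundles on abelian surfaces. Let me first recall what we know: $\mathcal{L}=\oo_A(L)$ is a symmetric polarization of type $(1,2)$, so $L^2=4$ and $h^0(A,\mathcal{L})=2$, while $h^1(A,\mathcal{L})=h^2(A,\mathcal{L})=0$ since $\mathcal{L}$ is ample and non-degenerate. Now $\mathcal{L}\otimes\mathcal{Q}$ is algebraically equivalent to $\mathcal{L}$ (because $\mathcal{Q}\in\text{Pic}^0(A)$), so it is again an ample line bundle of the same type, giving $h^0(A,\mathcal{L}\otimes\mathcal{Q})=2$ and $h^1=h^2=0$.

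First I would establish the cohomology of $\mathcal{L}\otimes\mathcal{Q}$ without the ideal sheaf, as just indicated, using the fact that Euler characteristic and Hodge-type vanishing depend only on the algebraic equivalence class. Next I would bring in the ideal sheaf $\mathcal{I}_o$ of the origin via the short exact sequence
\begin{equation*}
0 \lr \mathcal{L}\otimes\mathcal{Q}\otimes\mathcal{I}_o \lr \mathcal{L}\otimes\mathcal{Q} \lr \mathbb{C}_o \lr 0,
\end{equation*}
where $\mathbb{C}_o$ is the skyscraper sheaf at $o$. Passing to the long exact cohomology sequence, and using $h^1(A,\mathcal{L}\otimes\mathcal{Q})=h^2(A,\mathcal{L}\otimes\mathcal{Q})=0$, I immediately read off $h^2(A,\mathcal{L}\otimes\mathcal{Q}\otimes\mathcal{I}_o)=0$, and the remaining piece of the sequence reads
\begin{equation*}
0 \lr H^0(A,\mathcal{L}\otimes\mathcal{Q}\otimes\mathcal{I}_o) \lr H^0(A,\mathcal{L}\otimes\mathcal{Q}) \stackrel{\mathrm{ev}_o}{\lr} \mathbb{C} \lr H^1(A,\mathcal{L}\otimes\mathcal{Q}\otimes\mathcal{I}_o) \lr 0.
\end{equation*}
Thus the whole problem reduces to showing that the evaluation map $\mathrm{ev}_o$ is \emph{surjective}, i.e. that $o$ is not a base point of $|L+Q|$; once that is known, $H^1$ of the twisted ideal sheaf vanishes and $H^0$ has dimension $2-1=1$, giving all three asserted values at once.

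The crux, and the main obstacle, is therefore the statement $o\notin\text{Bs}|L+Q|$. This is where I expect the real work to lie, since it is a genuinely geometric condition rather than a formal cohomological one. My strategy would be to exploit the base-point structure of $|L|$ itself: by the discussion before Assumption \ref{ass.sym}, the base locus of $|L|$ is exactly $K(\mathcal{L})=\{e_0,e_1,e_2,e_3\}$ (with $e_0=o$), an orbit under the translation action of $K(\mathcal{L})\cong(\ZZ/2\ZZ)^2$. Translating by $\mathcal{Q}$ moves the linear system, and the point is that for \emph{nontrivial} $\mathcal{Q}$ the new base locus $\text{Bs}|L+Q|$ no longer contains $o$. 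Concretely, I would use the symmetry established in Proposition \ref{prop.semichar}: since $\mathcal{Q}$ is a nontrivial degree-$0$ bundle and the four base points are distinguished by the semicharacters / the $K(\mathcal{L})$-action, twisting by $\mathcal{Q}$ permutes or shifts the base configuration so that $o$ escapes it. An alternative, cleaner route would be to argue by the theorem of the square and translation: $|L+Q|$ is a translate of $|L|$ (since $\mathcal{L}\otimes\mathcal{Q}=t_x^*\mathcal{L}$ for the unique $x$ with $\phi_{\mathcal{L}}(x)=\mathcal{Q}$, $x\neq o$ as $\mathcal{Q}\neq\oo_A$), so $\text{Bs}|L+Q|=\text{Bs}|t_x^*L|=K(\mathcal{L})-x$, and then $o\in K(\mathcal{L})-x$ would force $x\in K(\mathcal{L})$; I would check that the $x$ arising from a nontrivial $\mathcal{Q}\in\text{Pic}^0(A)$ under $\phi_{\mathcal{L}}$ cannot lie in $K(\mathcal{L})$ in the relevant way, or rather that even if it does, the shifted base locus avoids $o$ precisely when $\mathcal{Q}\neq\oo_A$. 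Verifying this last point carefully — distinguishing exactly which degree-$0$ twists can reintroduce $o$ as a base point and confirming none of the nontrivial ones do — is the step I would watch most closely.
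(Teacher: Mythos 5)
Your proposal is correct and, in its ``cleaner route'', coincides with the paper's own proof: write $\mathcal{L}\otimes\mathcal{Q}=t_x^*\mathcal{L}$, observe that $o\in\textrm{Bs}|L+Q|$ forces $x\in K(\mathcal{L})$, hence $\mathcal{L}\otimes\mathcal{Q}=\mathcal{L}$, contradicting $\mathcal{Q}\neq\oo_A$, and then conclude via the ideal-sheaf sequence exactly as you do. The final verification you flag as delicate is immediate, since $K(\mathcal{L})=\ker\phi_{\mathcal{L}}$ by definition, so $x\in K(\mathcal{L})$ if and only if $\mathcal{Q}=\phi_{\mathcal{L}}(x)$ is trivial; the vaguer first route via semicharacters is unnecessary.
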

\begin{proof}
Since $\mathcal{L}$ is ample, the line bundle $\mathcal{L} \otimes \mathcal{Q}$
is equal to $t_x^*\mathcal{L}$ for some $x \in A$. Then $o \in \textrm{Bs}|L + Q|$
 if and only if $x \in K(\mL)$, that is $\mathcal{L} \otimes \mathcal{Q} =\mathcal{L}$,
 which is impossible since $\mathcal{Q}$ is non-trivial. The rest of the proof follows
 by tensoring with $\mathcal{Q}$ the short exact sequence
\begin{equation*}
0 \to \mL \otimes \mathcal{I}_o \to \mL \to \mL \otimes \oo_o \to 0
\end{equation*}
and by taking cohomology.
\end{proof}

In the rest of this section we assume that $\mathcal{L}$ is not a product
polarization. \\
We denote by $e_4, \ldots ,e_{15}$ the twelve $2$-division points
of $A$ distinct from $e_0$, $e_1$, $e_2$, $e_3$. Some of the
following results are probably known to the experts; however,
since we have not been able to find a comprehensive reference, for
the reader's convenience we give all the proofs.

\begin{prop}\label{prop.lin.M.sing} The following holds.
\begin{itemize}
\item[$\boldsymbol{(a)}$] Assume that  $\mL$ is a general $(1, \,
2)$-polarization. Then $|L|$ contains exactly $12$ singular curves
$L_5, \ldots, L_{16}$. Every $L_i$ has an ordinary double point at
$e_i$, and the set $\{L_i \}_{ i=4, \ldots, 15}$ consists of three
orbits for the action of $K(\mL)$ on $|L|$.
\item[$\boldsymbol{(b)}$] Assume that $\mL$ is a special $(1,
\,2)$-polarization, and let $E+F \in |L|$ be a reducible divisor.
Then the $K(\mL)$-orbit of $E+F$ consists of two curves $E+F$,
$E'+F'$ which intersect as in Figure
\emph{\ref{figure-reducible}}. Referring to this figure, the set
$\{p, \, q, \, r, \, s\}$ is contained in $\{e_4, \ldots,
e_{15}\}$, and it is an orbit for the action of $K(\mL)$ on $A$.
\end{itemize}
\end{prop}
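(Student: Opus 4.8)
The plan is to analyze the pencil $|L|\cong\PP^1$ through the action of the involution $(-1)_A$ on its members. Since by Proposition \ref{prop.semichar} every $C\in|L|$ is symmetric, and the fixed locus of $(-1)_A$ is exactly the set $A[2]=\{e_0,\dots,e_{15}\}$ of sixteen two-division points, the singularities of symmetric members are forced onto these points. First I would record the eigenvalue $\epsilon_i\in\{\pm1\}$ by which $(-1)_A$ acts on the fibre $\mathcal{L}_{e_i}$, using the symmetric normalization of Assumption \ref{ass.sym}. Trivializing $\mathcal{L}$ near a fixed point so that the involution reads $(x,y)\mapsto(-x,-y)$, a global section has local equation $g$ with $g(-x,-y)=\epsilon_i\,g(x,y)$; hence if $\epsilon_i=-1$ then $g$ vanishes at $e_i$ for every section, so $e_i$ is a base point (and generically a smooth point), while if $\epsilon_i=+1$ then $g$ is even, so $e_i$ is not a base point and any member vanishing there is singular, of order $\geq2$. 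Thus, among two-division points, being a base point is equivalent to $\epsilon_i=-1$. As the base locus is $K(\mathcal{L})=\{e_0,e_1,e_2,e_3\}$, exactly these four points have $\epsilon_i=-1$ and the remaining twelve $e_4,\dots,e_{15}$ have $\epsilon_i=+1$ (the eigenvalue count for symmetric line bundles, cf. \cite{BL04}).

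For part (a), each $e_i$ with $i\geq4$ imposes one linear condition on the pencil, so there is a unique member $L_i\in|L|$ through $e_i$, and since $\epsilon_i=+1$ this $L_i$ is singular at $e_i$. Because $\mathcal{L}$ is general, every member is of type $(a)$ or $(b)$; being singular, $L_i$ is of type $(b)$, hence irreducible with a single ordinary double point by Proposition \ref{prop.barth.class}. Its node is therefore its unique singular point and determines $e_i$, so $e_i\mapsto L_i$ is injective and yields twelve distinct singular curves. Conversely, any singular member is of type $(b)$, and by symmetry its unique node is a fixed point of $(-1)_A$ that cannot be a base point (members are smooth there), hence one of $e_4,\dots,e_{15}$; so these twelve are all the singular members. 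As an independent check, blowing up the four base points yields a genus-$3$ fibration $\widetilde{A}\to\PP^1$, and the Euler-number relation $4=e(\widetilde{A})=2\cdot(-4)+\sum_b\mu_b$ forces total Milnor number $\sum_b\mu_b=12$, matching the twelve nodes and leaving room for nothing more. Finally, $K(\mathcal{L})\cong(\ZZ/2\ZZ)^2$ acts on $A$ by translations; since translation by a nonzero element of $K(\mathcal{L})$ is fixed-point-free, it permutes $\{e_4,\dots,e_{15}\}$ freely, giving three orbits of length four, and since translation sends $L_i$ to the member singular at $e_i+t$, these lift to three $K(\mathcal{L})$-orbits on $\{L_i\}$.

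For part (b), let $E+F\in|L|$ be reducible of type $(c)$, so $E,F$ are elliptic with $EF=2$ and, by Proposition \ref{prop.barth.class}, $A$ is isogenous to a product with $\mathcal{L}$ pulled back from the principal product polarization. The curve $E+F$ is symmetric and passes through all four base points; since $E+F$ is smooth at each base point whereas the two points of $E\cap F$ are nodes, no base point lies on $E\cap F$, so each base point lies on exactly one component. Were $(-1)_A$ to exchange $E$ and $F$, a base point on $E$ would map to itself inside $F$, forcing it into $E\cap F$ --- a contradiction; hence $(-1)_A$ preserves each component, $E$ and $F$ are symmetric, and the four base points split as two on $E$ and two on $F$. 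I would then examine the three nonzero translations in $K(\mathcal{L})$: one of them preserves the pair $\{E,F\}$, so the stabilizer of $E+F$ has order two, while the other two carry $E+F$ to a single further member $E'+F'$, giving a $K(\mathcal{L})$-orbit of length two. The four points of $E\cap F$ and $E'\cap F'$ are the nodes of these two symmetric curves, hence fixed points of $(-1)_A$ lying outside the base locus, that is among $e_4,\dots,e_{15}$; tracking the translation action shows they are permuted transitively, so $\{p,q,r,s\}$ is a single $K(\mathcal{L})$-orbit, as in Figure \ref{figure-reducible}.

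The main obstacle, I expect, is part (b): whereas the eigenvalue bookkeeping in (a) pins everything down by a clean counting argument, in (b) one must identify precisely which translation stabilizes $E+F$ and verify that the four nodes of the two orbit members form one $K(\mathcal{L})$-orbit rather than two pairs. This requires exploiting the explicit product structure of $A$ underlying a type-$(c)$ member in order to describe $E,F$ and their translates concretely and to read off the intersection pattern recorded in the figure.
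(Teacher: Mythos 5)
Your part $(a)$ is correct and takes a somewhat different route from the paper: where the paper cites Barth and Yoshihara for the fact that a member through one of $e_4,\dots,e_{15}$ acquires a node there and then uses Zeuthen--Segre as the actual counting device, you derive the nodal behaviour from the parity of sections at the $2$-division points and get the count "for free" from Proposition \ref{prop.barth.class} (a type-$(b)$ member has a \emph{unique} node, so $e_i\mapsto L_i$ is injective, and every singular member arises this way because its node is a $2$-division point which is not a base point). That is a clean, more self-contained argument, and your Zeuthen--Segre computation agrees with the paper's. The only loose thread is the assertion that exactly the four base points carry eigenvalue $-1$; this is the content of the Barth/Birkenhake--Lange fact you cite, so it is acceptable, but note that $\epsilon_i=+1$ alone does not show $e_i$ is not a base point --- you need Assumption \ref{ass.sym} for that direction, as you in effect use.

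Part $(b)$, however, is not a proof: the three substantive claims are either deferred or rest on an incorrect inference. First, you conclude that the points of $E\cap F$ are $2$-division points because they are "the nodes of a symmetric curve, hence fixed points of $(-1)_A$". This is a non sequitur for a curve with \emph{two} nodes: $(-1)_A$ preserves the set $E\cap F=\{p,q\}$ (since you have shown it preserves $E$ and $F$ separately) but could a priori interchange $p$ and $q$. The paper's route is to observe that $E$ and $F$, being individually symmetric elliptic curves, each contain exactly four $2$-division points; combining this with your own parity statement (a $2$-division point of $E+F$ outside the base locus must be a singular point of $E+F$) forces $|E\cap F\cap A[2]|=2$ and, incidentally, also yields the "two base points on each component" split that you assert without argument. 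Second, the statement that exactly one non-zero element of $K(\mL)$ stabilizes $E+F$ (so that the orbit has length two) is announced as something you "would examine" but never established; one needs, e.g., that $t^*_{e_1}E=E$ precisely when $e_1$ lies on the abelian subvariety underlying $E$, together with the two-and-two distribution of the base points, and one must also rule out $t^*_{e_2}(E+F)=E+F$ via a swap of components (impossible since $EF=2\neq 0$). Third, the claim that $\{p,q,r,s\}$ is a single $K(\mL)$-orbit is again only "tracking shows"; the actual point is that $t_{e_1}$ preserves $E\cap F$ without fixed points, so $q=p+e_1$, whence the orbit of $p$ is $\{p,\,p+e_1,\,p+e_2,\,p+e_3\}=\{p,q,r,s\}$. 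Since you yourself flag part $(b)$ as the unresolved obstacle, the proposal as it stands establishes only part $(a)$.
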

\begin{figure}[H]
\begin{center}
\includegraphics*[totalheight=7 cm]{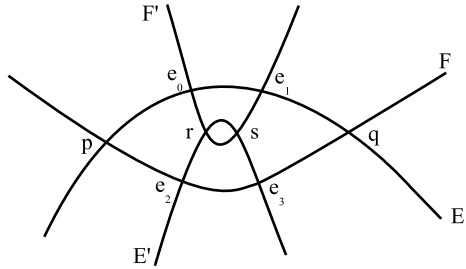}
\end{center}
\caption{The reducible curves $E+F$ and $E'+F'$ in the linear system
$|L|$} \label{figure-reducible}
\end{figure}

\begin{proof} $\boldsymbol{(a)}$ If a curve of $|L|$ contains any of the points
$e_4, \cdots, e_{15}$ then it must have a node there, see
\cite[Section 1.7]{Ba87} and \cite[Remark 11]{Yo97}. In order to
prove that there are no more singular curves, we blow-up the base
points of $|L|$ obtaining a genus $3$ fibration $ \tau \colon
\widetilde{A} \to \mathbb{P}^1$. By the Zeuthen-Segre formula, see
\cite[Lemma 6.4]{Be83}, we have
\begin{equation} \label{eq.ZS}
c_2(\widetilde{A})= e(\mathbb{P}^1)e(L)+ \sum (e(L_s)-e(L)),
\end{equation}
where the sum is taken on all the singular curves $L_s$ of $|L|$.
Since $e(L_s)=e(L)+1$ for a nodal curve, relation \eqref{eq.ZS}
implies that $|L|$ contains precisely $12$ singular elements. This
proves our first statement. The second statement is clear since
the twelve points $e_4, \ldots ,e_{15}$ consist of three orbits
for the action of $K(\mL)$ on $A$.

$\boldsymbol{(b)}$ Both curves $E$ and $F$ are fixed by the
involution $(-1)_{A}$, so they must both contain exactly four
$2$-division points. In particular the two intersection points of
$E$ and $F$ must be $2$-division points, say $E \cap F = \{p, \, q
\}$. Since we have
\begin{equation*}
t^*_{e_0}E=t^*_{e_{1}}E=E, \quad t^*_{e_0}F=t^*_{e_{1}}F=F,
\end{equation*}
it follows that the orbit of $E+F$ contains exactly two elements,
namely $E+F$ and $E'+F'$ where
\begin{equation*}
E':= t^*_{e_2}E=t^*_{e_{3}}E, \quad F':=t^*_{e_2}F=t^*_{e_{3}}F.
\end{equation*}
Setting $E' \cap F' = \{r, \, s \}$, it is straightforward to
check that the set of $2$-division points
 $\{p, \, q, \, r, \, s\}$
is an orbit for the action of $K(\mL)$ on $A$.
\end{proof}

\begin{rem} \label{Yoshihara}
In case $(b)$ of Proposition \ref{prop.lin.M.sing}, if one makes
the further assumption that $A$ is not isomorphic to the product
of two elliptic curves, it is not difficult to see that $E+F$ and
$E'+F'$ are the unique reducible curves in $|L|$, and that the
singular elements of $|L|$ distinct from $E+F$ and $E'+F'$ are
eight irreducible curves $L_i$ which have an ordinary double point
at the $2$-division points of $A$ distinct from $e_0$, $e_1$,
$e_2$, $e_3$, $p$, $q$, $r$, $s$. Moreover, these curves form two
orbits for the action of $K(\mL)$ on $|L|$.

There exist examples of abelian surfaces which are isomorphic to the
product of two elliptic curves and which admit also a special $(1,
\,2)$-polarization $\mathcal{L}$ besides the product polarization,
see \cite{Yo97}. For such surfaces, the linear system $|L|$ could
possibly contain more than two reducible curves (hence, less than
eight irreducible nodal curves).
\end{rem}

The other special elements of the pencil $|L|$ are smooth
hyperelliptic curves; let us compute their number.

\begin{prop} \label{prop.hyp}
The following holds.
\begin{itemize}
\item[$\boldsymbol{(a)}$]
Assume that $\mL$ is a general $(1, \,2)$-polarization. Then $|L|$
contains exactly six smooth hyperelliptic curves.
\item[$\boldsymbol{(b)}$] Assume that $\mL$ is a special
$(1, \,2)$-polarization. Then $|L|$ contains at most four smooth
hyperelliptic curves. More precisely, the number of such curves is
given by $6- \nu$, where $\nu$ is the number of reducible curves in $|L|$.
\end{itemize}
In any case, the set of hyperelliptic curves is union of orbits for the action of
$K(\mL)$ on $|L|$, and each of these orbits has cardinality $2$.
\end{prop}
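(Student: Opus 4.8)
The plan is to reduce the hyperellipticity of a smooth $C\in|L|$ to a purely group-theoretic condition, namely invariance under a non-trivial translation by a point of $K(\mL)$, and then to count such curves by analysing the fixed points of the induced $(\ZZ/2\ZZ)^2$-action on $|L|\cong\mP^1$. Throughout I set $\iota:=(-1)_A$ and use that every $C\in|L|$ is symmetric (Proposition \ref{prop.semichar}), so $\iota$ restricts to an involution of $C$; for smooth $C$ its fixed points are exactly the four base points $e_0,\dots,e_3$ (any further $2$-division point on $C$ would be a node), whence $C/\iota$ is elliptic and $C$ is bielliptic. I also record that the Gauss map $\gamma_C\colon C\to\mP^1=\mP(H^0(\Omega^1_A)^\vee)$, given by restricting to $C$ the two invariant $1$-forms, is base-point-free (a non-zero tangent vector cannot lie in the kernel of both forms) of degree $2g(C)-2=4$, and factors through $\iota$ because $d\iota=-\mathrm{id}$ acts trivially on $\mP(T_0A)$.

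The key equivalence to establish is: a smooth $C\in|L|$ is hyperelliptic if and only if $t_cC=C$ for some $c\in K(\mL)\setminus\{0\}$. For the forward direction I would use the Albanese description of the hyperelliptic involution $h$. Choosing a Weierstrass point $p_0$ as base point, Abel--Jacobi satisfies $AJ(h(p))=-AJ(p)$, and the inclusion $a\colon C\hookrightarrow A$ factors as $a=\psi\circ AJ$ with $\psi$ a homomorphism; hence $a(h(p))=c-a(p)$ with $c:=2a(p_0)$. Thus $h$ is the restriction to $C$ of $\sigma_c\colon x\mapsto c-x=(t_c\circ\iota)(x)$. Since $h$ and $\iota$ both preserve $C$, so does $t_c$, which forces $c\in K(\mL)$; and $c\neq0$ because $C/\iota$ has genus $1$, so $h\neq\iota$. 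I expect this direction to be the main obstacle, as it is where the abelian-surface structure must be extracted from the intrinsic geometry of $C$.

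For the reverse direction, suppose $t_cC=C$ with $c\in K(\mL)\setminus\{0\}$, and set $G:=\langle\iota,t_c\rangle\cong(\ZZ/2\ZZ)^2$, with third involution $\sigma_c=t_c\iota$. Translation-invariance of tangent directions together with $t_cC=C$ gives $\gamma_C(t_cp)=\gamma_C(p)$, so the degree-$4$ map $\gamma_C$ factors through the degree-$4$ quotient $C\to C/G$; comparing degrees shows $C/G\cong\mP^1$. Feeding $g(C/\iota)=1$, $g(C/t_c)=2$ (here $t_c$ is fixed-point-free, so $C\to C/t_c$ is \'etale) and $g(C/G)=0$ into the genus formula $g(C)=g(C/\iota)+g(C/t_c)+g(C/\sigma_c)-2g(C/G)$ yields $g(C/\sigma_c)=0$; hence $\sigma_c|_C$ is a hyperelliptic involution and $C$ is hyperelliptic.

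It remains to count. The theta-group (Heisenberg) action makes $K(\mL)\cong(\ZZ/2\ZZ)^2$ act faithfully on $|L|\cong\mP^1$, see \cite[Ch.~6]{BL04}; a Klein four-group in $\mathrm{PGL}_2(\cc)$ has exactly six points with non-trivial stabiliser—the fixed points of its three involutions—and these form three orbits of length $2$. By the equivalence the smooth curves among these six are precisely the smooth hyperelliptic members of $|L|$, which also settles the orbit statement. For a general polarization the only singular curves are the twelve nodal ones of Proposition \ref{prop.lin.M.sing}(a), and a translation cannot fix a curve with a single node (it would move the node to another $2$-division point), so all six curves are smooth, giving exactly six and proving $(a)$. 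For a special polarization the reducible curves are singular and, by Proposition \ref{prop.lin.M.sing}(b), are permuted by $K(\mL)$ in orbits of length $2$, hence lie among the six fixed points; since a singular translation-invariant curve cannot be nodal, the singular members among the six are exactly the $\nu$ reducible ones, and the remaining $6-\nu$ are smooth and hyperelliptic. As $\nu\geq2$ in the special case, this proves $(b)$ and the bound.
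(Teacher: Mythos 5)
Your proof is correct, but it follows a genuinely different route from the paper's. The paper blows up the four base points of $|L|$ to get a genus-$3$ fibration $\tau\colon\widetilde{A}\to\mathbb{P}^1$ and invokes the slope equality $K_{\widetilde{A}}^2=3\chi(\oo_{\widetilde{A}})-10+\deg\mathcal{T}$, where each smooth hyperelliptic fibre and (citing Ashikaga--Konno) each reducible fibre contributes $1$ to $\deg\mathcal{T}$; computing $\deg\mathcal{T}=6$ gives the counts at once, and the cardinality-$2$ orbit claim in the special case is then obtained by a limit argument from the general case. You instead prove the pointwise characterization ``a smooth $C\in|L|$ is hyperelliptic if and only if $t_cC=C$ for some $c\in K(\mL)\setminus\{0\}$'' --- forward via the Jacobian factorization of the hyperelliptic involution, which realizes it as $x\mapsto c-x$ on $A$; backward via the Gauss map and Accola's genus relation for the $(\ZZ/2\ZZ)^2$-action --- and then count the six points of $|L|\cong\mathbb{P}^1$ with non-trivial $K(\mL)$-stabilizer and sort out which are singular. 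Each approach has its merits: the paper's is shorter once the Horikawa-number machinery is granted, but it is a pure count and says nothing about \emph{which} members are hyperelliptic, forcing the separate limit argument for the orbit statement in case $(b)$. Yours is longer but more self-contained and more informative: it identifies the hyperelliptic members explicitly as the smooth stabilized ones, yields the orbit statement and the cardinality-$2$ claim with no limiting process, and in effect gives an independent proof of Proposition \ref{prop.hyp.stab} --- note that in the paper that proposition is \emph{deduced from} Proposition \ref{prop.hyp}, so your argument reverses the logical order of the two results. Two small points of hygiene: the factorization through the Jacobian should be written $a=t_{a(p_0)}\circ\psi\circ AJ$ (your displayed $a=\psi\circ AJ$ omits the translation, though your subsequent formula $a(h(p))=2a(p_0)-a(p)$ is the correct one), and you should say explicitly that the singular irreducible members of $|L|$ have a \emph{unique} node (Proposition \ref{prop.barth.class}$(b)$), which is what makes the ``a translation would move the node'' argument work.
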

\begin{proof}
$\boldsymbol{(a)}$ We borrow the following argument from
\cite[Proposition 3.3]{BPS09}. Let us consider again the blow-up
$\widetilde{A}$ of $A$ at the four base points of $|L|$ and the
induced genus $3$ fibration $\tau \colon \widetilde{A} \to
\mathbb{P}^1$. By \cite[Sections 3.2 and 3.3]{R} there is an
equality
\begin{equation} \label{eq.hyp}
K_{\widetilde{A}}^2=3 \chi(\oo_{\widetilde{A}})-10+ \deg \mathcal{T},
\end{equation}
where $\mathcal{T}$ is a torsion sheaf on $\mathbb{P}^1$ supported
over the points corresponding to the hyperelliptic fibres of
$\tau$. Since $\mL$ is a general polarization, we can have only
 smooth hyperelliptic fibres and the contribution of each of them
 to $\deg \mathcal{T}$, which
 is usually called the Horikawa number, is equal
 to $1$. So \eqref{eq.hyp} implies that $\tau$ has exactly
 six smooth hyperelliptic fibres.
On the other hand $K(\mL)$ acts on the set of hyperelliptic curves
of $|L|$, so have three orbits of cardinality $2$.

$\boldsymbol{(b)}$ The Horikawa number of a reducible
curve in $|L|$
is equal to $1$, see \cite{AK00}, so \eqref{eq.hyp} implies
that $|L|$ contains precisely $6-\nu$ smooth
hyperelliptic curves. In particular, by Remark \ref{Yoshihara}, $|L|$
contains exactly six hyperelliptic curves if $A$ is not isomorphic to
the product of two elliptic curves. Since the hyperelliptic curves
have non-trivial stabilizer for the action of $K(\mL)$ on $|L|$ when
$\mL$ is a general polarization (see part $(a)$), by a
limit argument we deduce that this is also true when $\mL$ is a
special polarization. It follows that the orbit of each
hyperelliptic curve consists again of exactly two curves.
\end{proof}

\begin{prop} \label{prop.hyp.stab}
Let $(A, \, \mL)$ be a $(1, \,2)$-polarized abelian surface and let
$C \in |L|$. Then the stabilizer of $C$ for the action of $K(\mL)$
on $|L|$ is non-trivial if and only if either $C$ is a smooth
hyperelliptic curve or $C$ is a reducible curve $($in the latter
case, $\mL$ is necessarily a special polarization$)$.
\end{prop}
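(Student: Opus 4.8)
The plan is to translate the statement into a count of fixed points for the translation action of $K(\mL)\cong(\ZZ/2\ZZ)^2$ on the pencil $|L|\cong\mP^1$, using the orbit information already established in Propositions \ref{prop.lin.M.sing} and \ref{prop.hyp}. The two implications are handled separately: the forward one is immediate from those propositions, while the converse reduces to a fixed-point count on $\mP^1$.

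For the implication ``$\Leftarrow$'' I would simply quote the earlier results. By Proposition \ref{prop.hyp} the smooth hyperelliptic members of $|L|$ lie in $K(\mL)$-orbits of cardinality $2$, and by Proposition \ref{prop.lin.M.sing}$\,\boldsymbol{(b)}$ so do the reducible members when $\mL$ is special. Since $|K(\mL)|=4$, an orbit of cardinality $2$ forces the stabilizer to have order $2$, hence to be non-trivial.

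For the converse I would argue by counting fixed loci on $\mP^1$. The crucial point is that the $K(\mL)$-action on $|L|\cong\mP^1$ is faithful. Granting this, each of the three non-zero elements of $K(\mL)$ acts as a non-trivial involution of $\mP^1$ and hence fixes exactly two points; as $C$ has non-trivial stabilizer precisely when it is fixed by one of these three involutions, at most $3\times 2=6$ members of $|L|$ can have non-trivial stabilizer. On the other hand, Propositions \ref{prop.hyp} and \ref{prop.lin.M.sing} already produce six of them---the $6-\nu$ smooth hyperelliptic curves and the $\nu$ reducible ones, which are pairwise distinct since the former are smooth and the latter reducible. Hence these six curves are exactly the members of $|L|$ with non-trivial stabilizer, which is the claim.

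The main obstacle is thus the faithfulness of the action, on which the bound ``at most six'' depends. The cleanest route is through theta groups: $|L|=\mP(H^0(A,\mL))$ carries the projectivisation of the Schr\"odinger representation of the theta group $\mathcal{G}(\mL)$, which is irreducible by the Stone--von Neumann theorem, so that only the central (scalar) elements act trivially on $\mP^1$ and the induced $K(\mL)$-action is faithful. Alternatively, in the general case faithfulness is already apparent from Proposition \ref{prop.lin.M.sing}$\,\boldsymbol{(a)}$, which exhibits $K(\mL)$-orbits of cardinality $4$---impossible if a non-zero element fixed all of $\mP^1$. Once faithfulness is in place the remainder is elementary combinatorics on $\mP^1$, requiring no further geometric input about the curves themselves.
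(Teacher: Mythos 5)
Your proof is correct and follows essentially the same route as the paper's: both reduce the statement to the fact that exactly six points of $|L|\cong\mathbb{P}^1$ have non-trivial stabilizer and then match these against the six curves supplied by Propositions \ref{prop.hyp} and \ref{prop.lin.M.sing}, the only difference being that you obtain the bound by counting fixed points of the three involutions directly, whereas the paper applies Riemann--Hurwitz to the quotient map $\mathbb{P}^1\to\mathbb{P}^1$. Your explicit justification of the faithfulness of the $K(\mL)$-action via the Schr\"odinger representation is a welcome addition, since the paper's Riemann--Hurwitz step tacitly assumes it.
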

\begin{proof}
The action of $K(\mL)$ on $|L| \cong \mathbb{P}^1$ induces a
$(\ZZ/2\ZZ)^2$-cover $\mathbb{P}^1 \to \mathbb{P}^1$, which is
 branched in three points by the Riemann-Hurwitz formula. This
implies that there are exactly six elements of $|L|$ having
non-trivial stabilizer. Our claim is now an immediate consequence
of Proposition \ref{prop.hyp} and Proposition
\ref{prop.lin.M.sing}, part $(b)$.
\end{proof}

Let us consider the line bundle $\mL^2=\oo_A(2L)$. It is a
polarization of type $(2, \, 4)$ on $A$, hence $h^0(A, \,
\mL^2)=8$. Moreover, since $\mL$  satisfies Assumption
\ref{ass.sym}, the same is true for $\mL^2$. Let $H^0(A, \,
\mL^2)^+$ and $H^0(A, \,
 \mL^2)^-$ be the subspaces of invariant and anti-invariant sections for
$(-1)_A$, respectively. One proves that
\begin{equation*}
\dim H^0(A, \, \mL^2)^+=6, \;\; \dim H^0(A, \,\mL^2)^-=2,
\end{equation*}
see \cite[Section 2]{Ba87}.

\begin{prop} \emph{\cite[Section 5]{Ba87}} \label{prop.anti.inv}
The pencil $\mathbb{P}H^0(A, \, \mL^2)^-$ of anti-invariant sections
has precisely $16$ distinct base points, namely $e_0, \, e_1,
\ldots, e_{15}$. Moreover all the corresponding divisors are smooth
at these base points.
\end{prop}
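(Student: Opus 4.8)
The plan is to exploit the involution $\iota:=(-1)_A$ at every stage. With respect to the normalized symmetric isomorphism $\iota^*\mL^2\cong \mL^2$, a section $s\in H^0(A,\mL^2)$ is anti-invariant exactly when $\iota^*s=-s$, so its divisor is $\iota$-symmetric and the fixed points of $\iota$ — the sixteen $2$-division points $A[2]=\{e_0,\ldots,e_{15}\}$ — are where anti-invariance forces special behaviour. First I would show that every anti-invariant section vanishes at every $e_i$. Since $\mL$ is symmetric (Assumption \ref{ass.sym}), at each $e_i$ the normalized isomorphism acts on the fibre $\mL_{e_i}$ by a sign $\epsilon(e_i)\in\{\pm1\}$; because $\mL^2=\mL\otimes\mL$ carries the square of the normalized isomorphism of $\mL$, the corresponding sign for $\mL^2$ is $\epsilon(e_i)^2=+1$ at all sixteen points. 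Evaluating $\iota^*s=-s$ at $e_i$ then gives $s(e_i)=-s(e_i)$, hence $s(e_i)=0$, so $A[2]\subseteq \mathrm{Bs}\,\mP H^0(A,\mL^2)^{-}$. In local coordinates $u$ at $e_i$ in which $\iota$ is $u\mapsto -u$ (with a frame of fibre sign $+1$), an anti-invariant section is an odd function, so its multiplicity at $e_i$ is odd, in particular $\geq 1$. I also record that $(2L)^2=4L^2=16$.

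The heart of the argument is to prove that the base scheme $B$ is reduced of length $1$ at each $e_i$. Translation by a $2$-division point commutes with $\iota$ and preserves $\mL^2$ up to isomorphism, hence preserves $H^0(A,\mL^2)^{-}$; since $A[2]$ acts simply transitively on itself by translation, the local length of $B$ is the same at all sixteen points, and by Proposition \ref{prop.semichar}(ii) it suffices to treat $e_0=o$. There I would fix a basis $s_1,s_2$ of the two-dimensional space $H^0(A,\mL^2)^{-}$ and show that the differentials $ds_1(o),ds_2(o)$ — which a priori lie in the two-dimensional cotangent space, the sections vanishing at $o$ — are linearly independent; equivalently, that no nonzero anti-invariant section is singular at $o$. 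This is the step I expect to be the main obstacle: it is a genuine computation, most naturally carried out with explicit odd theta functions with characteristics, where it amounts to the non-vanishing of the relevant Jacobian of theta-gradients, reflecting the nondegeneracy of the period matrix $\tau$ (geometrically, the statement that the sixteen nodes of the Kummer surface $A/\iota$ are ordinary). Granting this, $ds_1(o),ds_2(o)$ are independent, $B$ has length exactly $1$ at each $e_i$, and every anti-invariant divisor is smooth there.

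It then remains to rule out further base points, i.e.\ to show the pencil has no fixed component. Since $B$ has finite length at each $e_i$, no fixed component can pass through a $2$-division point. If a fixed component $M$ existed it would therefore be disjoint from $A[2]$; writing $N:=2L-M$ for the (effective) moving class and removing $M$, all sixteen points would lie in the base scheme of the resulting fixed-component-free pencil in $|N|$, whose length is $N^2$, forcing $N^2\geq 16$. But $N^2=16-4\,L\cdot M+M^2$, and the Hodge index theorem together with $0<L\cdot M\leq L\cdot(2L)=8$ gives $M^2\leq (L\cdot M)^2/4$, whence $N^2\leq 16-4\,L\cdot M+(L\cdot M)^2/4<16$, a contradiction. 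Hence the pencil has no fixed component.

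Therefore two distinct anti-invariant members $D_1,D_2$ share no component, so $B=D_1\cap D_2$ has length $(2L)^2=16$. As the sixteen reduced points $e_0,\ldots,e_{15}$ already account for this entire length, they are precisely the base points and every anti-invariant divisor is smooth at each of them, which is the assertion of the proposition.
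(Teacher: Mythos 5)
First, a remark on the comparison itself: the paper does not prove this statement; it is quoted from \cite[Section 5]{Ba87}, so your proposal has to stand on its own. The formal scaffolding of your argument is sound: the parity computation showing that every anti-invariant section of $\mL^2$ vanishes to odd order at each of the sixteen $2$-division points, the observation that translation by $A[2]$ commutes with $(-1)_A$ and preserves the odd pencil (so the local structure of the base scheme is the same at all sixteen points), the Hodge-index exclusion of a fixed component disjoint from $A[2]$, and the closing count against $(2L)^2=16$ are all correct. But the proof is not complete, because the single step carrying all of the non-formal content --- that the base scheme is \emph{reduced} at $o$, i.e.\ that no nonzero anti-invariant section is singular at a $2$-division point --- is only announced (``Granting this\ldots''), never established. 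Everything downstream depends on it: without finiteness of the base scheme at the $e_i$ you cannot even rule out a fixed component through a $2$-division point, so the length-$16$ count never gets off the ground.

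Worse, the heuristic you offer for that step cannot be the right reason. Nondegeneracy of the period matrix and ordinariness of the sixteen nodes of $A/(-1)_A$ hold for \emph{every} abelian surface, including a product $E\times F$ with $\mL=\oo_A(E_o+2F_o)$ --- and there the proposition is false. Indeed in that case $H^0(A,\mL^2)^-\cong H^0(F,\oo_F(2o))^{+}\otimes H^0(E,\oo_E(4o))^{-}$, and since $H^0(E,\oo_E(4o))^{-}$ is spanned by the single odd function $\wp'_E$, every anti-invariant divisor contains the four fibres of $\pi_E$ over the zeros of $\wp'_E$ as a fixed part: the base locus is one-dimensional, not sixteen reduced points. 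This is precisely why the paper states the proposition only under the standing hypothesis, in force throughout that subsection, that $\mL$ is not a product polarization --- a hypothesis your argument never invokes, which is a concrete sign that the missing step is not a formality. To close the gap you would need either Barth's explicit computation with the $(1,2)$-theta characteristics or an argument extracting reducedness at $o$ from the non-product hypothesis (e.g.\ via the irreducibility of the general member of $|L|$); and note that you cannot shortcut through Proposition \ref{prop.2M} of the paper, since its proof already cites Proposition \ref{prop.anti.inv}.
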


The $12$ points $e_4, \ldots e_{15}$ form three orbits
for the action of $K(\mL)$ on $A$; without loss of generality, we may
assume that these orbits are
\begin{equation*}
\{e_4, \, e_5, \, e_6, \, e_7 \}, \quad \{e_8, \, e_9, \, e_{10}, \,
e_{11} \}, \quad \{e_{12}, \, e_{13}, \, e_{14}, \, e_{15} \}.
\end{equation*}
Now let us take the $2$-torsion line bundles
$\mathcal{Q}_i:=\oo_A(Q_i)$, $i=1, \, 2, \,3$ such that
\begin{equation} \label{eq.2-tors}
t_{e_4}^* \mL = \mL \otimes \mathcal{Q}_1, \quad
t_{e_8}^* \mL   = \mL \otimes \mathcal{Q}_2, \quad
t_{e_{12}}^* \mL  = \mL \otimes \mathcal{Q}_3.
\end{equation}
Then
\begin{equation*}
\begin{split}
\textrm{Bs} \, |L + Q_1| & =\{e_4, \, e_5, \, e_6, \, e_7 \}, \\
\textrm{Bs} \, |L + Q_2| & =\{e_8, \, e_9, \, e_{10}, \, e_{11} \}, \\
\textrm{Bs} \, |L + Q_3| & =\{e_{12}, \, e_{13}, \, e_{14}, \,
e_{15} \}.
\end{split}
\end{equation*}
Moreover, for all $i=1, \,2, \, 3$,
\begin{equation} \label{eq.Io-Io2}
h^0(\wA, \, \mL \otimes \mathcal{Q} \otimes \mathcal{I}_o)=
h^0(\wA, \, \mL \otimes \mathcal{Q} \otimes \mathcal{I}_o^2)=1.
\end{equation}
Let us call $N_i$, $i=1, \,2,\,3$, the unique curve in the pencil
$|L + Q_i|$ containing $o$ (and having a node there, see \eqref{eq.Io-Io2}).
If $\mL$ is a general
$(1, \,2)$-polarization then the $N_i$ are all irreducible, in
particular they are smooth outside $o$.

\begin{defin} \label{def.D}
We denote by $\mathfrak{D}$ the linear system
$\mathbb{P}H^0(A, \, \mL^2 \otimes \mathcal{I}_o^4)$.
Geometrically speaking, $\mathfrak{D}$ consists of the curves in $|2L|$ having a point
of multiplicity at least $4$ at $o$.
\end{defin}

\begin{prop} \label{prop.pencil}
The linear system $\mathfrak{D} \subset |2L|$ is a pencil whose
general element is irreducible, with an ordinary quadruple point at
$o$ and no other singularities.
\end{prop}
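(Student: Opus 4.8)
The plan is to exhibit the pencil by explicit generators, bound its dimension by intersection theory, and then read the singularity of the general member directly off those generators. First I would produce members of $\mathfrak{D}$ from the nodal curves $N_1, N_2, N_3$ constructed above, where $N_i$ is the unique curve of $|L+Q_i|$ through $o$ and has a node there. Since $\mathcal{Q}_i$ is $2$-torsion we have $\mathcal{Q}_i^{\otimes 2}\cong\oo_A$, so if $s_i\in H^0(A,\,\mL\otimes\mathcal{Q}_i)$ is an equation of $N_i$, then $s_i^2$ is naturally a section of $\mL^2=\oo_A(2L)$, with divisor $2N_i$. As $N_i$ has multiplicity $2$ at $o$, the divisor $2N_i$ has multiplicity $4$ there, so $s_i^2\in H^0(A,\,\mL^2\otimes\mathcal{I}_o^4)$. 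Because $N_1$ and $N_2$ lie in the distinct linear systems $|L+Q_1|\neq|L+Q_2|$, the sections $s_1^2$ and $s_2^2$ are linearly independent; hence $h^0(A,\,\mL^2\otimes\mathcal{I}_o^4)\geq 2$ and $\mathfrak{D}$ is nonempty of dimension at least $1$. (One could instead obtain this bound from the symmetry decomposition $H^0(A,\mL^2)=H^0(A,\mL^2)^+\oplus H^0(A,\mL^2)^-$, since an invariant section has an even Taylor expansion at the fixed point $o$, so multiplicity $\geq 4$ imposes only $1+3=4$ conditions on the $6$-dimensional invariant space, while no anti-invariant section has multiplicity $\geq 2$ at $o$ by Proposition \ref{prop.anti.inv}.)

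Next I would get the matching upper bound, and simultaneously control the base locus, by intersection theory. Every member of $\mathfrak{D}$ has multiplicity $\geq 4$ at $o$, so two members $D_1,D_2$ with no common component satisfy $(D_1\cdot D_2)_o\geq 4\cdot 4=16$; since $D_1\cdot D_2=(2L)^2=16$, they meet only at $o$. In particular the base locus of $\mathfrak{D}$ is supported at $o$, and $\mathfrak{D}$ has no fixed component. If $\dim\mathfrak{D}\geq 2$, choosing a general point $p\neq o$ the members through $p$ would form a subsystem of dimension $\geq 1$, and two distinct such members would meet in at least $16+1=17>16$ points counted with multiplicity — a contradiction. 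Therefore $\mathfrak{D}$ is exactly a pencil, and it is spanned by $s_1^2$ and $s_2^2$.

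The heart of the argument is then the type of the quadruple point. Writing the local equation of the node of $N_i$ at $o$ as $a_i+(\text{higher order})$, with $a_i$ the quadratic tangent cone (the product of the two tangent lines), the degree-$4$ part of $s_i^2$ is $a_i^2$, so the general member $\alpha s_1^2+\beta s_2^2$ of $\mathfrak{D}$ has tangent cone $\alpha a_1^2+\beta a_2^2$ at $o$. The key claim is that the four tangent lines of the two nodes are pairwise distinct, i.e. $a_1$ and $a_2$ are coprime binary quadratics each with distinct roots; granting this, for general $[\alpha:\beta]$ one factors $\alpha a_1^2+\beta a_2^2=\alpha(a_1-\mu a_2)(a_1+\mu a_2)$ with $\mu^2=-\beta/\alpha$, and each quadratic factor has distinct roots (its discriminant is a nonzero polynomial in $\mu$, nonvanishing at $\mu=0$) while the two factors share no root by coprimality, giving four distinct linear forms. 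Hence the general member has an \emph{ordinary} quadruple point at $o$. I expect this coprimality to be the main obstacle: if the nodes of $N_1$ and $N_2$ shared a tangent direction, then $a_1^2$ and $a_2^2$ would have a common square factor and \emph{every} member of $\mathfrak{D}$ would carry a non-reduced tangent cone, contradicting the assertion. Ruling this out requires a genuinely local computation at the $2$-torsion point $o$ — for instance expanding symmetric theta functions, or exploiting the $K(\mL)$-action together with Proposition \ref{prop.anti.inv} — and this is the step I would treat with the greatest care.

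Finally, the remaining assertions follow formally. Since the base locus of $\mathfrak{D}$ is the single point $o$, Bertini's theorem shows that the general member is smooth away from $o$; combined with the previous paragraph, its only singularity is the ordinary quadruple point at $o$. Irreducibility follows from Bertini's irreducibility theorem: the pencil cannot be composed with a pencil, for a member $D$ of such a pencil would be numerically proportional to a fibre and so satisfy $D^2=0$, whereas here $D^2=(2L)^2=16$. This yields the proposition.
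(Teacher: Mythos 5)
Your strategy (exhibit $2N_1,\,2N_2$ as members, control everything with $(2L)^2=16$, finish with Bertini) is essentially the paper's, but you leave the decisive step unproven. You correctly isolate the key claim --- that the tangent cones $a_1,a_2$ of the nodes of $N_1,N_2$ at $o$ are coprime --- and then set it aside as requiring ``a genuinely local computation'' with theta functions. That is a genuine gap, and the diagnosis is also off: no local computation is needed, because the claim follows from the very intersection count you already use for the base locus and the dimension bound. Indeed $N_1\cdot N_2=(L+Q_1)(L+Q_2)=L^2=4$, while both curves have multiplicity $2$ at $o$ and no common component; since the local intersection multiplicity at a common point is at least the product of the multiplicities, with equality only when the two curves share no tangent line there, the equality $4=2\cdot2$ forces $N_1$ and $N_2$ to meet only at $o$ and with four pairwise distinct tangent lines (equivalently, $2N_1\cdot 2N_2=16=4\cdot4$). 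Each $a_i$ has distinct roots because the singularity of $N_i$ at $o$ is an ordinary node (Proposition \ref{prop.lin.M.sing}), so $a_1$ and $a_2$ are coprime binary quadratics with simple roots, and your factorization $\alpha a_1^2+\beta a_2^2=\alpha(a_1-\mu a_2)(a_1+\mu a_2)$ finishes the step exactly as you describe. This global argument is what the paper's one-line justification ``$(2L)^2=16$, so the singularity at $o$ is actually an ordinary quadruple point'' encodes.

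A second, smaller flaw: your exclusion of ``composed with a pencil'' via ``$D^2=0$'' is not valid, because a composing pencil may have base points. For instance the pencil $\{2C:\ C\in|L|\}$ is composed with $|L|$, every member is non-reduced, and still $D^2=16$. The correct argument is that if $\mathfrak{D}$ were composed with a pencil, the irreducibility of $N_1$ and $N_2$ would force both to be members of the composing pencil; they are not fibres of an elliptic fibration since $N_i^2=4>0$, so the composing pencil would be rational with linearly equivalent members, contradicting the fact that $N_1-N_2$ is a non-trivial $2$-torsion class. The remaining steps (linear independence of $s_1^2,s_2^2$, the dimension bound, Bertini for smoothness away from $o$) are sound; the paper instead gets $\dim\mathfrak{D}=1$ by blowing up $o$ and noting that the strict transform of the general member has self-intersection $16-16=0$.
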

\begin{proof}
Since the sections corresponding to the three curves $2N_i$
obviously belongs to $H^0(A, \, \mathcal{L}^2 \otimes
\mathcal{I}^4_o)$, by Bertini theorem it follows that the general
element of $\mathfrak{D}$ is irreducible, and smooth outside $o$. On
the other hand, $(2L)^2=16$, so the singularity at $o$ is actually
an ordinary quadruple point. Blowing up this point, the strict
transform of the general curve in $\mathfrak{D}$ has
self-intersection $0$, so $\mathfrak{D}$ is a pencil.
\end{proof}

The following classification of the curves in $\mathfrak{D}$ will
be needed in the proof of Theorem \ref{teo.moduli}.

\begin{prop} \label{prop.pencil.I4}
Let $(A, \, \mL)$ be a $(1, \,2)$-polarized abelian surface, and let
$C \in \mathfrak{D}$. Then we are in one of the following cases:
\begin{itemize}
\item[$\boldsymbol{(a)}$] $C$ is an irreducible curve of geometric
genus $3$, with an ordinary quadruple point;
\item[$\boldsymbol{(b)}$] $C$ is an irreducible curve of geometric
genus $2$, with an ordinary quadruple point and an ordinary double
point;
\item[$\boldsymbol{(c)}$] $C=2C'$, where $C'$ is an
irreducible curve of geometric genus $2$ with an ordinary double
point;
\item[$\boldsymbol{(d)}$] $\mathcal{L}$ is a special
$(1,\,2)$-polarization and $C=2C'$, where $C'$ is the union of two
elliptic curves intersecting in two points.
\end{itemize}
\end{prop}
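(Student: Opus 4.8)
The plan is to combine a genus computation with the symmetry of the members of $\mathfrak{D}$, and then to split the analysis according to whether $C$ is reduced and irreducible. Two numerical facts are recorded first. Since $K_A=0$ and $(2L)^2=16$, adjunction gives arithmetic genus $p_a(C)=9$ for every $C\in|2L|$; and by Definition \ref{def.D} each $C\in\mathfrak{D}$ has multiplicity at least $4$ at $o$, which is in fact exactly $4$ (a higher multiplicity would force the strict transform of a pencil member to have negative self-intersection, contradicting that $\mathfrak{D}$ is a pencil whose general member has an ordinary quadruple point by Proposition \ref{prop.pencil}). The crucial structural input is symmetry. By Proposition \ref{prop.anti.inv} the anti-invariant sections of $\mL^2$ are smooth at $o$, hence vanish there only to order $1$; therefore $H^0(A,\,\mL^2\otimes\mathcal{I}_o^4)\subseteq H^0(A,\,\mL^2)^+$, so every $C\in\mathfrak{D}$ is symmetric. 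Consequently $C$ has even multiplicity at each of the sixteen $2$-division points lying on it, since a $(-1)_A$-invariant local equation at a fixed point has no odd-order terms.

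Suppose next that $C$ is reduced and irreducible. An abelian surface contains no rational curves, and an elliptic curve has self-intersection $0\neq 16$; hence no member of $|2L|$ has geometric genus $0$ or $1$, so $p_g(C)\ge 2$. On the other hand the quadruple point at $o$ forces a drop of at least $\binom{4}{2}=6$ in the genus, whence $p_g(C)\le 3$. If $p_g(C)=3$ then the total $\delta$-invariant is $6$, the quadruple point is ordinary and $C$ is otherwise smooth: this is case $(a)$. If $p_g(C)=2$ the total $\delta$-invariant is $7$, and I would argue, using that the extra-multiplicity singularities of a symmetric curve occur only at $2$-division points, that the additional contribution is a single ordinary node away from $o$ while the quadruple point stays ordinary, giving case $(b)$.

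It remains to treat $C$ not reduced-and-irreducible. Each component through $o$ contributes even multiplicity there, so with total multiplicity $4$ there are exactly two such components, each of multiplicity $2$, and (by the self-intersection bound) no component of multiplicity $4$ can occur without exhausting the whole class $2L$. A component of multiplicity $2$ at $o$ is a translate $t_x^{*}L_i$ of a nodal member of $|L|$ with $x$ one of the $2$-division points $e_4,\dots,e_{15}$ (the only nodes of curves in $|L|$ occur there, by Proposition \ref{prop.lin.M.sing}); hence the translation is by a $2$-division point, the component is symmetric, and its class is $L$. Thus both components lie in systems $|L+Q_i|$, and since $N_i+N_j$ lies in $|2L+Q_i+Q_j|\neq|2L|$ for $i\neq j$, the two components must coincide, forcing $C=2N_i$ with $N_i$ the unique nodal curve of $|L+Q_i|$ through $o$ from \eqref{eq.Io-Io2}. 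Proposition \ref{prop.barth.class} then identifies $N_i$ either as an irreducible curve of geometric genus $2$ with an ordinary double point (case $(c)$) or, when $\mL$ is special, as a union $E+F$ of two elliptic curves meeting in two points (case $(d)$).

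The main obstacle is completeness, namely ruling out every remaining singularity configuration: that in case $(b)$ the quadruple point is genuinely ordinary rather than a degenerate (tacnodal) quadruple point, and that in the special-polarization case no spurious reduced reducible member, for instance one with an elliptic component carrying odd multiplicity at $o$, appears. I expect to settle this exactly as in the proofs of Propositions \ref{prop.lin.M.sing} and \ref{prop.hyp}: blow up the base point $o$ to obtain a genus-$3$ fibration $\tau\colon\widetilde{A}\to\mathbb{P}^1$ associated to $\mathfrak{D}$, and run the Zeuthen-Segre computation \eqref{eq.ZS}. The three double curves $2N_i$ and the nodal members of type $(b)$ should account precisely for the Euler-characteristic defect, simultaneously fixing their number and excluding any further type. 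The delicate bookkeeping will be the contribution of the non-reduced fibres $2\widetilde{N}_i$ and the verification that a single blow-up already resolves the base locus of $\mathfrak{D}$ away from the varying generic tangent directions at $o$.
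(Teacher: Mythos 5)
Your overall strategy (symmetry of the members of $\mathfrak{D}$, genus bounds for the irreducible members, explicit identification of the non-reduced ones, Zeuthen--Segre for completeness) is close in spirit to the paper's, and your symmetry observation is essentially Proposition \ref{prop.2M.sym}. But there are two genuine gaps. First, in the non-reduced/reducible case you assert that ``each component through $o$ contributes even multiplicity there''. This does not follow from the symmetry of $C$: symmetry constrains the total divisor, not its individual components, which may merely be permuted by $(-1)_A$ and whose local equations carry their own linearizations. Taken literally the claim is false in case $(d)$ itself, where the irreducible components are smooth elliptic curves passing through $o$ with multiplicity $1$. Your enumeration also tacitly assumes that every non-irreducible member of $|2L|$ splits into two pieces algebraically equivalent to $L$; on surfaces of higher Picard number one must also exclude decompositions involving elliptic curves or principal polarizations (for instance $C=E_1+E_2+D$ with the $E_i$ elliptic through $o$ and $D$ doubly singular there). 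These are precisely the configurations the deferred count is supposed to kill, so the case analysis is not complete before that count is carried out.

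Second, the Zeuthen--Segre computation you postpone is not a routine verification, and even once executed it does not by itself give the statement. On $\mathrm{Bl}_{o}A$ one finds $\sum\bigl(e(F_s)-e(F)\bigr)=9$; the three double fibres $2\widetilde{N}_i$ account for $6$, leaving three fibres whose strict transforms carry a single node. Such a fibre can arise either from an ordinary quadruple point at $o$ plus a node elsewhere (case $(b)$) or from a degenerate quadruple point with $\delta=7$ (an infinitely near node at $o$); the Euler-number defect is the same in both situations, so the count alone cannot distinguish them. The paper closes this by exhibiting the three curves $B_1$, $B_2$, $B_3$ with an ordinary quadruple point at $o$ and an ordinary double point at $e_i$ directly from \cite[Corollary 4.7.6]{BL04}, and then uses the count only to show that nothing else occurs. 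You need either that input or a separate local argument excluding the degenerate quadruple point: your symmetry argument only locates an extra node \emph{away from} $o$ at a $2$-division point, and says nothing about infinitely near singularities at $o$.
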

\begin{proof}
By Proposition \ref{prop.pencil} the general element of
$\mathfrak{D}$ is as in case $(a)$. Now assume first that $\mL$ is
a general polarization. Then $\mathfrak{D}$ contains the following
distinguished elements:
\begin{itemize}
\item three reduced, irreducible curves $B_1$, $B_2$, $B_3$ such
that $B_i$ has an ordinary quadruple point at $o$, an ordinary
double point at $e_i$ and no other singularities (see
\cite[Corollary 4.7.6]{BL04}). These curves are as in case $(b)$;
\item three non-reduced elements, namely $2N_1$, $2N_2$, $2N_3$. These
 curves are as in case $(c)$.
\end{itemize}
Moreover, all the other elements of $\mathfrak{D}$ are smooth outside $o$;
one can see this by blowing-up $o$ and applying Zeuthen-Segre formula as in
the proof of Proposition \ref{prop.lin.M.sing}.

Finally, assume that $\mL$ is a special polarization. Then there
is just one more possibility, namely $C=2C'$, where $C'$ is the
translate of a reducible curve $E+F \in |L|$ by a suitable
$2$-division point. This yields case $(d)$.
\end{proof}

\begin{prop} \label{prop.2M.sym}
Every $s \in H^0(A, \, \mL^2 \otimes \mathcal{I}^4_o)$ satisfies $(-1)_A^* s
=s$.
\end{prop}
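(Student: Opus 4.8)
The plan is to combine the eigenspace decomposition of $H^0(A,\,\mL^2)$ under the involution $(-1)_A$ with a local computation of vanishing orders at the fixed point $o$. Since $\mL^2$ is symmetric (it satisfies Assumption \ref{ass.sym}), the involution $(-1)_A$ acts on $H^0(A,\,\mL^2)=H^0(A,\,\mL^2)^+\oplus H^0(A,\,\mL^2)^-$, with the two summands of dimension $6$ and $2$. Because $o$ is fixed by $(-1)_A$, the ideal $\mathcal{I}_o^4$ is preserved, so the action restricts to $H^0(A,\,\mL^2\otimes\mathcal{I}_o^4)$, which splits into an invariant and an anti-invariant part. It therefore suffices to show that the anti-invariant part is zero, i.e. that no nonzero anti-invariant section vanishes to order $4$ at $o$.

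First I would fix local coordinates $(x,y)$ at $o$ in which $(-1)_A$ reads $(x,y)\mapsto(-x,-y)$ and trivialize $\mL^2$ near $o$. The symmetry isomorphism $(-1)_A^*\mL^2\iso\mL^2$ acts on the fibre at $o$ by a scalar $\epsilon\in\{\pm1\}$, so on local functions the involution becomes $f(x,y)\mapsto\epsilon f(-x,-y)$. Consequently the two eigenspaces $H^0(A,\,\mL^2)^{\pm}$ are, locally at $o$, spanned by monomials $x^ay^b$ whose total degree $a+b$ has a fixed parity, and these two parities are opposite. In particular, the Taylor expansions at $o$ of an invariant and of an anti-invariant section occupy disjoint sets of degrees. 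This local normalization statement for symmetric line bundles is the standard theory (see \cite[Section 4.6]{BL04}); notably I do not need to determine $\epsilon$ explicitly.

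Next I would invoke Proposition \ref{prop.anti.inv}: the pencil $\mathbb{P}H^0(A,\,\mL^2)^-$ has $o=e_0$ as a base point and every divisor in it is smooth there. Hence every nonzero anti-invariant section has multiplicity \emph{exactly} $1$ at $o$; in the local picture its expansion begins in degree $1$, which pins down the degree-parity of $H^0(A,\,\mL^2)^-$ to be odd. Now take $s\in H^0(A,\,\mL^2\otimes\mathcal{I}_o^4)$ and write $s=s^++s^-$ with $s^{\pm}$ invariant, respectively anti-invariant. Since the degree-parities of $s^+$ and $s^-$ are disjoint, there is no cancellation between them in low degree, so the hypothesis $\mathrm{mult}_o(s)\geq 4$ forces $\mathrm{mult}_o(s^+)\geq 4$ and $\mathrm{mult}_o(s^-)\geq 4$ separately. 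But a nonzero anti-invariant section has multiplicity $1$ at $o$, so $s^-=0$, and $s$ is invariant, as claimed.

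The only point requiring care is the local correspondence between the $\pm$-eigenspaces of $(-1)_A$ and the degree-parity of Taylor expansions at $o$; once that is in place, the argument is a formal consequence of the disjointness of the two parities together with the smoothness of anti-invariant divisors at $o$ provided by Proposition \ref{prop.anti.inv}. I expect no further obstacle, since the contradiction between $\mathrm{mult}_o\geq 4$ and $\mathrm{mult}_o=1$ for a nonzero anti-invariant component is immediate.
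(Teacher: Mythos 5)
Your argument is correct, but it is not the route the paper takes. The paper's proof is a two-line basis argument: since $h^0(A,\,\mL^2\otimes\mathcal{I}_o^4)=2$ and the squares $v_1^2$, $v_2^2$ of the sections cutting out the nodal curves $N_1$, $N_2$ form a basis of this space, and since each $N_i$ is a symmetric divisor (so $(-1)_A^*v_i=\pm v_i$ and hence $(-1)_A^*v_i^2=v_i^2$), every element of the space is invariant. You instead decompose $H^0(A,\,\mL^2)$ into $(-1)_A$-eigenspaces, normalize a local trivialization at the fixed point $o$ so that the involution acts by $\epsilon f(-x,-y)$, observe that the two eigenspaces then have Taylor expansions of opposite, disjoint degree-parities, use Proposition \ref{prop.anti.inv} to force the anti-invariant parity to be odd, and conclude that $\mathrm{mult}_o(s)\geq 4$ kills the anti-invariant component. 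Both proofs work. Yours avoids Proposition \ref{prop.pencil} and the curves $N_i$ entirely and actually proves the stronger statement that any section of $\mL^2$ with $\mathrm{mult}_o\geq 2$ is invariant (which, combined with \cite[Corollary 4.7.6]{BL04}, would also yield Proposition \ref{prop.2M} directly); the price is the local normalization of the symmetric structure at a fixed point, which is standard (\cite[Section 4.6]{BL04}) and is in fact the same kind of input the authors themselves invoke in the proof of Proposition \ref{prop.2M}, where they split $w=w^++w^-$ and compare multiplicities at $o$ exactly as you do. In short: your method is essentially the paper's proof of Proposition \ref{prop.2M} transplanted one statement earlier, while the paper's own proof of Proposition \ref{prop.2M.sym} is a cheaper ad hoc argument exploiting the explicit two-dimensional basis.
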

\begin{proof}
Let $v_1 \in  H^0(A, \, \mL \otimes \mathcal{Q}_1)$, $v_2 \in  H^0(A, \,
\mL \otimes \mathcal{Q}_2)$ be sections corresponding to the curves $N_1$ and
$N_2$, respectively. Since $N_1$ and $N_2$ are invariant divisors,
it follows $(-1)_A^*\,v_1= \pm v_1$ and $(-1)_A^*
\,v_2= \pm v_2$. Therefore $(-1)_A^* \,v_1^2=v_1^2$ and $(-1)_A^*
\,v_2^2=v_2^2$. But $v_1^2$, $v_2^2$ form a basis for $H^0(A, \,
\mL^2 \otimes \mathcal{I}_0^4)$, so we are done.
\end{proof}

\begin{prop} \label{prop.2M}
We have
\begin{equation*}
h^0(A, \, \mL^2 \otimes \mathcal{I}_o^3)=
h^0(A, \, \mL^2 \otimes \mathcal{I}_o^4)=2.
\end{equation*}
Geometrically speaking, every curve in $|2L|$, having multiplicity at least
$3$ at $o$, actually has multiplicity $4$.
\end{prop}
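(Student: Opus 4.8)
The plan is to treat the two equalities separately, the first being immediate and the second requiring the decomposition of sections into $(-1)_A$-invariant and anti-invariant parts. For the right-hand equality, observe that by Definition \ref{def.D} the space $H^0(A,\,\mL^2\otimes\mathcal{I}_o^4)$ is exactly the one whose projectivization is $\mathfrak{D}$; since Proposition \ref{prop.pencil} asserts that $\mathfrak{D}$ is a pencil, we get $h^0(A,\,\mL^2\otimes\mathcal{I}_o^4)=2$ at once. Writing $V_m:=H^0(A,\,\mL^2\otimes\mathcal{I}_o^m)$ for brevity, we have the obvious inclusion $V_4\subseteq V_3$, so $h^0(V_3)\geq 2$, and it remains only to prove $V_3=V_4$, i.e.\ that every section of $\mL^2$ vanishing to order $\geq 3$ at $o$ in fact vanishes to order $\geq 4$.

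The key is that $o$ is fixed by $(-1)_A$. I would choose local analytic coordinates at $o$ in which $(-1)_A$ acts as $(x,y)\mapsto(-x,-y)$, and expand a section $s$ into its Taylor series $\sum_k s_k$ with $s_k$ homogeneous of degree $k$; then $(-1)_A^*s_k=(-1)^k s_k$. Consequently an invariant section has all odd-degree terms $s_1,s_3,\dots$ equal to zero, and an anti-invariant section has all even-degree terms $s_0,s_2,\dots$ equal to zero. Decomposing $V_m=V_m^+\oplus V_m^-$ according to the splitting $H^0(A,\,\mL^2)=H^0(A,\,\mL^2)^+\oplus H^0(A,\,\mL^2)^-$, I would then analyze the two parts independently.

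For the invariant part, since $s_1=s_3=0$ automatically, the condition ``$s_0=s_1=s_2=0$'' (order $\geq 3$) and the condition ``$s_0=s_1=s_2=s_3=0$'' (order $\geq 4$) define the same subspace, so $V_3^+=V_4^+$. For the anti-invariant part, since $s_0=s_2=0$ automatically, order $\geq 3$ reduces to the single condition $s_1=0$; but Proposition \ref{prop.anti.inv} tells us that every nonzero anti-invariant section has its divisor smooth at the base point $e_0=o$, hence vanishes to order exactly $1$, i.e.\ $s_1\neq 0$. Therefore $V_3^-=0$, and a fortiori $V_4^-=0$ (recovering the conclusion $V_4=V_4^+$ of Proposition \ref{prop.2M.sym}). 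Combining the two parts gives $V_3=V_3^+\oplus V_3^-=V_4^+\oplus 0=V_4$, whence $h^0(A,\,\mL^2\otimes\mathcal{I}_o^3)=h^0(A,\,\mL^2\otimes\mathcal{I}_o^4)=2$.

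The argument involves no hard analysis; the only point requiring care is the bookkeeping of the parity of the Taylor coefficients and making sure the ``automatic vanishing'' of $s_1,s_3$ (invariant case) and of $s_0,s_2$ (anti-invariant case) is correctly matched against the multiplicity-$3$ and multiplicity-$4$ conditions. Once the invariant/anti-invariant splitting is combined with the smoothness statement of Proposition \ref{prop.anti.inv} and the pencil statement of Proposition \ref{prop.pencil}, both equalities follow formally.
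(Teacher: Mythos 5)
Your proof is correct and follows essentially the same route as the paper's: both rest on splitting $H^0(A,\,\mathcal{L}^2)$ into $(-1)_A$-invariant and anti-invariant parts, killing the anti-invariant contribution via Proposition \ref{prop.anti.inv}, and noting that invariant sections automatically have even vanishing order at $o$, with $h^0(A,\,\mathcal{L}^2\otimes\mathcal{I}_o^4)=2$ coming from Proposition \ref{prop.pencil}. The only cosmetic differences are that you argue directly on the subspaces $V_m^{\pm}$ rather than by contradiction on a single section $w$, and you re-derive the even-multiplicity fact for invariant sections from the parity of the local Taylor expansion at the fixed point instead of citing \cite{BL04}.
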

\begin{proof}
By contradiction, suppose that $H^0(A, \, \mL^2 \otimes \mathcal{I}_o^4)$ is
strictly contained in $H^0(A, \, \mL^2 \otimes \mathcal{I}_o^3)$. Then there
exists $w \in H^0(A, \, \mL^2 \otimes \mathcal{I}_o^3)$, $w \notin H^0(A, \,
\mL^2 \otimes \mathcal{I}_o^4)$ such that the three sections $v_1^2, \,
v_2^2, \, w \in H^0(A, \, \mL^2)$ are linearly independent;
let us write
\begin{equation*}
w=w^+ + w^-, \quad \textrm{where} \; \; w^+ \in H^0(A, \,
\mL^2)^+ \; \; \textrm{and} \; \; w^- \in H^0(A, \,
\mL^2)^-.
\end{equation*}
Consider the sum
\begin{equation*}
s = v_1^2+ v_2^2 +  w=
v_1^2+ v_2^2 +  w^+ +  w^-  \in H^0(A, \,
\mL^2 \otimes \mathcal{I}_o^3);
\end{equation*}
then Proposition \ref{prop.2M.sym} implies
\begin{equation*}
(-1)^*_A \, s = v_1^2+ v_2^2 +  w^+ -  w^-.
\end{equation*}
On the other hand, $(-1)_A$ fixes the tangent cone at $o$
of the curve corresponding to $s$; hence $(-1)^*_A \, s$ also
vanishes of order at least $3$ in $o$, that is $ (-1)^*_A \, s \in
H^0(A, \, \mL^2 \otimes \mathcal{I}_o^3)$.  This implies
\begin{equation*}
w^+, \; w^- \in H^0(A, \, \mL^2 \otimes \mathcal{I}_o^3).
\end{equation*}
Since by assumption $w=w^+ + w^- \notin H^0(A, \, \mL^2 \otimes \mathcal{I}_o^4)$,
it follows that either $w^+ \notin H^0(A, \, \mL^2 \otimes \mathcal{I}_o^4)$
or $w^- \notin H^0(A, \, \mL^2 \otimes \mathcal{I}_o^4)$.
In the former case, the curve $W^+:=\textrm{div}(w^+)$ is a
$\emph{even}$ divisor (i.e., corresponding to an invariant
section) in $|2L|$ which has multiplicity exactly $3$ at $o$; but
this is impossible, since every even divisor in $|2L|$ has even
multiplicity at the $2$-division points of $A$, see
\cite[Corollary 4.7.6]{BL04}. In the latter case, the curve
$W^-:=\textrm{div}(w^-)$ is an \emph{odd} divisor (i.e.,
corresponding to an anti-invariant section) in $|2L|$ which has
multiplicity exactly $3$ at $o$; but this is again a
contradiction, since all the odd divisors in $|2L|$ are smooth at
the $2$-division points of $A$, see Proposition
\ref{prop.anti.inv}.
\end{proof}

\section{Computations on vector bundles} \label{sec.vec}

Let $(A, \, \mL)$ be a $(1,\,2)$-polarized abelian surface.
Throughout this section, $\FF$ will denote a rank $2$ vector bundle
on $A$ such that
\begin{equation} \label{eq.vec.F}
h^0(A, \,\FF)=1, \quad h^1(A, \, \FF)=0, \quad h^2(A, \, \FF)=0,
\quad \det \FF = \mathcal{L};
\end{equation}
note that \eqref{eq.vec.F} together with Hirzebruch-Riemann-Roch
implies $c_2(\FF)=1$. These results will be needed in Section
\ref{sec.main.thm}.

\begin{prop} \label{prop.decomp.unstable}
If $\mathcal{F}$ is the direct sum of two line bundles, then it cannot be strictly
$\mathcal{L}$-semistable.
\end{prop}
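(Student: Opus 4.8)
The plan is to argue by contradiction, exploiting the parity constraint that Riemann--Roch imposes on the self-intersection of a line bundle on an abelian surface. Write $\mathcal{F} = \mathcal{L}_1 \oplus \mathcal{L}_2$ and set $D_i := c_1(\mathcal{L}_i) \in \mathrm{NS}(A)$. From the splitting one has $c_1(\mathcal{F}) = D_1 + D_2$ and $c_2(\mathcal{F}) = D_1 \cdot D_2$, so the hypotheses \eqref{eq.vec.F} (together with the resulting $c_2(\mathcal{F})=1$) translate into $D_1 + D_2 = L$ and $D_1 \cdot D_2 = 1$. Since $L^2 = 4$, the slope of $\mathcal{F}$ with respect to $\mathcal{L}$ is $\mu(\mathcal{F}) = \tfrac{1}{2}\, L \cdot c_1(\mathcal{F}) = \tfrac{1}{2} L^2 = 2$.

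Next I would show that semistability forces both summands to have slope exactly $2$. Indeed $\mathcal{L}_1$ and $\mathcal{L}_2$ are rank-one subbundles of $\mathcal{F}$, so $\mathcal{L}$-semistability gives $L \cdot D_1 \le 2$ and $L \cdot D_2 \le 2$; but $L \cdot D_1 + L \cdot D_2 = L \cdot (D_1 + D_2) = L^2 = 4$, whence both inequalities are equalities. (Observe that a direct sum of two line bundles of equal slope is never stable, while if the slopes were unequal the bundle would fail to be semistable; thus for such an $\mathcal{F}$ the notions of "semistable" and "strictly semistable" coincide, and it is enough to exclude semistability.)

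Combining $L \cdot D_1 = 2$ with $L = D_1 + D_2$ and $D_1 \cdot D_2 = 1$ gives $2 = L \cdot D_1 = D_1^2 + D_1 \cdot D_2 = D_1^2 + 1$, hence $D_1^2 = 1$. This is the contradiction: by Riemann--Roch on $A$ (where $K_A = 0$ and $\chi(\mathcal{O}_A) = 0$) one has $\chi(\mathcal{L}_1) = \tfrac{1}{2} D_1^2$, which must be an integer, so $D_1^2$ is necessarily even, contradicting $D_1^2 = 1$.

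I do not expect any genuine obstacle here: there is no delicate estimate, and the whole argument rests only on choosing the obvious destabilizing subbundles $\mathcal{L}_1, \mathcal{L}_2$ and on the integrality of $\chi$. The single point worth emphasizing is that the numerical data coming from \eqref{eq.vec.F} alone ($D_1 + D_2 = L$ and $D_1 \cdot D_2 = 1$) are perfectly consistent; it is precisely the semistability hypothesis that pins down the forbidden odd value $D_1^2 = 1$. A parallel argument through the Hodge index theorem, applied to the class $D_1 - D_2$ (which is $L$-orthogonal and has square $0$ under these relations, forcing $D_1 \equiv D_2$ and hence $L$ divisible by $2$, incompatible with a $(1,2)$-polarization), would work equally well, but the parity argument is the shortest.
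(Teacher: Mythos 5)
Your proof is correct and takes essentially the same route as the paper's: both derive $L\cdot D_1=L\cdot D_2=2$ from semistability of the direct sum and then combine this with $c_2(\mathcal{F})=D_1\cdot D_2=1$ to reach the absurdity $D_1^2=D_2^2=1$. The paper leaves the final contradiction implicit, whereas you make explicit both the parity argument via Riemann--Roch on the abelian surface and the reduction from ``strictly semistable'' to ``semistable''; these are welcome clarifications of the same argument, not a different one.
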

\begin{proof}
Set $\mathcal{F}=\oo_A(C_1) \oplus \oo_A(C_2)$, where $C_1$, $C_2$
are divisors in $A$, and suppose by contradiction that
$\mathcal{F}$ is $\mathcal{L}$-semistable. Since $L=C_1+C_2$, we
obtain
\begin{equation*}
C_1(C_1+C_2)=C_2(C_1+C_2)=2.
\end{equation*}
On the other hand $1=c_2(\FF)=C_1C_2$ and so $C_1^2=C_2^2=1$, which
is absurd.
\end{proof}

From now on, we assume that $\FF$ is
\emph{indecomposable}. We divide the rest of the section into three subsections according to the properties of $\mathcal{L}$ and $\mathcal{F}$.
\subsection{The case where $\mathcal{L}$ is not a product polarization}

\begin{prop} \label{prop.L.irred.F.simple}
If $\mathcal{L}$ is not a product polarization, then $\mathcal{F}$
 is isomorphic to the unique locally free extension
\begin{equation*}
0 \longrightarrow \mathcal{O}_A \longrightarrow \FF
\longrightarrow \mL \otimes \mathcal{I}_x \longrightarrow 0,
\end{equation*}
with $x \in K(\mathcal{L})$. Moreover, $\mathcal{F}$ is $\mathcal{H}$-stable for any ample line
bundle $\mathcal{H}$ on $A$.
\end{prop}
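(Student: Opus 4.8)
The plan is to prove this in two parts: first existence and uniqueness of the extension, then stability.

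For the extension, I would start from the constraints in \eqref{eq.vec.F}. Since $\det\FF=\mL$ and $c_2(\FF)=1$, and since $h^0(A,\FF)=1$, there is a nonzero section $s\in H^0(A,\FF)$. The key point is to understand the zero locus of $s$. Because $\FF$ is indecomposable of rank $2$ (Proposition \ref{prop.decomp.unstable} has ruled out the decomposable case as a potential obstruction, and we have now assumed indecomposability), I expect the section $s$ to vanish in a subscheme $Z$ of length $c_2(\FF)=1$, i.e. a single reduced point $x$. The section then fits into the standard exact sequence
\begin{equation*}
0 \longrightarrow \oo_A \stackrel{s}{\longrightarrow} \FF \longrightarrow \mL\otimes\mathcal{I}_x \longrightarrow 0,
\end{equation*}
where the quotient is $\mL\otimes\mathcal{I}_Z$ with $Z=\{x\}$ of length $1$. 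I would need to check that $s$ does not vanish on a divisor (which would force $\FF$ to contain a line subbundle of the form $\oo_A(D)$ with $D>0$, contradicting $h^0(A,\FF)=1$ combined with the numerical data), so that $Z$ is genuinely zero-dimensional of length $1$.

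The next step is to pin down $x\in K(\mL)$ and establish uniqueness. Extensions of $\mL\otimes\mathcal{I}_x$ by $\oo_A$ are classified by $\mathrm{Ext}^1(\mL\otimes\mathcal{I}_x,\oo_A)$, and local freeness of $\FF$ imposes that the extension class be nonzero at $x$ (the Cayley--Bacharach / Serre correspondence condition: $\FF$ is locally free iff the extension class generates the relevant local $\mathrm{Ext}$ at $x$). I would use the cohomology computations from Section \ref{sec.ab.surf} — in particular Proposition \ref{prop.no.base.locus} and equation \eqref{eq.Io-Io2} — to compute $\mathrm{Ext}^1$ and to see that a locally free extension exists precisely when $x$ is one of the base points of $|L+Q|$, forcing $x\in K(\mL)$; the one-dimensionality of the relevant $\mathrm{Ext}$ space then gives uniqueness of $\FF$ up to isomorphism for each such $x$. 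This local-to-global bookkeeping is where I expect the main bookkeeping effort to lie.

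For stability, suppose $\FF$ is not $\mathcal{H}$-stable for some ample $\mathcal{H}$. Then there is a destabilizing line subbundle $\oo_A(C)\hookrightarrow\FF$ with $C\cdot\mathcal{H}\geq\tfrac12 L\cdot\mathcal{H}$. The cokernel is $\mL\otimes\oo_A(-C)\otimes\mathcal{I}_W$ for some zero-dimensional $W$, and comparing with the extension above together with $h^0(A,\FF)=1$ should force $C$ to be effective and to compete with the section $s$; I would derive numerical inequalities from $\det\FF=\mL$, $c_2(\FF)=1$ and the index theorem on the abelian surface to reach a contradiction, in the spirit of the computation in Proposition \ref{prop.decomp.unstable}. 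The delicate point — and the main obstacle — is that stability must hold for \emph{every} ample $\mathcal{H}$, not just for $\mL$; the cleanest route is to show the only candidate destabilizing subsheaf would have to be $\oo_A\stackrel{s}{\hookrightarrow}\FF$ itself, and then to verify $\oo_A$ cannot destabilize because $0=\oo_A\cdot\mathcal{H}<\tfrac12 L\cdot\mathcal{H}$ for any ample $\mathcal{H}$ (as $L$ is ample), while any competing effective $C$ with $\oo_A(C)\subset\FF$ would produce a second independent section, contradicting $h^0(A,\FF)=1$. Making this ``only candidate'' step rigorous, independently of $\mathcal{H}$, is what requires care.
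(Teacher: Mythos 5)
There is a genuine gap at the decisive step, namely your claim that the section $s$ cannot vanish along a divisor because that ``would force $\FF$ to contain a line subbundle $\oo_A(D)$ with $D>0$, contradicting $h^0(A,\FF)=1$ combined with the numerical data.'' On an abelian surface an effective divisor $D>0$ with $h^0(A,\oo_A(D))=1$ is perfectly possible: $D$ can be an elliptic curve or a principal polarization. Neither case is excluded by the Chern classes: writing $0 \to \oo_A(D) \to \FF \to \mathcal{I}_Z(L-D) \to 0$, the relation $1=c_2(\FF)=D(L-D)+\ell(Z)$ gives $DL=1$, $\ell(Z)=0$ in the elliptic case and (via the index theorem $(DL)^2\geq D^2L^2=8$) $DL=3$, $\ell(Z)=0$ in the principal-polarization case, both consistent. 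Note that your sketch never invokes the hypothesis that $\mL$ is not a product polarization in this step, which is a warning sign: that hypothesis is exactly what kills these two cases. The paper rules out the elliptic case because $DL=1$ forces $\mL$ to be a product polarization by \cite[Lemma 10.4.6]{BL04}; the principal-polarization case is harder and uses indecomposability to get $\textrm{Ext}^1(\oo_A(L-D),\oo_A(D))\neq 0$, then Riemann--Roch and the vanishing of $H^0(A,\oo_A(L-2D))$ to show $2D-L$ is effective, then $(2D-L)D=1$ and Barth's lemma to identify $2D-L$ with an elliptic curve, and finally \cite[Lemma 10.4.6]{BL04} again to reach the same contradiction. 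Without this case analysis the proposition is not proved; once $D=0$ is established, your treatment of $x\in K(\mL)$ via Cayley--Bacharach and of uniqueness via $\textrm{Ext}^1(\mL\otimes\mathcal{I}_x,\oo_A)=\mathbb{C}$ does match the paper.

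The stability argument is also incomplete, and you have honestly flagged why: a destabilizing sub-line bundle for an arbitrary ample $\mathcal{H}$ need not be effective, so it cannot be compared with the section $s$ via $h^0(A,\FF)=1$, and the claim that a competing effective $C$ would ``produce a second independent section'' fails for the same elliptic-curve/principal-polarization reason as above. The paper takes a different and cleaner route: tensoring the extension with $\FF^{\vee}$ gives $h^0(A,\FF\otimes\FF^{\vee})=1$, i.e.\ $\FF$ is simple, and then Takemoto's results \cite[Proposition 5.1]{T72}, \cite[Proposition 2.1]{T73} say that a simple bundle with $c_1^2(\FF)-4c_2(\FF)=0$ on an abelian surface is stable with respect to every polarization. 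If you want to keep a direct destabilization argument you would essentially have to redo the case analysis above for the saturation of the destabilizing subsheaf; the simplicity-plus-Takemoto route avoids this entirely.
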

\begin{proof}
Since $h^0(A, \,\mathcal{F})=1$, there exists an injective morphism
of sheaves $\mathcal{O}_A \hookrightarrow \mathcal{F}$. By
\cite[Proposition 5 p. 33]{F98} we can find an effective divisor $C$
and a zero-dimensional subscheme $Z$ such that $\mathcal{F}$ fits
into a short exact sequence
\begin{equation}\label{suc.def.f}
0 \longrightarrow \mathcal{O}_A(C) \longrightarrow \FF
\longrightarrow \mathcal{I}_Z(L-C)  \longrightarrow 0.
\end{equation}
Then $h^0(A, \, \oo_A(C))=1$ and
\begin{equation} \label{eq.c2}
1=c_2(\FF)=C (L-C) + \ell(Z).
\end{equation}
Now there are three possibilities:
\begin{itemize}
\item[$(i)$] $C$ is an elliptic curve; \item[$(ii)$] $C$ is a
principal polarization; \item[$(iii)$] $C=0$.
\end{itemize}

In case $(i)$ we have $C^2=0$, then by \eqref{eq.c2} we obtain
$CL=1$ and $\ell(Z)=0$. Thus \cite[Lemma 10.4.6]{BL04} implies
that $\mL$ is a product polarization, contradiction.

In case $(ii)$, the Index Theorem yields $(CL)^2 \geq C^2L^2 =8$, so
using \eqref{eq.c2} we deduce $C L=3$, $\ell(Z)=0$. Setting
$\mathcal{C}:=\oo_A(C)$, sequence \eqref{suc.def.f} becomes
\begin{equation*}
0 \lr \mathcal{C} \lr \mathcal{F} \lr \mathcal{C}^{-1} \otimes
\mathcal{L} \lr 0.
\end{equation*}
Being $\mathcal{F}$ indecomposable by assumption, we have
\begin{equation} \label{eq.h1}
H^1(A, \, \mathcal{C}^2 \otimes \mathcal{L}^{-1})=
\textrm{Ext}^1(\mathcal{C}^{-1} \otimes \mathcal{L}, \,
\mathcal{C}) \neq 0.
\end{equation}
Moreover, since $(-2C + L) L=-2$, the divisor $-2C+L$ is not
effective, that is
\begin{equation} \label{eq.h0}
H^2(A, \, \mathcal{C}^2 \otimes \mathcal{L}^{-1})=H^0(A, \,
\mathcal{C}^{-2} \otimes \mathcal{L})=0.
\end{equation}
On the other hand, by Riemann-Roch we have
\begin{equation*}
\mathcal{\chi}(A, \, \mathcal{C}^2 \otimes \mathcal{L}^{-1})=
\frac{1}{2}(2C-L)^2=0,
\end{equation*}
so \eqref{eq.h1} and \eqref{eq.h0} yield $H^0(A, \, \mathcal{C}^2
\otimes \mathcal{L}^{-1}) \neq 0$. This implies that $2C - L$ is
effective, so by using \cite[Lemma 1.1]{Ba87} and the equality
$(2C-L)C=1$ one concludes that there exists an elliptic curve $E$
on $A$ such that $2C-L = E$. Thus \cite[Lemma 10.4.6]{BL04}
implies that $A$ is a product of elliptic curves and that $C$ is a
principal product polarization. In other words $A=E \times F$ and
$C$ is algebraically equivalent to $E+F$. But then $L$ is
algebraically equivalent to $E+2F$, contradicting the fact that
$\mathcal{L}$ is not a product polarization.

Therefore the only possibility is $(iii)$, namely $C=0$. It follows
that $Z$ consists of a single point $x \in A$ and, since $\FF$ is
locally free, $x$ is a base point of $|L|$, i.e., $x \in K(\mathcal{L})$.

Therefore \eqref{suc.def.f} becomes
\begin{equation} \label{suc.def.f.x}
0 \longrightarrow \mathcal{O}_A \longrightarrow \FF
\longrightarrow \mL \otimes \mathcal{I}_x \longrightarrow 0.
\end{equation}
Tensoring \eqref{suc.def.f.x} with $\FF^{\vee}$ and taking
cohomology, we obtain
\begin{equation*}
\begin{split}
1 & \leq h^0(A, \, \FF \otimes \FF^{\vee})=h^0(A, \, \FF^{\vee}
\otimes \mL \otimes \mathcal{I}_x) = h^0(A, \,
\FF^{\vee} \otimes \bigwedge^2 \FF \otimes \mathcal{I}_x) \\
&= h^0(A, \, \FF \otimes \mathcal{I}_x) \leq h^0(A, \, \FF) =1.
\end{split}
\end{equation*}
Therefore $H^0(A, \, \FF \otimes \FF^{\vee})= \mathbb{C} $, that
is $\FF$ is simple. Since $c^2_1(\FF)-4c_2(\FF)=0$, by
\cite[Proposition 5.1]{T72} and \cite[Proposition 2.1]{T73} it
follows that $\FF$ is $\mathcal{H}$-ample for any ample line
bundle $\mathcal{H}$ on $A$.

It remains to show that \eqref{suc.def.f.x} defines a unique
locally free extension. By applying the functor
$\textrm{Hom}(-,\mathcal{O}_A)$ to
\begin{equation} \label{eq.ext}
0 \longrightarrow \mL \otimes \mathcal{I}_x \longrightarrow \mL
\longrightarrow \mL \otimes \mathcal{O}_x \longrightarrow 0
\end{equation}
and using Serre duality, we get
\begin{equation*}
\begin{split}
0 \longrightarrow \textrm{Ext}^1(\mL \otimes \mathcal{I}_x, \,
\mathcal{O}_A) & \longrightarrow \textrm{Ext}^2(\mL \otimes
\mathcal{O}_x, \, \mathcal{O}_A) \cong H^0(A, \, \mL \otimes
\oo_x)^{\vee} \\ & \stackrel{\varphi}{\longrightarrow}
\textrm{Ext}^2(\mL, \, \mathcal{O}_A) \cong H^0(A, \, \mL)^{\vee}.
\end{split}
\end{equation*}
Being $x \in \textrm{Bs} \,|L|$, it follows that $\varphi$ is the
zero map (see \cite[Theorem 1.4]{Ca90}), so
\begin{equation} \label{eq.ext.1}
\textrm{Ext}^1(\mL \otimes \mathcal{I}_x, \, \mathcal{O}_A) =
\mathbb{C}.
\end{equation}
This completes the proof.
\end{proof}
\begin{rem}\label{rem.o}
Up to replacing $\mathcal{L}$ by $t^*_x\mathcal{L}$, which is still a symmetric $(1,2)$-polarization, we may assume $x=o$.
So $\mathcal{F}$ will be isomorphic to the unique locally free extension
 \begin{equation} \label{suc.def.f.triv}
0 \longrightarrow \mathcal{O}_A \longrightarrow \FF
\longrightarrow \mL \otimes \mathcal{I}_o \longrightarrow 0.
\end{equation}
\end{rem}
\begin{prop} \label{prop.F.WIT}
If $\mathcal{L}$ is not a product polarization, $\mathcal{F}$ is a
symmetric $\emph{IT}$-sheaf of index $0$.
\end{prop}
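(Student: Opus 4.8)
The plan is to establish the two assertions of the statement separately: the index-$0$ vanishing $H^1(A, \, \FF \otimes \mathcal{Q}) = H^2(A, \, \FF \otimes \mathcal{Q}) = 0$ for every $\mathcal{Q} \in \textrm{Pic}^0(A)$, and the symmetry $(-1)_A^* \FF \cong \FF$. The single input driving both parts is the defining extension \eqref{suc.def.f.triv}, namely
\[
0 \lr \oo_A \lr \FF \lr \mL \otimes \mathcal{I}_o \lr 0,
\]
which by Remark \ref{rem.o} we may take based at $o$. For the IT condition I would tensor it with $\mathcal{Q}$ and pass to the long exact sequence, and for the symmetry I would pull it back along $(-1)_A$.

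For the index-$0$ statement, first I would tensor the sequence with $\mathcal{Q} \in \textrm{Pic}^0(A)$ to get
\[
0 \lr \mathcal{Q} \lr \FF \otimes \mathcal{Q} \lr \mL \otimes \mathcal{Q} \otimes \mathcal{I}_o \lr 0,
\]
and then split into two cases. If $\mathcal{Q}$ is nontrivial, then $H^j(A, \, \mathcal{Q}) = 0$ for all $j$, since a nontrivial degree-$0$ line bundle on an abelian surface has no cohomology; the long exact sequence then gives $H^j(A, \, \FF \otimes \mathcal{Q}) \cong H^j(A, \, \mL \otimes \mathcal{Q} \otimes \mathcal{I}_o)$, and the right-hand groups vanish for $j = 1, 2$ by Proposition \ref{prop.no.base.locus}. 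If $\mathcal{Q} = \oo_A$, the vanishing $h^1(A, \, \FF) = h^2(A, \, \FF) = 0$ is precisely hypothesis \eqref{eq.vec.F}. This settles index $0$ and is essentially routine.

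The symmetry is the part that requires a genuine idea, and my approach is to invoke the \emph{uniqueness} of the extension rather than to compute anything. Applying the automorphism $(-1)_A$ (invertible, hence flat, so its pullback is exact) to the defining sequence yields
\[
0 \lr \oo_A \lr (-1)_A^* \FF \lr (-1)_A^*(\mL \otimes \mathcal{I}_o) \lr 0.
\]
Now $(-1)_A^* \oo_A = \oo_A$, the ideal $\mathcal{I}_o$ is preserved because $o$ is fixed by $(-1)_A$, and $(-1)_A^* \mL \cong \mL$ by Assumption \ref{ass.sym}; hence $(-1)_A^* \FF$ is again a locally free extension of $\mL \otimes \mathcal{I}_o$ by $\oo_A$. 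By \eqref{eq.ext.1} one has $\textrm{Ext}^1(\mL \otimes \mathcal{I}_o, \, \oo_A) = \mathbb{C}$, so up to isomorphism there is a single nonsplit such extension, namely $\FF$ itself; since the split extension $\oo_A \oplus (\mL \otimes \mathcal{I}_o)$ fails to be locally free whereas $(-1)_A^* \FF$ is locally free, the extension class of $(-1)_A^* \FF$ must be the nonzero one, and therefore $(-1)_A^* \FF \cong \FF$.

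The one point demanding care is the final step: one must be sure the pulled-back sequence represents a nonsplit class, for only then does one-dimensionality of $\textrm{Ext}^1$ force the isomorphism; this is guaranteed precisely because local freeness excludes the split extension. Everything else reduces cleanly to the previously established sequence \eqref{suc.def.f.triv} together with Proposition \ref{prop.no.base.locus} and the defining properties \eqref{eq.vec.F}, so I expect no further obstacle.
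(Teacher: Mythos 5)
Your proof is correct and follows essentially the same route as the paper: the IT-index-$0$ part is obtained by tensoring the extension \eqref{suc.def.f.triv} with $\mathcal{Q}$ and invoking Proposition \ref{prop.no.base.locus} (together with \eqref{eq.vec.F} for $\mathcal{Q}=\oo_A$), and the symmetry is obtained by pulling the extension back along $(-1)_A$ and using the one-dimensionality of $\textrm{Ext}^1(\mL \otimes \mathcal{I}_o, \, \oo_A)$ from \eqref{eq.ext.1} to identify $(-1)_A^*\FF$ with $\FF$. Your extra remark that local freeness rules out the split extension is exactly the justification implicit in the paper's phrase ``the unique locally free extension,'' so nothing is missing.
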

\begin{proof}
Since $\mL$ is a symmetric polarization, by applying $(-1)^*_A$ to
\eqref{suc.def.f.triv} we get
\begin{equation*}
0 \longrightarrow \mathcal{O}_A \longrightarrow (-1)^*_A \FF
\longrightarrow  \mL \otimes \mathcal{I}_o \longrightarrow 0.
\end{equation*}
But \eqref{eq.ext.1} implies that $\FF$ is the unique locally free
extension of $\mL \otimes \mathcal{I}_o$ by $\mathcal{O}_A$, so we
obtain $(-1)_A^* \FF = \FF$, that is $\FF$ is symmetric.

In order to prove that $\FF$ satisfies IT of index $0$, we must show that
\begin{equation} \label{eq.WIT}
\begin{split}
V^1(A, \, \FF) &:=\{\mathcal{Q} \in \textrm{Pic}^0(A)\; | \;
h^1(A, \, \FF \otimes
\mathcal{Q}) > 0\}=\emptyset \\
V^2(A, \, \FF)&:=\{\mathcal{Q} \in \textrm{Pic}^0(A)\; | \; h^2(A,
\, \FF \otimes \mathcal{Q}) > 0\}=\emptyset.
\end{split}
\end{equation}
First, notice that $\oo_A \notin V^1(A, \, \FF)$ and $\oo_A \notin
V^2(A, \, \FF)$,
 since $h^1(A, \, \FF) = h^2(A, \, \FF)= 0$. \\
Now take  $\mathcal{Q} \in \textrm{Pic}^0 (A)$ such that
$\mathcal{Q} \neq \oo_A$. Tensoring \eqref{suc.def.f.triv} with
$\mathcal{Q}$ and using
 Proposition \ref{prop.no.base.locus}, we obtain
\begin{equation*}
h^0(A, \, \mathcal{F} \otimes \mathcal{Q})=1, \quad h^1(A, \,
\mathcal{F} \otimes \mathcal{Q})=0, \quad h^2(A, \, \mathcal{F}
\otimes \mathcal{Q})=0.
\end{equation*}
Hence \eqref{eq.WIT} is satisfied, and the proof is complete.
\end{proof}

Since $\FF$ is simple and $\chi(A, \, \FF \otimes \FF^{\vee})=0$,
we have
\begin{equation} \label{eq.f.otim.fdul}
h^0(A, \, \FF \otimes \FF^{\vee})=1, \; \; h^1(A, \, \FF \otimes
\FF^{\vee})=2, \; \; h^2(A, \, \FF \otimes \FF^{\vee})=1.
\end{equation}
On the other hand, the Clebsch-Gordan formula for the tensor product
(\cite[p. 438]{At57}) gives an isomorphism
\begin{equation*}
\oo_A \oplus (S^2\FF \otimes \bigwedge^2 \FF^{\vee}) = \FF \otimes
\FF^{\vee},
\end{equation*}
so by using \eqref{eq.f.otim.fdul} we obtain
\begin{equation} \label{eq.S2=0}
h^0(A, \, S^2\FF \otimes \bigwedge^2 \FF^{\vee})=0, \; \; h^1(A, \,
S^2\FF \otimes \bigwedge^2 \FF^{\vee})=0, \; \; h^2(A, \, S^2\FF
\otimes \bigwedge^2 \FF^{\vee})=0.
\end{equation}

\begin{prop} \label{prop.S3}
If $\mL$ is not a product polarization, we have
\begin{equation} h^0(A, \, S^3\FF \otimes \bigwedge^2 \FF^{\vee} )=
h^0(A, \, \mathcal{L}^2 \otimes \mathcal{I}^3_o)=2.
\end{equation}
\end{prop}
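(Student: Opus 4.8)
The statement has two equalities. The second one, $h^0(A,\,\mathcal{L}^2\otimes\mathcal{I}^3_o)=2$, is exactly Proposition \ref{prop.2M}, which has already been established. So the real content is the first equality, $h^0(A,\,S^3\FF\otimes\bigwedge^2\FF^{\vee})=2$, and the plan is to reduce it to the already-known quantity by means of the Clebsch--Gordan decomposition together with the defining extension \eqref{suc.def.f.triv}. The main obstacle will be producing an exact sequence that isolates $S^3\FF\otimes\bigwedge^2\FF^{\vee}$ and connects it to $\mathcal{L}^2\otimes\mathcal{I}^3_o$, and then controlling the cohomology of the remaining terms via \eqref{eq.S2=0}.

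\textbf{Decomposing the symmetric power.} The plan is to exploit the filtration of symmetric powers induced by \eqref{suc.def.f.triv}. From $0\to\oo_A\to\FF\to\mL\otimes\mathcal{I}_o\to0$ one obtains, by applying $S^3$, a filtration of $S^3\FF$ whose successive quotients are $\oo_A,\ \mL\otimes\mathcal{I}_o,\ \mL^2\otimes\mathcal{I}_o^2,\ \mL^3\otimes\mathcal{I}_o^3$ (the symmetric-power analogue of the usual Koszul/symmetric filtration for a rank-$2$ extension). Tensoring throughout by $\bigwedge^2\FF^{\vee}=\mL^{-1}$, the graded pieces become $\mL^{-1},\ \mathcal{I}_o,\ \mL\otimes\mathcal{I}_o^2,\ \mL^2\otimes\mathcal{I}_o^3$. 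The top quotient is precisely $\mathcal{L}^2\otimes\mathcal{I}^3_o$, which carries the cohomology we want; the strategy is to show that the three lower pieces contribute nothing to $H^0$, so that $H^0$ of the whole is carried by the top quotient.

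\textbf{Killing the lower pieces.} For this I would proceed step by step through the filtration, using the long exact sequences in cohomology. The piece $\mL^{-1}$ is the inverse of an ample bundle on an abelian surface, so $H^0$ and $H^1$ both vanish. The piece $\mathcal{I}_o$ has $H^0(A,\mathcal{I}_o)=0$, and I would control its $H^1$ using $0\to\mathcal{I}_o\to\oo_A\to\oo_o\to0$. The delicate pieces are $\mL\otimes\mathcal{I}_o^2$ and $\mL^2\otimes\mathcal{I}_o^3$; the relation \eqref{eq.S2=0}, which records that $S^2\FF\otimes\bigwedge^2\FF^{\vee}$ has \emph{all} cohomology vanishing, is the lever here, since $S^2\FF\otimes\mL^{-1}$ is itself an extension of $\mL\otimes\mathcal{I}_o^2$ by lower terms. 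Chasing the vanishing down the filtration, the alternating-sum (Euler-characteristic) bookkeeping forces $H^0(A,\,S^3\FF\otimes\mL^{-1})\cong H^0(A,\,\mathcal{L}^2\otimes\mathcal{I}^3_o)$, and the latter is $2$ by Proposition \ref{prop.2M}.

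\textbf{Anticipated difficulty.} The one subtle point is that $\mathcal{I}_o^k$ does not literally arise as $S^k$ of a scheme-theoretic ideal in the naive way; one must check that the symmetric-power filtration of the locally free sheaf $\FF$ really produces the ideal powers $\mathcal{I}_o^k$ as quotients rather than some larger sheaf, which uses that the extension \eqref{suc.def.f.triv} is locally free and that $o$ is a single reduced point (so $\mathcal{I}_o$ is the ideal of a smooth point and its symbolic and ordinary powers agree). Once this identification is secured, the cohomological vanishing combines cleanly with \eqref{eq.S2=0} and Proposition \ref{prop.2M}, and the first equality follows; the second is immediate. Hence $h^0(A,\,S^3\FF\otimes\bigwedge^2\FF^{\vee})=2$, as claimed.
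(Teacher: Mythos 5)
Your proposal is correct and is essentially the paper's own argument: the symmetric-power filtration you describe is exactly what the paper packages as the Eagon--Northcott sequence $0 \to S^2\FF \otimes \bigwedge^2 \FF^{\vee} \to S^3\FF \otimes \bigwedge^2 \FF^{\vee} \to \mL^2 \otimes \mathcal{I}^3_o \to 0$, after which the vanishing \eqref{eq.S2=0} of all cohomology of the sub-bundle and Proposition \ref{prop.2M} give the count. Your identification of the top quotient with $\mL^2\otimes\mathcal{I}_o^3$ (the point you flag as delicate) is precisely what Eagon--Northcott supplies, and invoking \eqref{eq.S2=0} wholesale for the sub-object is the right move, since the individual lower graded pieces such as $\mathcal{I}_o$ do not have vanishing $H^1$.
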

\begin{proof} The Eagon-Northcott complex
applied to \eqref{suc.def.f.triv} yields
\begin{equation*}
0 \longrightarrow S^2\FF \otimes \bigwedge^2 \FF^{\vee}
\longrightarrow S^3\FF \otimes \bigwedge^2 \FF^{\vee}
\longrightarrow \mL^2 \otimes \mathcal{I}^3_o \longrightarrow 0,
\end{equation*}
so our assertion is an immediate consequence of  \eqref{eq.S2=0}
and Proposition \ref{prop.2M}.
\end{proof}

\subsection{The case where $\mathcal{L}$ is a product polarization and $\mathcal{F}$ is not simple}
Now let us assume that $\mL$ is a product $(1, \,2)$-polarization.
Then $A = E \times F$, where $E$ and $F$ are two elliptic curves,
whose zero elements are both denoted by $o$. Let $\pi_E \colon E
\times F \to E$ and $\pi _F \colon E \times F \to F$ be the natural
projections. For any $p \in F$ and $q \in E$, we will write $E_p$
and $F_q$ instead of $\pi_F^{-1}(p)$ and $\pi_E^{-1}(q)$.

Furthermore, up to translations we may assume
$\mathcal{L}=\oo_A(E_o+2F_o)$.

Following the terminology of  \cite{O71}, we say that $\FF$ is
\emph{of Schwarzenberger type} if it is indecomposable but not
simple.

\begin{prop} \label{prop.prod.non.ample}
Suppose that $\mL$ is a product $(1, \, 2)$-polarization. Then
$\mathcal{F}$ is of Schwarzenberger type if and only if it is a
non-trivial extension of the form
\begin{equation} \label{suc.nonsimple}
0 \lr \mathcal{C} \lr \FF \lr \mathcal{L} \otimes \mathcal{C}^{-1}
\lr 0,
\end{equation}
where $\mathcal{C}:=\oo_A(E_p+F_q)$, with $p \in F$ different from
$o$
and $q \in E$ a $2$-division point. \\
\end{prop}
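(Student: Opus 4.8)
The plan is to prove both implications by manufacturing a nilpotent endomorphism and identifying the resulting destabilizing sub-line-bundle with the one produced by the Friedman extension of \cite{F98}.

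For the forward implication, suppose $\FF$ is of Schwarzenberger type. Since $\FF$ is indecomposable, $\mathrm{End}(\FF)$ is a finite-dimensional local $\CC$-algebra, so every non-scalar endomorphism differs from a homothety by a nonzero nilpotent $N$; as $\rk\FF=2$ this forces $N^2=0$. Then $\mathcal{C}:=\ker N$ is a saturated sub-line-bundle and we obtain a short exact sequence $0 \lr \mathcal{C} \lr \FF \lr \mathcal{L}\otimes\mathcal{C}^{-1} \lr 0$, non-split because $\FF$ is indecomposable, while $N$ induces a nonzero map $\mathcal{L}\otimes\mathcal{C}^{-1} \lr \mathcal{C}$, so that $2C-L$ is effective. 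If $s\colon \oo_A\to\FF$ is the unique section, then $N\circ s$ is a multiple of $s$ with eigenvalue $0$, hence $N\circ s=0$ and $s$ factors through $\mathcal{C}$; writing $\mathcal{C}=\oo_A(C)$ this shows $C$ is effective, and $h^0(A,\mathcal{C})\le h^0(A,\FF)=1$ gives $h^0(A,\oo_A(C))=1$.

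Next I would pin down $\mathcal{C}$ numerically. The relation $c_2(\FF)=C(L-C)+\ell(Z)=1$ gives $C^2\ge CL-1$; the nonzero effective class $2C-L$ meets the ample $L$ positively, forcing $CL\ge 3$; and from $h^0(A,\oo_A(C))=1$, $h^0(A,\oo_A(-C))=0$ one gets $\tfrac12 C^2=\chi(\oo_A(C))\le 1$. Since $C^2$ is even, these combine to $C^2=2$, $CL=3$ and $\ell(Z)=0$, so $\mathcal{C}$ is a principal polarization. As $\mL$ is of product type, $A=E\times F$ and \cite[Lemma 10.4.6]{BL04} identifies $\mathcal{C}=\oo_A(E_p+F_q)$. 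Finally the splitting $2C-L=\pi_F^*(2[p]-[o])+\pi_E^*(2[q]-2[o])$ yields $h^0(A,\oo_A(2C-L))=h^0(E,\oo_E(2[q]-2[o]))$, which is nonzero exactly when $q$ is a $2$-division point; and $p\neq o$ follows from $h^0(A,\FF)=1$, since $p=o$ would give $h^0(A,\mathcal{L}\otimes\mathcal{C}^{-1})\neq 0$ and hence $h^0(A,\FF)\ge 2$.

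For the converse I would start from a non-trivial extension of the stated form and check directly that $\FF$ satisfies \eqref{eq.vec.F}: the graded pieces $\mathcal{C}$ and $\mathcal{L}\otimes\mathcal{C}^{-1}$ have cohomology dimensions $(1,0,0)$ and $(0,0,0)$, whence $h^0(A,\FF)=1$, $h^1=h^2=0$ and $c_2(\FF)=C(L-C)=1$. Composing the projection onto $\mathcal{L}\otimes\mathcal{C}^{-1}$ with a nonzero map $\mathcal{L}\otimes\mathcal{C}^{-1}\lr\mathcal{C}$ (available precisely because $q$ is a $2$-division point) and the inclusion of $\mathcal{C}$ yields a nonzero nilpotent endomorphism, so $\FF$ is not simple; indecomposability follows from the non-splitting together with a short check that no decomposition $\FF=\mathcal{A}\oplus\mathcal{B}$ is compatible with $\det\FF=\mL$, $c_2(\FF)=1$ and $h^0(A,\FF)=1$. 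The step I expect to be most delicate is the numerical one: ruling out the a priori competing classes with larger $C^2$ and making the identification $\mathcal{C}=\oo_A(E_p+F_q)$ watertight when $E$ and $F$ are isogenous, so that $\mathrm{NS}(A)$ carries extra correspondence classes; controlling the effectivity of $2C-L$ there, rather than the cohomological estimates or the construction of $N$, is where the real work lies.
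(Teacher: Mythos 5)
Your argument is correct, and although it lands on the same numerical classification as the paper, the mechanism is genuinely different at the three decisive points. The paper produces the sub-line bundle $\mathcal{C}$ from $\mathcal{L}$-instability (non-simple implies non-stable, with strict semistability excluded as in Proposition \ref{prop.decomp.unstable}), so $CL\geq 3$ comes from the destabilizing inequality $2CL>L^2$; it then extracts the $2$-torsion condition on $q$ from the \emph{non-splitting} of the extension, i.e.\ from $H^1(A,\oo_A(2C-L))\neq 0$; and for the converse it cites Oda's lemma. You instead set $\mathcal{C}=\ker N$ for a square-zero endomorphism $N$, which hands you the nonzero map $\mathcal{L}\otimes\mathcal{C}^{-1}\to\mathcal{C}$ and hence the \emph{effectivity} of $2C-L$; this single fact yields both $CL\geq 3$ and, via the K\"unneth splitting, the condition that $q$ be a $2$-division point --- the same line bundle $\oo_A(2C-L)$ that the paper interrogates, its $H^0$ and $H^1$ being simultaneously (non)zero here. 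Your converse, building the nilpotent endomorphism explicitly from a section of $\oo_A(2C-L)$, replaces the citation of Oda and makes the equivalence visibly symmetric: being of Schwarzenberger type is literally the existence of that section. The only real thin spot is the one you flag, the identification $\mathcal{C}=\oo_A(E_p+F_q)$, but it closes without any discussion of $\mathrm{NS}(A)$ or of isogenies between $E$ and $F$: since $\mathcal{C}$ is an ample principal polarization with $C\cdot E_o=C\cdot F_o=1$ (forced by $CL=3$ together with $C\cdot E_o\geq 1$ and $C\cdot F_o\geq 1$), an irreducible $C$ would be a smooth genus-$2$ curve mapping with degree $C\cdot F_o=1$ onto $E$, which is absurd; so $C=C_1+C_2$ with $C_1$, $C_2$ elliptic and $C_1C_2=1$, and $C_1E_o+C_2E_o=1$ forces, say, $C_1E_o=0$, whence $C_1$ is a translate $E_p$ of $E_o$, and then $C_2F_o=0$ gives $C_2=F_q$. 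Two minor points: your displayed sequence in the forward direction should carry $\mathcal{I}_Z$ on the quotient until $\ell(Z)=0$ is established (as your subsequent computation in fact does), and the indecomposability check in the converse is superfluous, since indecomposability is a standing hypothesis of Section \ref{sec.vec}.
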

\begin{proof}
If $\FF$ is a non-trivial extension of type \eqref{suc.nonsimple},
then \cite[Lemma p. 251]{O71} shows that $\FF$ is indecomposable but
$h^0(A, \, \FF \otimes \FF^{\vee})=2$, so $\FF$ is not simple. \\
Conversely, assume that  $\mathcal{F}$ is of Schwarzenberger type.
Being $\FF$ not simple, it is not $\mathcal{H}$-stable with
respect to any ample line bundle $\mathcal{H}$ on $A$. In
particular, $\FF$ is not $\mathcal{L}$-stable. An argument similar
to the one used in the proof of Proposition
\ref{prop.decomp.unstable} shows that $\FF$ is not strictly
$\mathcal{L}$-semistable, so it must be $\mathcal{L}$-unstable.
This implies that there exists a unique sub-line bundle
$\mathcal{C}:=\oo_A(C)$ of $\mathcal{F}$ with torsion-free
quotient such that
\begin{equation} \label{eq.CL}
2C L >  L^2 =4.
\end{equation}
Now let us write
\begin{equation} \label{eq.sch1}
0 \lr \oo_A(C) \lr \FF \lr \mathcal{I}_Z(L - C)\lr 0,
\end{equation}
where $Z \subset A$ is a zero-dimensional subscheme. Then by using
\eqref{eq.CL} we obtain
\begin{equation} \label{eq.c2(FF)}
1=c_2(\FF)=C(L-C)+\ell(Z) > 2-C^2 + \ell(Z),
\end{equation}
that is $C^2 > 1 + \ell(Z)$. On the other hand, since $h^0(A, \,
\mathcal{C})=1$, the only possibility is $\ell(Z)=0$ and $C^2=2$, in
particular $\mathcal{C}$ is a principal polarization. But
\eqref{eq.c2(FF)} also gives $3 = CL = C(E_o+2F_o)$, so $C$ is
numerically equivalent to $E_o + F_o$. Therefore we can write
$C=E_p+F_q$ for some $p \in F$, $q \in E$ and \eqref{eq.sch1}
becomes
\begin{equation} \label{eq.sch.2}
0 \to \oo_A(E_p + F_q) \to \FF \to \oo_A(E_o-E_p + 2F_o-F_q) \to
0.
\end{equation}
Since $h^0(A, \, \FF)=1$, we have $p \neq o$. On the other hand,
since $\FF$ is indecomposable, \eqref{eq.sch.2} must be non-split,
so
\begin{equation*}
H^1(A, \, \oo_A(2E_p-E_o+2F_q-2F_o)) \neq 0.
\end{equation*}
This implies that $2F_q$ is linearly equivalent to $2F_o$, that is
$q \in E$ is a $2$-division point.
\end{proof}

\begin{prop} \label{prop.no.product.Sch}
If $\mathcal{L}$ is a product polarization and $\FF$ is of
Schwarzenberger type, we have
\begin{equation*}
h^0(A, \, S^3 \FF \otimes \bigwedge^2 \FF^{\vee})= h^0(A, \, S^2 \FF
\otimes \bigwedge^2 \FF^{\vee} \otimes \mathcal{C})= h^0(A, \,  \FF
\otimes \bigwedge^2 \FF^{\vee} \otimes \mathcal{C}^2)=3.
\end{equation*}
\end{prop}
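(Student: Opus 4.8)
The plan is to feed the extension from Proposition \ref{prop.prod.non.ample} into the standard symmetric-power filtration, thereby reducing all three numbers to the cohomology of explicit line bundles on the product $A=E\times F$, where everything is computable by K\"unneth. First I would record that $0\lr\mathcal{C}\lr\FF\lr\mathcal{M}\lr 0$ with $\mathcal{M}:=\mathcal{L}\otimes\mathcal{C}^{-1}$ forces $\bigwedge^2\FF=\mathcal{L}$, hence $\bigwedge^2\FF^{\vee}=\mathcal{L}^{-1}$, and then write each line bundle as a pullback: since $\mathcal{C}=\oo_A(E_p+F_q)$ and $\mathcal{L}=\oo_A(E_o+2F_o)$, one has $\mathcal{C}=\pi_F^*\oo_F(p)\otimes\pi_E^*\oo_E(q)$ and $\mathcal{L}=\pi_F^*\oo_F(o)\otimes\pi_E^*\oo_E(2o)$, and so on for every bundle that appears. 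All the relevant $h^i$ can then be read off from the two elliptic factors.

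The reduction itself uses, for $n=2,3$, the tautological sequence $0\lr\mathcal{C}\otimes S^{n-1}\FF\lr S^n\FF\lr\mathcal{M}^n\lr 0$ coming from $\mathcal{C}\hookrightarrow\FF$ (the cokernel being $S^n(\FF/\mathcal{C})=\mathcal{M}^n$ because $\mathcal{M}$ is a line bundle). Tensoring the $n=3$ sequence by $\mathcal{L}^{-1}$ identifies the middle term with the first sheaf and the sub-object with the second, the quotient being $\mathcal{M}^3\otimes\mathcal{L}^{-1}=\mathcal{L}^2\otimes\mathcal{C}^{-3}$; as this last bundle has a negative-degree factor on $F$, its $h^0$ vanishes and the first two numbers coincide. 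Tensoring the $n=2$ sequence by $\mathcal{L}^{-1}\mathcal{C}$ identifies the sub-object with the third sheaf, the quotient now being $\mathcal{M}^2\otimes\mathcal{L}^{-1}\mathcal{C}=\mathcal{M}$; since $\mathcal{M}=\pi_F^*\oo_F(o-p)\otimes\pi_E^*\oo_E(2o-q)$ has a \emph{non-trivial} degree-$0$ factor (here $p\neq o$), we get $h^0(\mathcal{M})=0$, so the second and third numbers coincide as well.

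It then remains to compute the third number, $h^0(A,\FF\otimes\mathcal{L}^{-1}\mathcal{C}^2)$. Tensoring the defining extension by $\mathcal{L}^{-1}\mathcal{C}^2$ and using $\mathcal{M}\otimes\mathcal{L}^{-1}\mathcal{C}^2=\mathcal{C}$ gives $0\lr\mathcal{C}^3\mathcal{L}^{-1}\lr\FF\otimes\mathcal{L}^{-1}\mathcal{C}^2\lr\mathcal{C}\lr 0$. Here K\"unneth yields $h^0(\mathcal{C})=1$, while $\mathcal{C}^3\mathcal{L}^{-1}=\pi_F^*\oo_F(3p-o)\otimes\pi_E^*\oo_E(3q-2o)$ has a degree-$2$ factor on $F$ and a degree-$1$ factor on $E$, whence $h^0(\mathcal{C}^3\mathcal{L}^{-1})=2$ and, crucially, $h^1(\mathcal{C}^3\mathcal{L}^{-1})=0$. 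The long exact sequence then collapses to $0\lr H^0(\mathcal{C}^3\mathcal{L}^{-1})\lr H^0(\FF\otimes\mathcal{L}^{-1}\mathcal{C}^2)\lr H^0(\mathcal{C})\lr 0$, giving $2+1=3$.

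The one genuinely delicate point is this final step: a priori the connecting map $H^0(\mathcal{C})\to H^1(\mathcal{C}^3\mathcal{L}^{-1})$, i.e.\ cup product with the extension class, could cut $h^0$ down from $3$ to $2$. The clean way around it is exactly the vanishing $h^1(\mathcal{C}^3\mathcal{L}^{-1})=0$, which leaves the connecting map nowhere to go; I would verify it by K\"unneth (a positive-degree line bundle on an elliptic curve has no $H^1$) rather than by analyzing the extension class directly. Throughout, the main thing to watch is which degree-$0$ factors are trivial: the hypotheses $p\neq o$ and $q$ a $2$-division point are precisely what make $\oo_F(o-p)$ non-trivial and $\oo_E(2q-2o)$ trivial, and keeping these straight is what makes all the error terms vanish.
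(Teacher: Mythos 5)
Your proof is correct and follows essentially the same route as the paper: your ``tautological'' filtration $0\to\mathcal{C}\otimes S^{n-1}\FF\to S^n\FF\to\mathcal{M}^n\to 0$ is exactly the Eagon--Northcott complex the authors apply to the extension $0\to\mathcal{C}\to\FF\to\mathcal{L}\otimes\mathcal{C}^{-1}\to 0$, and the vanishings $h^0(\mathcal{L}\otimes\mathcal{C}^{-1})=h^0(\mathcal{L}^2\otimes\mathcal{C}^{-3})=0$ and the final count $h^0(\FF\otimes\bigwedge^2\FF^{\vee}\otimes\mathcal{C}^2)=2+1=3$ match theirs, with your K\"unneth verification of $h^1(\mathcal{C}^3\otimes\mathcal{L}^{-1})=0$ merely making explicit a step the paper leaves implicit.
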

\begin{proof}
The Eagon-Northcott complex applied to \eqref{suc.nonsimple} gives
\begin{equation*}
\begin{split}
0 & \lr \FF \otimes \bigwedge^2 \FF^{\vee} \otimes \mathcal{C}^2 \lr
 S^2 \FF \otimes \bigwedge^2 \FF^{\vee} \otimes \mathcal{C} \lr
\mathcal{L} \otimes \mathcal{C}^{-1} \lr 0 \\
0 & \lr S^2 \FF \otimes \bigwedge^2 \FF^{\vee} \otimes \mathcal{C}
\lr S^3 \FF \otimes \bigwedge^2 \FF^{\vee} \lr \mathcal{L}^2 \otimes
\mathcal{C}^{-3} \lr 0.
\end{split}
\end{equation*}
On the other hand, we have
\begin{equation*}
\begin{split}
H^0(A, \, \mathcal{L} \otimes \mathcal{C}^{-1}) & =H^0(A, \,
\oo_A(E_o-E_p+F_q))=0, \\
H^0(A, \, \mathcal{L}^2 \otimes \mathcal{C}^{-3}) & = H^0(A, \,
\oo_A(2E_o-3E_p+F_q))=0.
\end{split}
\end{equation*}
Tensoring \eqref{suc.nonsimple} with $\bigwedge^2 \FF^{\vee} \otimes
\mathcal{C}^2$ we obtain $h^0(A, \, \FF \otimes \bigwedge^2
\FF^{\vee} \otimes \mathcal{C}^2)=3$, so the claim follows.
\end{proof}

\begin{cor} \label{cor.nonsimple}
If $\mathcal{L}$ is a product polarization and $\FF$ is of
Schwarzenberger type, then the natural product map
\begin{equation*}
H^0(A, \, \FF \otimes \bigwedge^2 \FF^{\vee} \otimes \mathcal{C}^2)
\otimes H^0(A, \, \FF \otimes \mathcal{C}^{-1})^{\otimes 2} \lr
H^0(A, \, S^3 \FF \otimes \bigwedge^2 \FF^{\vee})
\end{equation*}
is bijective. Therefore, if $f \colon X \to A$ is the triple cover
corresponding to a non-zero section $\eta \in H^0(A, \, S^3 \FF
\otimes \bigwedge^2 \FF^{\vee})$, the surface $X$ is reducible and
non-reduced.
\end{cor}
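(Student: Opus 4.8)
The plan is to reduce the statement to a dimension count for the product map and then to read off the geometry of $X$ from the factorisation of $\eta$ that this forces. First I would collect the relevant dimensions. By Proposition \ref{prop.no.product.Sch}, the target $H^0(A,\,S^3\FF\otimes\bigwedge^2\FF^\vee)$ and the first factor $H^0(A,\,\FF\otimes\bigwedge^2\FF^\vee\otimes\mathcal{C}^2)$ are both $3$-dimensional. For the remaining factor I tensor the defining extension \eqref{suc.nonsimple} by $\mathcal{C}^{-1}$, obtaining
\[
0\lr\oo_A\lr\FF\otimes\mathcal{C}^{-1}\lr\mL\otimes\mathcal{C}^{-2}\lr 0 .
\]
Since $\mL\otimes\mathcal{C}^{-2}$ has negative degree on the fibres $F_q$ of $\pi_E$, it has no sections, so $h^0(A,\,\FF\otimes\mathcal{C}^{-1})=1$. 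Let $s_0$ be a generator; it is precisely the sub-bundle inclusion $\mathcal{C}\hookrightarrow\FF$ of \eqref{suc.nonsimple}. Hence the source of the product map is canonically $H^0(A,\,\FF\otimes\bigwedge^2\FF^\vee\otimes\mathcal{C}^2)$ via $a\mapsto a\otimes s_0\otimes s_0$, and under this identification the product map is multiplication by $s_0^2$.

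Next I would recognise this multiplication inside the two Eagon--Northcott sequences used in the proof of Proposition \ref{prop.no.product.Sch}. Each inclusion $S^{k-1}\FF\otimes\mathcal{C}\hookrightarrow S^k\FF$ is induced by $\mathcal{C}\hookrightarrow\FF$, i.e. is exactly multiplication by $s_0$. Because the two cokernels $\mL\otimes\mathcal{C}^{-1}$ and $\mL^2\otimes\mathcal{C}^{-3}$ have no global sections (as computed there), passing to $H^0$ shows that each of the maps
\[
H^0(\FF\otimes\bigwedge^2\FF^\vee\otimes\mathcal{C}^2)\xrightarrow{\,\cdot s_0\,}H^0(S^2\FF\otimes\bigwedge^2\FF^\vee\otimes\mathcal{C})\xrightarrow{\,\cdot s_0\,}H^0(S^3\FF\otimes\bigwedge^2\FF^\vee)
\]
is an isomorphism. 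Composing, $\cdot\,s_0^2$ is an isomorphism; equivalently the product map is a surjection between two $3$-dimensional spaces, hence bijective.

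For the geometric statement I realise the triple cover as a relative cubic. In the $\mathbb{P}^1$-bundle $\PP(\EE^\vee)$ of the introduction, with $\EE=\FF^\vee$ and $\pi_*\oo(1)=\FF$, the cover $X$ is the divisor $V(\eta)\in|\oo(3)\otimes\pi^*\bigwedge^2\FF^\vee|$. By the bijectivity just proved, every non-zero $\eta$ equals $s_0^2\cdot a$ with $a\in H^0(A,\,\FF\otimes\bigwedge^2\FF^\vee\otimes\mathcal{C}^2)$, so $V(\eta)=2\Sigma+D_a$, where $\Sigma=V(s_0)$ and $D_a=V(a)$ are the relative-degree-one sections cut out by $s_0$ and $a$. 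Their classes in $\PP(\EE^\vee)$ differ by $\pi^*(\mL^{-1}\otimes\mathcal{C}^3)$, a non-trivial line bundle (it has degree $2$ on the fibres $F_q$ of $\pi_E$); thus $\Sigma\neq D_a$. Therefore $X$ carries the two distinct components $\Sigma$ and $D_a$, with $\Sigma$ appearing with multiplicity $2$, and so it is reducible and non-reduced, as claimed.

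I expect the only real obstacle to be the bookkeeping in the second paragraph: checking carefully, via the functoriality of the symmetric-power filtration, that the Eagon--Northcott inclusions coincide with multiplication by the distinguished section $s_0$, and that the product map in the statement is literally the composite $\cdot\,s_0^2$ rather than a twist of it. Everything else is dimension counting together with the standard description of a triple cover as a relative cubic.
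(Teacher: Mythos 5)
Your proof is correct and follows essentially the same route as the paper's: the bijectivity is the dimension count $3=3\cdot 1\cdot 1$ from Proposition \ref{prop.no.product.Sch} combined with the Eagon--Northcott inclusions induced by $\mathcal{C}\hookrightarrow\FF$ (i.e.\ multiplication by the canonical section $s_0$), whose cokernels have no sections, and the geometric conclusion comes from the forced factorization $\eta=\eta_1\eta_2^2$. You merely make explicit two points the paper leaves implicit, namely that the product map is literally $\cdot\,s_0^2$ and that the two divisors $V(s_0)$ and $V(\eta_1)$ are distinct.
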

\begin{proof}
The first statement follows from Proposition
\ref{prop.no.product.Sch} and from $H^0(A, \, \FF \otimes
\mathcal{C}^{-1})= \mathbb{C}$. The second statement is an immediate
consequence of the first one, since $\eta$ can be written as
$\eta=\eta_1 \eta_2^2$, where $\eta_1 \in H^0(A, \, \FF \otimes
\bigwedge^2 \FF^{\vee} \otimes \mathcal{C}^2)$ and $\eta_2$ is a
generator of $H^0(A, \, \FF \otimes \mathcal{C}^{-1})$.
\end{proof}

\subsection{The case where $\mathcal{L}$ is a product polarization and $\mathcal{F}$ is simple}

\begin{prop} \label{prop.prod.ample}
Suppose that $\mL$ is a product $(1, \,2)$-polarization. Then the
following are equivalent:
\begin{itemize}
\item[$\boldsymbol{(i)}$] $\FF$ is simple;
\item[$\boldsymbol{(ii)}$] $\FF$ is $\mathcal{H}$-stable for any
ample line bundle $\mathcal{H}$ on $A;$
\item[$\boldsymbol{(iii)}$] there exists a $2$-division point
 $q \in E$ such that $\FF$ is isomorphic to the unique non-trivial
extension
\begin{equation*}
0 \lr \mathcal{O}_A(F_q) \lr \FF \lr \oo_A(E_o+F_q) \lr 0;
\end{equation*}
\item[$\boldsymbol{(iv)}$] there exists a $2$-division
point $q \in E$ such that $\FF(-F_q)=\pi_F^*\mathcal{G}$, where
$\mathcal{G}$ is the unique non-trivial extension
\begin{equation*}
0 \lr \oo_F \lr \mathcal{G} \lr \oo_F(o) \lr 0.
\end{equation*}
\end{itemize}
\end{prop}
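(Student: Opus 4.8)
The plan is to prove the equivalence by establishing a cycle of implications, treating $(iii)$ and $(iv)$ as the concrete descriptions that pin down $\FF$ uniquely, and then deriving stability and simplicity from this explicit form. First I would show $(iii) \Leftrightarrow (iv)$, which is essentially formal: pulling back the extension of $\oo_F$ by $\oo_F(o)$ via $\pi_F$ and twisting by $\oo_A(F_q)$ turns $\pi_F^*\oo_F = \oo_A$ into $\oo_A(F_q)$ and $\pi_F^*\oo_F(o) = \oo_A(E_o)$ into $\oo_A(E_o + F_q)$, so the two extension sequences correspond under the identifications $\det \FF = \oo_A(E_o + 2F_q)$; one checks that $\oo_A(E_o + 2F_q)$ is algebraically equivalent to $\mL = \oo_A(E_o + 2F_o)$ since $q$ is a $2$-division point, which is why we land on the correct polarization. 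The non-triviality of the extension and the fact that $H^1$ is one-dimensional (so the extension is unique) follow from the corresponding statement on $F$, where $\mathcal{G}$ is the Atiyah bundle and $\mathrm{Ext}^1(\oo_F(o), \oo_F) = H^1(F, \oo_F(-o)) = \mathbb{C}$.

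Next I would prove $(iv) \Rightarrow (ii)$, that is, stability for every ample polarization. The key point is that $\FF(-F_q) = \pi_F^*\mathcal{G}$ is a pullback of the Atiyah bundle $\mathcal{G}$ on the elliptic curve $F$, and $\mathcal{G}$ is known to be the unique indecomposable rank $2$ bundle of degree $1$ on $F$; such a bundle is stable. A destabilizing sub-line bundle $\mathcal{N} \hookrightarrow \FF$ would, after twisting, give a sub-line bundle of $\pi_F^*\mathcal{G}$ violating the slope inequality for some $\mathcal{H}$, so I would argue that any sub-line bundle of $\pi_F^*\mathcal{G}$ restricts on a general fibre $F_q$ to a sub-line bundle of $\mathcal{G}$, and use the stability of $\mathcal{G}$ together with a direct slope computation (using $\det\FF = \mL$, $c_2(\FF) = 1$) to rule out destabilization for arbitrary ample $\mathcal{H}$. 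The implication $(ii) \Rightarrow (i)$ is standard, since any $\mathcal{H}$-stable bundle is simple.

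To close the cycle I would prove $(i) \Rightarrow (iii)$: assuming $\FF$ is simple (hence, by the dichotomy from Proposition \ref{prop.prod.non.ample}, \emph{not} of Schwarzenberger type), I would run the same Fitting-ideal analysis as in Proposition \ref{prop.L.irred.F.simple}, writing $0 \to \oo_A(C) \to \FF \to \mathcal{I}_Z(L-C) \to 0$ with $h^0(\oo_A(C)) = 1$ and $1 = c_2(\FF) = C(L-C) + \ell(Z)$. The product-polarization case now \emph{allows} the elliptic-curve possibility $C = F_q$ that was excluded in the irreducible case, and I expect simplicity to force exactly $C = F_q$ with $q$ a $2$-division point and $\ell(Z) = 0$, yielding the sequence in $(iii)$; non-triviality and uniqueness of the extension follow as before from $\mathrm{Ext}^1(\oo_A(E_o + F_q), \oo_A(F_q)) = H^1(A, \oo_A(-E_o)) = \mathbb{C}$.

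The main obstacle I anticipate is the slope computation in $(iv) \Rightarrow (ii)$: proving $\mathcal{H}$-stability for \emph{every} ample $\mathcal{H}$, not just for $\mathcal{L}$, requires controlling sub-line bundles $\mathcal{N} = \oo_A(D)$ of $\FF$ against an arbitrary ample class, and one must rule out the possibility that a fibre-type divisor $D = aF_o + bE_o$ destabilizes for some skewed polarization. The cleanest route is to exploit that $\pi_F^*\mathcal{G}$ is trivial on every horizontal fibre $E_p$ up to the extension, so that restricting a would-be destabilizing $\mathcal{N}$ to fibres and invoking the stability of the indecomposable $\mathcal{G}$ on $F$ constrains the numerical type of $D$ enough that the slope inequality $2 (D \cdot \mathcal{H}) > L \cdot \mathcal{H}$ becomes impossible for any ample $\mathcal{H}$; verifying this uniformly is the delicate part of the argument.
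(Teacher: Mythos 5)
Your architecture (a cycle with $(iii)\Leftrightarrow(iv)$ by pullback along $\pi_F$ and $(ii)\Rightarrow(i)$ standard) is sensible, and the $(iii)\Leftrightarrow(iv)$ step matches the paper's. But both implications you identify as substantive have genuine gaps. The implication $(iv)\Rightarrow(ii)$ cannot be proved by the numerical argument you sketch: restricting a sub-line bundle $\oo_A(D')\hookrightarrow\pi_F^*\mathcal{G}$ to the fibres of the two projections only yields $D'\cdot E_o\le 0$ and $D'\cdot F_o\le 0$, and these two inequalities do \emph{not} force $(2D'-E_o)\cdot H<0$ for every ample $H$ once $NS(E\times F)$ has rank greater than $2$. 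For instance, if $E\cong F$ and $\Gamma$ is the graph of an isomorphism, the class $D'=E_o+F_o-2\Gamma$ satisfies $D'\cdot E_o=D'\cdot F_o=-1$, yet $(2D'-E_o)\cdot H=3n-5>0$ for the ample class $H=E_o+F_o+n\Gamma$ with $n\ge 2$. So you would have to prove directly that $H^0\bigl(A,\,\pi_F^*\mathcal{G}\otimes\oo_A(-D')\bigr)=0$ for all such classes, not merely constrain their numerics. The paper never attempts $(iv)\Rightarrow(ii)$: it closes the cycle with $(iv)\Rightarrow(i)$ (projection formula gives $\mathrm{End}(\FF)=\mathrm{End}(\mathcal{G})=\CC$), and then $(i)\Rightarrow(ii)$ comes for free from Takemoto's criterion, since $c_1^2(\FF)-4c_2(\FF)=0$ and a simple bundle with vanishing discriminant on a surface is stable for \emph{every} polarization. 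You should reorient your cycle the same way.

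The step $(i)\Rightarrow(iii)$ is also not a routine transplant of Proposition \ref{prop.L.irred.F.simple}. In the subcase where $C$ is a principal polarization, the contradiction used there was precisely that $\mL$ would be forced to be of product type --- which is now the hypothesis, so nothing is contradicted; you must instead recognize this subcase as the Schwarzenberger situation of Proposition \ref{prop.prod.non.ample} when $p\neq o$ (hence not simple), and exclude $p=o$ separately (there $h^0(\oo_A(L-C))=1$ and $h^1(\oo_A(C))=0$ force $h^0(\FF)=2$). More seriously, in the subcase $C=0$ the base locus of $|L|=|E_o+2F_o|$ is the entire curve $E_o$, not four points, so the Cayley--Bacharach condition permits a locally free extension $0\to\oo_A\to\FF\to\mL\otimes\mathcal{I}_x\to 0$ for \emph{every} $x\in E_o$; you must show all of these either violate $h^0(\FF)=1$, $h^1(\FF)=0$ or collapse onto case $(iii)$, and your sketch does not address this. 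The paper's proof of $(ii)\Rightarrow(iii)$ avoids both issues by invoking Oda's structure theorem --- a stable bundle with these invariants on $E\times F$ is $\varphi_*\mathcal{N}$ for a degree-$2$ isogeny $\varphi$ and a line bundle $\mathcal{N}$ --- and reading off the sub-line bundle $\oo_A(F_q)$ from $\varphi_*\oo_B(E_a)=\FF(-F_q)$; I would recommend that route over the section/zero-locus analysis.
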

\begin{proof}
$\boldsymbol{(i)} \Rightarrow \boldsymbol{(ii)}$ See
\cite[Proposition
5.1]{T72} and \cite[Proposition 2.1]{T73}. \\
$\boldsymbol{(ii)} \Rightarrow \boldsymbol{(iii)}$ If $\FF$ is
$\mathcal{H}$-stable, then it is simple. By \cite[Corollary p.
249]{O71}, there exists an abelian surface $B$, a degree $2$ isogeny
$\varphi \colon B \to A$ and a line bundle $\mathcal{N}:=\oo_B(N)$
on $B$ such that
\begin{equation} \label{eq:N}
\varphi_{*} \mathcal{N}= \FF.
\end{equation}
Let $\mathcal{Q} :=\oo_A(Q) \in \textrm{Pic}(A)$ be the $2$-torsion
line bundle defining the double cover $\varphi$; then
 the following equality holds in $\textrm{Pic}(A)$:
\begin{equation*}
\oo_{A}(E_o+2F_o)= c_1(\FF) = \oo_{A}(\varphi_*N + Q),
\end{equation*}
see \cite[Proposition 27 p. 47]{F98}. This implies
\begin{itemize}
\item $B=E \times \widetilde{F}$ and
\begin{equation*}
\varphi = id \times \tilde{\varphi} \colon E \times \widetilde{F}
\lr E \times F,
\end{equation*}
where $\tilde{\varphi} \colon \widetilde{F} \to F$ is a degree $2$
isogeny. Note that $Q = E_p - E_o$, where $p \in F$ is a
$2$-division point.
\item $N$ is a principal product polarization
 of the form $N=E_a + \widetilde{F}_q$, where $a \in \widetilde{F}$ is such
that $\tilde{\varphi}(a)=p$ and $q \in E$ is a $2$-division point.
\end{itemize}
Since $\widetilde{F}_q=\varphi^*F_q$, by using \eqref{eq:N} and
projection formula we obtain
\begin{equation*}
\varphi_* \oo_B(E_a)=\varphi_*(\mathcal{N}
(-\widetilde{F}_q))=\mathcal{F}(-F_q).
\end{equation*}
Thus $h^0(A, \,\mathcal{F}(-F_q))=1$, and so there exists an
injective morphism of sheaves $\oo_A(F_q) \hookrightarrow \FF$. Then
we can find an effective divisor $D$ on $A$ and a zero-dimensional
subscheme $Z \subset A$ such that $\FF$ fits into a short exact
sequence
\begin{equation*}
0 \lr \oo_A(F_q + D) \lr \FF \lr \mathcal{I}_Z(E_o+F_q-D) \lr 0.
\end{equation*}
Since $h^0(A, \, \oo_{A}(F_q + D))=H^0(A, \, \FF)=1$, either $D=0$
or $F_q + D$ is a principal product polarization. The latter
possibility cannot occur, otherwise $\FF$ would be of
Schwarzenberger type (Proposition \ref{prop.prod.non.ample}). Then
$D=0$ and $\ell(Z)=c_2(\FF)-(E_o+F_q)F_q=0$, so $Z$ is empty and
we are done. \\
$\boldsymbol{(iii)} \Rightarrow \boldsymbol{(iv)}$ We have $\mathcal{O}_A(E_o) = \pi^*_F\mathcal{O}_F(o)$. By \cite[Footnote ***, p.
257]{O71} the map
\begin{equation*}
\textrm{Ext}^1(\mathcal{O}_F(o), \mathcal{O}_F) \longrightarrow \textrm{Ext}^1(\mathcal{O}_A(E_o), \mathcal{O}_A)
\end{equation*}
is an isomorphism.  Since the unique not-trivial extension of $\mathcal{O}_F(o)$ with  $\mathcal{O}_F$ is $\mathcal{G}$, we get $\boldsymbol{(iv)}$.
\\
$\boldsymbol{(iv)} \Rightarrow \boldsymbol{(i)}$ Again, \cite[p.
257]{O71} gives
$\textrm{End}(\FF)=\textrm{End}(\mathcal{G})=\mathbb{C}$.
\end{proof}

\begin{prop} \label{prop.no.product.ample}
If $\mathcal{L}$ is a product polarization and $\FF$ is simple, we
have
\begin{equation*}
h^0(A, \, S^3 \FF \otimes \bigwedge^2 \FF^{\vee} \otimes
\oo_{A}(-F_q))=h^0(A, \, S^3 \FF \otimes \bigwedge^2 \FF^{\vee})=2.
\end{equation*}
\end{prop}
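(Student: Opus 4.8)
The plan is to exploit the product structure of $\FF$ provided by Proposition \ref{prop.prod.ample}. Since $\FF$ is simple, part $\boldsymbol{(iv)}$ of that proposition gives a $2$-division point $q \in E$ with $\FF(-F_q) = \pi_F^* \mathcal{G}$, i.e. $\FF = \pi_F^* \mathcal{G} \otimes \oo_A(F_q)$, where $\mathcal{G}$ is the unique non-trivial extension of $\oo_F(o)$ by $\oo_F$. First I would rewrite everything in terms of the two elliptic factors: using $\oo_A(F_q) = \pi_E^* \oo_E(q)$, $\oo_A(E_o) = \pi_F^* \oo_F(o)$, and $\bigwedge^2 \FF = \mL = \oo_A(E_o + 2F_o)$, a direct computation gives
\[
S^3 \FF \otimes \textstyle\bigwedge^2 \FF^{\vee} = \pi_F^*\!\left( S^3 \mathcal{G} \otimes \oo_F(-o) \right) \otimes \pi_E^* \oo_E(3q - 2o).
\]
Because $q$ is a $2$-division point, $\oo_E(3q - 2o) \cong \oo_E(q)$, and tensoring by $\oo_A(-F_q) = \pi_E^*\oo_E(-q)$ simply replaces this factor by $\oo_E$. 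By the K\"unneth formula the two global-section spaces in the statement therefore equal $H^0(F,\, S^3\mathcal{G}\otimes\oo_F(-o))$ tensored with $H^0(E,\, \oo_E(q))$ and $H^0(E,\, \oo_E)$ respectively; since both $E$-factors are line bundles of degree $\le 1$ with a one-dimensional space of sections, the two numbers coincide and it remains to prove $h^0(F,\, S^3\mathcal{G}\otimes\oo_F(-o)) = 2$.

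Write $V := S^3\mathcal{G}\otimes\oo_F(-o)$, a rank $4$ bundle on the elliptic curve $F$. Since $\det \mathcal{G} = \oo_F(o)$ has degree $1$, one computes $\deg V = 2$, so Riemann--Roch gives $\chi(V) = 2$ and it suffices to show $h^1(F,\, V) = 0$. For this I would first observe that $\mathcal{G}$ is stable of slope $1/2$: it is simple by Proposition \ref{prop.prod.ample}, hence indecomposable, and any sub-line bundle of degree $\ge 1$ would, after composing with the surjection $\mathcal{G}\to\oo_F(o)$, either split the extension or factor through the sub-bundle $\oo_F$ and force a contradiction on degrees. In characteristic zero the symmetric power $S^3\mathcal{G}$ is then semistable, of slope $3/2$, so $V$ is semistable of slope $1/2$ and $V^{\vee}$ is semistable of slope $-1/2 < 0$. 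A semistable bundle of negative slope has no non-zero sections, whence $h^0(F,\, V^{\vee}) = 0$; by Serre duality on $F$ (where $K_F = \oo_F$) this gives $h^1(F,\, V) = h^0(F,\, V^{\vee}) = 0$, and hence $h^0(F,\, V) = \chi(V) = 2$, as desired.

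The main obstacle is precisely the vanishing $h^1(F,\, V) = 0$: the slope bookkeeping is routine, but the fact that $h^0(F,\, V)$ equals exactly $2$ (and not more) encodes the non-splitting of the natural filtration of $S^3\mathcal{G}$, and the cleanest way to capture this is the semistability of $S^3\mathcal{G}$, which is where the characteristic-zero hypothesis enters. As a self-contained alternative I would run the filtration $0 \to S^{n-1}\mathcal{G} \to S^n\mathcal{G} \to \oo_F(no) \to 0$ coming from $\oo_F \hookrightarrow \mathcal{G}$: the problem then reduces to checking $h^0(F,\, S^2\mathcal{G}\otimes\oo_F(-o)) = 0$, i.e. that the relevant connecting maps are injective; once this is known, $\chi(S^2\mathcal{G}\otimes\oo_F(-o)) = 0$ forces $h^1(F,\, S^2\mathcal{G}\otimes\oo_F(-o)) = 0$, and the top sequence immediately yields $h^0(F,\, V) = h^0(F,\, \oo_F(2o)) = 2$. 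I expect the semistability route to be the shortest to write down, with the filtration computation serving as a cross-check.
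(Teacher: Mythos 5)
Your proof is correct, and it reaches the conclusion by a genuinely different computation than the paper's. Both arguments start from Proposition \ref{prop.prod.ample}\,$(iv)$, i.e.\ from $\FF = \pi_F^*\mathcal{G} \otimes \oo_A(F_q)$, but they diverge at the key step. The paper identifies the bundle $S^3\FF \otimes \bigwedge^2\FF^{\vee}$ explicitly: using Atiyah's computation $\mathcal{G}\otimes\mathcal{G}^{\vee} = \oo_F \oplus \mathcal{Q}_1 \oplus \mathcal{Q}_2 \oplus \mathcal{Q}_3$ and the Krull--Schmidt theorem, it deduces $S^2\mathcal{G} = \bigoplus_i \mathcal{Q}_i(o)$, then $S^3\mathcal{G} = \mathcal{G}(o)^{\oplus 2}$, and finally $S^3\FF\otimes\bigwedge^2\FF^{\vee} \cong \FF \oplus \FF$, after which both $h^0$'s are read off from $h^0(A,\FF)=h^0(A,\FF(-F_q))=1$. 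You instead reduce by K\"unneth to the single number $h^0(F, S^3\mathcal{G}\otimes\oo_F(-o))$ and compute it ``softly'': stability of $\mathcal{G}$, semistability of symmetric powers in characteristic zero, the vanishing of $h^1$ by slope reasons and Serre duality, and Riemann--Roch. Your route buys generality and brevity of verification (no need to identify $S^2\mathcal{G}$ or $S^3\mathcal{G}$ up to isomorphism, and the same scheme would handle $S^n$ for any $n$), at the cost of invoking the semistability of symmetric powers; the paper's route is more elementary in its inputs and yields the stronger structural statement $S^3\FF\otimes\bigwedge^2\FF^{\vee}=\FF\oplus\FF$, which makes the twisted and untwisted $h^0$'s transparently equal. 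Your fallback via the filtration $0 \to S^2\mathcal{G}(-o) \to S^3\mathcal{G}(-o) \to \oo_F(2o) \to 0$ is essentially the paper's computation in disguise: the required vanishing $h^0(F, S^2\mathcal{G}\otimes\oo_F(-o))=0$ is exactly what the identification $S^2\mathcal{G}(-o)=\mathcal{Q}_1\oplus\mathcal{Q}_2\oplus\mathcal{Q}_3$ delivers (and is the content of \eqref{eq.S2=0} in the non-product case).
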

\begin{proof}
By Proposition \ref{prop.prod.ample}, we have $\FF(-F_q)= \pi_F^*
\mathcal{G}$, where $\mathcal{G}$ is the unique non-trivial
extension of $\oo_F(o)$ by $\oo_F$. Therefore
\begin{equation} \label{eq.H0.F.ample}
h^0(A, \, \FF(-F_q))=h^0(F, \, \pi_{F*}\FF(-F_q))=h^0(F, \,
\mathcal{G)}=1.
\end{equation}
By \cite[p. 438-439]{At57} we have
\begin{equation*}
S^2\mathcal{G}(-o)\oplus \mathcal{O}_A =\mathcal{G} \otimes \mathcal{G}^{\vee} = \mathcal{O}_A \oplus \mathcal{Q}_1  \oplus \mathcal{Q}_2 \oplus \mathcal{Q}_3
\end{equation*}
where the $\mathcal{Q}_i$ are the non trivial $2$-torsion line bundles on $A$. Since the decomposition of a vector bundle in indecomposable summands is unique (\cite{At56}) we get
\begin{equation*}
S^2\mathcal{G}=\mathcal{Q}_1(o)  \oplus \mathcal{Q}_2(o) \oplus \mathcal{Q}_3(o),
\end{equation*}
hence
\begin{equation*}
\begin{split}
S^3 \mathcal{G}\oplus \mathcal{G}(o) & = S^2\mathcal{G} \otimes \mathcal{G} = \mathcal{G} \otimes \mathcal{Q}_1(o)  \oplus \mathcal{G} \otimes \mathcal{Q}_2(o) \oplus \mathcal{G} \otimes \mathcal{Q}_3(o) \\
& = \mathcal{G}(o)  \oplus \mathcal{G}(o)  \oplus \mathcal{G}(o).
\end{split}
\end{equation*}
Therefore $S^3
\mathcal{G}=\mathcal{G}(o) \oplus \mathcal{G}(o)$ and by
straightforward computations one obtains
\begin{equation} \label{eq.S3.FF.ample}
S^3 \FF \otimes \bigwedge^2 \FF^{\vee} = \mathcal{F} \oplus
\mathcal{F}.
\end{equation}
Now the claim follows from \eqref{eq.H0.F.ample} and
\eqref{eq.S3.FF.ample}.
\end{proof}

\begin{cor} \label{cor.simple}
Assume that $\mathcal{L}$ is a product polarization and that $\FF$
is simple, and let $f \colon X \to A$ be the triple cover defined by
a general section $\eta \in H^0(A, \, S^3 \FF \otimes \bigwedge^2
\FF^{\vee})$. Then the variety $X$ is non-normal, and its
normalization $X^{\nu}$ is a properly elliptic surface with
$p_g(X^{\nu})=2$, $q(X^{\nu})=3$.
\end{cor}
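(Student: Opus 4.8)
The plan is to reduce everything to the geometry of a product, using the explicit description of $\FF$ in Proposition \ref{prop.prod.ample} and the vanishing statements of Proposition \ref{prop.no.product.ample}. Write $A=E\times F$ and recall from Proposition \ref{prop.prod.ample}(iv) that $\FF(-F_q)=\pi_F^*\mathcal{G}$, so that $\FF=\pi_F^*\mathcal{G}\otimes\oo_A(F_q)$ and the Tschirnhausen bundle of $f$ is $\FF^{\vee}=\pi_F^*\mathcal{G}^{\vee}\otimes\oo_A(-F_q)$, where $\oo_A(F_q)=\pi_E^*\oo_E(q)$. The first step is non-normality. By Proposition \ref{prop.no.product.ample} we have $h^0(A,\,S^3\FF\otimes\bigwedge^2\FF^{\vee}\otimes\oo_A(-F_q))=h^0(A,\,S^3\FF\otimes\bigwedge^2\FF^{\vee})$, so every global section, and in particular the general $\eta$ defining $f$, vanishes identically along the elliptic curve $F_q$. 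Since by Theorem \ref{teo.miranda} the section $\eta$ determines the algebra structure of $f_*\oo_X=\oo_A\oplus\FF^{\vee}$, its vanishing along $F_q$ forces the multiplication on the $\FF^{\vee}$-part to degenerate there, so the fibres of $f$ over $F_q$ are non-reduced; being non-reduced along a divisor, $X$ is non-normal. This is the counterpart of Corollary \ref{cor.nonsimple}, with the difference that here $\eta$ does not factor and $X$ will turn out to be irreducible.

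The second step is to identify the normalization. Expanding the twist one gets $S^3\FF\otimes\bigwedge^2\FF^{\vee}=\pi_F^*(S^3\mathcal{G}\otimes\bigwedge^2\mathcal{G}^{\vee})\otimes\pi_E^*\oo_E(q)$, so by K\"unneth the section splits as $\eta=\pi_F^*(\eta')\otimes\pi_E^*(t_0)$, where $t_0\in H^0(E,\,\oo_E(q))$ is the section with $\mathrm{div}(t_0)=q$ and $\eta'\in H^0(F,\,S^3\mathcal{G}\otimes\bigwedge^2\mathcal{G}^{\vee})$ is general (this space being $2$-dimensional). Let $g\colon Y\to F$ be the triple cover with Tschirnhausen bundle $\mathcal{G}^{\vee}$ and section $\eta'$. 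Over $E\setminus\{q\}$, where $t_0$ is nowhere zero, a trivialization of $\oo_E(-q)$ identifies the cover data along $\{e\}\times F$ with $(\mathcal{G}^{\vee},\,c(e)\,\eta')$ for a single-valued nowhere-vanishing function $c$ on $E\setminus\{q\}$. Rescaling $\eta'$ by a nonzero scalar is induced by the corresponding scalar automorphism of $\mathcal{G}^{\vee}$ (the $\mathbb{G}_m$-action multiplies a section of $S^3\mathcal{G}\otimes\bigwedge^2\mathcal{G}^{\vee}$ by $t^{-1}$), so it yields a canonical isomorphism of covers; as $c$ carries no monodromy around $q$, this trivializes the family and gives $X|_{(E\setminus\{q\})\times F}\cong(E\setminus\{q\})\times Y$. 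Hence $X$ is birational to $E\times Y$, and its normalization $X^{\nu}$ is the normal proper surface completing this trivial family.

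It remains to identify $Y$ and compute the invariants. From $\det\mathcal{G}=\oo_F(o)$ we get $c_1(\mathcal{G}^{\vee})=-1$, hence $\chi(\mathcal{G}^{\vee})=-1$ and $\chi(\oo_Y)=\chi(g_*\oo_Y)=\chi(\oo_F)+\chi(\mathcal{G}^{\vee})=-1$. The cover $Y$ is connected: a decomposition $Y=Y_1\sqcup Y_2$ would split $g_*\oo_Y=\oo_F\oplus\mathcal{G}^{\vee}$ and force $\mathcal{G}^{\vee}$ to be decomposable, contradicting the indecomposability of $\mathcal{G}$ (Proposition \ref{prop.prod.ample}(iv)). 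For general $\eta'$ the branch locus on $F$ is reduced, so $Y$ is smooth; thus $Y$ is a smooth connected curve of genus $2$. Finally $E\times Y$, with $g(E)=1$ and $g(Y)=2$, is minimal with $q=g(E)+g(Y)=3$, $p_g=g(E)\,g(Y)=2$ and $\kappa=\kappa(E)+\kappa(Y)=1$, i.e. it is a minimal properly elliptic surface (the elliptic fibration being the projection $E\times Y\to Y$). Since $p_g$, $q$ and $\kappa$ are birational invariants, $X^{\nu}$ is properly elliptic with $p_g(X^{\nu})=2$ and $q(X^{\nu})=3$.

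The step I expect to be the main obstacle is the second one: turning the bundle-and-section identity $\eta=\pi_F^*(\eta')\otimes\pi_E^*(t_0)$ into the geometric statement $X|_{(E\setminus\{q\})\times F}\cong(E\setminus\{q\})\times Y$, that is, checking carefully that the $\mathbb{G}_m$-rescaling trivializes the restricted family with no monodromy around $q$ and that passing to the normalization introduces nothing unexpected over $F_q$. The remaining steps are either cohomological bookkeeping already carried out in Section \ref{sec.vec} or standard facts about products of curves.
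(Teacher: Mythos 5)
Your proof is correct and, after the shared first step (non-normality forced by the vanishing of every section, hence of $\eta$, along $F_q$), it diverges genuinely from the paper's argument. The paper stays inside Miranda's formalism: it observes that $f\circ\nu\colon X^{\nu}\to A$ is again a flat triple cover, identifies its Tschirnhausen bundle as $\EE^{\nu}=\FF(-F_q)^{\vee}=\pi_F^*\mathcal{G}^{\vee}$, reads off that the branch divisor lies in $|2E_o|$ (whence the elliptic fibration), and computes $c_1^2(\EE^{\nu})=c_2(\EE^{\nu})=0$ together with $h^i(A,\,\EE^{\nu})=0,1,1$ to obtain $p_g=2$, $q=3$, $K^2=0$ from Proposition \ref{prop.invariants}. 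You instead push the product structure all the way: the K\"unneth factorization $\eta=\pi_F^*(\eta')\otimes\pi_E^*(t_0)$ (which is exactly the reason Proposition \ref{prop.no.product.ample} holds) lets you trivialize the family over $E\setminus\{q\}$ and exhibit $X^{\nu}$ as $E\times Y$, with $Y\to F$ the genus-$2$ triple cover determined by $(\mathcal{G}^{\vee},\eta')$; in fact one gets an isomorphism, not merely a birational equivalence, since both surfaces are normal, finite over $A$, and agree over a dense open subset. Your route is longer but more transparent: it explains geometrically why the cover degenerates and makes the elliptic fibration and the invariants of $X^{\nu}$ visible by inspection, whereas the paper's computation is shorter and purely cohomological. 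Two minor remarks. First, the $\mathbb{G}_m$-rescaling discussion is superfluous: a single trivialization of $\oo_E(q)$ over $E\setminus\{q\}$ by $t_0$ already identifies the restricted cover data with the pullback of $(\mathcal{G}^{\vee},\eta')$. Second, the claim that a general $\eta'$ yields a \emph{smooth} $Y$ is asserted rather than proved; it is genuinely needed (a nodal $Y$ would lower $q$ and $p_g$ of the normalization), but it sits at exactly the same level of rigor as the paper's implicit assumption that Proposition \ref{prop.invariants} applies to $f^{\nu}$, so it is not a gap relative to the paper's own proof.
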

\begin{proof}
Proposition \ref{prop.no.product.ample} shows that every section of
 $S^3 \FF \otimes \bigwedge^2 \FF^{\vee}$ vanishes along the curve
 $F_q$; this implies that $X$ is singular along
 $f^{-1}(F_q)$, in particular $X$ is non-normal. The composition of
 $f \colon X \to A$ with the normalization map is a triple cover
$f^{\nu} \colon X^{\nu} \to A$, whose Tschirnhausen bundle is
 $\mathcal{E}^{\nu}: =\mathcal{F}(-F_q)^{\vee}$. Since
 $\bigwedge^2\EE^{\nu}=\oo_A(-E_o)$, the morphism $f^{\nu}$
 is branched over a divisor belonging to the linear system $|2E_o|$,
 hence $X^{\nu}$ contains an elliptic fibration. Moreover
$c_1^2(\EE^{\nu})=0$, $c_2(\EE^{\nu})=0$ and a straightforward
 computation using $\FF(-F_q)=\pi_F^* \mathcal{G}$ and Leray
 spectral sequence yields
\begin{equation*}
h^0(A, \, \EE^{\nu})=0, \quad h^1(A, \, \EE^{\nu})=1, \quad h^2(A,
\, \EE^{\nu})=1.
\end{equation*}
Therefore Proposition \ref{prop.invariants} implies
$p_g(X^{\nu})=2$, $q(X^{\nu})=3$ and $K_{X^{\nu}}^2=0$, hence
$X^{\nu}$ is a properly elliptic surface.
\end{proof}

\section{Surfaces with $p_g=q=2$, $K^2_S=5$ and Albanese map of degree
$3$}\label{sec.degree.3}

\subsection{The triple cover construction} \label{subsec.ch}
The first example of a surface $S$ of general type with $p_g=q=2$
and $K_S^2=5$ was given by Chen and Hacon in \cite{CH06}, as a
triple cover of an abelian surface. In order to fix our notation,
let us recall their construction.

Let $(A, \, \mL)$ be a $(1, \,2)$-polarized abelian surface, and
assume that $\mL$ is a general, symmetric polarization. Since
\begin{equation*}
h^0(A, \, \mL \otimes \mathcal{Q} )=2, \quad h^1(A, \, \mL \otimes
\mathcal{Q})=0, \quad h^2(A, \, \mL \otimes \mathcal{Q})=0
\end{equation*}
for all $\mathcal{Q} \in \textrm{Pic}^0(A)$, the line bundle
$\mL^{-1}$ satisfies IT of index $2$. Then its Fourier-Mukai
transform $\mathcal{F}:= \widehat{\mL^{-1}}$ is a rank $2$ vector
bundle on $\wA$ which satisfies IT of index $0$, see \cite[Theorem
14.2.2]{BL04}. Let us consider the isogeny
\begin{equation*}
\phi:=\phi_{\mL^{-1}}\colon A \longrightarrow \wA,
\end{equation*}
whose kernel is $K(\mL^{-1})=K(\mL)$; then by \cite[Proposition 3.11]{Mu81}
we have
\begin{equation}\label{eq.mukai}
\phi^* \FF = \mL \oplus \mL.
\end{equation}

\begin{prop} \label{prop.coh.S3}
The vector bundle $S^3\FF \otimes \bigwedge^2 \FF^{\vee}$ satisfies
\begin{equation*}
h^0(\wA, \, S^3\FF \otimes \bigwedge^2 \FF^{\vee})=2, \; \; h^1(\wA,
\, S^3\FF \otimes \bigwedge^2 \FF^{\vee})=0, \; \; h^2(\wA, \,
S^3\FF \otimes \bigwedge^2 \FF^{\vee})=0.
\end{equation*}
\end{prop}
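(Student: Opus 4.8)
The plan is to pull everything back to $A$ along the \'etale isogeny $\phi=\phi_{\mL^{-1}}\colon A \lr \wA$, whose Galois group is $K(\mL)$ acting by translations, and then to recover the cohomology on $\wA$ by taking $K(\mL)$-invariants. Using \eqref{eq.mukai} I would first compute
\begin{equation*}
\phi^*\bigl(S^3\FF \otimes \bigwedge^2\FF^{\vee}\bigr)
= S^3(\mL\oplus\mL)\otimes\bigl(\bigwedge^2(\mL\oplus\mL)\bigr)^{\vee}
= (\mL^3)^{\oplus 4}\otimes\mL^{-2}=\mL^{\oplus 4},
\end{equation*}
where I used $S^3(\mL\oplus\mL)=\mL^3\otimes S^3\CC^2=(\mL^3)^{\oplus 4}$ and $\bigwedge^2(\mL\oplus\mL)=\mL^2$. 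Since $\phi$ is finite \'etale and we work in characteristic $0$, descent gives $H^i(\wA,\,S^3\FF\otimes\bigwedge^2\FF^{\vee})=H^i(A,\,\mL^{\oplus 4})^{K(\mL)}$ for every $i$, the group $K(\mL)$ acting through the canonical linearization on the pullback.

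Because $h^1(A,\mL)=h^2(A,\mL)=0$, the groups $H^1(A,\mL^{\oplus 4})$ and $H^2(A,\mL^{\oplus 4})$ vanish, and so do their invariant subspaces; this yields $h^1=h^2=0$ immediately. The only real content is therefore the computation of $h^0=\dim H^0(A,\mL^{\oplus 4})^{K(\mL)}$, where $\dim H^0(A,\mL^{\oplus 4})=4\,h^0(A,\mL)=8$.

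To carry out the invariant count I would track the theta-group weights. The linearization of $\phi^*\FF=\mL\oplus\mL$ is a \emph{genuine} $K(\mL)$-action (it is the descent datum of a pullback along a Galois cover), so writing $\phi^*\FF=\mL\otimes W$ with $W$ a two-dimensional representation of the theta group $\mathcal{G}(\mL)$, the central $\CC^*$ must act on $W$ with weight $-1$, opposite to its weight $+1$ on $\mL$. A short bookkeeping then shows that $\mL^{\oplus 4}=\mL\otimes\bigl(S^3W\otimes(\det W)^{-1}\bigr)$ carries total weight $0$, so $M:=H^0(A,\mL)\otimes S^3W\otimes(\det W)^{-1}$ is honestly a representation of the finite group $K(\mL)$. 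Now $V:=H^0(A,\mL)$ is the irreducible weight-$1$ (Schr\"odinger) representation of $\mathcal{G}(\mL)$, and the key elementary fact is that any $g\in\mathcal{G}(\mL)$ lying over a \emph{nonzero} $k\in K(\mL)$ is traceless on $V$: choosing $h$ with $ghg^{-1}h^{-1}=\zeta\neq 1$ central (possible since the commutator pairing on $K(\mL)$ is non-degenerate) gives $\mathrm{tr}_V(g)=\mathrm{tr}_V(hgh^{-1})=\zeta^{-1}\mathrm{tr}_V(g)$, forcing $\mathrm{tr}_V(g)=0$. Since $V$ is a tensor factor of $M$, the character $\chi_M$ vanishes at every nonzero $k$, whence
\begin{equation*}
\dim M^{K(\mL)}=\frac{1}{|K(\mL)|}\sum_{k\in K(\mL)}\chi_M(k)=\frac{\dim M}{|K(\mL)|}=\frac{8}{4}=2,
\end{equation*}
the desired value $h^0=2$.

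The step I expect to be the main obstacle is precisely this equivariant bookkeeping: one must check that the linearization on $\phi^*\FF$ coming from descent is genuine rather than merely projective, and then propagate the weights correctly through $S^3(-)$ and $\det(-)$ so that $M$ is a bona fide $K(\mL)$-module and the averaging formula applies. Once this is settled, the tracelessness of the Schr\"odinger representation makes the invariant count immediate, and the vanishing of $h^1$ and $h^2$ comes for free.
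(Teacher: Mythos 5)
Your argument is correct, and its skeleton coincides with the paper's: both proofs pull back along the isogeny $\phi=\phi_{\mL^{-1}}$, use \eqref{eq.mukai} to get $\phi^*(S^3\FF\otimes\bigwedge^2\FF^{\vee})=\mL^{\oplus 4}$, and deduce $h^1=h^2=0$ from $h^1(A,\mL)=h^2(A,\mL)=0$ (the paper only needs the injectivity of $\phi^*$ on cohomology for this, which is marginally lighter than invoking full Galois descent, but the content is the same). Where you genuinely diverge is the computation of $h^0$. The paper sidesteps it entirely: a Riemann--Roch computation gives $\chi(\wA,\,S^3\FF\otimes\bigwedge^2\FF^{\vee})=2$, so once $h^1=h^2=0$ the value $h^0=2$ is automatic, and no equivariant structure on $\phi^*\FF$ ever needs to be examined. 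You instead carry out the descent honestly, writing $\phi^*\FF=\mL\otimes W$ with $W$ a weight $(-1)$ representation of the theta group and averaging the character of $H^0(A,\mL)\otimes S^3W\otimes(\det W)^{-1}$ over $K(\mL)$; the tracelessness of the Schr\"odinger representation on nonzero elements then gives $8/4=2$. This bookkeeping is exactly the step you flag as delicate, and it is the one step Riemann--Roch renders unnecessary; on the other hand your computation is self-contained at the level of $H^0$ and identifies the invariant subspace explicitly, which is in the spirit of the identification \eqref{eq.sections} used later in Subsection \ref{subsec.ch}. (The paper also notes a second alternative via Proposition \ref{prop.S3}, i.e.\ the Eagon--Northcott complex and $h^0(\mL_{\delta}^2\otimes\mathcal{I}^3_{\wo})=2$.) One point you could make slightly more explicit is why the descent linearization on $\phi^*\FF\cong\mL^{\oplus 2}$ is of the product form $\mL\otimes W$: this follows because the evaluation map $H^0(A,\phi^*\FF\otimes\mL^{-1})\otimes\oo_A\to\phi^*\FF\otimes\mL^{-1}$ is an equivariant isomorphism of trivial bundles, but as written it is asserted rather than proved.
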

\begin{proof}
We could use Proposition \ref{prop.S3}, but we prefer a different
argument exploiting the isogeny $\phi$. Since $\chi(\wA, \, S^3\FF
\otimes \bigwedge^2 \FF^{\vee})=2$, it is sufficient to show that
$h^1(\wA, \, S^3\FF \otimes \bigwedge^2 \FF^{\vee})=h^2(\wA, \,
S^3\FF \otimes \bigwedge^2 \FF^{\vee})=0$. Since $\phi$ is a finite
map, we obtain
\begin{equation*}
H^i(\wA, \, S^3\FF \otimes \bigwedge^2 \FF^{\vee}) \cong
\phi^*H^i(\wA, \, S^3\FF \otimes \bigwedge^2 \FF^{\vee}) \subseteq
H^i(A, \, \phi^*(S^3\FF \otimes \bigwedge^2 \FF^{\vee}))
\end{equation*}
for all $i=0, \, 1, \, 2$. On the other hand, \eqref{eq.mukai}
yields
\begin{equation*}\label{eq.coh.split}
H^i(A, \, \phi^*(S^3\FF \otimes \bigwedge^2 \FF^{\vee}))
=H^i(A, \, \mL)^{\oplus 4},
\end{equation*}
so the claim follows.
\end{proof}

By Theorem \ref{teo.miranda} there is a $2$-dimensional family of
triple covers $\hat{f}\colon \widehat{X} \rightarrow \wA$ with
Tschirnhausen bundle $\EE=\FF^{\vee}$. We have the commutative
diagram
\begin{equation} \label{dia.projs}
\begin{xy}
\xymatrix{%%
X=A \times_{\wA} X  \ar[d]^{f} \ar[rr]^{\psi} & & \widehat{X} \ar[d]^{\widehat{f}} \\
A   \ar[rr]^{\phi} & & \widehat{A}  \\
  }
\end{xy}
\end{equation}
where $\psi\colon X \rightarrow \widehat{X}$ is a quadruple
\'etale cover and $f\colon X \rightarrow A$ is a triple cover
determined by a section of
\begin{equation*}\phi^*H^0(\wA, \, S^3\FF \otimes \bigwedge^2 \FF^{\vee}) \subset
H^0(A, \, \phi^*(S^3\FF \otimes \bigwedge^2 \FF^{\vee}))=H^0(A, \,
\mL)^{\oplus 4}.
\end{equation*}
By \cite[Chapter 6]{BL04} there exists a canonical Schr\"odinger
representation of the Heisenberg group
$\mathcal{H}_2$ on $H^0(A, \, \mL)$, where the latter space is identified
with the vector space $\CC(\ZZ/2\ZZ)$ of all complex valued
function on the finite group $\ZZ/2\ZZ$.

Following \cite[Section 2]{CH06} we can identify the $2$-dimensional
subspace of $H^0(A, \, \phi^*(S^3\FF \otimes \bigwedge^2
\FF^{\vee}))$ corresponding to $\phi^*H^0(\wA, \, S^3\FF \otimes
\bigwedge^2 \FF^{\vee})$ with
\begin{equation} \label{eq.sections}
\{(s x, \, t y, \, -t x, \,-s y) \, | \, s, \, t \in \CC \}
\subset H^0(A, \, \mL)^{\oplus 4},
\end{equation}
where $x$, $y \in H^0(A, \, \mL)$ form the canonical basis
induced by the characteristic functions of $0$ and $1$ in
$\CC(\ZZ/2\ZZ)$. By \cite{M85}, we can construct the triple cover
$f\colon X \rightarrow A$ using the data
\begin{equation} \label{eq.data.triple}
a=s x, \; \; b=t y, \; \; c=-t x, \; \; d=-s y.
\end{equation}
Over an affine open subset $U$ of $A$ the surface $X$ is defined
in $U \times \mathbb{A}^2$ by the determinantal equations
\begin{equation}\label{eq.determinant}
\textrm{rank} \, \left(\begin{array}{ccc}
z+a & w-2d & c \\
b & z-2a & w+d
\end{array} \right) \leq 1,
\end{equation}
where $w$, $z$ are coordinates in $\mathbb{A}^2$.
Moreover, the branch locus $D$ of $f\colon X \rightarrow A$ is given by
\begin{equation}\label{eq.branch}
D=(t^2-s^2)^2 x^2 y^2-4(s^2 x^2+st y^2)(s^2 y^2+st x^2) \in H^0(A,
\, \mL^4).
\end{equation}
This corresponds to a divisor $D_1+D_2+D_3+D_4$ with $D_i \in
|L|$; moreover the set $\{D_1, \ldots ,D_4\}$ is an orbit for
the action of $K(\mL)$ on $|L|$. For a general choice of $s, \, t$,
the $D_i$ are all smooth, so the singularity of $D$ are four
ordinary quadruple points at $e_0$, $e_1$, $e_2$, $e_3$. Over
these points $f \colon X \to A$ is totally ramified and $X$ has
four singularities of type $\frac{1}{3}(1, \,1)$. Blowing up these
points and the base points of $|L|$ we obtain a smooth triple
cover $\tilde{f} \colon \widetilde{X} \to \widetilde{A}$, which is
actually the canonical resolution of singularities of $X$, see
Proposition \ref{prop.can.ris}. Let $\{E_i\}_{i=1, \ldots,4}$ be
the exceptional divisor in $\widetilde{X}$ and $\{R_i\}_{i=1,
\ldots, 4}$ be the proper transform of the $D_i$ in
$\widetilde{X}$. Then $E_i^2=-3$, $E_iE_j=0$ for $i \neq j$,
$R_iR_j=0$ and $R_iE_j=1$ for all $i$, $j$. Since
\begin{equation*}
K_{\widetilde{X}}= \sum_{i=1}^4 R_i + \sum_{i=1}^4 E_i,
\end{equation*}
we obtain $K_{\widetilde{X}}^2=20$. Moreover $X$ has only rational
singularities, so if $\tilde{\sigma} \colon \widetilde{X} \to X$
is the resolution map we have $R^1 \tilde{\sigma}_*
\oo_{\widetilde{X}}= \oo_X$; therefore
\begin{equation*}
\begin{split}
p_g(\widetilde{X})&=h^2(\widetilde{X}, \, \oo_{\widetilde{X}})=
h^2(X, \, \oo_X)=h^2(A, \oo_A)+ 2h^2(A, \, \mL^{-1})=5; \\
q(\widetilde{X})&=h^1(\widetilde{X}, \, \oo_{\widetilde{X}})=
h^1(X, \, \oo_X)=h^1(A, \oo_A)+ 2h^1(A, \, \mL^{-1})=2.
\end{split}
\end{equation*}
This shows that $\chi(\widetilde{X}, \, \oo_{\widetilde{X}})=4$.
Now let $S$ be the canonical resolution of singularities of
$\widehat{X}$; then $K_S$ is ample and $\wA= \textrm{Alb}(S)$.
Since there is a quadruple,
\'etale cover $\tilde{\psi} \colon \widetilde{X} \lr S$ induced by
$\psi \colon X \to \widehat{X}$, the invariants of
$S$ are
\begin{equation*}
p_g(S)=q(S)=2, \quad K_S^2=5.
\end{equation*}
\begin{rem}
Both $X$ and $\widehat{X}$ only contain singular points of type
$\frac{1}{3}(1, \, 1)$, which are
 negligible singularities, see Example \ref{ex.1}.
 Hence we could compute the invariants of both
 $\widetilde{X}$ and
 $S$ by directly using Proposition \ref{prop.invariants}.
\end{rem}

\subsection{The product-quotient construction} \label{subsec.pe}
In \cite{Pe09} it is shown that there exists precisely one family
of surfaces with $p_g=q=2$ and $K_S^2=5$ which contain an
isotrivial fibration. Now we briefly explain how this family is
obtained, referring the reader to \cite{Pe09} for further details.

By using the Riemann Existence Theorem, one can construct two smooth
curves $C_1$, $C_2$ of genus $3$ which admit an action of the finite
group $S_3$, such that the $2$-cycles act without fixed points,
whereas the cyclic subgroup generated by the $3$-cycles has exactly
two fixed points. Then $E_i := C_i /S_3$ is a smooth elliptic curve
and the Galois cover $ C_i \to E_i$ is branched in exactly one point
with branching number $3$. Now let us consider the quotient
$\widehat{X}:= (C_1 \times C_2)/S_3$, where $S_3$ acts diagonally on
the product. Then $\widehat{X}$ contains precisely two cyclic
quotient singularities and, since the $3$-cycles are conjugated in
$S_3$, it is not difficult to show that one singularity is of type
$\frac{1}{3}(1, \,1)$ whereas the other is of type $\frac{1}{3}(1,
\, 2)$. Let $S \to \widehat{X}$ be the minimal resolution of
singularities of $\widehat{X}$; then $S$ is a minimal surface of
general type with $p_g=q=2$ and $K_S^2=5$; notice that $K_S$ is
\emph{not} ample. The surface $S$ admits two isotrivial fibrations
$S \to E_i$, which are induced by the two natural projections of
$C_1 \times C_2$.
\begin{figure}[H]
\begin{center}
\includegraphics*[totalheight=7 cm]{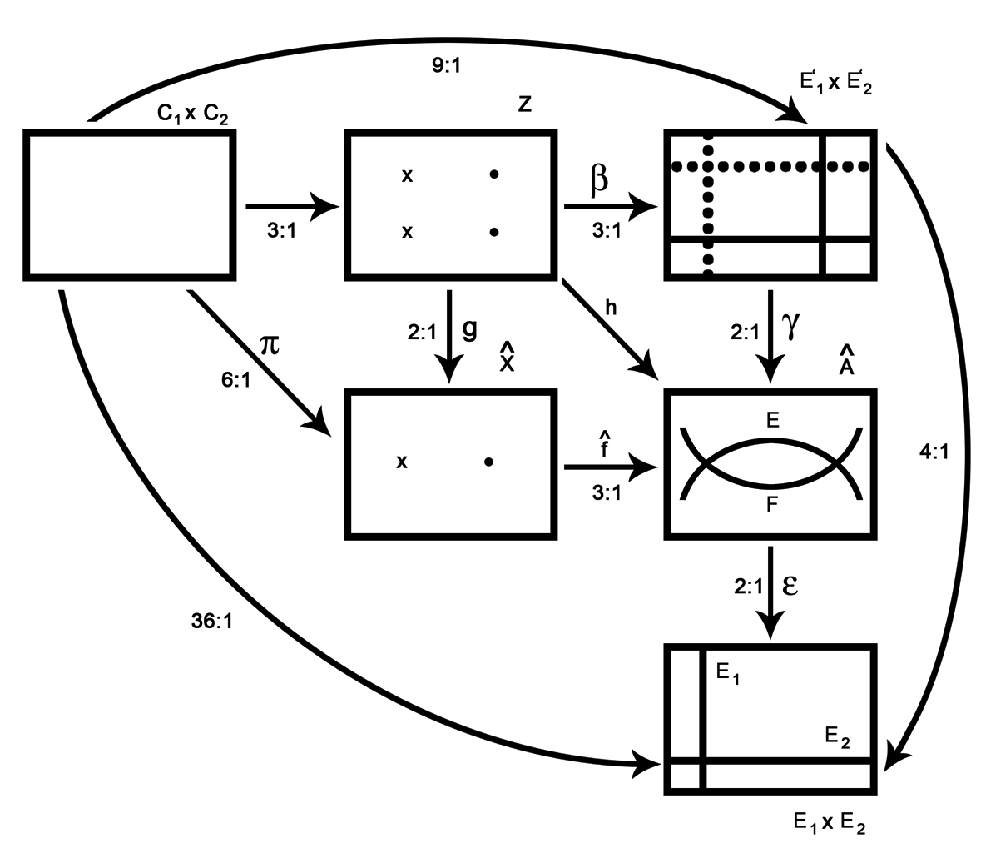}
\end{center}
\caption{The product-quotient construction} \label{figure-penegini}
\end{figure}
The Albanese variety $\widehat{A}$ of $S$ is an
\'etale double cover of $E_1 \times E_2$; it is actually a $(1,
\,2)$-polarized abelian variety, whose polarization $\mL$ is of
special type. The Albanese map $\alpha \colon S \to \widehat{A}$ is
totally ramified, and its reduced branch locus
$\Delta_{\textrm{red}}=E+F$ is a
 curve of type $(c)$ in Proposition \ref{prop.barth.class}, having
 one of its nodes in $\hat{o}$. It is clear that the two singular
 points of $\widehat{X}$ lie precisely over the two nodes of
 $\Delta_{\textrm{red}}$. In particular $\widehat{X}$ has only
 negligible singularities, see Example \ref{ex.3}.
This construction is summarized in Figure \ref{figure-penegini}.

There $\pi \colon C_1 \times C_2 \to \widehat{X}$ is induced by the
the diagonal action of $S_3$ on $C_1 \times C_2$, while
$\hat{f} \colon \widehat{X} \to \wA$ is the Stein factorization of
the Albanese map $\alpha \colon S \to \wA$. Since the diagonal
subgroup is not normal in $S_3 \times S_3$, it follows that
$\hat{f}$ is not a Galois cover; let $h \colon Z \rightarrow \wA$
be its Galois closure, which has Galois group $S_3$. The
surface $Z$ is isomorphic to the diagonal quotient $(C_1 \times
C_2)/(\mathbb{Z}/3\mathbb{Z})$, where $\mathbb{Z}/3 \mathbb{Z}$ is
 the subgroup of $S_3$ generated by the $3$-cycles; therefore $Z$ has four
 singular points coming
from the four points with non-trivial stabilizer on $C_1 \times
C_2$. More precisely,
\begin{equation*}
\textrm{Sing}(Z)=2 \times \frac{1}{3}(1, \,1) + 2 \times
\frac{1}{3}(1, \,2).
\end{equation*}
In addition, the cover $C_i \rightarrow E_i$ factors through the cover
$C_i \rightarrow E'_i:=C_i/(\mathbb{Z}/3\mathbb{Z})$, where $E'_i$
is an elliptic curve isogenous to $E_i$; this induces the cover
$C_1 \times C_2 \rightarrow (C_1 \times C_2)/
(\mathbb{Z}/3\mathbb{Z})^2=E'_1 \times E'_2$, which clearly
factors through $Z$. Observe that also the cover $\pi \colon C_1
\times C_2 \rightarrow \widehat{X}$ factors through $Z$. Finally
the composition $\epsilon \circ \gamma \colon E_1 \times E_2 \to
E_1' \times E_2'$ is a $(\mathbb{Z}/2\mathbb{Z})^2$-cover, which
factors through $\wA$. Using the commutativity of the diagrams in
Figure \ref{figure-penegini} and the theory of abelian covers
developed in \cite{Pa91}, one can check, looking at the building
data of $\beta\colon Z \rightarrow  E'_1 \times E'_2$ and
$\gamma\colon E'_1 \times E'_2 \rightarrow \wA$, that the
Tschirnhausen bundle $\EE$ of $f\colon \widehat{X} \rightarrow
\wA$ satisfies $\bigwedge^2 \EE^{\vee}=\mL \otimes \mathcal{Q}$,
where $\mathcal{Q}$ is a
non-trivial, $2$-torsion line bundle. This is a particular case of a
more general situation, see Proposition \ref{prop.no.Galois.1}.

\section{Chen-Hacon surfaces} \label{sec.CH}

In this section we will generalize the triple cover construction
described in Subsection \ref{subsec.ch}. In fact, since we want to
be able to ``take the limit" of a $1$-parameter family of surfaces
obtained in that way, we shall drop the assumptions that $\mL$ is
a general polarization and that $s$ and $t$ are general complex
numbers. Among other results, we will show that the
product-quotient surface described in Subsection \ref{subsec.pe}
can be also obtained as a specialization of Chen-Hacon's example,
see Corollary \ref{cor.peneg.}.

Let us start with the following

\begin{defin} \label{def.gch}
Let $S$ be a minimal surface of general type with $p_g=q=2$ such
that its Albanese map $\alpha \colon S \to
\widehat{A}:=\emph{Alb}(S)$ is a generically finite morphism of
degree $3$ onto an abelian surface $\widehat{A}$. Let
\begin{equation}\label{eq.stein.alb}
S \stackrel{p}{\lr} \widehat{X} \stackrel{\hat{f}}{\lr} \widehat{A}
\end{equation}
be the Stein factorization of $\alpha$, and $\mathcal{F}^{\vee}$ be
the Tschirnhausen bundle associated with the triple cover $\hat{f}$.
We say that $S$ a \emph{Chen-Hacon surface} if there exist a
polarization $\mL$ of type $(1, \,2)$ on
$A=\emph{Pic}^0(\widehat{A})$ such that $\FF =
\widehat{\mathcal{L}^{-1}}$.
\end{defin}

\begin{rem} \label{rem.min}
Since $\wA$ is an abelian variety and $\hat{f}$ is a finite map, it
follows that $p$ contracts all rational curves in $S$. The surface
$S$ is the minimal resolution of singularities of $\widehat{X}$ but
it is, in general, different from the canonical resolution
$\widetilde{X}$ described in Proposition \ref{prop.can.ris}. For
instance, in Example \ref{ex.2} the surface $\widetilde{X}$ contains
a $(-1)$-curve.
\end{rem}

The line bundle $\mathcal{L}$ is a IT-sheaf of index $0$, so by
\cite[Theorem 14.2.2]{BL04} and \cite[Proposition 14.4.3]{BL04} we
have
\begin{equation} \label{eq.FF-s}
h^0(\wA, \, \FF)=1, \quad h^1(\wA, \, \FF)=0, \quad h^2(\wA, \,
\FF)=0, \quad \det \FF=\mathcal{L}_{\delta},
\end{equation}
where $\mathcal{L}_{\delta}:=\oo_{\wA}(L_{\delta})$ is the dual
polarization of $\mathcal{L}$. Therefore $\FF$ belongs to the family
of bundles studied in Section \ref{sec.vec}.

\begin{prop} \label{prop.CH.no.decomposable}
Let $S$ be a Chen-Hacon surface. Then $\mathcal{F}$ is
indecomposable.
\end{prop}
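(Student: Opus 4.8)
The plan is to argue by contradiction, exploiting the fact that after pulling back along the isogeny $\phi=\phi_{\mL^{-1}}\colon A \to \wA$ the Fourier--Mukai transform $\FF=\widehat{\mL^{-1}}$ becomes a sum of copies of the ample bundle $\mL$. Any nontrivial splitting of $\FF$ would then be visible after pullback and would force an impossible numerical identity on $\wA$. Since $\FF$ has rank $2$ on the smooth surface $\wA$, decomposability means exactly that $\FF \cong \mathcal{M}_1 \oplus \mathcal{M}_2$ for line bundles $\mathcal{M}_1,\mathcal{M}_2$, and by \eqref{eq.FF-s} these satisfy $\mathcal{M}_1 \otimes \mathcal{M}_2 = \det\FF = \mathcal{L}_\delta$.

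First I would pull back along $\phi$. By Mukai's formula \eqref{eq.mukai} (which, being \cite[Proposition 3.11]{Mu81}, applies to any nondegenerate $\mL$, hence to an arbitrary Chen--Hacon surface) one has $\phi^*\FF \cong \mL \oplus \mL$. On the other hand $\phi^*\FF \cong \phi^*\mathcal{M}_1 \oplus \phi^*\mathcal{M}_2$, where each $\phi^*\mathcal{M}_i$ is a line bundle and thus indecomposable. The uniqueness of the decomposition of a vector bundle into indecomposable summands \cite{At56} then forces $\phi^*\mathcal{M}_1 \cong \phi^*\mathcal{M}_2 \cong \mL$.

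Next I would descend this to the N\'eron--Severi group. Since $\phi$ is an isogeny, $\phi_* \phi^* = \deg\phi$ shows that $\phi^*$ is injective on $\mathrm{NS}(\wA)_{\QQ}$, and as $\mathrm{NS}(\wA)$ is torsion-free it is injective on $\mathrm{NS}(\wA)$. From $\phi^*c_1(\mathcal{M}_1) = c_1(\mL) = \phi^*c_1(\mathcal{M}_2)$ I conclude $c_1(\mathcal{M}_1) = c_1(\mathcal{M}_2)$ in $\mathrm{NS}(\wA)$. Combining this with $c_1(\mathcal{M}_1) + c_1(\mathcal{M}_2) = c_1(\mathcal{L}_\delta) = L_\delta$ gives $2\,c_1(\mathcal{M}_1) = L_\delta$, whence $4\,c_1(\mathcal{M}_1)^2 = L_\delta^2 = 4$ and therefore $c_1(\mathcal{M}_1)^2 = 1$. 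This is the contradiction: on an abelian surface every divisor class $D$ satisfies $D^2 = 2\,\chi(\oo_{\wA}(D)) \in 2\ZZ$, so $c_1(\mathcal{M}_1)^2$ must be even. Hence no such splitting exists and $\FF$ is indecomposable.

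The hard part is essentially packaged into the two standard inputs: verifying that $\phi^*\FF \cong \mL^{\oplus 2}$ holds in the full generality of Definition \ref{def.gch} (not only for the explicit construction of Subsection \ref{subsec.ch}), and invoking Krull--Schmidt uniqueness; once these are in place the parity obstruction finishes matters immediately. As an alternative I would mention the slicker route through the Fourier--Mukai equivalence: since this equivalence is fully faithful, $\Hom(\FF,\FF) = \Hom(\mL^{-1},\mL^{-1}) = \CC$, so $\FF$ is even simple and in particular indecomposable; I prefer the pullback argument above because it stays within the elementary toolkit already set up in the paper.
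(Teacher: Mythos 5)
Your proof is correct, but it follows a genuinely different route from the paper's. The paper first invokes \cite[Corollary 14.3.10]{BL04} to see that the Fourier--Mukai transform $\FF$ of the non-degenerate line bundle $\mL^{-1}$ is $\mathcal{H}$-semistable for every polarization $\mathcal{H}$, and then applies Proposition \ref{prop.decomp.unstable}: for a direct sum $\oo_{\wA}(C_1)\oplus\oo_{\wA}(C_2)$ with determinant $\mL_{\delta}$ and $c_2=1$, semistability forces $C_1L_\delta=C_2L_\delta=2$, whence $C_1^2=C_2^2=1$, impossible on an abelian surface. You instead pull back along the isogeny $\phi$, use Mukai's formula \eqref{eq.mukai} (correctly noting that it holds for any non-degenerate $\mL$, not just the general polarization of Subsection \ref{subsec.ch}) together with Atiyah's Krull--Schmidt theorem to force $\phi^*\mathcal{M}_1\cong\phi^*\mathcal{M}_2\cong\mL$, and then descend via the injectivity of $\phi^*$ on the N\'eron--Severi group to $2\,c_1(\mathcal{M}_1)=L_\delta$, hence $c_1(\mathcal{M}_1)^2=1$. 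Both arguments terminate in the same parity obstruction ($D^2\in 2\ZZ$ on an abelian surface, by Riemann--Roch), but yours replaces the semistability input by the pullback formula and Krull--Schmidt; it is marginally more self-contained (it does not even need $c_2(\FF)=1$) and it proves the stronger fact that any two line-bundle summands would have to be numerically equivalent, not merely of equal $\mL_\delta$-slope. Your closing remark --- that indecomposability is immediate because the Fourier--Mukai functor is an equivalence and $\mL^{-1}$ is indecomposable --- is exactly the one-line alternative the paper itself records at the end of its proof.
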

\begin{proof}
Since $\mathcal{L}$ is a non-degenerate line bundle, by
\cite[Corollary 14.3.10]{BL04} it follows that $\FF$ is
$\mathcal{H}$-semistable with respect to any polarization
$\mathcal{H}$. Now the claim follows from Proposition
\ref{prop.decomp.unstable}. Alternatively, one could also remark that since $\mathcal{L}^{-1}$ is indecomposable the same must be true for its Fourier-Mukai transform $\mathcal{F}$.
\end{proof}

\begin{prop} \label{prop.CH.no.product}
Let $S$ be a Chen-Hacon surface. Then $\mathcal{L}$ is not a product
polarization.
\end{prop}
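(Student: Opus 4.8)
The plan is to argue by contradiction: assume $\mathcal{L}$ is a product polarization and derive a contradiction with the fact that $\widehat{X}$ is a normal, irreducible surface. Before invoking the hypothesis, I would record two structural facts about $\widehat{X}$ that come for free from Definition \ref{def.gch}. Since $S$ is a smooth connected surface and $\widehat{X}$ is the base of the Stein factorization \eqref{eq.stein.alb} of the Albanese map, $\widehat{X}$ is normal and irreducible. Moreover, the triple cover $\hat{f}\colon \widehat{X}\to\widehat{A}$ corresponds under Theorem \ref{teo.miranda} to a \emph{nonzero} section $\eta\in H^0(\widehat{A},\,S^3\FF\otimes\bigwedge^2\FF^{\vee})$, because its Tschirnhausen bundle is $\mathcal{E}=\FF^{\vee}$.

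Next I would transport the hypothesis to $\widehat{A}$. By \eqref{eq.FF-s} we have $\det\FF=\mathcal{L}_{\delta}$, the dual polarization, and since $\mathcal{L}$ is a product polarization on $A$ the dual $\mathcal{L}_{\delta}$ is a product polarization on $\widehat{A}$. Thus the triple $(\widehat{A},\,\mathcal{L}_{\delta},\,\FF)$ falls precisely into the product case analyzed in Section \ref{sec.vec}. By Proposition \ref{prop.CH.no.decomposable} the bundle $\FF$ is indecomposable, so exactly one of two alternatives occurs: either $\FF$ is of Schwarzenberger type (indecomposable but not simple), or $\FF$ is simple.

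Then I would eliminate both cases using the corollaries already proved. If $\FF$ is of Schwarzenberger type, Corollary \ref{cor.nonsimple} shows that the triple cover attached to any nonzero section is reducible and non-reduced, contradicting the irreducibility of $\widehat{X}$. If instead $\FF$ is simple, then by Proposition \ref{prop.no.product.ample} every section of $S^3\FF\otimes\bigwedge^2\FF^{\vee}$—in particular our $\eta$—vanishes identically along the elliptic curve $F_q$; as in the proof of Corollary \ref{cor.simple}, this forces $\widehat{X}$ to be singular along $\hat{f}^{-1}(F_q)$ and hence non-normal, contradicting the normality of $\widehat{X}$. Since both alternatives are impossible, $\mathcal{L}$ cannot be a product polarization.

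The only genuine point requiring care—and the step I expect to be the main obstacle—is the very first one: verifying that $\widehat{X}$ is normal and irreducible, and that the associated section $\eta$ is nonzero. These properties are standard for the Stein factorization of a morphism out of a smooth connected surface, but I would state them explicitly, since the entire argument hinges on contrasting them with the degenerate geometry (reducibility or non-normality) that the product case would impose on the triple cover. Once this is in place, the two cases close immediately by direct appeal to the corollaries of Section \ref{sec.vec}, with no further computation needed.
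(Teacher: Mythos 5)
Your proof is correct and follows essentially the same route as the paper: reduce to the dual polarization $\mathcal{L}_{\delta}$ on $\widehat{A}$ and rule out both the Schwarzenberger and the simple case via Corollaries \ref{cor.nonsimple} and \ref{cor.simple}. The only (harmless) difference is that the paper phrases the contradiction as ``$\widehat{X}$ would not be of general type,'' whereas you contradict the irreducibility and normality of the Stein factorization directly --- and your appeal to Proposition \ref{prop.no.product.ample} to cover \emph{every} nonzero section, not just the general one, is a slightly more careful reading than the paper's wholesale citation of Corollary \ref{cor.simple}.
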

\begin{proof}
$\mathcal{L}$ is a product polarization if and only if
$\mathcal{L}_{\delta}$ is a product polarization. If
$\mathcal{L}_{\delta}$ were of product type, then $\widehat{X}$
would not be a surface of general type (see Corollaries
\ref{cor.nonsimple} and \ref{cor.simple}), contradiction.
\end{proof}

Since $\mathcal{L}$ is not a product polarization, we may use the
results of Subsection \ref{sec.ab.surf}. Moreover, for any
Chen-Hacon surface $S$ we can consider its associated diagram
\eqref{dia.projs}. Being the morphism $\psi$ \'etale, $X$ is
nonsingular in codimension one if and only if the same holds for
$\widehat{X}$. Similarly, $f$ is totally ramified if and only if
$\hat{f}$ is totally ramified.

\begin{prop} \label{prop.iso.sing}
The following holds:
\begin{itemize}
\item[$\boldsymbol{(i)}$] $X$ has only isolated singularities unless
$t=0$ or $t^2-9s^2=0$.
\item[$\boldsymbol{(ii)}$] If $t=0$ or $t^2-9s^2=0$, then $X$ has non-isolated singularities.
Moreover, if $\nu \colon X^{\nu} \to X$ is the normalization map, then the composition
$f \circ \nu \colon X^{\nu} \to A$ is an \'etale triple cover. Therefore,
in this case $X$ is
not a surface of general type.
\end{itemize}
\end{prop}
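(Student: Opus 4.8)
The plan is to reduce both statements to the reducibility of the triple cover $f\colon X\to A$ and to pin down reducibility by a single cubic equation attached to the determinantal description \eqref{eq.determinant}. First I would eliminate the fibre coordinate $w$ from the three $2\times2$ minors: solving the $(1,3)$-minor for $w$, substituting into the $(1,2)$-minor and clearing denominators, one obtains the cubic
\begin{equation*}
(z+a)^2(z-2a)+3bd\,(z+a)-b^2c=0,
\end{equation*}
which after inserting $a=sx,\ b=ty,\ c=-tx,\ d=-sy$ becomes
\begin{equation*}
z^3-3s\,(s x^2+t y^2)\,z+x\bigl(t(t^2-3s^2)\,y^2-2s^3x^2\bigr)=0 .
\end{equation*}
A component of $f$ of degree one over $A$ is the graph of a section, hence is cut out by $z=\alpha x+\beta y$ with $\alpha,\beta\in\CC$ (and $w$ recovered from the $(1,3)$-minor). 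The key computation is to decide when such a linear root exists: substituting $z=\alpha x+\beta y$ and equating to zero the coefficients of $x^3,x^2y,xy^2,y^3$ yields
\begin{equation*}
\alpha^3-3s^2\alpha-2s^3=0,\quad \beta(\alpha^2-s^2)=0,\quad 3\alpha\beta^2-3st\alpha+t(t^2-3s^2)=0,\quad \beta(\beta^2-3st)=0 .
\end{equation*}
Splitting on $\beta=0$ versus $\beta\neq0$, a short analysis shows this system is solvable \emph{if and only if} $t=0$ or $t^2=9s^2$; in those cases the cubic factors completely into linear forms (with a repeated $z$-value when $t=0$), and one checks that each factor verifies the remaining minor, so it lifts to a genuine component of $X$ isomorphic to $A$ via $f$.

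For part $(i)$ assume $t\neq0$ and $t^2\neq9s^2$. Then the cubic has no linear root; since a root in the function field of $A$ would be integral over $\mathcal O_A$, hence a section and so linear in $x,y$, the cubic is irreducible and $X$ is irreducible. By Proposition \ref{prop.sing.ram}, $\mathrm{Sing}(X)$ lies over $\mathrm{Sing}(D)$. Factoring the branch quartic $D=(t^2-s^2)^2x^2y^2-4(s^2x^2+sty^2)(s^2y^2+stx^2)$, its only multiple components for parameters in this range arise when $t^2=s^2$ or $s=0$, in which case $D=2D_{\mathrm{red}}$ with $D_{\mathrm{red}}$ reduced. Along such a double component the cubic has a \emph{triple} $z$-root, so $f$ is totally ramified with local model $z^3\sim u$ in a transverse parameter $u$; the total space is then smooth and $\mathrm{mult}\,D=2<3$, so by Proposition \ref{prop.sing.ram}$(ii)$ and Example \ref{ex.3} the cover is smooth at the generic points of $D_{\mathrm{red}}$. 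Hence $\mathrm{Sing}(X)\subset f^{-1}\bigl(\mathrm{Sing}(D_{\mathrm{red}})\bigr)$ is finite and $X$ has only isolated singularities.

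For part $(ii)$ assume $t=0$ or $t^2=9s^2$. By the first step the cubic splits, exhibiting $X$ as a union of three copies of $A$, each mapped isomorphically to $A$ by $f$. Two such sheets coincide exactly along the curves where the corresponding linear factors agree, so $X$ is singular in codimension one, i.e. has non-isolated singularities. The normalization $\nu\colon X^\nu\to X$ separates the three sheets, whence $f\circ\nu\colon X^\nu\to A$ is a disjoint union of isomorphisms, in particular an étale triple cover. Since each connected component of $X^\nu$ is an étale cover of the abelian surface $A$, it is itself an abelian surface, of Kodaira dimension $0$; therefore $X$ is not of general type.

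The main obstacle is the behaviour in the borderline cases $t^2=s^2$ and $s=0$ that fall under part $(i)$: there the branch divisor $D$ is already non-reduced, so one cannot read off isolated singularities from reducedness of $D$ alone. The subtle point is that a double component of $D$ has two possible origins—honest total ramification of the irreducible cover, where $z^3\sim u$ yields a smooth total space and only isolated singularities, versus two smooth sheets of a reducible cover crossing transversally, where $z_1-z_2\sim u$ produces a one-dimensional singular locus. Showing that the first mechanism occurs precisely when the cubic is irreducible (triple $z$-root along $D_{\mathrm{red}}$) and the second precisely when it has a linear root (double $z$-root) is what makes reducibility, rather than non-reducedness of $D$, the correct dividing line.
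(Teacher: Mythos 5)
Your route is genuinely different from the paper's: the paper proves $(i)$ by a local computation at the base points of $|L|$ (quoting Chen--Hacon) and $(ii)$ by writing out the three minors near a base point and computing the normalization with \texttt{Singular}, whereas you eliminate $w$ from \eqref{eq.determinant} to get a global cubic resolvent and try to reduce everything to the existence of a linear root. The resolvent, the system for $(\alpha,\beta)$, and the conclusion that a linear root exists exactly for $t=0$ or $t^2=9s^2$ are all correct, and part $(ii)$ for $t^2=9s^2$ (three distinct sections, \'etale normalization, Kodaira dimension $0$) is fine.

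There is, however, a genuine gap in part $(i)$, at the step you yourself single out as the crux. The surface $X$ is the determinantal scheme inside the total space of $\mathcal{L}\oplus\mathcal{L}$, and its projection to the hypersurface $\{z^3+pz+q=0\}$ is only birational; it degenerates exactly where $z+a$ and $bc$ both vanish, which is where all the delicate behaviour lives. This breaks your argument in two places. First, ``the cubic has no root in $K(A)$, hence it has a triple $z$-root along a double component $C$ of $D$'' is not a valid inference: an irreducible triple cover can perfectly well split, over the completion of $\mathcal{O}_{A,C}$, into a section plus two touching sheets interchanged by monodromy, producing a curve of non-normal points with no global linear factor; absence of a root in $K(A)$ says nothing about factorizations over the local rings at the generic points of $D_{\mathrm{red}}$. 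Second, even a triple $z$-root along $C$ does not imply total ramification, because fibre points can be separated by $w$ alone: your own case $t=0$ is a counterexample, since there $(z-2sx)(z+sx)^2$ restricts to $z^3$ along the branch component $\{x=0\}$, yet the fibre over its generic point consists of the two points $(z,w)=(0,sy)$ and $(0,-2sy)$ and $X$ is singular along a curve above $\{x=0\}$. So the sentence ``the cubic has a triple $z$-root, so $f$ is totally ramified with local model $z^3\sim u$'' is unsupported in both directions. The statement you need is nevertheless true and is checked by direct substitution rather than by irreducibility: for $t=\pm s$ one has $p=-3s^2(x^2\pm y^2)$, $q=-2s^3x(x^2\pm y^2)$ and $D_{\mathrm{red}}=\{x^2\pm y^2=0\}$, and for $s=0$ one has $p=0$, $q=t^3xy^2$ and $D_{\mathrm{red}}=\{xy=0\}$; in each case \emph{both} resolvents (the one in $z$ and the analogous one in $w$, namely $(w+d)^2(w-2d)+3ac(w+d)-bc^2$) restrict to pure cubes on every component of $D_{\mathrm{red}}$, so the fibre over its generic point is a single length-$3$ point, i.e.\ $f$ is totally ramified there, and then Proposition \ref{prop.sing.ram}, applied to $X$ itself and not to the resolvent hypersurface (which for $s=0$ is actually singular along $\{y=z=0\}$ while $X$ is smooth there), gives that $\mathrm{Sing}(X)$ is finite. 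The same omission of the $w$-direction resurfaces in part $(ii)$ for $t=0$: the repeated root $z=-sx$ means the $z$-resolvent cannot distinguish two of the three components, and the claim that $X$ is a union of three reduced copies of $A$ must be verified on the full determinantal equations, as the paper does.
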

\begin{proof}
$\boldsymbol{(i)}$ A local computation as in \cite[Claim 2]{CH06} shows that,
if $t \neq 0$ and $t^2 \neq 9s^2$, above a neighborhood of
any of the base points of $|L|$ the equations \eqref{eq.determinant} define
a cone over a twisted cubic, hence an isolated singularity of type $\frac{1}{3}(1,1)$.

$\boldsymbol{(ii)}$ We can assume $t=0$, since
 the proof in the other cases is the same. Looking at \eqref{eq.determinant},
 we see that in a neighborhood of any of the
 base points $e_0, \, e_1, \, e_2, \, e_3$,
 the surface $X$ is defined in $\mathbb{A}^4$ by
\begin{equation*}
(x+z)(2x-z)=0, \; \; (2y+w)(y-w)=0, \; \; (x+z)(y-w)=0,
\end{equation*}
 and it is straightforward to see that these
equations define the union of three $2$-planes intersecting along
two lines. This shows that $X$ contains non-isolated
singularities. The normalization map $\nu \colon X^{\nu} \to X$
can be computed by using the Computer Algebra
System \verb|Singular|, see \cite{SING}. It turns out that
 $X^{\nu}$ is locally given by three mutually disjoint $2$-planes
 in $\mathbb{A}^5$; moreover, for each of these planes the
 projection onto the first two coordinates of $\mathbb{A}^5$ is
 an isomorphism. In the global picture this means that $X^{\nu}$ is smooth and
 $f \circ \nu \colon X^{\nu}
\to A$ is an \'etale triple cover.
\end{proof}

\begin{rem} \label{rem.red}
In Proposition \ref{reducible} we will show that if $t=0$ or $t^2-9s^2=0$
then $\widehat{X}$ (and hence $X$) is a reducible surface.
\end{rem}

\begin{prop} \label{prop.total.ram}
Assume that $X$ has only isolated singularities. Then the
following holds:
\begin{itemize}
\item[$\boldsymbol{(i)}$] $f \colon X \rightarrow A$ is totally
ramified if and only if $s=t$, $s=-t$ or $s=0.$
\item[$\boldsymbol{(ii)}$] $f \colon X \to A$ is totally ramified if
and only if
\begin{itemize}
\item[$\boldsymbol{(iia)}$] either $D=2D_1+2D_2$, where $D_1,
\,D_2 \in |L|$ are distinct, smooth hyperelliptic curves belonging
to the same $K(\mL)$-orbit, or
\item[$\boldsymbol{(iib)}$] $\mL$ is a special polarization and
$D=2(E+F)+2(E'+F')$, where $E+F$ and $E'+F'$ are as in
Proposition \emph{\ref{prop.lin.M.sing}} $(b)$.
\end{itemize}
\end{itemize}
\end{prop}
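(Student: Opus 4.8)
The plan is to translate total ramification into an algebraic condition on the branch quartic and then to read off the geometry of its factors from the Heisenberg action of $K(\mathcal{L})$.

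For part $\boldsymbol{(i)}$, recall from Example \ref{ex.3} that $f$ is totally ramified precisely when $D=2D_{\mathrm{red}}$. The branch locus is the explicit section \eqref{eq.branch}, a binary quartic in the canonical basis $x,y$ of $H^0(A,\mathcal{L})$, and it splits as $D_1+D_2+D_3+D_4$ with $D_i\in|L|$ corresponding to its linear factors; hence $D=2D_{\mathrm{red}}$ if and only if this quartic is the square of a binary quadratic $\alpha x^2+\beta xy+\gamma y^2$. First I would bring \eqref{eq.branch} into the normal form
\begin{equation*}
D=-4s^3t\,(x^4+y^4)+(t^4-6s^2t^2-3s^4)\,x^2y^2,
\end{equation*}
which has no $x^3y$ or $xy^3$ term. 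Matching coefficients then forces $\alpha\beta=\beta\gamma=0$: the case $\beta\neq 0$ gives $s^3t=0$, while the case $\beta=0$, $\gamma=\pm\alpha$ gives $t^4-6s^2t^2\pm 8s^3t-3s^4=0$, which factor as $(t\mp s)^3(t\pm 3s)=0$. Thus $D$ is a perfect square exactly for $s=0$, $t=0$, $t=\pm s$ or $t=\pm 3s$. Since the hypothesis that $X$ has only isolated singularities excludes precisely $t=0$ and $t^2=9s^2$ by Proposition \ref{prop.iso.sing}, the surviving cases are $s=0$, $s=t$ and $s=-t$, which proves $\boldsymbol{(i)}$.

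For part $\boldsymbol{(ii)}$, I would substitute the three values found above into the normal form, obtaining $D_{\mathrm{red}}=\{xy=0\}$ for $s=0$, $D_{\mathrm{red}}=\{x^2+y^2=0\}$ for $s=t$, and $D_{\mathrm{red}}=\{x^2-y^2=0\}$ for $s=-t$; in each case $D_{\mathrm{red}}=D_1+D_2$ is the sum of two distinct members of $|L|$, so $D=2D_1+2D_2$. To identify the pair $\{D_1,D_2\}$ I would use the canonical Schr\"odinger action of $K(\mathcal{L})\cong(\mathbb{Z}/2\mathbb{Z})^2$ on $H^0(A,\mathcal{L})=\langle x,y\rangle$, whose two generators act by $x\leftrightarrow y$ and by $y\mapsto -y$; the three non-trivial involutions then have as fixed pairs exactly $\{D_x,D_y\}$, $\{D_{x+y},D_{x-y}\}$ and $\{D_{x+iy},D_{x-iy}\}$, which are the three pairs just produced. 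As the two generating involutions have no common fixed point on $|L|\cong\mathbb{P}^1$, no curve of $|L|$ is $K(\mathcal{L})$-invariant, so each such pair is a single $K(\mathcal{L})$-orbit of cardinality $2$. Therefore $D_1$ and $D_2$ have non-trivial stabilizer, and Proposition \ref{prop.hyp.stab} forces both to be either smooth hyperelliptic curves or reducible curves; lying in one orbit they are of the same type, giving case $\boldsymbol{(iia)}$, or, when reducible, case $\boldsymbol{(iib)}$, in which $\mathcal{L}$ is necessarily special and $\{D_1,D_2\}=\{E+F,E'+F'\}$ by Proposition \ref{prop.lin.M.sing}$\,\boldsymbol{(b)}$. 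The reverse implication is immediate, since both $\boldsymbol{(iia)}$ and $\boldsymbol{(iib)}$ exhibit $D=2D_{\mathrm{red}}$.

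The main obstacle I anticipate is the conceptual step underlying part $\boldsymbol{(i)}$: justifying that total ramification $D=2D_{\mathrm{red}}$ is equivalent to the branch section being a square \emph{within the image of $S^2H^0(A,\mathcal{L})$}, rather than merely in $H^0(A,\mathcal{L}^2)$. This is what legitimizes the elementary coefficient computation, and it holds because \eqref{eq.branch} already factors through linear sections of $\mathcal{L}$. The remaining care is bookkeeping: checking that the excluded values $t=\pm 3s$, which also square the quartic, coincide with the non-isolated locus of Proposition \ref{prop.iso.sing}, so that $\boldsymbol{(i)}$ is sharp, and tracking the Schr\"odinger action accurately enough to recognise the three pairs as full orbits before invoking Proposition \ref{prop.hyp.stab}.
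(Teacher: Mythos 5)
Your proof is correct and follows essentially the same route as the paper: both arguments reduce to the explicit branch quartic \eqref{eq.branch} and conclude via Proposition \ref{prop.hyp.stab}. The only variation is that you characterize total ramification by the quartic being a perfect square and exhibit the three orbits of size $2$ explicitly through the Schr\"odinger action, whereas the paper invokes the vanishing of the discriminant together with the fact that $\{D_1,\ldots,D_4\}$ is a single $K(\mathcal{L})$-orbit; your version is marginally more self-contained on that point, but the list of parameter values and the endgame are identical.
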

\begin{proof}
$\boldsymbol{(i)}$ The triple cover $f \colon X \to A$ is totally
ramified if and only if the discriminant of the polynomial
defining $D$ in \eqref{eq.branch} vanishes. This happens exactly
for $s=0$, $t=0$, $s=t$, $s=-t$, $t=3s$, $t=-3s$. Since we are
assuming that $X$ has isolated singularities, the only acceptable
values are $s=t$, $s=-t$ and $s=0$ (see Proposition \ref{prop.iso.sing}).

$\boldsymbol{(ii)}$ The triple cover $f
\colon X \to A$ is totally ramified if and only if $D=2D'$ for
some effective divisor $D'$. Since the four curves $D_i$ form an
orbit for the action of $K(\mL)$ on $|L|$, this is equivalent to
say that the $D_i$ have non-trivial stabilizer. Now the assertion
follows from Proposition \ref{prop.hyp.stab}.
\end{proof}

\begin{prop} \label{prop.total.ram.2}
Assume that $\widehat{X}$ has only isolated singularities. Then
$\widehat{X}$ always contains a singular point of type
$\frac{1}{3}(1,1)$, lying over $\hat{o} \in \widehat{A}$. Moreover,
this point is the unique singular point of $\widehat{X}$, unless:
\begin{itemize}
\item[$\boldsymbol{(i)}$] one of the $D_i$ is an irreducible,
nodal curve$;$ in this case $\widehat{X}$ also contains a singular
point of type $\frac{1}{2}(1, \,1);$
\item[$\boldsymbol{(ii)}$]
$\mL$ is a special polarization and we are in case $(iib)$ of
Proposition \emph{\ref{prop.total.ram}}. Then
$\hat{f} \colon \widehat{X} \to \wA$ is
totally ramified over the image in $\widehat{A}$ of the divisor
$E+F+E'+F'$, which is a curve isomorphic to $E+F$ and having a
node at $\hat{o}$. In this case $\widehat{X}$ also contains a
singular point of type $\frac{1}{3}(1, \, 2)$.
\end{itemize}
\end{prop}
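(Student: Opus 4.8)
The plan is to transport the whole question upstairs along the étale quadruple cover $\psi\colon X\to\widehat{X}$ of diagram~\eqref{dia.projs}. Since $\psi$ is étale and Galois with group $K(\mL)$, we have $\widehat{X}=X/K(\mL)$, so the singular points of $\widehat{X}$ are exactly the $K(\mL)$-orbits of singular points of $X$, each carrying the same analytic type as its preimages; likewise the branch divisor of $\hat{f}$ pulls back to the branch divisor $D$ of $f$, and $\mathrm{Sing}(D)$ maps $K(\mL)$-equivariantly onto the singular locus of the branch of $\hat{f}$. Thus it suffices to classify $\mathrm{Sing}(X)$ together with the $K(\mL)$-action and then pass to the quotient. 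By Proposition~\ref{prop.sing.ram} all singularities of $X$ lie over $\mathrm{Sing}(D)$, so the first task is to locate $\mathrm{Sing}(D)$.

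Writing $D=D_1+D_2+D_3+D_4$ with $D_i\in|L|$ forming a $K(\mL)$-orbit, I would first observe that two distinct components meet only at the four base points of $|L|$: indeed $D_iD_j=L^2=4$, and all curves of $|L|$ pass through those four points, where the general member is smooth and the intersection is transverse, so the base points already exhaust the intersection number. Hence $\mathrm{Sing}(D)=\{e_0,\dots,e_3\}\cup\{\text{singular points of the individual }D_i\}$. Over the base points Proposition~\ref{prop.iso.sing}$(i)$ shows that the local equations~\eqref{eq.determinant} define a cone over a twisted cubic, i.e.\ a $\tfrac13(1,1)$ point at which $f$ is totally ramified; since $\{e_0,\dots,e_3\}$ is a single $K(\mL)$-orbit mapping to $\hat{o}=\phi(o)$, this yields the asserted $\tfrac13(1,1)$ singularity of $\widehat{X}$ over $\hat{o}$ in all cases with isolated singularities. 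If every $D_i$ is smooth, there is nothing further and this is the unique singularity.

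Next I would decide when some $D_i$ can be singular, using Proposition~\ref{prop.barth.class}: a member of $|L|$ is singular exactly when it is an irreducible nodal curve (type $(b)$) or reducible (type $(c)$; type $(d)$ is excluded by Proposition~\ref{prop.CH.no.product}). The orbit structure forces the dichotomy of the statement. A nodal irreducible curve has trivial $K(\mL)$-stabilizer, so its orbit consists of four distinct nodal curves and $D$ is reduced: this is case~$(i)$, where the four nodes lie at $2$-division points distinct from $e_0,\dots,e_3$ and form one $K(\mL)$-orbit, $f$ is not totally ramified there, and Example~\ref{ex.2} produces a $\tfrac12(1,1)$ point, giving after quotient the single extra $\tfrac12(1,1)$ singularity. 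A reducible curve $E+F$ has an order-two stabilizer, so its orbit has two elements each of multiplicity two, forcing $D=2(E+F)+2(E'+F')$ and total ramification: by Proposition~\ref{prop.total.ram} this is precisely $(iib)$ and forces $\mL$ special (case~$(ii)$). In case~$(ii)$ the reduced branch $D_{\mathrm{red}}=(E+F)+(E'+F')$ is singular, besides the base points, exactly at the nodes $\{p,q,r,s\}$ of $E+F$ and $E'+F'$, which form one $K(\mL)$-orbit; their common image $\bar{p}$ and the image $\hat{o}$ of the base points are the two nodes of the image curve $\Delta=\phi(D_{\mathrm{red}})$, which is of type $(c)$ (isomorphic to $E+F$), and over $\bar{p}$ the cover $\hat{f}$ is totally ramified.

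The main obstacle is to show that the singularity over $\bar{p}$ is $\tfrac13(1,2)$ rather than $\tfrac13(1,1)$, since by Example~\ref{ex.3} both are a priori possible for a totally ramified triple cover over an ordinary double point of the reduced branch. I would settle this by a local analysis of the determinantal equations~\eqref{eq.determinant} at a genuine node of $E+F$ (where the two elliptic components cross), in the spirit of the base-point computation of Proposition~\ref{prop.iso.sing}$(i)$: whereas at the base points one obtains the non-Gorenstein cone over the twisted cubic ($\tfrac13(1,1)$, cut out by $2\times 2$ minors), at such a node the cover is locally defined by the single equation $z^3=uv$, the $A_2$-point $\tfrac13(1,2)$. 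Equivalently, one may run Tan's canonical resolution of Proposition~\ref{prop.can.ris} and compare discrepancies as in \cite[Section~6]{Tan02}. Together with the $\tfrac13(1,1)$ point over $\hat{o}$, this produces exactly the two singularities of $\widehat{X}$ in case~$(ii)$ and completes the classification.
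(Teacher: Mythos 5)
Your proposal is correct and follows essentially the same route as the paper: reduce to the cover $f\colon X\to A$ via the \'etale $K(\mathcal{L})$-cover $\psi$, locate $\mathrm{Sing}(D)$ using the orbit structure of the $D_i$ and Proposition \ref{prop.hyp.stab}, and identify the local types by the determinantal computation at the base points ($\tfrac13(1,1)$, cone over a twisted cubic) and the local equation $z^3=xy$ at the nodes $p,q,r,s$ ($\tfrac13(1,2)$). The paper likewise simply asserts these two local normal forms (citing the computation in \cite{CH06} for the first), so the one step you flag as the ``main obstacle'' is resolved exactly as you propose.
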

\begin{proof}
Since there exists an \'etale morphism $\psi \colon X \to
\widehat{X}$, it is sufficient to
 analyze the triple cover $f \colon X \to A$.
If all divisors $D_i$ are smooth, then the only singularities of $X$
are the four points of type $\frac{1}{3}(1, \, 1)$ lying over the
base points of $|L|$. If one of the $D_i$ is an irreducible, nodal
curve, then all the $D_i$ are so, because they form a single
$K(\mL)$-orbit, see Proposition \ref{prop.lin.M.sing} and Remark \ref{Yoshihara}.
In this case $X$ also contains
four points of type $\frac{1}{2}(1, \,1)$, which are identified by
$\psi$ to a unique point of type $\frac{1}{2}(1, \, 1)$ in
$\widehat{X}$; this
 yields $(i)$. Finally, if $\mL$ is a special polarization and
$D=2(E+F)+2(E'+F')$, then locally around any of the four points
$p, \, q, \, r, \,  s$ the equation of $X$ can be written as
$z^3=xy$, so they give singularities of type $\frac{1}{3}(1,
\,2)$. The morphism $\psi$ identifies $E$ with $E'$ and $F$ with
$F'$. Then $\hat{f} \colon \widehat{X} \to \widehat{A}$ is totally
ramified and its reduced branch locus is isomorphic to $E+F$, in
particular it has two nodes. One of these nodes is at $\hat{o}$
and it gives the singular point of type $\frac{1}{3}(1, \,1)$; the
second one gives instead a singular point of type
$\frac{1}{3}(1,2)$. This is case $(ii)$.
\end{proof}

In the sequel we will denote by $\Delta$ the branch locus of
$\hat{f} \colon \widehat{X} \to \widehat{A}$.  By construction, it
is precisely the image of $D$ via $\phi \colon A \to \wA$. It
follows that $\Delta$ always has a point of multiplicity $4$ at
$\hat{o} \in \widehat{A}$. More precisely, we have the following

\begin{prop} \label{prop.quadruple}
The branch locus $\Delta$ belongs precisely to one of the following
types:
\begin{itemize}
\item[$\boldsymbol{(a)}$] $\Delta$ is reduced and its only
singularity is an ordinary quadruple point at $\hat{o};$ in this
case $\emph{Sing}(\widehat{X})=\frac{1}{3}(1, \, 1).$
\item[$\boldsymbol{(b)}$] $\Delta$ is reduced and its only
singularities are an ordinary quadruple point at $\hat{o}$ and an
ordinary double point; in this case
$\emph{Sing}(\widehat{X})=\frac{1}{3}(1, \, 1) + \frac{1}{2}(1, \,
1).$ \item[$\boldsymbol{(c)}$] $\Delta=2 \Delta_{\emph{red}}$,
where $\Delta_{\emph{red}}$ is an irreducible curve whose unique
singularity is an ordinary double point at $\hat{o};$ in this case
$\emph{Sing}(\widehat{X})=\frac{1}{3}(1, \, 1).$
\item[$\boldsymbol{(d)}$] $\Delta=2 \Delta_{\emph{red}}$ and
$\Delta_{\emph{red}}=E+F$, where $E$, $F$ are elliptic curves such
that $EF=2$ and $\hat{o} \in E \cap F;$
in this case $\emph{Sing}(\widehat{X})=\frac{1}{3}(1,
\, 1) + \frac{1}{3}(1, \, 2).$
\end{itemize}
The canonical divisor $K_S$ is ample if and only if we are either
in case $(a)$ or in case $(c)$.
\end{prop}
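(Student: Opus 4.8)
The plan is to recognize the branch curve $\Delta$ as a member of the linear system $\mathfrak{D}$ attached to the dual pair $(\widehat{A},\,\mathcal{L}_{\delta})$, import its classification from Proposition \ref{prop.pencil.I4}, and then translate each of the four shapes of $\Delta$ into the corresponding description of $\mathrm{Sing}(\widehat{X})$ via the local theory of triple covers. First I would observe that, since $S$ is of general type, $\widehat{X}$ has only isolated singularities: by Proposition \ref{prop.iso.sing} and Remark \ref{rem.red} the excluded values $t=0$ and $t^2-9s^2=0$ give reducible, non-general-type surfaces and hence do not arise for a Chen--Hacon surface. Next, recall that $\Delta=\phi(D)$ lies in $|2L_{\delta}|$ (since $\phi^{*}\Delta=D\in|4L|=|\phi^{*}(2L_{\delta})|$) and has a point of multiplicity $4$ at $\hat{o}$; therefore $\Delta\in\mathbb{P}H^0(\widehat{A},\,\mathcal{L}_{\delta}^2\otimes\mathcal{I}_{\hat{o}}^4)$, which is exactly the system $\mathfrak{D}$ of Definition \ref{def.D} for $(\widehat{A},\,\mathcal{L}_{\delta})$; note that $\mathcal{L}_{\delta}$ is again a non-product $(1,2)$-polarization by Proposition \ref{prop.CH.no.product}, so the results of Subsection \ref{sec.ab.surf} apply. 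Applying Proposition \ref{prop.pencil.I4} to this pair then shows that $\Delta$ has precisely one of the four types $(a)$--$(d)$ listed there, and I would arrange the argument so that these match the four cases of the statement one to one.

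It then remains to compute $\mathrm{Sing}(\widehat{X})$ in each case, the organizing principle being that $\hat{f}$ is totally ramified if and only if $\Delta$ is non-reduced, i.e.\ $\Delta=2\Delta_{\mathrm{red}}$. By Proposition \ref{prop.total.ram} this occurs exactly in the two non-reduced types $(c)$ and $(d)$, whereas in $(a)$ and $(b)$ the curve $\Delta$ is reduced and $\hat{f}$ is generically simply branched. Since $\hat{o}$ is the $\phi$-image of the base points of $|L|$, over which the cover is totally ramified, $\hat{f}$ is always totally ramified over $\hat{o}$; in types $(a),(b)$ the ordinary quadruple point there yields a $\frac{1}{3}(1,1)$ singularity by Example \ref{ex.1}, while in types $(c),(d)$ the node of $\Delta_{\mathrm{red}}$ at $\hat{o}$ yields a $\frac{1}{3}(1,1)$ singularity by Proposition \ref{prop.total.ram.2}. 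Every remaining singular point of $\widehat{X}$ lies over a singular point of $\Delta_{\mathrm{red}}$ by Proposition \ref{prop.sing.ram}: type $(a)$ has none; type $(b)$ has one further node, over which $\hat{f}$ is not totally ramified, producing an extra $\frac{1}{2}(1,1)$ by Example \ref{ex.2}; type $(c)$ has none; and type $(d)$ has the second node of $E+F$, producing an extra $\frac{1}{3}(1,2)$ by Proposition \ref{prop.total.ram.2}$(ii)$. This yields exactly the four singular loci asserted.

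For the ampleness claim I would use that, $S$ being minimal of general type, $K_S$ is nef and big and is ample if and only if $S$ contains no $(-2)$-curve. Any $(-2)$-curve is rational, hence contracted by the finite morphism $\hat{f}\circ p\colon S\to\widehat{A}$ onto the abelian surface $\widehat{A}$, so it is necessarily $p$-exceptional. Now $\frac{1}{3}(1,1)$ resolves to a single $(-3)$-curve, whereas $\frac{1}{2}(1,1)$ (an $A_1$-point) and $\frac{1}{3}(1,2)$ (an $A_2$-point) resolve to configurations of $(-2)$-curves; hence $S$ carries a $(-2)$-curve precisely when $\widehat{X}$ has a singularity other than $\frac{1}{3}(1,1)$, that is, in cases $(b)$ and $(d)$. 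Consequently $K_S$ is ample if and only if we are in case $(a)$ or $(c)$.

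The step I expect to be most delicate is the local bookkeeping over the nodes of $\Delta_{\mathrm{red}}$ in the totally ramified cases. Example \ref{ex.3} only guarantees that such a node gives a $\frac{1}{3}(1,1)$ or a $\frac{1}{3}(1,2)$ point without specifying which, so the distinction between cases $(c)$ and $(d)$ — and in particular the fact that in $(d)$ the node at $\hat{o}$ produces $\frac{1}{3}(1,1)$ while the second node of $E+F$ produces $\frac{1}{3}(1,2)$ — must be read off from the explicit local models underlying Proposition \ref{prop.total.ram.2}. Making the correspondence between the intrinsic classification of $\Delta$ and this local analysis genuinely bijective is where the care is needed.
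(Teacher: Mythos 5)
Your proposal is correct and follows essentially the same route as the paper: the paper's own proof is a one-line collation of Propositions \ref{prop.total.ram} and \ref{prop.total.ram.2}, and your detour through the classification of $\mathfrak{D}_{\delta}$ in Proposition \ref{prop.pencil.I4} is exactly the correspondence the authors record in Remark \ref{rem.dual.pol}. You additionally spell out the ampleness criterion ($K_S$ ample iff no $(-2)$-curves, which arise precisely from the $\frac{1}{2}(1,1)$ and $\frac{1}{3}(1,2)$ points), which the paper leaves implicit, and you correctly identify that the $\frac{1}{3}(1,1)$ versus $\frac{1}{3}(1,2)$ distinction must come from the local models in the proof of Proposition \ref{prop.total.ram.2} rather than from Example \ref{ex.3} alone.
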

\begin{proof}
Case $(a)$ corresponds to the general situation. Case $(b)$
corresponds to Proposition \ref{prop.total.ram.2}, $(i)$. Case
$(c)$ corresponds to Proposition \ref{prop.total.ram}, $(iia)$.
Finally, Case $(d)$ corresponds to Proposition
\ref{prop.total.ram.2}, $(ii)$ or, equivalently, to Proposition
\ref{prop.total.ram}, $(iib)$.
\end{proof}

\begin{rem} \label{rem.dual.pol}
The equation of $\Delta$ is given by a
non-zero element in $H^0(\wA, \, \mL_{\delta}^2 \otimes
\mathcal{I}^4_{\hat{o}})$,
 where $\mL_{\delta}$ is a $(1, \,2)$-polarization on $\wA$
which coincides, up to translations, with the dual polarization of
$\mL$, see \cite[Chapter 14]{BL04} (we cannot denote the dual polarization by
$\widehat{\mL}$, since this is the Fourier-Mukai transform of $\mL$).
Notice that the four cases in
Proposition \ref{prop.quadruple} correspond exactly to the ones in
Proposition \ref{prop.pencil.I4}.
\end{rem}

Summarizing the results obtained in this section, we have

\begin{prop} \label{prop.ch}
If $S$ is a  Chen-Hacon surface, then it is a minimal surface of
general type with $p_g=q=2$, $K_S^2=5$. Moreover $\widehat{X}$
contains at least one and at most two isolated, negligible
singularities, which belong to the the types described in Examples
\emph{\ref{ex.1}}, \emph{\ref{ex.2}}, \emph{\ref{ex.3}}.
In particular, $\widehat{X}$ is never smooth.
\end{prop}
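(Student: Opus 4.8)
The plan is to assemble this summary from results already established in the section, the one genuinely new computation being $K_S^2 = 5$. Minimality, general type, and $p_g = q = 2$ are part of the hypothesis in Definition \ref{def.gch}, so nothing is needed for those; what remains is the value of $K_S^2$, the classification and negligibility of the singularities of $\widehat{X}$, and the non-smoothness of $\widehat{X}$. First I would observe that, since $S$ is of general type, $\widehat{X}$ cannot have non-isolated singularities: by Proposition \ref{prop.iso.sing}, read through the \'etale cover $\psi$ of diagram \eqref{dia.projs} (which identifies the singularity types of $X$ and $\widehat{X}$), the non-isolated case produces a variety that is not of general type. Hence Proposition \ref{prop.total.ram.2} and Proposition \ref{prop.quadruple} apply, and the singular locus of $\widehat{X}$ falls into exactly one of the four cases $(a)$--$(d)$.

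Next I would match the singularities listed in those four cases against Examples \ref{ex.1}, \ref{ex.2} and \ref{ex.3}, which cover precisely the types $\frac{1}{3}(1,1)$, $\frac{1}{2}(1,1)$ and $\frac{1}{3}(1,2)$ that occur in Proposition \ref{prop.quadruple}; in each case the cited example exhibits the singularity as negligible. Since cases $(a)$ and $(c)$ produce a single point while cases $(b)$ and $(d)$ produce two, this yields the ``at least one and at most two'' count. Because the $\frac{1}{3}(1,1)$ point lying over $\hat{o}$ is present in every one of the four cases, $\widehat{X}$ is never smooth.

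Finally, negligibility means, by Definition \ref{def.neg.sing}, that the invariants of the minimal resolution $S$ are computed by the formulae of Proposition \ref{prop.invariants} applied to $\hat{f} \colon \widehat{X} \to \wA$ with Tschirnhausen bundle $\EE = \FF^{\vee}$. As $\wA$ is abelian, $K_{\wA} = 0$, so part $(ii)$ reduces to $K_S^2 = 2 c_1^2(\EE) - 3 c_2(\EE)$. From \eqref{eq.FF-s} we have $\det \FF = \mathcal{L}_{\delta}$, a polarization of type $(1,2)$ with $L_{\delta}^2 = 4$, while Hirzebruch--Riemann--Roch gives $c_2(\FF) = 1$; since dualizing a rank $2$ bundle fixes $c_2$ and negates $c_1$, we obtain $c_1^2(\EE) = 4$ and $c_2(\EE) = 1$, whence $K_S^2 = 2 \cdot 4 - 3 \cdot 1 = 5$. (As a consistency check, part $(i)$ of Proposition \ref{prop.invariants} together with Serre duality on $\wA$ recovers $h^1(\oo_S) = h^2(\oo_S) = 2$.)

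The only point needing genuine care is that Proposition \ref{prop.invariants} is stated for smooth triple covers, whereas $\widehat{X}$ is singular; this is exactly what negligibility licenses, so the substantive part of the argument is the verification that every singularity of $\widehat{X}$ is negligible, which rests on the already-established Proposition \ref{prop.quadruple} and Examples \ref{ex.1}--\ref{ex.3}. Once that is in hand, the Chern-class bookkeeping is routine.
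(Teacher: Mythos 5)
Your proposal is correct and follows essentially the same route as the paper, which states Proposition \ref{prop.ch} as a summary of Propositions \ref{prop.iso.sing}--\ref{prop.quadruple} together with Examples \ref{ex.1}--\ref{ex.3}, and whose own remark at the end of Subsection \ref{subsec.ch} endorses exactly your shortcut of computing the invariants directly from Proposition \ref{prop.invariants} via negligibility (the paper's explicit computation there instead passes through the quadruple \'etale cover $\psi$ and the canonical resolution, obtaining $K^2_{\widetilde{X}}=20$ and dividing by $4$). Your Chern-class bookkeeping ($c_1^2(\EE)=4$, $c_2(\EE)=1$, hence $K_S^2=5$) and the case count from Proposition \ref{prop.quadruple} are both accurate.
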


\section{Characterization of Chen-Hacon surfaces} \label{sec.main.thm}

In this section we prove one of the key results
of the paper, namely the following converse of Proposition \ref{prop.ch}.

\begin{theo} \label{teo.ch}
Let $S$ be a minimal surface of general type with $p_g=q=2$, $K_S^2=5$
such that the Albanese map
$\alpha \colon S \to \widehat{A}:= \emph{Alb}(S)$
is a generically finite morphism of degree $3$. Let
\begin{equation*}
S \stackrel{p}{\lr} \widehat{X} \stackrel{\hat{f}}{\lr} \widehat{A}
\end{equation*}
be the Stein factorization of $\alpha$. If $\widehat{X}$ has at most
negligible singularities, then $S$ is a Chen-Hacon surface.
\end{theo}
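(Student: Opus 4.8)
### Proof Proposal

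The plan is to reverse the Fourier--Mukai construction. We start with a minimal surface $S$ of general type with $p_g=q=2$, $K_S^2=5$ whose Albanese map $\alpha$ has degree $3$, and whose Stein factorization yields a triple cover $\hat{f}\colon \widehat{X}\to\widehat{A}$ with $\widehat{X}$ having at most negligible singularities. Let $\EE$ be the Tschirnhausen bundle of $\hat{f}$, and set $\FF:=\EE^{\vee}$. The goal is to produce a $(1,\,2)$-polarization $\mL$ on $A=\mathrm{Pic}^0(\widehat{A})$ such that $\FF=\widehat{\mathcal{L}^{-1}}$. The key idea is that the negligibility hypothesis forces the numerical invariants of $\EE$ to be exactly those predicted by the formulae in Proposition \ref{prop.invariants}, so that $\FF$ falls squarely into the family of rank $2$ bundles studied in Section \ref{sec.vec}, characterized by \eqref{eq.vec.F}.

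First I would compute the invariants of $\FF$. Since $\widehat{X}$ has only negligible singularities, Definition \ref{def.neg.sing} tells us that $p_g(S)$, $q(S)$ and $K_S^2$ are given by the formulae of Proposition \ref{prop.invariants} applied to the smooth model, with $Y=\widehat{A}$. Part $\boldsymbol{(i)}$ of that proposition gives $h^i(\widehat{A},\,\FF^{\vee})=h^i(S,\,\oo_S)-h^i(\widehat{A},\,\oo_{\widehat{A}})$; using $p_g(S)=q(S)=2$ and the fact that $\widehat{A}$ is an abelian surface (so $h^0=h^2=1$, $h^1=2$), this yields the cohomology of $\EE=\FF^{\vee}$, and hence by Serre duality the conditions $h^0(\widehat{A},\,\FF)=1$, $h^1(\widehat{A},\,\FF)=0$, $h^2(\widehat{A},\,\FF)=0$. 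Next, from part $\boldsymbol{(ii)}$ with $K_{\widehat{A}}=0$ and $K_S^2=5$, one solves $5=2c_1^2(\EE)-3c_2(\EE)$; combined with the branch-locus relation $\det\EE^{\vee}=\bigwedge^2\FF=\mathcal{L}_{\delta}$ for the appropriate $(1,\,2)$-class, and Hirzebruch--Riemann--Roch as in \eqref{eq.vec.F}, this pins down $\det\FF$ to be a polarization of type $(1,\,2)$ with $c_2(\FF)=1$. Thus $\FF$ satisfies all of \eqref{eq.vec.F}.

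Having placed $\FF$ into the Section \ref{sec.vec} family, the structural results apply. By Proposition \ref{prop.decomp.unstable}, $\FF$ cannot be a direct sum (its numerics would force $C_1^2=C_2^2=1$ with $C_1C_2=1$, contradicting the index theorem), so $\FF$ is indecomposable. The case where $\det\FF$ is a product polarization must then be excluded: in that case Corollaries \ref{cor.nonsimple} and \ref{cor.simple} show that the associated triple cover is either reducible and non-reduced or has normalization a properly elliptic surface, and in neither case is $\widehat{X}$ a surface of general type --- contradicting that $S$ is of general type. Therefore $\mathcal{L}_{\delta}$ is not a product polarization, and Proposition \ref{prop.L.irred.F.simple} identifies $\FF$ as the unique locally free extension of $\mathcal{L}_{\delta}\otimes\mathcal{I}_x$ by $\oo_{\widehat{A}}$ with $x\in K(\mathcal{L}_{\delta})$; by Proposition \ref{prop.F.WIT}, $\FF$ is a symmetric IT-sheaf of index $0$.

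The final step, which I expect to be the main obstacle, is to invert the Fourier--Mukai transform. Since $\FF$ is an IT-sheaf of index $0$ on $\widehat{A}$, the theory of \cite[Chapter 14]{BL04} guarantees that $\FF$ is itself the Fourier--Mukai transform of some sheaf on $A$; the content is to show that this preimage is precisely $\mathcal{L}^{-1}$ for a $(1,\,2)$-polarization $\mL$, i.e.\ that $\FF=\widehat{\mathcal{L}^{-1}}$. Concretely, one sets $\mL$ to be the dual polarization of $\mathcal{L}_{\delta}$ (legitimate because $\mathcal{L}_{\delta}$ is a genuine $(1,\,2)$-polarization, not of product type, as just established), and then verifies that the Fourier--Mukai transform of $\mathcal{L}^{-1}$ --- which by \cite[Theorem 14.2.2]{BL04} is a rank $2$ bundle satisfying IT of index $0$ with the same determinant and Chern numbers as $\FF$ --- actually coincides with $\FF$. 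The delicate point is the \emph{uniqueness}: two such bundles a priori share all invariants in \eqref{eq.vec.F} but need not be isomorphic. Here the rigidity of Proposition \ref{prop.L.irred.F.simple}, namely that such a bundle is the \emph{unique} non-split extension \eqref{suc.def.f.x} (as established via the $\mathrm{Ext}^1\cong\mathbb{C}$ computation in \eqref{eq.ext.1}), is what closes the gap: both $\FF$ and $\widehat{\mathcal{L}^{-1}}$ are the unique extension of $\mathcal{L}_{\delta}\otimes\mathcal{I}_o$ by $\oo_{\widehat{A}}$, hence isomorphic. This yields $\FF=\widehat{\mathcal{L}^{-1}}$, so $S$ is a Chen-Hacon surface by Definition \ref{def.gch}.
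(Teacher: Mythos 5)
Your proposal is correct and follows essentially the same route as the paper: the negligibility hypothesis forces the Tschirnhausen numerics \eqref{eq.inv.E}, placing $\FF=\EE^{\vee}$ in the family of Section \ref{sec.vec}, after which indecomposability, the exclusion of the product polarization via Corollaries \ref{cor.nonsimple} and \ref{cor.simple}, and Fourier--Mukai inversion give $\FF=\widehat{\mL^{-1}}$. The only (harmless) divergence is in the final step: the paper defines $\mL^{-1}:=\widehat{\FF}$ directly and concludes by Mukai's inversion theorem together with the symmetry $(-1)^*_{\wA}\FF=\FF$ from Proposition \ref{prop.F.WIT}, whereas you start from the dual polarization of $\mL_{\delta}$ and identify $\widehat{\mL^{-1}}$ with $\FF$ via the uniqueness of the extension \eqref{eq.F.in.wA}; both arguments work and produce the same $\mL$.
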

The proof will be a consequence of Propositions \ref{prop.no.decomp} and \ref{prop.exist.L} below. Let
$\mathcal{E}$ be the Tschirnhausen bundle of the triple cover
$\hat{f} \colon \widehat{X} \to \widehat{A}$. Since by assumption
$\widehat{X}$ has at most negligible singularities, Proposition
\ref{prop.invariants} implies
\begin{equation} \label{eq.inv.E}
\begin{split}
h^0(\wA, \, \EE)& =0, \quad h^1(\wA, \, \EE)=0, \quad h^2(\wA, \,
\EE)=1; \\
 c_1^2(\EE)&=4, \quad c_2(\EE)=1.
\end{split}
\end{equation}
In particular, $\bigwedge^2 \EE^{\vee}$ yields a polarization of
type $(1, \,2)$ on $\wA$; let us denote it by
$\mL_{\delta}=\oo_A(L_{\delta})$. Setting $\FF:=\EE^{\vee}$, we have
\begin{equation*}
h^0(\wA, \, \FF)=1, \quad h^1(\wA, \, \FF)=0, \quad h^2(\wA, \,
\FF)=0, \quad \det \FF = \mL_{\delta},
\end{equation*}

that is $\FF$ belongs to the family of vector bundles studied in
Section \ref{sec.vec}.

\begin{prop} \label{prop.no.decomp}
$\mathcal{F}$ is an indecomposable vector bundle.
\end{prop}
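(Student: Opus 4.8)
The plan is to argue by contradiction: assume $\FF=\oo_{\wA}(C_1)\oplus\oo_{\wA}(C_2)$ for some divisors $C_1,C_2$ on $\wA$, and to show this is incompatible with $\widehat{X}$ being integral and normal (which it is, being the Stein factorization of the Albanese map of the smooth connected surface $S$). This sharpens Proposition \ref{prop.decomp.unstable}, which only rules out \emph{strict semistability} of a split bundle; here the destabilizing summand will have too large a slope for that proposition to apply directly, so a geometric argument is needed.

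First I would pin down the two summands numerically. Since $h^0(\wA,\FF)=1$ and $h^0(\wA,\EE)=h^0(\wA,\FF^{\vee})=0$, additivity of cohomology over a direct sum forces, after relabelling, $h^0(\oo_{\wA}(C_1))=1$, $h^0(\oo_{\wA}(C_2))=0$ and $h^0(\oo_{\wA}(-C_1))=h^0(\oo_{\wA}(-C_2))=0$; in particular $C_1$ is effective and nonzero, so $h^2(\oo_{\wA}(C_1))=h^0(\oo_{\wA}(-C_1))=0$. From $\det\FF=\mL_{\delta}$ and $c_2(\FF)=1$ I read off $C_1+C_2\equiv L_{\delta}$, $(C_1+C_2)^2=L_{\delta}^2=4$ and $C_1C_2=1$, whence $C_1^2+C_2^2=2$. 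Since the N\'eron--Severi lattice of an abelian surface is even, both self-intersections are even; a short Riemann--Roch computation then closes off the remaining cases, as $h^0(\oo_{\wA}(C_1))=1$ forces $C_1^2<4$ (otherwise $h^0\ge C_1^2/2\ge 2$, giving in particular $C_2^2\ge 0$), while $h^0(\oo_{\wA}(C_2))=0$ rules out $C_2^2=2$. The only surviving possibility is $C_1^2=2$, $C_2^2=0$.

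Next I would feed this back into the datum defining the cover. By Theorem \ref{teo.miranda} the cover $\hat{f}$ is given by a section $\eta\in H^0(\wA,\,S^3\FF\otimes\bigwedge^2\FF^{\vee})$, and for $\FF=\oo_{\wA}(C_1)\oplus\oo_{\wA}(C_2)$ this bundle splits as
\[
S^3\FF\otimes\textstyle\bigwedge^2\FF^{\vee}\;=\;\oo_{\wA}(2C_1-C_2)\oplus\oo_{\wA}(C_1)\oplus\oo_{\wA}(C_2)\oplus\oo_{\wA}(-C_1+2C_2).
\]
Using $C_1^2=2$, $C_2^2=0$, $C_1C_2=1$ one gets $(-C_1+2C_2)^2=-2<0$, so the fourth summand has no sections, while the third summand $\oo_{\wA}(C_2)$ has $h^0=0$ by the previous step. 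Hence every global section $\eta$ is supported on the first two summands, i.e. on the two ``extreme'' monomials in the fibre coordinates of $\mathbb{P}(\FF)$; this means the cubic form cutting out $\widehat{X}$ is divisible by the square of one fibre coordinate. Exactly as in the proof of Corollary \ref{cor.nonsimple}, such an $\eta$ factors as (a section)\,$\times$\,(a section)$^2$, so the resulting triple cover is non-reduced (it carries a double component), contradicting the integrality of $\widehat{X}$.

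The step I expect to be the crux is this last translation: passing from the vanishing of the two outer summands of $S^3\FF\otimes\bigwedge^2\FF^{\vee}$ to the conclusion that the Miranda cubic necessarily acquires a repeated linear factor, so that $\widehat{X}$ is forced to be non-reduced \emph{for every} admissible $\eta$. Here one must handle Miranda's conventions relating $\eta$ to the algebra structure on $\oo_{\wA}\oplus\EE$ with care; the model to imitate is precisely the computation underlying Corollary \ref{cor.nonsimple}. Everything else reduces to Riemann--Roch and intersection theory on the abelian surface $\wA$.
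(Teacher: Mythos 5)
Your proof is correct and follows essentially the same route as the paper's: assume $\FF=\oo_{\wA}(C_1)\oplus\oo_{\wA}(C_2)$, pin down the numerics of the summands from $h^0(\FF)=1$, $h^0(\FF^{\vee})=0$, $\det\FF=\mL_{\delta}$, $c_2(\FF)=1$, and derive a contradiction from the forced degeneracy of the Miranda section $\eta$ (the paper shows one of the ``middle'' data $b,c$ must vanish while irreducibility of $\widehat{X}$ requires both to be non-zero; you instead kill the case $C_1^2=0$ outright by Riemann--Roch and then get non-reducedness from the vanishing of the last two summands). One cosmetic slip: the two surviving summands $\oo_{\wA}(2C_1-C_2)$ and $\oo_{\wA}(C_1)$ correspond to the \emph{adjacent} monomials $z^3$ and $z^2w$, not to the two ``extreme'' monomials $z^3$ and $w^3$ --- and it is precisely this adjacency that makes the cubic divisible by $z^2$, as you correctly conclude.
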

\begin{proof}
Assume that $\mathcal{F}$ is decomposable. Then there exists a
line bundle $\mathcal{C}=\oo_{\wA}(C)$ such that
\begin{equation*}
\FF = \mathcal{C} \oplus ( \mathcal{C}^{-1} \otimes
\mathcal{L}_{\delta}).
\end{equation*}
Following \cite[Section 6]{M85}, we can construct $\hat{f} \colon
\widehat{X} \to \wA$ by using the data
\begin{equation*}
\begin{split}
a & \in H^0(\wA, \, \mathcal{C}), \\ b &  \in H^0(\wA, \,
\mathcal{C}^3 \otimes \mathcal{L}_{\delta}^{-1}), \\
c & \in H^0(\wA, \, \mathcal{C}^{-3} \otimes \mathcal{L}_{\delta}^2 ), \\
d & \in H^0(\wA, \, \mathcal{C}^{-1} \otimes
\mathcal{L}_{\delta}).
\end{split}
\end{equation*}
Moreover, being $\widehat{X}$ irreducible, $b$ and $c$ are both
non-zero.

Since $h^0(\wA, \, \mathcal{F})=1$, we may assume $h^0(\wA, \,
\mathcal{C})=1$ and $h^0(\wA, \,  \mathcal{C}^{-1} \otimes
\mathcal{L}_{\delta})=0$. Therefore there are two possibilities:
\begin{itemize}
\item[$(i)$] $C$ is an elliptic curve; \item[$(ii)$]
$C$ is a principal polarization.
\end{itemize}

In case $(i)$, we have $1=C(\Ld - C)=C \Ld$. Then $(3C- \Ld)\Ld=-1$,
so $3C-\Ld$ cannot be effective. This implies $b=0$, contradiction.

In case $(ii)$, the Index Theorem yields $8=C^2 \Ld^2 \leq (C
\Ld)^2$, so $C \Ld \geq 3$. It follows
\begin{equation*}
(-3C + 2 \Ld) \Ld = -3 C \Ld + 8 \leq -1,
\end{equation*}
hence $c=0$, contradiction.
\end{proof}

\begin{prop} \label{prop.no.pol.prod}
$\mathcal{L}_{\delta}$ is not a product polarization.
\end{prop}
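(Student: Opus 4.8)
The plan is to argue by contradiction and reduce everything to the structural trichotomy established in Section~\ref{sec.vec}. Suppose $\mathcal{L}_{\delta}$ were a product polarization. By Proposition~\ref{prop.no.decomp} the bundle $\mathcal{F}$ is indecomposable, so the dichotomy of Propositions~\ref{prop.prod.non.ample} and~\ref{prop.prod.ample} leaves exactly two possibilities: either $\mathcal{F}$ is of Schwarzenberger type (indecomposable but not simple), or $\mathcal{F}$ is simple. I would rule out each alternative by confronting it with the geometry forced on $\widehat{X}$ by the Stein factorization.

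First I would record the features of $\widehat{X}$ that come for free: as $\widehat{X}=\mathrm{Spec}\,\alpha_*\mathcal{O}_S$, it is normal, and since the fibres of $p$ are connected and $S$ is irreducible, $\widehat{X}$ is integral; moreover its minimal resolution $S$ is a surface of general type. Recall also that $\hat{f}\colon\widehat{X}\to\wA$ is the triple cover determined by a section $\eta\in H^0(\wA,\,S^3\mathcal{F}\otimes\bigwedge^2\mathcal{F}^{\vee})$, with Tschirnhausen bundle $\mathcal{E}=\mathcal{F}^{\vee}$, so the results of Section~\ref{sec.vec} apply verbatim after identifying $A$ with $\wA$ and $\mathcal{L}$ with $\mathcal{L}_{\delta}$. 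In the Schwarzenberger case, Corollary~\ref{cor.nonsimple} shows that any non-zero section factors as $\eta=\eta_1\eta_2^2$, whence the cover is reducible and non-reduced; this contradicts the integrality of $\widehat{X}$. In the simple case, Proposition~\ref{prop.no.product.ample} shows that every section of $S^3\mathcal{F}\otimes\bigwedge^2\mathcal{F}^{\vee}$ vanishes along the elliptic curve $F_q$, so $\widehat{X}$ is singular along $\hat{f}^{-1}(F_q)$ and hence non-normal, contradicting normality (alternatively, Corollary~\ref{cor.simple} identifies the normalization as a properly elliptic surface, which cannot dominate a surface of general type). Either way we obtain a contradiction, so $\mathcal{L}_{\delta}$ is not a product polarization.

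The only point requiring care — and thus the main (if mild) obstacle — is to make sure the obstruction applies to the \emph{specific} section $\eta$ attached to $S$, and not merely to a generic one. This is precisely why I would invoke Proposition~\ref{prop.no.product.ample} rather than only Corollary~\ref{cor.simple}: the equality of the two zeroth cohomology groups there forces every section, general or not, to vanish along $F_q$, so the non-normality is automatic; likewise, in the Schwarzenberger case the factorization $\eta=\eta_1\eta_2^2$ is valid for every non-zero $\eta$. The argument is essentially the one used in Proposition~\ref{prop.CH.no.product}, the sole difference being that here indecomposability of $\mathcal{F}$ is furnished by Proposition~\ref{prop.no.decomp} instead of by the Fourier--Mukai description of a Chen-Hacon surface.
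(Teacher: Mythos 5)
Your proof is correct and follows essentially the same route as the paper, which likewise deduces the contradiction from Corollaries \ref{cor.nonsimple} and \ref{cor.simple}: a product polarization would force $\widehat{X}$ to be either reducible or non-normal and birational to a properly elliptic surface, hence not of general type. Your extra care in invoking Proposition \ref{prop.no.product.ample} so that the vanishing along $F_q$ applies to the \emph{specific} section $\eta$ (not just a general one) is a worthwhile refinement of a point the paper leaves implicit.
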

\begin{proof}
By the results of Section \ref{sec.vec}, especially Corollaries
\ref{cor.nonsimple} and \ref{cor.simple}, if $\mathcal{L}_{\delta}$
were a product polarization then $\widehat{X}$ would be either a
reducible surface or a non-normal surface birational to a properly
elliptic surface, in particular it would not be a surface of general
type.
\end{proof}

\begin{prop} \label{prop.exist.L}
There exists a symmetric $(1,2)$-polarization $\mL$ on $A$ such that
\begin{equation*}
\widehat{\mL^{-1}}=\FF.
\end{equation*}
\end{prop}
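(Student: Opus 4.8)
The plan is to construct $\mL$ directly as an inverse Fourier--Mukai transform of $\FF$ and then to check that it has the required properties. Since $\FF$ is indecomposable (Proposition \ref{prop.no.decomp}) and $\mL_{\delta}=\det\FF$ is not a product polarization (Proposition \ref{prop.no.pol.prod}), the results of Section \ref{sec.vec} apply verbatim to the $(1,2)$-polarized abelian surface $(\wA,\,\mL_{\delta})$; in particular Proposition \ref{prop.F.WIT} shows that $\FF$ is a \emph{symmetric} IT-sheaf of index $0$ on $\wA$. I would then form the Fourier--Mukai transform $\mathcal{N}:=\widehat{\FF}$: because $\FF$ satisfies IT of index $0$, the sheaf $\mathcal{N}$ is a vector bundle on $A$ of rank $h^0(\wA,\,\FF)=1$, hence a line bundle, and I set $\mL:=\mathcal{N}^{-1}$.

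Next I would invoke Mukai's inversion theorem, asserting that the derived transforms $\mathcal{S}\colon\deriv{A}\to\deriv{\wA}$ and $\widehat{\mathcal{S}}\colon\deriv{\wA}\to\deriv{A}$ satisfy $\mathcal{S}\circ\widehat{\mathcal{S}}\cong(-1)^*_{\wA}[-2]$ and intertwine the two involutions, i.e. $(-1)^*_A\,\widehat{\FF}=\widehat{(-1)^*_{\wA}\FF}$. Applying $\mathcal{S}$ to $\mathcal{N}=\widehat{\FF}$ and using the symmetry $(-1)^*_{\wA}\FF=\FF$ gives $\mathcal{S}(\mathcal{N})=\FF[-2]$; in other words $\mathcal{N}=\mL^{-1}$ is an IT-sheaf of index $2$ whose transform $\widehat{\mL^{-1}}$, computed via $R^2$, is exactly $\FF$. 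The same intertwining relation shows $(-1)^*_A\mathcal{N}=\widehat{(-1)^*_{\wA}\FF}=\mathcal{N}$, so $\mathcal{N}$, and hence $\mL$, is symmetric.

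It remains to verify that $\mL$ is an ample $(1,2)$-polarization. Ampleness follows from the fact that a line bundle on an abelian surface satisfying IT of index $2$ is negative definite, so its inverse $\mL$ is positive definite, i.e. ample (see \cite[Chapter 3]{BL04}). To pin down the type I would use the rank formula for the transform: since $\mathcal{N}$ is IT of index $2$, the rank of $\widehat{\mathcal{N}}=\FF$ equals $h^2(A,\,\mathcal{N})$, which is therefore $2$. As $h^0(A,\,\mathcal{N})=h^1(A,\,\mathcal{N})=0$, Riemann--Roch yields $\tfrac12\mathcal{N}^2=\chi(\mathcal{N})=2$, whence $\mL^2=\mathcal{N}^2=4$. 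Since $(1,2)$ is the only polarization type on an abelian surface with self-intersection $4$, the line bundle $\mL$ is a symmetric $(1,2)$-polarization with $\widehat{\mL^{-1}}=\FF$, as required.

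The main subtlety, and the step I would be most careful about, is the bookkeeping in Mukai's inversion theorem: getting the cohomological degree shift and the involution $(-1)_{\wA}$ exactly right, so that $\mathcal{N}$ comes out as an index-$2$ sheaf (rather than index $0$) and so that the symmetry hypothesis on $\FF$ is genuinely what lets me identify $\mathcal{S}(\mathcal{N})$ with $\FF$ itself rather than with $(-1)^*_{\wA}\FF$. Everything else reduces to the already established structure of $\FF$ and to standard Riemann--Roch and index computations.
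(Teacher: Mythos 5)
Your proposal is correct and follows essentially the same route as the paper: both define $\mL^{-1}$ as the Fourier--Mukai transform of $\FF$ (a line bundle since $\FF$ satisfies IT of index $0$ with $h^0(\wA,\,\FF)=1$) and then use Mukai's inversion theorem together with the symmetry $(-1)^*_{\wA}\FF=\FF$ to conclude $\widehat{\mL^{-1}}=\FF$. The only difference is cosmetic: you verify ampleness and the $(1,2)$-type via the index theorem and Riemann--Roch, while the paper reads the type off directly from $h^0(A,\,\mL)=\mathrm{rank}(\FF)=2$.
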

\begin{proof}
Since $\mathcal{L}_{\delta}$ is not a product polarization
(Proposition \ref{prop.no.pol.prod}), it follows that $\FF$ is the
unique non-trivial extension
\begin{equation} \label{eq.F.in.wA}
0 \to \mathcal{O}_{\wA} \to \FF \to \mathcal{L}_{\delta} \otimes
\mathcal{I}_{\wo} \to 0,
\end{equation}
see Proposition \ref{prop.L.irred.F.simple}. Moreover,
$(-1)_{\wA}^* \FF = \FF$ and $\FF$ satisfies IT of index $0$
(Proposition \ref{prop.F.WIT}). Thus $\widehat{\FF}$ is a line
bundle on $A$ that we denote by $\mL^{-1}$;
 the sheaf $\mL$ satisfies IT of index $0$ too,
 see \cite[Theorem 14.2.2]{BL04}.
Therefore by \cite{Mu81} we get
\begin{equation*}
\widehat{\mL^{-1}}=\widehat{(\widehat{\FF})}=(-1)^*_{\wA}\FF=\FF.
\end{equation*}
Since $h^0(A, \, \mL)=\textrm{rank}(\FF)=2$, it follows that
$\mathcal{L}$ is a $(1, \, 2)$-polarization. Notice that
$\mathcal{L}$ coincides with the dual polarization of
$\mathcal{L}_{\delta}$, in particular it is not a product
polarization (see also Remark \ref{rem.dual.pol}).
\end{proof}
This completes the proof of Theorem \ref{teo.ch}.

\begin{rem} \label{rem.S3}
It is interesting to compare Proposition \ref{prop.S3} with
Proposition \ref{prop.2M}. In fact, an explicit isomorphism
$H^0(\widehat{A}, \, \mL_{\delta}^2 \otimes \mathcal{I}^3_{\wo})
\stackrel{\cong}{\to} H^0(\widehat{A}, \, \mL_{\delta}^2 \otimes
\mathcal{I}^4_{\wo})$ can be given by associating to every section
$\eta \in H^0(\widehat{A}, \, \mL_{\delta}^2 \otimes
\mathcal{I}^3_{\wo}) \cong H^0(\wA, \, S^3 \FF \otimes \bigwedge^2
\FF^{\vee})$ the equation defining the branch locus $\Delta$ of the
triple cover given by $\eta$, see again Remark \ref{rem.dual.pol}.
\end{rem}

An immediate consequence of Theorem \ref{teo.ch} is

\begin{cor} \label{cor.peneg.}
The isotrivially fibred surface constructed in \emph{\cite{Pe09}}, i.e., the
product-quotient surface of Subsection \emph{\ref{subsec.pe}}, is a
Chen-Hacon surface. More precisely, it corresponds to case $(ii)$ of
Proposition \emph{\ref{prop.total.ram.2}} or, equivalently, to case $(d)$
of Proposition \emph{\ref{prop.quadruple}}.
\end{cor}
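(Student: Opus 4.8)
The plan is to verify that the product-quotient surface $S$ of Subsection \ref{subsec.pe} satisfies every hypothesis of Theorem \ref{teo.ch}, and then to read off the precise case from the explicit description of its singularities already obtained there. First I would recall that $S$ is a minimal surface of general type with $p_g=q=2$ and $K_S^2=5$, realized as the minimal resolution of $\widehat{X}=(C_1 \times C_2)/S_3$, and that $\hat{f} \colon \widehat{X} \to \widehat{A}$ is precisely the Stein factorization of the Albanese morphism $\alpha \colon S \to \widehat{A}$. Since $\hat{f}$ is a triple cover, $\alpha$ is generically finite of degree $3$, so the degree and numerical hypotheses of Theorem \ref{teo.ch} hold.

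Next I would check the remaining hypothesis, that $\widehat{X}$ has at most negligible singularities. This is exactly what was observed in Subsection \ref{subsec.pe}: the two cyclic quotient singularities of $\widehat{X}$, of types $\frac{1}{3}(1,1)$ and $\frac{1}{3}(1,2)$, lie over the two nodes of the reduced branch locus $\Delta_{\mathrm{red}}=E+F$, and both are negligible by Example \ref{ex.3}. With all hypotheses in place, Theorem \ref{teo.ch} applies and immediately gives that $S$ is a Chen-Hacon surface.

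For the identification of the case I would compare the singular locus $\mathrm{Sing}(\widehat{X})=\frac{1}{3}(1,1)+\frac{1}{3}(1,2)$ with the list of Proposition \ref{prop.quadruple}. This combination occurs there only in case $(d)$, where $\Delta=2\Delta_{\mathrm{red}}$ with $\Delta_{\mathrm{red}}=E+F$ a union of two elliptic curves meeting at $\hat{o}$; this matches the branch geometry described in Subsection \ref{subsec.pe}, and is consistent with the fact (noted both there and in Proposition \ref{prop.quadruple}) that $K_S$ is not ample. By the conclusion of the proof of Proposition \ref{prop.quadruple}, case $(d)$ is equivalent to case $(ii)$ of Proposition \ref{prop.total.ram.2}, which completes the identification.

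I do not expect a genuine obstacle, since the geometry of Penegini's surface was already worked out in Subsection \ref{subsec.pe}; the argument is essentially a matter of matching that description against the hypotheses and the case list. The only point deserving a little care is to confirm that the branch divisor of $\hat{f}$ is of the special reducible form rather than the general one, which is guaranteed by the observation in Subsection \ref{subsec.pe} that the polarization $\mathcal{L}$ on $\widehat{A}$ is of special type.
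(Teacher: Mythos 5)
Your proposal is correct and follows essentially the same route as the paper: invoke the observation from Subsection \ref{subsec.pe} and Example \ref{ex.3} that the two quotient singularities of $\widehat{X}$ are negligible, apply Theorem \ref{teo.ch}, and then read off the case from the singularity types $\frac{1}{3}(1,1)+\frac{1}{3}(1,2)$ via Propositions \ref{prop.total.ram.2} and \ref{prop.quadruple}. The paper's proof is just a more compressed version of the same argument.
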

\begin{proof}
The product-quotient surface contains only negligible singularities,
see Example \ref{ex.3}, so Theorem \ref{teo.ch} implies
that it is a Chen-Hacon surface. Since $\widehat{X}$ has one singularity of type
$\frac{1}{3}(1, \, 1)$ and one singularity
of type $\frac{1}{3}(1,\,2)$, looking at Proposition \ref{prop.total.ram.2}
we see that it  corresponds to case $(ii)$.
\end{proof}

The remainder of this section deals with some further properties of
Chen-Hacon surfaces.

\begin{prop} \label{prop.never.finite}
Let $S$ be a Chen-Hacon surface. Then $\alpha \colon S \to
\widehat{A}$ is never a finite morphism.
\end{prop}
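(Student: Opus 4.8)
The plan is to exploit the fact, already established in Proposition \ref{prop.ch}, that $\widehat{X}$ is never smooth; in fact it always carries a cyclic quotient singularity of type $\frac{1}{3}(1,1)$ lying over $\hat{o} \in \widehat{A}$. Since a finite morphism has finite fibres, it will be enough to exhibit a curve in $S$ that $\alpha$ contracts to a point.

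First I would recall the Stein factorization $S \stackrel{p}{\lr} \widehat{X} \stackrel{\hat{f}}{\lr} \widehat{A}$, so that $\alpha = \hat{f} \circ p$, and observe that $p$ is the minimal resolution of singularities of $\widehat{X}$ (Remark \ref{rem.min}). By Proposition \ref{prop.ch}, $\widehat{X}$ has a singular point $x$ of type $\frac{1}{3}(1,1)$; the minimal resolution of such a point has as exceptional divisor a single smooth rational curve $\Gamma$ with $\Gamma^2 = -3$, as recalled in Example \ref{ex.1}. In particular $p(\Gamma) = x$ is a single point, so $p$ genuinely contracts the curve $\Gamma$.

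Next, since $\widehat{A}$ is an abelian surface and $\hat{f}$ is a finite triple cover, the map $\hat{f}$ contracts no curve. Composing, $\alpha(\Gamma) = \hat{f}(p(\Gamma)) = \hat{f}(x)$ is a single point of $\widehat{A}$. Hence $\alpha$ has a one-dimensional fibre over $\hat{f}(x)$, so it is not quasi-finite and, a fortiori, not finite. This finishes the argument.

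I do not expect any serious obstacle here: the only geometric input beyond the formal properties of the Stein factorization is the guaranteed presence of the $\frac{1}{3}(1,1)$-singularity on $\widehat{X}$, which is exactly the content of Proposition \ref{prop.ch}. The genuinely subtle point — that $\widehat{X}$ is always singular, so that $p$ really does contract a positive-dimensional curve — has already been absorbed into that proposition, and it remains only to assemble these facts into the contradiction with finiteness.
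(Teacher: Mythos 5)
Your argument is correct and is essentially the paper's own proof: the authors likewise invoke the guaranteed $\frac{1}{3}(1,1)$-singularity of $\widehat{X}$ (via Proposition \ref{prop.quadruple}) to conclude that $S$ contains a $(-3)$-curve contracted by $\alpha$, which rules out finiteness. Your version merely spells out the role of the Stein factorization and the finiteness of $\hat{f}$ in slightly more detail.
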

\begin{proof}
By Proposition \ref{prop.quadruple}, $S$ always contains a
$(-3)$-curve, which is contracted by $\alpha$.
\end{proof}

\begin{prop} \label{prop.no.Galois.1}
Let $S$ be a  Chen-Hacon surface, and assume that $\hat{f} \colon
\widehat{X} \to \widehat{A}$ is totally ramified. Then
$\Delta_{\emph{red}}$ is linearly equivalent to $\Ld+Q$,
where $Q$ is a non-trivial, $2$-torsion divisor.
\end{prop}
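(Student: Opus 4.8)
The plan is to read off the divisor class of $\Delta_{\mathrm{red}}$ from the branch data and then to use the local geometry at $\hat{o}$ to force the torsion twist to be non-trivial. First I would record that, since $\hat{f}$ is totally ramified, we are in case $(c)$ or $(d)$ of Proposition \ref{prop.quadruple}, and in either case the branch locus splits as $\Delta = 2\,\Delta_{\mathrm{red}}$. By Remark \ref{rem.dual.pol} the equation of $\Delta$ is a section of $\mL_\delta^2 \otimes \mathcal{I}^4_{\hat{o}}$, so in particular $\oo_{\wA}(\Delta)=\mL_\delta^2$. Combining these two facts gives $\oo_{\wA}(\Delta_{\mathrm{red}})^{\otimes 2}=\mL_\delta^{\otimes 2}$, hence the class $Q:=\oo_{\wA}(\Delta_{\mathrm{red}})\otimes\mL_\delta^{-1}$ is $2$-torsion in $\mathrm{Pic}(\wA)$. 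Since the N\'eron--Severi group of an abelian surface is torsion free, every torsion class lies in $\mathrm{Pic}^0$, so $Q$ is a $2$-torsion point of $\mathrm{Pic}^0(\wA)$ and $\Delta_{\mathrm{red}}\sim \Ld+Q$. This already yields the whole statement except for the non-triviality of $Q$.

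The heart of the argument is therefore to rule out $Q=\oo_{\wA}$, i.e. $\Delta_{\mathrm{red}}\in|L_\delta|$. Here I would use that $\mL_\delta$ is not a product polarization (Proposition \ref{prop.no.pol.prod}), so that the results of Subsection \ref{sec.ab.surf} apply to the $(1,2)$-polarized surface $(\wA,\mL_\delta)$. In particular $\hat{o}$ is one of the four base points of $|L_\delta|$ (Assumption \ref{ass.sym} together with Proposition \ref{prop.semichar}), and every curve in $|L_\delta|$ is smooth at each of these base points, by \cite[Section 1]{Ba87}. On the other hand, in both totally ramified cases $\Delta_{\mathrm{red}}$ is singular at $\hat{o}$: it has an ordinary double point there in case $(c)$ of Proposition \ref{prop.quadruple}, while in case $(d)$ it is the reducible curve $E+F$ with $\hat{o}\in E\cap F$. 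A member of $|L_\delta|$ cannot be singular at $\hat{o}$, so $\Delta_{\mathrm{red}}\notin|L_\delta|$, and therefore $Q\neq\oo_{\wA}$.

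The only step requiring a little care is the numerical bookkeeping, namely the identity $\oo_{\wA}(\Delta)=\mL_\delta^2$ and the splitting $\Delta=2\,\Delta_{\mathrm{red}}$; both are already available from Remark \ref{rem.dual.pol} and Proposition \ref{prop.quadruple}, so no new computation is needed. I expect the genuinely geometric input --- the incompatibility between the forced double point of $\Delta_{\mathrm{red}}$ at the base point $\hat{o}$ and the smoothness of all members of $|L_\delta|$ along their base locus --- to be the decisive point, and it is exactly what distinguishes a non-trivial twist $Q$ from the trivial one.
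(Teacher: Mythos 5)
Your proof is correct and follows essentially the same route as the paper's: the authors likewise deduce $\Delta=2\Delta_{\mathrm{red}}\sim 2L_{\delta}$ (citing Miranda's Proposition 4.7 rather than Remark \ref{rem.dual.pol}), conclude that $\Delta_{\mathrm{red}}\sim L_{\delta}+Q$ with $Q$ of $2$-torsion, and rule out $Q=\oo_{\wA}$ because $\Delta_{\mathrm{red}}$ is singular at $\hat{o}$ while every member of $|L_{\delta}|$ is smooth at its base points. Your write-up merely makes explicit two steps the paper leaves implicit, namely that torsion in $\mathrm{Pic}$ of an abelian surface lies in $\mathrm{Pic}^0$ and that $\hat{o}$ is a base point of $|L_{\delta}|$.
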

\begin{proof}
By \cite[Proposition 4.7]{M85} the divisor $\Delta=2
\Delta_{\textrm{red}}$ is linearly equivalent to $2 \Ld$,
hence $\Delta_{\textrm{red}}$ is linearly equivalent to
$L_{\delta}+Q$, where $Q$ is a $2$-torsion divisor. On the other
hand, $\Delta_{\textrm{red}}$ is singular at $\hat{o}$ (Proposition
\ref{prop.quadruple}), so $Q$ is not trivial.
\end{proof}

\begin{prop} \label{prop.no.Galois.2}
Let $S$ be a Chen-Hacon surface. Then $\hat{f} \colon \widehat{X}
\to \widehat{A}$ is never a Galois cover.
\end{prop}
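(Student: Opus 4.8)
The plan is to argue by contradiction, exploiting the fact—already established in Proposition \ref{prop.no.decomp}—that the bundle $\FF = \EE^{\vee}$ is indecomposable. The starting observation is that a degree $3$ cover which is Galois must have Galois group of order $3$, hence cyclic: $G \cong \zz/3\zz$. So I would assume that $\hat{f} \colon \widehat{X} \to \wA$ is a $\zz/3\zz$-Galois cover and then show that its Tschirnhausen bundle $\EE$ splits as a direct sum of two line bundles, which is impossible by Proposition \ref{prop.no.decomp}.

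Here is the mechanism I would use. Since $\hat{f}$ is a finite, flat morphism onto the smooth surface $\wA$, the sheaf $\hat{f}_* \oo_{\widehat{X}}$ is locally free of rank $3$, and it carries a fibrewise $G$-action refining the canonical splitting $\hat{f}_* \oo_{\widehat{X}} = \oo_{\wA} \oplus \EE$. Working over $\CC$, the group algebra $\CC[G]$ is semisimple, so $\hat{f}_* \oo_{\widehat{X}}$ decomposes, as a sheaf of $\oo_{\wA}$-modules, into the three isotypic components attached to the characters of $G$. The trivial character yields the $G$-invariants, which equal $\oo_{\wA}$ because $\widehat{X}$ is integral—being the Stein factorization of $\alpha$, it is normal and irreducible—and $\wA = \widehat{X}/G$. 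The remaining two isotypic pieces, call them $\mathcal{P}_{\chi}$ and $\mathcal{P}_{\chi^2}$, therefore account for $\EE$.

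The main technical point to verify is that $\mathcal{P}_{\chi}$ and $\mathcal{P}_{\chi^2}$ are genuine line bundles. Each is a direct summand of the locally free sheaf $\hat{f}_* \oo_{\widehat{X}}$, hence locally free since $\wA$ is a smooth (in particular regular) surface; and each has generic rank $1$, because over the complement of the branch locus the cover is \'etale with fibre the regular representation $\CC[G]$, in which every character appears with multiplicity one. Thus $\EE \cong \mathcal{P}_{\chi} \oplus \mathcal{P}_{\chi^2}$ is decomposable, whence $\FF = \EE^{\vee}$ is decomposable as well, contradicting Proposition \ref{prop.no.decomp}. This contradiction shows that $\hat{f}$ cannot be Galois. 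As an alternative route one could invoke directly the classification of cyclic triple covers in \cite{M85}, which characterizes them precisely by the splitting of the Tschirnhausen bundle; the isotypic argument above simply makes this self-contained, and I expect the only delicate step to be the verification that the two nontrivial isotypic summands are locally free of rank one.
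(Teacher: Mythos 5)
Your argument is correct, but it is genuinely different from the one in the paper. The paper invokes the characterization of Galois triple covers from \cite[Theorem 5.5]{TZ04}: $\hat{f}$ is Galois if and only if it is totally ramified and $\bigwedge^2 \FF \cong \oo_{\wA}(\Delta_{\mathrm{red}})$; since $\bigwedge^2\FF = \mL_{\delta}$ while Proposition \ref{prop.no.Galois.1} shows $\Delta_{\mathrm{red}} \sim L_{\delta}+Q$ with $Q$ a \emph{non-trivial} $2$-torsion divisor, the condition fails. Your route instead observes that a Galois cover of prime degree $3$ is cyclic, so $\hat{f}_*\oo_{\widehat{X}}$ splits into character eigensheaves, the two non-trivial ones being line bundles (direct summands of a locally free sheaf on a smooth surface, of generic rank $1$ by the regular-representation description over the \'etale locus); hence $\EE$, and so $\FF$, would be decomposable. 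This is sound: it is exactly the splitting underlying cyclic triple covers in \cite{M85} and abelian covers in \cite{Pa91}, and the verification that the eigensheaves are locally free of rank one goes through as you describe. Your argument is more self-contained (no appeal to \cite{TZ04} or to Proposition \ref{prop.no.Galois.1}), while the paper's argument yields the extra geometric information that, in the totally ramified case, the obstruction to being Galois is precisely the $2$-torsion twist of $\Delta_{\mathrm{red}}$. One small correction: since the hypothesis is that $S$ is a Chen-Hacon surface, the indecomposability of $\FF$ you need is Proposition \ref{prop.CH.no.decomposable} rather than Proposition \ref{prop.no.decomp}, which is proved under the hypotheses of Theorem \ref{teo.ch}; the two statements are parallel, but the correct citation in this context is the former.
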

\begin{proof}
By \cite[Theorem 5.5]{TZ04} it follows that $\hat{f}$ is a Galois
cover if and only if it is totally ramified and the line bundle
$\bigwedge^2 \mathcal{F}$ is isomorphic to
$\mathcal{O}_{\wA}(\Delta_{\textrm{red}})$. Since $\bigwedge^2 \FF= \mL_{\delta}$,
this is excluded by Proposition
\ref{prop.no.Galois.1}.
\end{proof}

\begin{prop} \label{prop.no.pencils}
Let $S$ be a Chen-Hacon surface, and assume that $A$ is a simple
abelian surface. Then $S$ does not contain any pencil $p \colon S
\to B$ over a curve $B$ with $g(B) \geq 1$.
\end{prop}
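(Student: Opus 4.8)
The plan is to argue by contradiction, using the universal property of the Albanese map together with the hypothesis of simplicity, and the crucial point will be that a Chen-Hacon surface has \emph{maximal Albanese dimension}. First I would record two preliminary facts. Since $A=\textrm{Pic}^0(\wA)$ is the dual abelian surface of $\wA=\textrm{Alb}(S)$, and simplicity is preserved under duality, the assumption that $A$ is simple is equivalent to $\wA$ being simple. Moreover, by the definition of Chen-Hacon surface the Albanese map $\alpha\colon S\to\wA$ is generically finite of degree $3$, hence \emph{surjective}: its image is a two-dimensional closed subvariety of the irreducible surface $\wA$, so it is all of $\wA$. This surjectivity is precisely what makes a pencil detectable at the level of Albanese varieties.

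Next, suppose for contradiction that there is a fibration $p\colon S\to B$ onto a smooth curve $B$ with $g(B)\ge 1$. I would fix a base point $s_0\in S$ and let $j\colon B\to\textrm{Jac}(B)$ be the Abel-Jacobi embedding based at $p(s_0)$; since $g(B)\ge 1$ the image $j(B)$ is a one-dimensional curve, and $(j\circ p)(s_0)=0$. The universal property of the Albanese variety then produces a homomorphism $\beta\colon\wA\to\textrm{Jac}(B)$ with $\beta\circ\alpha=j\circ p$.

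The key step is the computation of the image of $\beta$. Because $\alpha$ is surjective we have $\alpha(S)=\wA$, whence
\begin{equation*}
\beta(\wA)=\beta(\alpha(S))=(j\circ p)(S)=j(B),
\end{equation*}
which is one-dimensional. On the other hand $\beta(\wA)$ is an abelian subvariety of $\textrm{Jac}(B)$, and $\ker\beta$ is an abelian subvariety of $\wA$; since $\wA$ is simple, $(\ker\beta)^{0}$ equals either $0$ or all of $\wA$, so $\dim\beta(\wA)\in\{0,2\}$. This contradicts $\dim\beta(\wA)=\dim j(B)=1$, and the proposition follows. Note the contradiction is uniform in $g(B)$, so it disposes of the elliptic and the higher-genus cases simultaneously.

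The main obstacle is really the identification $\beta(\wA)=j(B)$, and it is exactly here that the generic finiteness of $\alpha$ (equivalently, maximality of the Albanese dimension) is indispensable: without surjectivity of $\alpha$ a pencil need not be seen by $\wA$, as the example $C\times\mathbb{P}^1$ with $g(C)\ge 2$ shows, where a genus $\ge 2$ pencil coexists with an Albanese image that is only a curve. Once surjectivity is in hand the contradiction with simplicity is immediate. As a cross-check on the higher-genus case, one may alternatively invoke the Castelnuovo--de Franchis theorem: a pencil over a base of genus $\ge 2$ would force a two-dimensional isotropic subspace of $H^0(S,\Omega^1_S)$, i.e.\ the vanishing of the wedge map $\bigwedge^2 H^0(S,\Omega^1_S)\to H^0(S,\Omega^2_S)$; but this map is the pullback under $\alpha$ of the (nonzero) wedge map on $\wA$, and since $\alpha$ is generically finite it sends a nowhere-zero $2$-form on $\wA$ to a nonzero $2$-form on $S$, so the wedge map is injective. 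This again pinpoints generic finiteness as the decisive hypothesis.
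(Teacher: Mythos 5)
Your proof is correct, and it takes a genuinely different route from the one in the paper. The paper splits into two cases: for $g(B)\geq 2$ it invokes generic vanishing, namely the fact that a pencil over a curve of genus at least $2$ would force a positive-dimensional component of $V^1(S)=\{\mathcal{Q}\in\textrm{Pic}^0(S)\mid h^1(S,\mathcal{Q})>0\}$ (citing Hacon--Pardini), which simplicity of $\wA$ forbids; for $g(B)=1$ it uses the universal property of the Albanese map to get a surjection $\wA\to B$, again contradicting simplicity. You instead treat both cases uniformly: composing the pencil with the Abel--Jacobi embedding $j\colon B\hookrightarrow\textrm{Jac}(B)$ and factoring through $\alpha$ yields a homomorphism $\beta\colon\wA\to\textrm{Jac}(B)$ whose image is $j(B)$ (here surjectivity of $\alpha$, i.e.\ maximal Albanese dimension, is correctly identified as the decisive input), and a simple abelian surface admits no one-dimensional abelian quotient. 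What your approach buys is uniformity and self-containedness --- no appeal to generic vanishing or to the external reference, and the elliptic and higher-genus cases collapse into one argument; what the paper's approach buys is that the $g\geq 2$ case is disposed of by quoting a standard structural result, and indeed your Castelnuovo--de Franchis cross-check shows (as does the paper's $V^1$ argument in spirit) that the genus $\geq 2$ case does not even need simplicity, a refinement that your main uniform argument does not isolate. Both proofs are complete and correct.
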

\begin{proof}
Since $A$ is simple, the same is true for $\wA$. Then the set
$V^1(S):=\{\mathcal{Q} \in \textrm{Pic}^0(S) \; | \; h^1(S, \, \mathcal{Q}) > 0 \}$
cannot contain any component of positive dimension, and
$S$ does not admit any pencil over a curve $B$ with $g(B) \geq 2$,
see \cite[Theorem 2.6]{HP02}. If instead $g(B)=1$,
 the universal property of the Albanese
map yields a surjective morphism $\wA \to B$,
contradicting again the fact that $\wA$ is simple.
This concludes the proof.
\end{proof}

It would be very interesting to classify the possible
degenerations of Chen-Hacon surfaces; however, this problem is at
present far from being solved. The following result describes some
natural degenerations obtained by taking reducible triple covers.

\begin{prop} \label{reducible}
Let $\hat{f} \colon \widehat{X} \to \widehat{A}$ be the non-normal
triple cover corresponding to either $t=0$ or $t^2=9s^2$ $($see
Proposition \emph{\ref{prop.iso.sing}}$)$. Then $\widehat{X}$ is a
reducible surface. More precisely, there exists $i \in \{1, \,2,
\,3\}$ such that the section defining $\hat{f}$ is in the image of
the multiplication map
\begin{equation*}
H^0(\wA, \, S^2 \FF \otimes \bigwedge^2 \FF^{\vee} \otimes \mathcal{Q}_i)
\otimes H^0(\wA, \, \FF \otimes \mathcal{Q}_i) \lr H^0(\wA, \, S^3 \FF \otimes
\bigwedge^2 \FF^{\vee}),
\end{equation*}
where the $\mathcal{Q}_i$ are the non-trivial, $2$-torsion line
bundles on $\widehat{A}$ defined as in \eqref{eq.2-tors}.
\end{prop}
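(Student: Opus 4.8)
The plan is to establish the factorization statement, since reducibility of $\widehat{X}$ is an immediate consequence of it. Indeed, suppose $\eta = \eta_1\,\eta_2$ with $\eta_2 \in H^0(\wA,\,\FF \otimes \mathcal{Q}_i)$ and $\eta_1 \in H^0(\wA,\,S^2\FF \otimes \bigwedge^2\FF^\vee \otimes \mathcal{Q}_i)$ (recall $\mathcal{Q}_i^{-1} = \mathcal{Q}_i$, so the product lands in $H^0(\wA,\,S^3\FF\otimes\bigwedge^2\FF^\vee)$). By Theorem \ref{teo.miranda}, $\eta_2$ is a fibrewise linear form and $\eta_1$ a fibrewise quadratic form, so the cubic defining $\widehat{X}$ factors; hence $\widehat{X} = \{\eta_2 = 0\} \cup \{\eta_1 = 0\}$ splits into a section over $\wA$ (a copy of $\wA$) and a double cover of $\wA$, and is therefore reducible. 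This is exactly the splitting mechanism already exploited in Corollary \ref{cor.nonsimple}. The twist by a non-trivial $\mathcal{Q}_i$ is forced: a linear factor in $H^0(\wA,\FF)$ would require a quadratic factor in $H^0(\wA,\,S^2\FF\otimes\bigwedge^2\FF^\vee)$, and this group vanishes by \eqref{eq.S2=0}.

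I would next reduce the two cases to a single computation. Evaluating the branch divisor \eqref{eq.branch}, one finds that for $t=0$ the section $D$ is proportional to $x^2y^2$, while for $t=\pm 3s$ it is proportional to $(x^2 \mp y^2)^2$; in every case $D$ is proportional to $\ell_1^2\,\ell_2^2$ for two independent linear forms $\ell_1,\ell_2 \in H^0(A,\mL)$, the three resulting unordered pairs being $\{x,y\}$, $\{x-y,\,x+y\}$ and $\{x-iy,\,x+iy\}$. These three pairs are permuted transitively by the finite group acting on $H^0(A,\mL)$ through the normaliser of the Schr\"odinger representation of $\mathcal{H}_2$, and this group correspondingly permutes $\mathcal{Q}_1,\,\mathcal{Q}_2,\,\mathcal{Q}_3$. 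It therefore suffices to treat the case $t=0$ and to transport the conclusion, together with the labelling of the $\mathcal{Q}_i$, by this symmetry.

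For $t=0$ I would work upstairs on $A$, where $\phi^*\FF = \mL\oplus\mL$ by \eqref{eq.mukai} and the defining data \eqref{eq.data.triple} become $a=sx$, $b=c=0$, $d=-sy$. Substituting $b=c=0$ into the determinantal equations \eqref{eq.determinant}, the general fibre splits as the union of the single sheet $\{z-2a=0,\ w+d=0\}$ and the conic $\{z+a=0\}\cap\{(w-2d)(w+d)=0\}$, in agreement with the three-plane description in the proof of Proposition \ref{prop.iso.sing}$(ii)$. Thus $\phi^*\eta$ factors as a linear form (cutting out the section component) times a quadratic form. Reading off the line bundle of the linear factor, and using that the section component is the $\phi$-image of a translate of a base-point configuration of $|L|$, identifies this linear factor after descent with a section $\eta_2 \in H^0(\wA,\,\FF\otimes\mathcal{Q}_i)$ for one of the three $2$-torsion bundles $\mathcal{Q}_i$ of \eqref{eq.2-tors}.

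The hard part will be the descent in the last step: the factorization of $\phi^*\eta$ found on $A$ must be shown to be equivariant for the action of $K(\mL)$ through which the isogeny $\phi$ descends, so that it genuinely comes from a factorization of $\eta$ on $\wA$, and so that the twisting bundle is one of the $\mathcal{Q}_i$ rather than some other square root of $\oo_{\wA}$. I would check this by tracking the Heisenberg action on the four components in \eqref{eq.sections}: the stabiliser of the section component determines the relevant character, hence pins down $\mathcal{Q}_i$ and shows that $\eta$ lies in the image of the stated multiplication map. Reducibility of $\widehat{X}$ (and, through the \'etale cover $\psi$ of \eqref{dia.projs}, of $X$) then follows from the first paragraph.
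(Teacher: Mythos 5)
Your strategy---factor the cubic explicitly in coordinates and descend along $\phi$---is genuinely different from the paper's, and the computations you actually perform are correct: at $t=0$ the branch section \eqref{eq.branch} is indeed proportional to $x^2y^2$ and at $t=\pm 3s$ to $(x^2\mp y^2)^2$; the determinantal equations \eqref{eq.determinant} do split into three planes when $b=c=0$; and the remark that \eqref{eq.S2=0} forbids an untwisted linear factor is a good one. But the proof is incomplete exactly where you say ``the hard part will be the descent'': the claims that the factorization of $\phi^*\eta$ on $A$ is $K(\mL)$-equivariant, hence comes from a factorization of $\eta$ on $\wA$, and that the resulting twisting bundle is one of the three $\mathcal{Q}_i$ of \eqref{eq.2-tors} rather than some other degree-zero line bundle, are announced (``I would check this by tracking the Heisenberg action'') but not carried out. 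Likewise, the symmetry argument reducing $t=\pm 3s$ to $t=0$ via the normalizer of the Schr\"odinger representation, together with the assertion that this action permutes $\mathcal{Q}_1,\mathcal{Q}_2,\mathcal{Q}_3$ compatibly with the three pairs of linear forms, is plausible but unproved. Since the identification of the twist with the $\mathcal{Q}_i$ is the actual content of the proposition, this is a genuine gap.

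The paper sidesteps all of the coordinate work and the descent with a short cohomological argument, which is the repair I would suggest. Tensoring the extension \eqref{eq.F.in.wA} with $\mathcal{Q}_i$ and using \eqref{eq.Io-Io2} gives $h^0(\wA,\,\FF\otimes\mathcal{Q}_i)=h^0(\wA,\,\mL_{\delta}\otimes\mathcal{Q}_i\otimes\mathcal{I}_{\wo})=1$; the Eagon--Northcott complex applied to the same extension, combined with the vanishing of $h^0$ and $h^1$ of $\FF^{\vee}\otimes\mathcal{Q}_i$ (Serre duality and \eqref{eq.WIT}), gives $h^0(\wA,\,S^2\FF\otimes\bigwedge^2\FF^{\vee}\otimes\mathcal{Q}_i)=h^0(\wA,\,\mL_{\delta}\otimes\mathcal{Q}_i\otimes\mathcal{I}^2_{\wo})=1$. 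Each $i$ therefore yields exactly one product section, i.e.\ one reducible (hence non-normal) cover in the pencil $\mathbb{P}H^0(\wA,\,S^3\FF\otimes\bigwedge^2\FF^{\vee})\cong\mathbb{P}^1$; by Proposition \ref{prop.iso.sing} the non-normal covers are precisely the three points $t=0$, $t=\pm 3s$, so these three product sections are the three degenerate ones. This yields the statement with no Heisenberg bookkeeping at all.
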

\begin{proof}
It is sufficient to show that
\begin{equation*}
h^0(\wA, \, S^2 \FF \otimes \bigwedge^2 \FF^{\vee} \otimes \mathcal{Q}_i) \neq
0 \; \; \textrm{and} \; \; h^0(\wA, \, \FF \otimes \mathcal{Q}_i) \neq 0
\end{equation*}
for $i=1, \,2, \,3$.
Tensoring \eqref{suc.def.f.triv} with $\mathcal{Q}_i$ and using \eqref{eq.Io-Io2} we
obtain
\begin{equation*}
h^0(\wA, \, \FF \otimes \mathcal{Q}_i)=h^0(\wA, \, \mL_{\delta} \otimes \mathcal{Q}_i
\otimes \mathcal{I}_{\wo} )=1.
\end{equation*}
On the other hand, Eagon-Northcott complex applied to
\eqref{eq.F.in.wA} gives
\begin{equation*}
0 \lr \FF \lr S^2 \FF  \lr \mL_{\delta}^2 \otimes
\mathcal{I}^2_{\hat{o}} \lr 0,
\end{equation*}
hence we obtain
\begin{equation} \label{suc.red}
0 \lr \FF \otimes \bigwedge^2 \FF^{\vee} \otimes \mathcal{Q}_i \lr
S^2 \FF \otimes \bigwedge^2 \FF^{\vee} \otimes \mathcal{Q}_i  \lr
\mL_{\delta} \otimes \mathcal{Q}_i \otimes \mathcal{I}^2_{\hat{o}}
\lr 0.
\end{equation}
Using $\FF \otimes \bigwedge^2 \FF^{\vee}=\FF^{\vee}$, Serre duality
and \eqref{eq.WIT} we deduce
\begin{equation*}
\begin{split}
h^0(\wA, \, \FF \otimes \bigwedge^2 \FF^{\vee} \otimes \mathcal{Q}_i) &
=h^0(\wA, \, \FF^{\vee}
\otimes \mathcal{Q}_i)=h^2(\wA, \, \FF \otimes \mathcal{Q}_i)=0, \\
h^1(\wA, \, \FF \otimes \bigwedge^2 \FF^{\vee} \otimes \mathcal{Q}_i) &
=h^1(\wA, \, \FF^{\vee} \otimes \mathcal{Q}_i)=h^1(\wA, \, \FF \otimes \mathcal{Q}_i)=0,
\end{split}
\end{equation*}
so by \eqref{eq.Io-Io2} we have
\begin{equation*}
h^0(\wA, \, S^2 \FF \otimes \bigwedge^2 \FF^{\vee} \otimes \mathcal{Q}_i)=
h^0(\wA, \, \mL_{\delta} \otimes \mathcal{Q}_i \otimes \mathcal{I}^2_{\hat{o}})=1.
\end{equation*}
This completes the proof.
\end{proof}

\begin{rem}
Further degenerations of Chen-Hacon surfaces could be obtained by
looking at the case where $\mathcal{L}_{\delta}$ becomes a product
polarization, see Corollaries \ref{cor.nonsimple} and
\ref{cor.simple}.
\end{rem}

We will now describe the canonical system $|K_S|$ of a Chen-Hacon
surface $S$, showing that it is composed with a rational pencil of
curves of genus $3$.

For the sake of simplicity, we will assume that $A$ is a simple
abelian surface. Let $\alpha \colon S \to \wA$ be the Albanese map
of $S$, let $\sigma \colon \BlA \to \wA$ be the blow-up of $\wA$ at
$\wo$, and let $\Lambda \subset \wA$ be the exceptional divisor.
Then there is an induced map $\beta \colon S \to \BlA$, which is a
flat triple cover. The branch locus of $\beta$ coincides with the
strict transform of the branch locus $\Delta$ of $\hat{f}$, so it
belongs to the strict transform of the pencil $\mathfrak{D}_{\delta}
\subset |2 L_{\delta}|$ given by $\mathbb{P}H^0(\wA, \,
\mL_{\delta}^2 \otimes \mathcal{I}_{\wo}^4)$. The general element in
this pencil is a smooth curve of genus $3$ and self-intersection
$0$, meeting $\Lambda$ in precisely four distinct points; so we have
a base-point free pencil $\hat{\varphi} \colon \BlA \to
\mathbb{P}^1$. The exceptional divisor $\Lambda$ is not in the
branch locus of $\beta$ and $\Xi:=\beta^*(\Lambda)$ is the unique
$(-3)$-curve in $S$. Considering the Stein factorization of the
composed map $S \stackrel{\beta} \to \BlA \stackrel{\hat{\varphi}}
\to \mathbb{P}^1$, and using Proposition \ref{prop.no.pencils}, we
obtain a commutative diagram
\begin{equation} \label{diag.can.sist}
\begin{xy}\xymatrix{
S   \ar[d]_{\varphi} \ar[rr]^{\beta} & & \BlA \ar[d]^{\hat{\varphi}} \\
\mathbb{P}^1   \ar[rr]^{b} & & \mathbb{P}^1,  \\
  }
\end{xy}
\end{equation}
where $b \colon \mathbb{P}^1 \to \mathbb{P}^1$ is a triple cover
simply branched on four points, corresponding to the branch
curve of $\beta$ and to the three double curves in $\mathfrak{D}_{\delta}$, see
Proposition \ref{prop.pencil.I4}.

\begin{prop} \label{prop.canonical}
Let $S$ be a Chen-Hacon surface. Then $|K_S|=\Xi + |\Phi|$, where
$\Phi$ is a smooth curve of genus $3$ which satisfies $h^0(S, \,
\oo_S(\Phi))=2$, $\Phi^2=0$, $\Xi \Phi=4$. It follows that $\varphi
\colon S \to \mathbb{P}^1$ coincides with the canonical map
$\varphi_{|K_S|}$ of $S$.
\end{prop}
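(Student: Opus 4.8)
The plan is to identify $\Xi$ as the fixed part and the fibres of $\varphi$ as the moving part of $|K_S|$. Since $p_g(S)=2$, the system $|K_S|$ is a pencil, so it suffices to produce a base-point-free pencil $|\Phi|$ with $K_S\sim\Xi+\Phi$: then every canonical divisor contains $\Xi$, and the moving part is exactly $|\Phi|$.

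First I would record the numerical data. Let $\Phi$ be a general fibre of $\varphi\colon S\to\mathbb{P}^1$ from diagram \eqref{diag.can.sist}. Using $\hat\varphi\circ\beta=b\circ\varphi$, the general fibre $\Phi=\varphi^{-1}(t)$ maps into the smooth genus-$3$ fibre $G=\hat\varphi^{-1}(b(t))$ of $\hat\varphi$; since $\beta^{-1}(G)=\varphi^{-1}(b^{-1}(b(t)))$ is a disjoint union of three fibres of $\varphi$ and $\beta|_{\beta^{-1}(G)}$ has degree $3$, each of these fibres maps isomorphically onto $G$. Hence $\Phi\cong G$ is smooth of genus $3$, $\Phi^2=0$, and $h^0(S,\oo_S(\Phi))=2$ because $\oo_S(\Phi)=\varphi^*\oo_{\mathbb{P}^1}(1)$ and $\varphi_*\oo_S=\oo_{\mathbb{P}^1}$. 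Moreover $\Xi=\beta^*\Lambda$ gives $\Xi^2=3\Lambda^2=-3$, and from $\beta^*G\equiv 3\Phi$ together with $G\cdot\Lambda=4$ I obtain $\Xi\cdot\Phi=\Lambda\cdot G=4$, as asserted.

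The core step is the linear equivalence $K_S\sim\Xi+\Phi$. Since $\sigma$ is the blow-up of the smooth point $\wo$, we have $K_{\BlA}=\Lambda$, and the Hurwitz formula for the finite flat triple cover $\beta$ reads $K_S=\beta^*K_{\BlA}+R=\Xi+R$, where $R$ is the ramification divisor. It remains to prove $R\sim\Phi$. The branch divisor of $\beta$ is the strict transform of $\Delta$, which by construction is the fibre $\hat\varphi^{-1}(p_0)$ of $\hat\varphi$ over the branch point $p_0$ of $b$. Analysing the cover over a general point of this branch curve (simple ramification in cases $(a)$, $(b)$ of Proposition \ref{prop.quadruple}, total ramification in cases $(c)$, $(d)$), I would show that $R$ is precisely the fibre $\varphi^{-1}(q)$ of $\varphi$ over the ramification point $q\in b^{-1}(p_0)$: in the simply branched case $R$ is a reduced genus-$3$ fibre mapping isomorphically to $\Delta_{\mathrm{red}}$, while in the totally ramified case $R=2R_0$ is the double fibre whose reduction $R_0\cong\tilde\Delta_{\mathrm{red}}$ has genus $2$. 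In either case $R=\varphi^*(q)\sim\Phi$, so $K_S\sim\Xi+\Phi$. I expect this to be the main obstacle: the numerical identity $R\equiv\Phi$ (immediate from $R=K_S-\Xi$, which yields $R^2=0$, $R\cdot\Phi=0$, $R\cdot\Xi=4$) does not by itself give linear equivalence, since $q(S)=2$; one genuinely must recognise $R$ as a \emph{full} fibre of $\varphi$, and the bookkeeping of multiplicities in the total-ramification cases $(c)$, $(d)$ requires care. As a cross-check, relative duality gives $\beta_*\omega_S=\oo_{\BlA}(\Lambda)\oplus\sigma^*\FF$, which recovers $p_g(S)=1+h^0(\wA,\FF)=2$ and exhibits the two canonical sections explicitly.

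Finally I would conclude. From $K_S\sim\Xi+\Phi$ it follows that $h^0(\oo_S(K_S-\Xi))=h^0(\oo_S(\Phi))=2=p_g(S)=h^0(\omega_S)$, so every section of $\omega_S$ vanishes along $\Xi$; thus $\Xi$ is a fixed component of $|K_S|$ and $|K_S|=\Xi+|K_S-\Xi|=\Xi+|\Phi|$. Since $|\Phi|=\varphi^*|\oo_{\mathbb{P}^1}(1)|$ is base-point-free, discarding the fixed part $\Xi$ shows that the canonical map $\varphi_{|K_S|}$ is the morphism defined by $|\Phi|$, that is $\varphi_{|K_S|}=\varphi$.
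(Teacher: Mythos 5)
Your argument is correct and follows the same route as the paper's proof: the adjunction/Hurwitz decomposition $K_S=\beta^*K_{\BlA}+R=\Xi+R$, the identification of the ramification divisor $R$ as a fibre of $\varphi$ via the commutative diagram (so $R\in|\Phi|$), and the count $h^0(\oo_S(K_S))=h^0(\oo_S(K_S-\Xi))=2$ to see that $\Xi$ is the fixed part. The only difference is that you spell out the multiplicity bookkeeping for $R$ in the totally ramified cases and the numerical checks $\Phi^2=0$, $\Xi\Phi=4$, which the paper leaves implicit.
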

\begin{proof}
The canonical divisor of $S$ is given by
\begin{equation*}
K_S= \beta^* K_{\BlA} + R = \Xi + R,
\end{equation*}
where $R$ is the ramification divisor of $\beta$. By diagram
\eqref{diag.can.sist} it follows that $R \in |\Phi|$, where
$|\Phi|$ is the pencil induced by $\varphi$.
The general element of $|\Phi|$ is
a smooth curve of genus $3$,
isomorphic to the strict transform of the
general element of $\mathfrak{D}_{\delta}$. Since
\begin{equation*}
2= h^0(S, \, \oo_S(K_S))=h^0(S, \, \oo_S(\Phi))=h^0(S, \,
\oo_S(K_S - \Xi)),
\end{equation*}
it follows that  $\Xi$ is contained in the fixed part of
$|K_S|$. The rest of the proof is now clear.
\end{proof}

%%%%%%%%%%%%%%%%%%%%%%%%%%%%%%%%%%%%%%%%%%%%%%%%%%%%%%%%%%%%%%%%%%%%%%%%%%%%%%%%%%%%%%%%%%%%
%%%%%%%%%%%%%%%%%%%%%%%%%%%%%%%%%%%%%%%%%%%%%%%%%%%%%%%%%%%%%%%%%%%%%%%%%%%%%%%%%%%%%%%%%%%%

\section{The moduli space} \label{sec.moduli}

The aim of this  section is to investigate the deformations of
Chen-Hacon surfaces. The first step is to embed $S$ in the projective bundle $\mathbb{P}(\FF)$ as a divisor \emph{containing a fibre}.

\begin{prop}\label{prop.embedding}
Let $S$ be a Chen-Hacon surface with ample canonical class; then there is an embedding $i\colon S \hookrightarrow \mathbb{P}(\FF)$ whose image is a smooth
divisor in the linear system $|3\xi - \pi^*L_{\delta}|$, where $\xi$
is the divisor class of $\oo_{\mP}(1)$ and $\pi \colon \mathbb{P} \to \wA$
is the natural projection. Moreover $i(S)$ contains the fibre
$\pi^{-1}(\wo)$ of $\mathbb{P}$; more precisely, $\pi^{-1}(\wo)$
coincides with the unique  $(-3)$-curve $\Xi$ of $S$.
\end{prop}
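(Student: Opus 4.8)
The plan is to realise $S$ as the zero divisor of $\eta$ inside the $\mathbb{P}^1$-bundle $\mathbb{P}=\mathbb{P}(\FF)$, using the relative description of triple covers of \cite{M85}. First I would pin down the divisor class. Since $\pi_*\oo_{\mathbb{P}}(n)=S^n\FF$ and $\det\FF=\mL_{\delta}$, the projection formula gives
\begin{equation*}
\pi_*\,\oo_{\mathbb{P}}(3\xi-\pi^*L_{\delta})=S^3\FF\otimes\mL_{\delta}^{-1}=S^3\FF\otimes\bigwedge^2\FF^{\vee},
\end{equation*}
whence $H^0(\mathbb{P},\,\oo_{\mathbb{P}}(3\xi-\pi^*L_{\delta}))\cong H^0(\wA,\,S^3\FF\otimes\bigwedge^2\FF^{\vee})$, the two-dimensional space housing the Miranda datum $\eta$ of $\hat{f}$ (Proposition \ref{prop.coh.S3}). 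By the structure theory of triple covers, the section corresponding to $\eta$ cuts out a divisor $V\in|3\xi-\pi^*L_{\delta}|$ on which $\pi$ restricts to a generically $3:1$ map recovering, after Stein factorisation, the cover $\hat{f}\colon\widehat{X}\to\wA$. This $V$ is my candidate for $i(S)$.

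Second, I would show that $V$ contains the whole fibre $\pi^{-1}(\wo)\cong\mathbb{P}^1$, which amounts to proving $\eta(\wo)=0$ in $S^3\FF_{\wo}\otimes\bigwedge^2\FF^{\vee}_{\wo}$. The generator of $H^0(\wA,\FF)$, namely the sub-line-bundle $\oo_{\wA}\hookrightarrow\FF$ of \eqref{eq.F.in.wA}, vanishes exactly at $\wo$ (its cokernel $\mL_{\delta}\otimes\mathcal{I}_{\wo}$ fails to be locally free there); writing it locally as $x e_1+y e_2$ in a frame of $\FF$, the Eagon--Northcott quotient $S^3\FF\otimes\bigwedge^2\FF^{\vee}\to\mL_{\delta}^2\otimes\mathcal{I}^3_{\wo}$ of Proposition \ref{prop.S3} carries the basis $e_1^{3-k}e_2^{k}$ to the cubics $x^{k}y^{3-k}$ up to sign. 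Hence the degree-$3$ part at $\wo$ of the image $b\in H^0(\mL_{\delta}^2\otimes\mathcal{I}^3_{\wo})$ records $\eta(\wo)$ through these linearly independent cubics. But Proposition \ref{prop.2M} asserts that $b$ actually vanishes to order $4$ at $\wo$, so its cubic part is zero, forcing $\eta(\wo)=0$. Therefore $\pi^{-1}(\wo)\subset V$ and $\pi|_V$ contracts this rational curve to the point $\wo$.

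Third --- and this is the step I expect to be the genuine obstacle --- I would carry out the local analysis of $V$ along $\pi^{-1}(\wo)$. Since the fibre lies in the base locus of $|3\xi-\pi^*L_{\delta}|$, Bertini gives nothing here and smoothness of $V$ along $\pi^{-1}(\wo)$ must be checked by hand. Because $K_S$ is ample we are in case $(a)$ or $(c)$ of Proposition \ref{prop.quadruple}, so $\widehat{X}$ has a single singularity, of type $\frac{1}{3}(1,1)$, over $\wo$; its local model is that of Examples \ref{ex.1} and \ref{ex.3}, namely the affine cone over a twisted cubic, whose minimal resolution replaces the vertex by one smooth rational curve. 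I would match this local resolution with $V$ over a neighbourhood of $\wo$ in $\mathbb{P}$ (equivalently, verify that the tangent cone of $b$ at $\wo$ is as expected), concluding that $V$ is smooth along $\pi^{-1}(\wo)$. Away from $\wo$, $\pi|_V$ is the finite triple cover, isomorphic to the smooth surface $\widehat{X}\setminus\textrm{Sing}(\widehat{X})$, so $V$ is smooth there as well.

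Finally, with $V$ smooth I would identify it with $S$. The morphism $\pi|_V\colon V\to\wA$ is an isomorphism onto $\widehat{X}$ away from $\wo$ and contracts the single curve $\pi^{-1}(\wo)$ to the $\frac{1}{3}(1,1)$ point, so $V\to\widehat{X}$ is precisely the minimal resolution of singularities (Proposition \ref{prop.can.ris}); thus $V\cong S$, and the inclusion $V\hookrightarrow\mathbb{P}$ furnishes the embedding $i\colon S\hookrightarrow\mathbb{P}(\FF)$ with image a smooth divisor in $|3\xi-\pi^*L_{\delta}|$. That $\pi^{-1}(\wo)$ is the unique $(-3)$-curve $\Xi$ then follows formally from the normal bundle sequence of $\pi^{-1}(\wo)\subset V\subset\mathbb{P}$,
\begin{equation*}
0\to N_{\pi^{-1}(\wo)/V}\to\oo_{\mathbb{P}^1}^{\oplus 2}\to\oo_{\mathbb{P}^1}(3)\to 0,
\end{equation*}
since $N_{\pi^{-1}(\wo)/\mathbb{P}}=\oo_{\mathbb{P}^1}^{\oplus 2}$ and $\oo_{\mathbb{P}}(3\xi-\pi^*L_{\delta})|_{\pi^{-1}(\wo)}=\oo_{\mathbb{P}^1}(3)$, which yields $\deg N_{\pi^{-1}(\wo)/V}=-3$.
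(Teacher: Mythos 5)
Your route is genuinely different from the paper's: you try to exhibit $i(S)$ directly as the zero divisor $V$ of the Miranda datum $\eta$ inside $\mP=\mathbb{P}(\FF)$, whereas the paper applies the Casnati--Ekedahl structure theorem \cite{CE96} to the \emph{flat} triple cover $\beta \colon S \to \wA^{\sharp}$ of the blow-up of $\wA$ at $\wo$, obtaining an embedding $i^{\sharp}\colon S \hookrightarrow \mathbb{P}(\FF^{\sharp})$ with image in $\big|3\xi^{\sharp}-(\pi^{\sharp})^{*}\det\FF^{\sharp}\big|$, and then pushes down via the identification $H^0\big(\mathbb{P}(\FF^{\sharp}), \, 3\xi^{\sharp}-(\pi^{\sharp})^{*}\det\FF^{\sharp}\big)\cong H^0\big(\wA,\, S^3\FF\otimes\bigwedge^2\FF^{\vee}\otimes\mathcal{I}_{\wo}\big)$. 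Your first two steps and the final normal-bundle computation are correct; in particular the deduction of $\eta(\wo)=0$ from Proposition \ref{prop.2M} through the Eagon--Northcott quotient is a nice self-contained substitute for the paper's cohomological identification, and it explains \emph{why} every divisor in $|3\xi-\pi^*L_{\delta}|$ passes through the fibre over $\wo$.

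The gap is exactly where you flag it: step 3 is announced, not proved, and the whole construction stands or falls with it. Smoothness of $V$ along $\pi^{-1}(\wo)$ is equivalent to the assertion that the differential of $\eta$ at $\wo$ --- a two-dimensional pencil $\langle \partial_u\eta|_{\wo},\,\partial_v\eta|_{\wo}\rangle$ of binary cubics in $S^3\FF_{\wo}\otimes\bigwedge^2\FF^{\vee}_{\wo}$ --- has no base point on $\pi^{-1}(\wo)\cong\mathbb{P}^1$. This does not follow formally from what you have established: the tangent cone of $b$ at $\wo$ is the quartic $x\,\tilde{c}_x+y\,\tilde{c}_y$ built from these two cubics, and a common root of $\tilde{c}_x,\tilde{c}_y$ is compatible with that quartic still being reduced; moreover in case $(c)$ of Proposition \ref{prop.quadruple} (allowed when $K_S$ is ample) the tangent cone of $b$ is a square, so an argument based on ``ordinary quadruple point'' cannot cover that case at all. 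Until base-point-freeness of the derivative pencil is actually proved, you cannot conclude that $V$ is smooth, hence cannot identify $V$ with the minimal resolution $S$, and steps 4--5 are left hanging. The paper's detour through $\wA^{\sharp}$ exists precisely to avoid this local analysis: there the divisor \emph{is} $S$, smooth by construction, and no fibrewise Jacobian computation is needed. If you want to keep your direct approach, the missing step should be carried out explicitly, e.g.\ using the explicit basis of $H^0(\wA,\,S^3\FF\otimes\bigwedge^2\FF^{\vee})$ coming from \eqref{eq.sections} and the local model of Proposition \ref{prop.iso.sing}.
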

\begin{proof}
Let us consider again the blow-up $\sigma \colon \wA^{\sharp} \to
\wA$, with exceptional divisor $\Lambda \subset \wA^{\sharp}$, and
the flat triple cover $\beta\colon S \longrightarrow
\widehat{A}^{\sharp}$ described in the previous section. A
straightforward calculation shows that the Tschirnhausen bundle
associated to $\beta$ is
\begin{equation*}  \FF^{\sharp} = \sigma^*\FF \otimes \mathcal{O}_{\widehat{A}^{\sharp}}(-\Lambda)
\end{equation*}
and that we have a commutative diagram
 \begin{equation}
\begin{xy}\xymatrix{
 \mathbb{P}(\FF^{\sharp})  \ar[d]_{\pi^{\sharp}} \ar[rr]^{\tau} & &  \mathbb{P}(\FF) \ar[d]^{\pi} \\
\widehat{A}^{\sharp}   \ar[rr]^{\sigma} & & \widehat{A}.  \\
  }
\end{xy}
\end{equation}
Since $S$ is smooth and $\beta$ is flat, by \cite{CE96} there is an embedding $i^{\sharp}\colon S \hookrightarrow \mathbb{P}(\FF^{\sharp})$. Its image is a divisor in the linear system
$|3\xi^{\sharp}-(\pi^{\sharp})^{*}\det\FF^{\sharp}|$, where $\xi^{\sharp}$
is the divisor class of $\oo_{\mP(\FF^{\sharp})}(1)$ and $\pi^{\sharp} \colon \mathbb{P}(\FF^{\sharp}) \to \wA^{\sharp}$
is the projection. We have natural identifications
\begin{equation*}
\begin{split}
H^0\big(\mP(\FF^{\sharp}), \, 3\xi^{\sharp}-(\pi^{\sharp})^{*}\det\FF^{\sharp}\big) & \cong H^0\big(\widehat{A}^{\sharp}, \,S^3\FF^{\sharp} \otimes \bigwedge^2 (\FF^{\sharp})^{\vee}\big) \\
& \cong H^0\big(\widehat{A}^{\sharp}, \, \sigma^*(S^3\FF \otimes \bigwedge^2 \FF^{\vee}) \otimes \mathcal{O}_{\widehat{A}^{\sharp}}(-\Lambda)\big) \\
& \cong   H^0\big(\widehat{A}, \, \sigma_*(\sigma^*(S^3\FF \otimes \bigwedge^2 \FF^{\vee}) \otimes \mathcal{O}_{\widehat{A}^{\sharp}}(-\Lambda))\big) \\
& \cong   H^0\big(\widehat{A}, \, S^3\FF \otimes \bigwedge^2
\FF^{\vee} \otimes \mathcal{I}_{\hat{o}}\big),
\end{split}
\end{equation*}
hence $i=\tau \circ i^{\sharp}\colon S \hookrightarrow
\mathbb{P}(\FF)$ provides an embedding of $S$ as a divisor in the
linear system $|3\xi - \pi^*L_{\delta}|$ containing the fibre
$\pi^{-1}(\wo)$. Such a fibre must coincide with $\Xi$, because
$\Xi$ is the unique rational curve in $S$.
\end{proof}
Given the embedding $i \colon S \hookrightarrow \mathbb{P}$, the Albanese map
$\alpha \colon S \to \wA$ of $S$ factors as $\alpha= \pi \circ i$,
as in the following diagram:
\begin{equation} \label{dia.alpha}
\xymatrix{
S \ar@{^{(}->}[r]^i \ar[dr]^{\alpha} & \mathbb{P} \ar[d]^{\pi} \\
 & \wA. }
\end{equation}

Since $K_{\mP}=-2 \xi + \pi^*\Ld$, the adjunction formula implies
that the canonical line bundle of $S$ is the restriction of
$\oo_{\mathbb{P}}(1)$ to $S$, that is
\begin{equation} \label{eq.can.S}
\omega_S = \oo_S(\xi).
\end{equation}
In the sequel we shall exploit the following short exact sequences:\\
$\bullet$ the normal bundle sequence of  $i \colon S
\hookrightarrow \mP$, i.e.,
\begin{equation} \label{seq.normal}
0 \lr \oo_{\mP} \lr \oo_{\mP}(S) \lr N_{S/\mP} \lr 0;
\end{equation}
$\bullet$ the tangent bundle sequence of $i \colon S \hookrightarrow
\mP$, i.e.,
\begin{equation} \label{seq.tang.i}
0 \lr T_S \lr T_{\mP} \otimes \oo_S \lr N_{S/\mP} \lr 0;
\end{equation}
$\bullet$ the tangent bundle sequence of $\pi \colon \mP \to \wA$,
i. e.
\begin{equation} \label{seq.tang.pi}
0 \lr T_{\mP/ \wA} \lr T_{\mP}  \lr \pi^*T_{\wA} \lr 0.
\end{equation}
Recalling that $ S \in   |3\xi - \pi^*L_{\delta}|$, we get
\begin{equation*}
\pi_* \oo_{\mathbb{P}}= \oo_{\wA}, \quad R^1 \pi_* \oo_{\mP}=0, \quad
\pi_* \oo_{\mP}(S)= S^3 \FF \otimes \bigwedge^2 \FF^{\vee}, \quad
R^1 \pi_* \oo_{\mP}(S)=0,
\end{equation*}
so by the Leray spectral sequence we obtain
\begin{equation*}
H^i(\mP, \, \oo_{\mP})=H^i(\wA, \, \oo_{\wA}), \quad H^i(\mP, \,
\oo_{\mP}(S))=H^i(\wA, \, S^3 \FF \otimes \Lambda ^2 \FF^{\vee}),
\quad i \geq 0.
\end{equation*}
Hence, considering the long exact sequence associated with
\eqref{seq.normal} and using Proposition \ref{prop.coh.S3}, we
deduce
\begin{equation} \label{eq.coh.bundle}
h^0(S, \, N_{S/\mP})=3, \quad h^1(S, \, N_{S/\mP})=1, \quad h^2(S, \,
N_{S/ \mP})=0.
\end{equation}
Let us denote by $H^{\mP}_S$ the complex
analytic space representing the functor of
embedded deformations of $S$ inside
$\mP$ (with $\mP$ fixed), see \cite[p. 123]{Se06}.
An immediate consequence of
\eqref{eq.coh.bundle} is
\begin{prop} \label{prop.H.unobstructed}
$H^{\mP}_S$ is generically smooth, of dimension $3$.
\end{prop}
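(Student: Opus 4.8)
The plan is to extract everything from the normal bundle computation \eqref{eq.coh.bundle}. Since the ambient space $\mP=\mP(\FF)$ is held fixed, the functor of embedded deformations of $S$ inside $\mP$ is controlled by $N_{S/\mP}$: the Zariski tangent space of $H^{\mP}_S$ at the point $[S]$ is canonically $H^0(S,\,N_{S/\mP})$, and the obstructions to extending deformations lie in $H^1(S,\,N_{S/\mP})$, see \cite[Chapter 3]{Se06}. By \eqref{eq.coh.bundle} these spaces have dimension $3$ and $1$ respectively, so the standard estimates of deformation theory give only
\begin{equation*}
2=h^0(S,\,N_{S/\mP})-h^1(S,\,N_{S/\mP})\leq \dim_{[S]}H^{\mP}_S\leq h^0(S,\,N_{S/\mP})=3.
\end{equation*}
Because the obstruction space $H^1(S,\,N_{S/\mP})$ does not vanish, this is not yet conclusive; the crux is therefore to exhibit a genuine $3$-dimensional family of embedded deformations, which will force the upper bound to be attained.

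To construct such a family I would let the polarization class vary in $\mathrm{Pic}^0(\wA)=A$. For $\mathcal{Q}\in\mathrm{Pic}^0(\wA)$ consider on $\mP$ the line bundle $\oo_{\mP}(3\xi)\otimes\pi^*(\mathcal{L}_{\delta}^{-1}\otimes\mathcal{Q})$, whose divisors are numerically equivalent to $S$ and reduce to $S$ when $\mathcal{Q}=\oo_{\wA}$. By the projection formula and the vanishing of higher direct images of $\oo_{\mP}(3\xi)$, its cohomology equals that of $S^3\FF\otimes\bigwedge^2\FF^{\vee}\otimes\mathcal{Q}$ on $\wA$. For $\mathcal{Q}=\oo_{\wA}$ this sheaf has $h^0=2$ and $h^1=h^2=0$ by Proposition \ref{prop.coh.S3}, so by semicontinuity the same vanishing persists, and hence $h^0=\chi=2$ holds, on a Zariski-open neighbourhood of $\oo_{\wA}$ in $\mathrm{Pic}^0(\wA)$. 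Over this neighbourhood the relative linear systems fit together into a $\mathbb{P}^1$-bundle, i.e. a smooth threefold, and assigning to each of its points the corresponding divisor of $\mP$ defines a morphism to $H^{\mP}_S$ sending the origin to $[S]$.

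This morphism is generically injective: divisors lying over distinct $\mathcal{Q}$ sit in distinct classes of $\mathrm{Pic}(\mP)$, since $\pi^*$ is injective on $\mathrm{Pic}^0(\wA)$, while within a fixed class the fibre $\mathbb{P}^1$ parametrizes distinct members of the pencil. Hence its image is a $3$-dimensional subvariety of $H^{\mP}_S$ through $[S]$, giving $\dim_{[S]}H^{\mP}_S\geq 3$; together with the upper bound above this yields $\dim_{[S]}H^{\mP}_S=3=\dim_{\mathbb{C}}T_{[S]}H^{\mP}_S$, so by the criterion that equality of local dimension and tangent dimension implies smoothness, $H^{\mP}_S$ is smooth of dimension $3$ at $[S]$. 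Since $[S]$ represents an arbitrary Chen-Hacon surface with ample canonical class, and such points form a dense open locus of the relevant component, $H^{\mP}_S$ is generically smooth of dimension $3$. The one genuinely delicate point I expect is precisely the nonvanishing of $H^1(S,\,N_{S/\mP})$: it rules out a one-line smoothness argument from $h^1=0$ and makes the explicit geometric family indispensable; the remaining technical checks — the semicontinuity giving $h^0=2$ off a proper closed subset of $\mathrm{Pic}^0(\wA)$, and the generic injectivity of the family map — are routine.
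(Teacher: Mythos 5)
Your proof is correct and follows essentially the same strategy as the paper: identify the tangent space $H^0(S,\,N_{S/\mP})$ as $3$-dimensional via \eqref{eq.coh.bundle}, exhibit an explicit $3$-dimensional family of embedded deformations to force $\dim_{[S]}H^{\mP}_S\geq 3$, and conclude smoothness at $[S]$ despite the nonvanishing of the obstruction space $H^1(S,\,N_{S/\mP})=\mathbb{C}$. The only difference is in how the $2$-dimensional part of the family is described: the paper moves $S$ by the translations of $\wA$ (which lift to automorphisms of $\mP$ because $t_x^*\FF\cong\FF\otimes\mathcal{Q}$ for some $\mathcal{Q}\in\mathrm{Pic}^0(\wA)$, so that $\mathbb{P}(t_x^*\FF)=\mathbb{P}(\FF)$), whereas you sweep out the corresponding twisted linear systems $|3\xi-\pi^*L_{\delta}+\pi^*Q|$ directly --- the same family, parametrized differently, with your version making explicit the semicontinuity and injectivity checks that the paper leaves implicit.
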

\begin{proof}
Since $h^0(S, \, N_{S/\mP})=3$, the dimension of the tangent space
of $H^{\mP}_S$ at the point corresponding to $S$ is $3$. On the
other hand, the family of embedded deformations of $S$ in $\mP$ is
at least $3$-dimensional: indeed, we can move $S$ into the
$1$-dimensional linear system $|\oo_{\mP}(S)|$ and we can
translate it by using the $2$-dimensional family of translations
of $\wA$. Therefore $H^{\mP}_S$ is smooth at $S$, hence
generically smooth of dimension $3$. In particular, the
obstructions in $H^1(S, \, N_{S/\mP})= \mathbb{C}$ actually
vanish.
\end{proof}
Now let us consider the long cohomology sequence associated with
\eqref{seq.tang.i}. Since $S$ is a surface of general type, we have
$H^0(S, \, T_S)=0$ and we get
\begin{equation} \label{seq.coh.tang.i}
\begin{split}
0 & \lr H^0(S, \, T_{\mP} \otimes \oo_S) \lr H^0(S, \, N_{S/\mP})
\stackrel{\delta^0}{\lr} H^1(S, \, T_S) \\
& \lr H^1(S, \, T_{\mP} \otimes \oo_S) \lr H^1(S, \, N_{S/\mP})
\stackrel{\delta^1}{\lr} H^2(S, \, T_S) \lr H^2(S, \, T_{\mP}
\otimes \oo_S) \lr 0.
\end{split}
\end{equation}
By standard deformation theory, see for instance \cite[p.
132]{Se06}, the map $\delta^0 \colon H^0(S, \, N_{S/\mP}) \to
H^1(S, \, T_S)$ is precisely the map induced on tangent spaces by
the ``forgetful morphism" $H^{\mP}_S \to \textrm{Def}(S)$, where
$\textrm{Def}(S)$ is the base of the Kuranishi family of $S$.

%Because of \eqref{seq.coh.tang.i}, we can interpret $H^1(S, \,
%T_{\mP} \otimes \oo_S)$ as the obstruction to lifting an abstract
%first-order deformation of $S$ to an embedded first-order
%deformation. We can also interpret the image of $H^0(S, \, T_{\mP}
%\otimes \oo_S)$ in $H^0(S, \, N_{S/\mP})$ as those first-order embedded
%deformations of $S$ induced by automorphisms of $\mP$.

Now we look at \eqref{seq.tang.pi}. Since $T_{\wA}$ is trivial, we
obtain
\begin{equation*}
T_{\mP / \wA} = \oo_{\mP}(-K_{\mP})=\oo_{\mP}(2 \xi - \pi^*\Ld).
\end{equation*}
Then $R^1 \pi_* T_{\mP / \wA} =0$, and Leray spectral sequence
together with \eqref{eq.S2=0} yields
\begin{equation}
H^i(\mathbb{P}, \, T_{\mP / \wA})=H^i(\wA, \, S^2 \FF \otimes
\bigwedge^2 \FF^{\vee})=0, \quad i \geq 0.
\end{equation}
Therefore $\textrm{Ext}^1(\pi^*T_{\wA}, \, T_{\mP /
\wA})=H^1(\mathbb{P}, \, T_{\mP / \wA})^{\oplus 2}=0$, so
 \eqref{seq.tang.pi} actually splits and we have
\begin{equation} \label{eq.pi.split}
T_{\mP}=T_{\mP/ \wA} \oplus \pi^*T_{\wA}= \oo_{\mP}(2 \xi -
\pi^*\Ld) \oplus \pi^*T_{\wA}.
\end{equation}
Since $N_{S/\mathbb{P}}=\oo_S(3 \xi - \pi^* L_{\delta})$, by
restricting \eqref{eq.pi.split} to $S$ and using \eqref{eq.can.S},
 we obtain
\begin{equation} \label{eq.pi.S.split}
T_{\mP} \otimes \oo_S=(T_{\mP/ \wA} \otimes \oo_S) \oplus
(\pi^*T_{\wA} \otimes \oo_S)= (N_{S/\mathbb{P}} \otimes \omega_S^{-1}) \oplus
\alpha^*\,T_{\wA}.
\end{equation}
Let us now compute the cohomology
groups of $N_{S/\mathbb{P}} \otimes \omega_S^{-1}=\oo_S(2 \xi - \pi^* \Ld)$. \\

\begin{lem} \label{prop.coh.N}
We have
\begin{equation*}
h^0(S, \, N_{S/\mathbb{P}} \otimes \omega_S^{-1})=0,
\quad h^1(S, \, N_{S/\mathbb{P}} \otimes \omega_S^{-1} )=0,
\quad h^2(S, \, N_{S/\mathbb{P}} \otimes \omega_S^{-1})=0.
\end{equation*}
\end{lem}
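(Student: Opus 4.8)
The plan is to avoid working on $S$ directly and instead compute everything on the ambient projective bundle $\mathbb{P}=\mathbb{P}(\FF)$, pulling the result back along the restriction sequence. Recall from \eqref{eq.can.S} that $\omega_S=\oo_S(\xi)$ and that $N_{S/\mathbb{P}}=\oo_S(3\xi-\pi^*L_\delta)$, so the sheaf under consideration is indeed $N_{S/\mathbb{P}}\otimes\omega_S^{-1}=\oo_S(2\xi-\pi^*L_\delta)$. Moreover, since $K_{\mathbb{P}}=-2\xi+\pi^*\Ld$, this is exactly $T_{\mathbb{P}/\wA}$ restricted to $S$, which is the reason the needed vanishing is within reach.

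The first step is to twist the structure sheaf sequence of $S\subset\mathbb{P}$, namely $0\to\oo_{\mathbb{P}}(-S)\to\oo_{\mathbb{P}}\to\oo_S\to 0$, by the line bundle $\oo_{\mathbb{P}}(2\xi-\pi^*L_\delta)$. Because $S\in|3\xi-\pi^*L_\delta|$, the left-hand twist becomes $\oo_{\mathbb{P}}(2\xi-\pi^*L_\delta-S)=\oo_{\mathbb{P}}(-\xi)$, so the sequence reads
\[
0 \lr \oo_{\mathbb{P}}(-\xi) \lr \oo_{\mathbb{P}}(2\xi-\pi^*L_\delta) \lr \oo_S(2\xi-\pi^*L_\delta) \lr 0 .
\]
It then suffices to show that both line bundles on $\mathbb{P}$ have vanishing cohomology in every degree; the associated long exact sequence will immediately force $h^i(S,\,\oo_S(2\xi-\pi^*L_\delta))=0$ for $i=0,1,2$.

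For the first term I would invoke the Leray spectral sequence for $\pi$: restricting $\oo_{\mathbb{P}}(-\xi)$ to a fibre $\cong\mathbb{P}^1$ yields $\oo_{\mathbb{P}^1}(-1)$, which has no cohomology, so both $\pi_*\oo_{\mathbb{P}}(-\xi)$ and $R^1\pi_*\oo_{\mathbb{P}}(-\xi)$ vanish and hence $H^i(\mathbb{P},\,\oo_{\mathbb{P}}(-\xi))=0$ for all $i$. For the middle term, the projection formula together with $\pi_*\oo_{\mathbb{P}}(2\xi)=S^2\FF$ and $\oo_{\mathbb{P}}(-\pi^*L_\delta)=\pi^*\bigwedge^2\FF^{\vee}$ (using $\det\FF=\mL_\delta$) gives $\pi_*\oo_{\mathbb{P}}(2\xi-\pi^*L_\delta)=S^2\FF\otimes\bigwedge^2\FF^{\vee}$, while $R^1\pi_*=0$ since $H^1(\mathbb{P}^1,\,\oo_{\mathbb{P}^1}(2))=0$. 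Leray then identifies $H^i(\mathbb{P},\,\oo_{\mathbb{P}}(2\xi-\pi^*L_\delta))$ with $H^i(\wA,\,S^2\FF\otimes\bigwedge^2\FF^{\vee})$; this is precisely the group already shown to vanish in the computation of $H^i(\mathbb{P},\,T_{\mathbb{P}/\wA})$ above.

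The only point that warrants a remark, and the closest thing to an obstacle, is the appeal to the vanishing \eqref{eq.S2=0}, which was derived in Section \ref{sec.vec} for a bundle on $A$. Here it is applied to $\FF=\EE^{\vee}$ on $\wA$, which is legitimate because $\FF$ belongs to exactly the family of rank $2$ bundles studied there, now with $\mL$ replaced by the dual polarization $\mL_\delta$, as recorded just before Proposition \ref{prop.no.decomp}. With all three outer cohomology groups equal to zero, the long exact sequence yields the three claimed vanishings, completing the proof.
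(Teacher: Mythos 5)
Your proof is correct and follows essentially the same route as the paper's: twist the ideal-sheaf sequence of $S\subset\mathbb{P}$ by $\oo_{\mathbb{P}}(2\xi-\pi^*L_{\delta})$, kill the cohomology of $\oo_{\mathbb{P}}(-\xi)$ fibrewise via Leray, and identify $H^i(\mathbb{P},\,\oo_{\mathbb{P}}(2\xi-\pi^*L_{\delta}))$ with $H^i(\wA,\,S^2\FF\otimes\bigwedge^2\FF^{\vee})=0$ from \eqref{eq.S2=0}. If anything, you are slightly more careful than the printed proof, which only records the vanishing of $H^0$ and $H^1$ of $\oo_{\mathbb{P}}(-\xi)$ although the long exact sequence also needs the higher degrees (which follow by the same Leray argument).
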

\begin{proof}
Let us consider the short exact sequence
\begin{equation*}
0 \lr \mathcal{O}_{\mathbb{P}}(-\xi) \lr \mathcal{O}_{\mathbb{P}}(2
\xi - \pi^*L_{\delta}) \lr \mathcal{O}_S(2 \xi - \pi^*L_{\delta})
\lr 0.
\end{equation*}
By \cite[p. 253]{H77} we have $\pi_* \mathcal{O}_{\mathbb{P}}
(-\xi)=R^1 \pi_* \mathcal{O}_{\mathbb{P}} (-\xi)=0$, so by Leray
spectral sequence we deduce $H^0(\mathbb{P},
\,\mathcal{O}_{\mathbb{P}}(-\xi))=H^1(\mathbb{P},
\,\mathcal{O}_{\mathbb{P}}(-\xi))=0$. It follows
\begin{equation*}
H^i(S, \, N_{S/\mathbb{P}} \otimes \omega_S^{-1})=H^i(\mathbb{P}, \,
\oo_{\mathbb{P}} (2 \xi - \pi^* \Ld))=H^i(\wA, \, S^2 \FF \otimes
\bigwedge^2 \FF^{\vee})=0
\end{equation*}
for $i=0, \, 1, \,2$, see \eqref{eq.S2=0}.
\end{proof}

By using \eqref{seq.coh.tang.i}, \eqref{eq.pi.S.split} and
Lemma \ref{prop.coh.N} we obtain the exact sequence
\begin{equation}
\begin{split}
0 & \lr H^0(S, \, \alpha^* T_{\wA}) \lr H^0(S, \, N_{S/\mP})
\stackrel{\delta^0}{\lr} H^1(S, \, T_S) \\
& \stackrel{\gamma}{\lr} H^1(S, \, \alpha^* T_{\wA} ) \lr H^1(S, \,
N_{S/\mP}) \stackrel{\delta^1}{\lr} H^2(S, \, T_S) \lr H^2(S, \,
\alpha^* T_{\wA}) \lr 0.
\end{split}
\end{equation}
The key remark is now contained in the following

\begin{prop} \label{prop.im.gamma}
The image of $\gamma \colon H^1(S, \, T_S) \to H^1(S, \, \alpha^*
T_{\wA})$ has dimension $3$.
\end{prop}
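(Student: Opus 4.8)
The plan is to pin down $\dim\,\mathrm{im}(\gamma)$ by a double inequality: the lower bound is formal and comes straight from the exact sequence preceding the statement, while the upper bound is the Hodge-theoretic heart of the matter. First I would read off the lower bound. Writing $\rho\colon H^1(S,\,\alpha^*T_{\wA})\to H^1(S,\,N_{S/\mP})$ for the arrow following $\gamma$, exactness gives $\mathrm{im}(\gamma)=\ker(\rho)$. Since $T_{\wA}$ is trivial we have $\alpha^*T_{\wA}\cong\oo_S^{\oplus 2}$, so $h^1(S,\,\alpha^*T_{\wA})=2\,h^1(S,\,\oo_S)=2q(S)=4$, whereas $h^1(S,\,N_{S/\mP})=1$ by \eqref{eq.coh.bundle}. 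Hence $\dim\ker(\rho)\ge 4-1=3$, that is $\dim\,\mathrm{im}(\gamma)\ge 3$.

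Next I would identify $\gamma$ geometrically. By the splitting \eqref{eq.pi.S.split} together with Lemma \ref{prop.coh.N} (which kills the cohomology of the vertical summand), $\gamma$ is the map $H^1(d\alpha)$ induced by the differential of the Albanese map, namely the composition $T_S\hookrightarrow T_{\mP}\otimes\oo_S\twoheadrightarrow\pi^*T_{\wA}\otimes\oo_S=\alpha^*T_{\wA}$. Using the Albanese isomorphisms $\alpha^*\colon H^0(\wA,\,\Omega^1_{\wA})\iso H^0(S,\,\Omega^1_S)$ and $\alpha^*\colon H^1(\wA,\,\oo_{\wA})\iso H^1(S,\,\oo_S)$, one checks that for $\theta\in H^1(S,\,T_S)$ the class $\gamma(\theta)\in H^1(S,\,\oo_S)\otimes T_{\wo}\wA$ is exactly the cup-product homomorphism $\theta\cup(-)\colon H^0(S,\,\Omega^1_S)\to H^1(S,\,\oo_S)$. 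In other words $\gamma$ is the derivative of the period map of the weight-one Hodge structure $H^1(S)\cong H^1(\wA)$.

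For the upper bound I would invoke the symmetry of this variation of Hodge structure. By functoriality of the Albanese, a first-order deformation of $S$ induces a first-order deformation of $\wA=\mathrm{Alb}(S)$ through polarized abelian surfaces; equivalently, the cup-product homomorphisms $\theta\cup(-)$ are symmetric with respect to the polarization pairing $H^1(S,\,\oo_S)\cong H^0(S,\,\Omega^1_S)^{\vee}$ (this is Griffiths transversality, automatic in weight one, together with the polarization condition). The space of such symmetric homomorphisms is $\mathrm{Sym}^2 H^1(S,\,\oo_S)$, of dimension $\binom{q(S)+1}{2}=3$, which is precisely the tangent space to the moduli of polarized abelian surfaces. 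Thus $\mathrm{im}(\gamma)$ is contained in a $3$-dimensional subspace, giving $\dim\,\mathrm{im}(\gamma)\le 3$, and combining with the lower bound yields $\dim\,\mathrm{im}(\gamma)=3$.

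The hard part is this upper bound: the lower bound and the dimension count are pure bookkeeping with the exact sequence, whereas the inequality $\dim\,\mathrm{im}(\gamma)\le 3$ forces me to identify $\gamma$ with the infinitesimal variation of Hodge structure and to argue that its image lands in the symmetric part $\mathrm{Sym}^2 H^1(S,\,\oo_S)$. As a by-product the squeeze shows that $\rho$ is surjective, i.e.\ the obstruction map $\delta^1\colon H^1(S,\,N_{S/\mP})\to H^2(S,\,T_S)$ vanishes, a fact that will be convenient in the subsequent analysis of $\mathrm{Def}(S)$.
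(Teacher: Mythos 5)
Your proof is correct, but it reaches both inequalities by routes genuinely different from the paper's. For the lower bound the paper argues geometrically: Chen--Hacon surfaces exist over \emph{every} non-product $(1,2)$-polarized abelian surface, so the $3$-dimensional family $\mathcal{W}(1,2)$ is realized by deformations of $S$ and hence $\dim\,(im\,\gamma)\geq 3$. Your argument is purely formal -- exactness gives $im\,\gamma=\ker\bigl(H^1(S,\alpha^*T_{\wA})\to H^1(S,N_{S/\mP})\bigr)$, and $4-1=3$ by \eqref{eq.coh.bundle} -- which is more elementary and avoids having to fit the examples into an actual family. For the upper bound the paper works with the log tangent sheaves of a smooth pluricanonical divisor $C\in|mK_S|$ and its image $C'\subset\wA$: since $h^1(S,\omega_S^m)=0$ the map $H^1(S,T_S(-\log C))\to H^1(S,T_S)$ is surjective, so $im\,\gamma$ lies in the image of $H^1(\wA,T_{\wA}(-\log C'))\to H^1(\wA,T_{\wA})$, which is at most $3$-dimensional because the ample line bundle $\oo_{\wA}(C')$ deforms only along a $3$-dimensional subspace. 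You instead identify $\gamma$ with the differential of the weight-one period map and land in the symmetric homomorphisms $\mathrm{Sym}^2H^1(S,\oo_S)$, of dimension $\binom{3}{2}=3$. That is a standard and correct IVHS fact, but I would tighten your parenthetical justification: the symmetry does not come from knowing \emph{a priori} that the polarization of $\wA$ deforms (that is essentially what the paper's divisor argument establishes), but from the flatness of the topologically defined skew pairing $Q_H(x,y)=\int_S x\cup y\cup c_1(H)$ for an ample $H$ on $S$, which is non-degenerate between $H^0(S,\Omega^1_S)$ and $H^1(S,\oo_S)$ by Hodge--Riemann and forces $Q_H(\gamma(\theta)\omega,\omega')=Q_H(\gamma(\theta)\omega',\omega)$. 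Your by-product that $\delta^1=0$ is consistent with Corollary \ref{cor.dim.tang}, where $h^2(S,T_S)=4=h^2(S,\alpha^*T_{\wA})$.
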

\begin{proof}
Since $T_{\wA}$ is trivial and there is a natural isomorphism
\begin{equation*}
H^1(S, \, \oo_S) \cong H^1(\wA, \, \oo_{\wA}),
\end{equation*}
we can see the map $\gamma$ as a map
\begin{equation*}
\gamma \colon H^1(S, \, T_S) \to H^1(\wA, \, T_{\wA}).
\end{equation*}
Take a positive integer $m \geq 2$ such that there exists a smooth
pluricanonical divisor $C \in |mK_S|$ and let $C'$ be
the image of $C$  in $\widehat{A}$; then we have a commutative
diagram
\begin{equation*} \label{dia.proof.2}
\begin{xy}
\xymatrix{%%
H^1(S, \, T_S(-\log \, C))  \ar[d]_{\epsilon} \ar[rr]^{\gamma'} & &
H^1(\wA, \, T_{\wA}(- \log \, C'))
 \ar[d]^{\epsilon'} \\
 H^1(S, \, T_S) \ar[rr]^{\gamma} & & H^1(\wA, \, T_{\wA}).  \\
  }
\end{xy}
\end{equation*}
Let us observe now the following facts.
\begin{itemize}
\item Since $S$ is smooth, the line bundle $\omega_S^m$ extends
along any first-order deformation of $S$, because the relative
dualizing sheaf is locally free for any smooth morphism of schemes, see
\cite[p. 182]{Man08}.
Moreover, since $S$ is minimal of general type, we have $h^1(S, \,
\omega_S^m)=0$, so every section of $\omega_S^m$ extends as well,
see \cite[Section 3.3.4]{Se06}. This means that no first-order
deformation of $S$ makes $C$ disappear, in other words $\epsilon$
is surjective. Therefore $im \,\gamma \subseteq im \, \epsilon'$.
\item Since $(C')^2 >0$, the line bundle $\oo_{\wA}(C')$ is ample
on $\wA$; therefore it deforms along a subspace of $H^1(\wA, \,
T_{\wA})$ of dimension $3$, see \cite[p. 152]{Se06}. Since every
first-order deformation of the pair $(\wA, \,C')$ induces a
first-order deformation of the pair $(\wA, \, \oo_{\wA}(C'))$, it
follows that the image of $\epsilon'$ is at most $3$-dimensional.
\end{itemize}
According to the above remarks, we obtain
\begin{equation*}
\dim \,(im \, \gamma) \leq \dim \,(im \, \epsilon') \leq 3.
\end{equation*}
On the other hand, given any abelian surface $\wA$ with a $(1,
\,2)$-polarization which is not of product type we may construct a
Chen-Hacon surface $S$ such that $\textrm{Alb}(S)=\wA$. Hence the
dimension of $im \, \gamma$ is exactly $3$.
\end{proof}

\begin{cor} \label{cor.dim.tang}
We have
\begin{equation*}
h^1(S, \, T_S)=4, \quad h^2(S, \, T_S)=4.
\end{equation*}
\end{cor}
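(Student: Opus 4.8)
The plan is to read off both dimensions from the long exact cohomology sequence displayed just above the statement (the one obtained by feeding the splitting \eqref{eq.pi.S.split} and Lemma \ref{prop.coh.N} into \eqref{seq.coh.tang.i}), so that the only inputs are numerical data already in hand. First I would record the cohomology of $\alpha^* T_{\wA}$. Since $\wA$ is an abelian surface its tangent bundle is trivial, $T_{\wA} \cong \oo_{\wA}^{\oplus 2}$, whence $\alpha^* T_{\wA} \cong \oo_S^{\oplus 2}$ and
\[
h^i(S, \, \alpha^* T_{\wA}) = 2\, h^i(S, \, \oo_S) \qquad (i = 0,1,2).
\]
Using $p_g(S)=q(S)=2$ this gives $h^0 = 2$, $h^1 = 4$, $h^2 = 4$.

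Next I would extract $h^1(S, T_S)$. Because $S$ is of general type we have $H^0(S, T_S)=0$, so the initial segment of the sequence shows that $H^0(S, \alpha^* T_{\wA}) \hookrightarrow H^0(S, N_{S/\mP})$ is injective; hence $\ker \gamma = \operatorname{im} \delta^0$ has dimension $h^0(S, N_{S/\mP}) - h^0(S, \alpha^* T_{\wA}) = 3 - 2 = 1$ by \eqref{eq.coh.bundle}. By Proposition \ref{prop.im.gamma} the image of $\gamma$ is $3$-dimensional, so
\[
h^1(S, \, T_S) = \dim \ker \gamma + \dim \operatorname{im} \gamma = 1 + 3 = 4.
\]

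For $h^2(S, T_S)$ I would chase the tail of the same sequence. The map $H^1(S, \alpha^* T_{\wA}) \to H^1(S, N_{S/\mP})$ has kernel $\operatorname{im}\gamma$, so its rank is $4 - 3 = 1$; since $h^1(S, N_{S/\mP}) = 1$ by \eqref{eq.coh.bundle}, this map is surjective, which forces $\delta^1 = 0$. Consequently $H^2(S, T_S) \to H^2(S, \alpha^* T_{\wA})$ is injective, and it is also surjective because $H^2(S, N_{S/\mP}) = 0$; thus it is an isomorphism and $h^2(S, T_S) = h^2(S, \alpha^* T_{\wA}) = 4$.

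I do not expect a genuine obstacle at this stage: once Lemma \ref{prop.coh.N} has replaced $T_{\mP}\otimes\oo_S$ by $\alpha^* T_{\wA}$ in cohomology and Proposition \ref{prop.im.gamma} has pinned down $\dim \operatorname{im}\gamma$, the corollary is a pure diagram chase. The one point to keep honest is internal consistency: the alternating sum of dimensions along the exact sequence must vanish, and indeed $2-3+4-4+1-4+4 = 0$ confirms $h^1(S,T_S) = h^2(S,T_S)$, matching the values found. All the real work --- the vanishing in Lemma \ref{prop.coh.N} and, above all, the delicate argument bounding and then realizing the image of $\gamma$ in Proposition \ref{prop.im.gamma} --- has already been carried out.
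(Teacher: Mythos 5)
Your proof is correct, and the computation of $h^1(S,\,T_S)=4$ is exactly the paper's: both arguments read off $\dim\ker\gamma=h^0(S,\,N_{S/\mP})-h^0(S,\,\alpha^*T_{\wA})=3-2=1$ from the head of the sequence and add $\dim(\mathrm{im}\,\gamma)=3$ from Proposition \ref{prop.im.gamma}. The only divergence is in how $h^2(S,\,T_S)$ is obtained: the paper invokes Riemann--Roch for the tangent bundle, $h^1(S,\,T_S)-h^2(S,\,T_S)=10\chi(\oo_S)-2K_S^2=0$, and concludes immediately, whereas you continue the diagram chase through the tail of the sequence (rank of $H^1(S,\,\alpha^*T_{\wA})\to H^1(S,\,N_{S/\mP})$ equals $4-3=1$, hence surjectivity, hence $\delta^1=0$, hence $H^2(S,\,T_S)\cong H^2(S,\,\alpha^*T_{\wA})$). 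Both routes are equally short here; the Riemann--Roch identity is slightly more robust (it needs no further input from the sequence), while your chase has the merit of confirming internal consistency of all the cohomology numbers already computed, as your alternating-sum check makes explicit.
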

\begin{proof}
By Riemann-Roch theorem we obtain $h^1(S, \, T_S)-h^2(S,
T_S)=10\chi(\oo_S)-2K_S^2=0$. On the other hand, Proposition
 \ref{prop.im.gamma} together with \eqref{eq.coh.bundle} implies
 $h^1(S, \, T_S)=4$, so we are done.
\end{proof}

Now let us denote by $\mathcal{M}$ the moduli space of minimal
surfaces of general type with $p_g=q=2$, $K_S^2=5$ and by
$\mathcal{M}^{CH}$ the subset of $\mathcal{M}$ given by isomorphism
classes of  Chen-Hacon surfaces.

\begin{theo} \label{teo.moduli}
$\mathcal{M}^{CH}$ is a connected, irreducible, generically smooth
 component of $\mathcal{M}$ of dimension $4$.
\end{theo}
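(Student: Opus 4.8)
The plan is to exhibit $\mathcal{M}^{CH}$ as the image of an explicit irreducible parameter space and then to show, using the deformation computations already assembled, that this image fills an entire connected component of $\mathcal{M}$. First I would settle irreducibility and connectedness. By Proposition \ref{prop.L.irred.F.simple} and Proposition \ref{prop.exist.L}, a Chen-Hacon surface is reconstructed from a non-product $(1,2)$-polarized abelian surface $(\wA,\mL_{\delta})$ together with a section $\eta\in\mathbb{P}H^0(\wA,\,S^3\FF\otimes\bigwedge^2\FF^{\vee})\cong\mathbb{P}^1$, where $\FF=\EE^{\vee}$ is the bundle canonically attached to the polarization. Since the moduli space $\mathcal{W}(1,2)$ of such abelian surfaces is irreducible of dimension $3$ and the sections $\eta$ form an irreducible $\mathbb{P}^1$-bundle over the non-product locus, the total parameter space is irreducible of dimension $4$; hence its image $\mathcal{M}^{CH}$ is irreducible, in particular connected, of dimension at most $4$.

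Next I would pin down the dimension and the tangent space. The heart is the exact sequence \eqref{seq.coh.tang.i} combined with the splitting \eqref{eq.pi.S.split} and Lemma \ref{prop.coh.N}, giving
\[ 0\to H^0(S,\,\alpha^*T_{\wA})\to H^0(S,\,N_{S/\mP})\stackrel{\delta^0}{\to} H^1(S,\,T_S)\stackrel{\gamma}{\to} H^1(S,\,\alpha^*T_{\wA})\to\cdots \]
By Corollary \ref{cor.dim.tang} we have $h^1(S,\,T_S)=4$, and by Proposition \ref{prop.im.gamma} the image of $\gamma$ has dimension $3$; since $h^0(S,\,N_{S/\mP})=3$ and $h^0(S,\,\alpha^*T_{\wA})=2$, the subspace $\ker\gamma=\textrm{im}\,\delta^0$ has dimension $1$. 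Thus $H^1(S,\,T_S)$ splits tangentially into the one-dimensional direction obtained by moving $S$ inside $|\oo_{\mP}(S)|$, i.e. by varying $\eta$, and the three-dimensional direction $\textrm{im}\,\gamma$ obtained by varying the polarized abelian surface $\wA$. I would then verify that both families of deformations are realized inside the Chen-Hacon parameter space, so that the Kodaira–Spencer map of that family surjects onto $H^1(S,\,T_S)$ and, by the dimension count, is an isomorphism; in particular $\dim\mathcal{M}^{CH}=4$.

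It follows that, although $h^2(S,\,T_S)=4\neq 0$, the base of the Kuranishi family $\textrm{Def}(S)$ is smooth of dimension $4$: the four-dimensional Chen-Hacon family provides a smooth germ with bijective Kodaira–Spencer map, so it must coincide with the semiuniversal deformation. Consequently every small deformation of $S$ is again a Chen-Hacon surface, whence $\mathcal{M}^{CH}$ is open in $\mathcal{M}$ and generically smooth of dimension $4$, and its closure is an irreducible component. To upgrade this to the statement that $\mathcal{M}^{CH}$ is itself a connected component I would invoke Catanese's theorem \cite{Ca91}: the degree of the Albanese map is a topological, hence deformation, invariant, so it is locally constant on $\mathcal{M}$ and equals $3$ on the whole connected set $\overline{\mathcal{M}^{CH}}$. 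For each surface of this component Theorem \ref{teo.ch} reduces being Chen-Hacon to the requirement that $\widehat{X}$ have only negligible singularities, and the classification in Proposition \ref{prop.quadruple}, together with the fact that $p_g=q=2$ and $K_S^2=5$ stay fixed along the family, shows that this requirement persists under specialization.

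The step I expect to be the main obstacle is precisely this last one: ruling out that a degeneration of Chen-Hacon surfaces acquires a \emph{non}-negligible singularity on $\widehat{X}$ while retaining the invariants $p_g=q=2$, $K_S^2=5$ and the Albanese degree $3$. This is exactly where the invariance of the Albanese degree is indispensable, as it forces every member of the component into the triple-cover framework of Section \ref{sec.vec}; the rigidity of the admissible singular configurations recorded in Proposition \ref{prop.quadruple} then closes the argument and yields that $\mathcal{M}^{CH}$ is closed, hence a connected component.
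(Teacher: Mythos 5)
Your parametrization argument for irreducibility and dimension, and your use of Corollary \ref{cor.dim.tang} together with Proposition \ref{prop.im.gamma} to get generic smoothness, match the paper's proof. The gaps are in the open-and-closed part. For openness you argue that the Kuranishi family of $S$ is smooth of dimension $4$ and coincides with the Chen--Hacon family; but this rests on $h^1(S,\,T_S)=4$, which is only available when $K_S$ is ample (Corollary \ref{cor.dim.tang} goes through the embedding $i\colon S\hookrightarrow \mathbb{P}(\FF)$ of Proposition \ref{prop.embedding}, stated for $K_S$ ample). Chen--Hacon surfaces of types $(b)$ and $(d)$ in Proposition \ref{prop.quadruple} --- e.g.\ the product-quotient surface --- are not covered, and openness at those points is exactly what is needed to rule out another component of $\mathcal{M}$ meeting $\mathcal{M}^{CH}$ there; generic openness plus closedness only gives an irreducible, not a connected, component. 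The paper avoids this by proving openness at \emph{every} point without any tangent-space input: after Ehresmann and \cite{Ca91} give that $\alpha_t$ is a generically finite triple cover, it uses that $X_t$ has rational singularities and that the branch class is $2$-divisible to compute $h^i(\EE_t)$ and $c_1^2(\EE_t)=4$, and then Riemann--Roch forces $c_2(\EE_t)=1$; this says the invariants of $S_t$ are those of Proposition \ref{prop.invariants}, i.e.\ $X_t$ has only negligible singularities, and Theorem \ref{teo.ch} applies.

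For closedness you correctly identify the crux --- a specialization might acquire a non-negligible singularity --- but you do not resolve it: Proposition \ref{prop.quadruple} is not the right tool, since it \emph{presupposes} that the surface is Chen--Hacon, which is what must be proved for the limit $S_0$. The missing mechanism is the pencil $\mathfrak{D}$: since $\Delta_t\in\mathfrak{D}_t=\mathbb{P}H^0(\textrm{Alb}(S_t),\,\mathcal{L}_t^2\otimes\mathcal{I}_{\hat o}^4)$ for $t\neq 0$, the limit satisfies $\Delta_0\in\mathfrak{D}_0$, and Proposition \ref{prop.pencil.I4} classifies \emph{all} members of $\mathfrak{D}_0$, showing that each yields a triple cover with only the negligible singularities of Examples \ref{ex.1}, \ref{ex.2}, \ref{ex.3}. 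With that substitution, and with the paper's pointwise openness argument in place of the Kuranishi one, your outline closes up.
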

\begin{proof}
The construction of Chen-Hacon surfaces depends on four parameters:
in fact, the moduli space $\mathcal{W}(1, \, 2)$ of $(1,
\,2)$-polarized abelian surfaces has dimension $3$, whereas
$\mathbb{P}H^0(\wA, \, S^3\FF \otimes \bigwedge^2 \FF^{\vee})$ is
$1$-dimensional (note that $\mathcal{F}$ does not give any
contribution to the number of parameters because of
\eqref{eq.ext.1}). This argument also shows that one has a
generically finite, dominant map
\begin{equation*}
\mathcal{W}(1, \, 2) \times \mathbb{P}H^0(\wA, \, S^3\FF \otimes
\bigwedge^2 \FF^{\vee}) \lr \mathcal{M}^{CH},
\end{equation*}
hence $\mathcal{M}^{CH}$ is an irreducible, algebraic subset of
$\mathcal{M}$ and $\dim \, \mathcal{M}^{CH}=4$. On the other hand,
if $K_S$ is ample Corollary \ref{cor.dim.tang} implies
\begin{equation*}
\dim \, T_{[S]} \mathcal{M}^{CH} = H^1(S, \, T_S)=4,
\end{equation*}
so $\mathcal{M}^{CH}$ is generically smooth.

It remains to show
that $\mathcal{M}^{CH}$ is a connected component of $\mathcal{M}$,
 i. e. that it is both open and closed therein.

 \bigskip

\emph{$\mathcal{M}^{CH}$ is open in $\mathcal{M}$.}

\medskip

Let
$\mathcal{S} \stackrel{\pi}{\to} \mathcal{B}$ be a deformation over
a small disk such that $S_0:=\pi^{-1}(0)$ is a Chen-Hacon surface.
We want to show that the same is true for $S_t:=\pi^{-1}(t)$. By Ehresmann's theorem,
$S_t$ is diffeomorphic to $S_0$, so it is a minimal surface of general type with $p_g=q=2$,
$K_{S_t}^2=5$. Moreover,
by \cite[p. 267]{Ca91}, the differentiable structure
of the general fibre of the Albanese map of $S_t$ is completely
determined by the differentiable structure of $S_t$;
it follows that the Albanese map $\alpha_t
\colon S_t \to \textrm{Alb}(S_t)$ is a generically finite triple
cover. Let
\begin{equation*}
S_t \stackrel{p_t}{\lr} X_t \stackrel{f_t}{\lr} \textrm{Alb}(S_t)
\end{equation*}
be the Stein factorization of $\alpha_t$, and let $\mathcal{E}_t$ be
the Tschirnhausen bundle associated with the flat triple cover $f_t
\colon X_t \to \textrm{Alb}(S_t)$, that is
\begin{equation} \label{eq.Ts.t}
f_{t*} \oo_{X_t}= \oo_{\textrm{Alb}(S_t)} \oplus \EE_t.
\end{equation}
By Proposition \ref{prop.quadruple}, $X_0$ has only rational
singularities, so the same holds for $X_t$ if $\mathcal{B}$ is
small enough.

The branch locus $\Delta_t$ of $\alpha_t$ is a deformation of
$\Delta_0$, in particular $p_a(\Delta_t)=p_a(\Delta_0)=9$; moreover,
by \cite[Proposition 4.7]{M85} the class of $\Delta_t$ must be
$2$-divisible in the Picard group of $\textrm{Alb}(S_t)$. It follows
that $\textrm{Alb}(S_t)$ is a $(1, \, 2)$-polarized abelian surface
and $\Delta_t \in |2L_t|$. Moreover $\bigwedge^2 \EE_t^{\vee}$ is
numerically equivalent to $\mL_t$, in particular $c_1^2(\EE_t)=4$.
Since $f_t$ is a finite map and $X_t$ has only rational
singularities, we obtain
\begin{equation*}
\begin{split}
h^1(\textrm{Alb}(S_t), \, f_{t*} \oo_{X_t}) & = h^1(X_t, \,
\oo_{X_t})=h^1(S_t, \, \oo_{S_t})=2, \\
h^2(\textrm{Alb}(S_t), \, f_{t*} \oo_{X_t}) & = h^2(X_t, \,
\oo_{X_t})=h^2(S_t, \, \oo_{S_t})=2,
\end{split}
\end{equation*}
so by using \eqref{eq.Ts.t} we deduce
\begin{equation*}
h^0(\textrm{Alb}(S_t), \, \EE_t)=0, \quad h^1(\textrm{Alb}(S_t),
\, \EE_t)=0, \quad h^2(\textrm{Alb}(S_t), \, \EE_t)=1.
\end{equation*}
Now Hirzebruch-Riemann-Roch Theorem yields
\begin{equation*}
1=\chi(\textrm{Alb}(S_t), \, \EE_t)= \frac{1}{2} c_1^2(\EE_t) -
c_2(\EE_t),
\end{equation*}
hence $c_2(\EE_t)=1$. It follows that the invariant of $S_t$ are
computed by formulae in Proposition \ref{prop.invariants}, in
other words $X_t$ contains only negligible singularities. By
Theorem \ref{teo.ch}, $S_t$ is a Chen-Hacon surface.

 \bigskip

\emph{$\mathcal{M}^{CH}$ is closed in $\mathcal{M}$.}

 \medskip

Let
$\mathcal{S} \stackrel{\pi}{\to} \mathcal{B}$ be a small deformation
 and assume that, for $t \neq 0$, $S_t$ is a Chen-Hacon
surface. We want to show that $S_0$ is a Chen-Hacon surface. Arguing
as before, we see that $\alpha_0 \colon S_0 \to \textrm{Alb}(S_0)$
is a generically finite triple cover, and that $\textrm{Alb}(S_0)$
is a $(1,\,2)$-polarized abelian surface. Let $\mathfrak{D}_t
\subset |2L_t|$ be the linear system
$\mathbb{P}H^0(\textrm{Alb}(S_t), \, \mathcal{L}_t^2 \otimes
\mathcal{I}_o^4)$. Since $\Delta_t \in \mathfrak{D}_t$ for all $t
\neq 0$, we have $\Delta_0 \in \mathfrak{D}_0$. The possible curves
in $|\mathfrak{D}_0|$ are classified in Proposition
\ref{prop.pencil.I4}; in all cases the corresponding triple cover
 contains only negligible singularities (see Examples \ref{ex.1}, \ref{ex.2}, \ref{ex.3}),
 so $S_0$ is a Chen-Hacon surface and we are done. \\ \\
This concludes the proof of Theorem \ref{teo.moduli}.
\end{proof}

Theorem \ref{teo.moduli} shows that every small deformation of a
Chen-Hacon surface is still a Chen-Hacon surface; in particular,
no small deformation of $S$ makes the $(-3)$-curve disappear.
Moreover, since $\mathcal{M}^{CH}$ is generically smooth, the same
is true for the first-order deformations. By contrast, Burns and
Wahl proved in \cite{BW74} that first-order deformations always
smooth all the $(-2)$-curves, and Catanese used this fact in
\cite{Ca89} in order to produce examples of surfaces of general
type with everywhere non-reduced moduli spaces. Theorem
\ref{teo.moduli} demonstrates rather strikingly that the results
of Burns-Wahl and Catanese cannot be extended to the case of
$(-3)$-curves and, as far as we know, it also provides the first
explicit example of this situation.
\medskip

%%%%%%%%%%%%%%%%%%%%%%%%%%%%%%%%%%%%%%%%%%%%%%%%%%%%%%%%%%%%%%%%%%%%%%%%%%%%%%%%%%%%%%%%%%%%

%%%%%%%%%%%%%%%%%%%%%%%%%%%%%%%%%%%%%%%%%%%%%%%%%%%%%%%%%%%5
%%%%%%%%%%%%%%%%%%%%%%%%%%%%%%%%%%%%%%%%%%%%%%%%%%%%%%%%%%%5
 %%%%%%%%%%%%%%%%%%%%%%%%%%%%%%%%%%%%%%%%%%%%%%%%%%%%%%%%%%%5
\bigskip
\bigskip

Matteo Penegini, Lehrstuhl Mathematik VIII, Universit\"at Bayreuth,
NWII, D-95440 Bayreuth, \\ Germany \\ \emph{E-mail address:}
 \verb|matteo.penegini@uni-bayreuth.de| \\ \\

Francesco Polizzi, Dipartimento di Matematica, Universit\`{a}  della
Calabria, Cubo 30B, 87036 \\ Arcavacata di Rende (Cosenza), Italy\\
\emph{E-mail address:} \verb|polizzi@mat.unical.it|


\begin{thebibliography} {9}
%
\bibitem[AK00]{AK00}
T. Ashikaga, K. Konno, \textit{Global and local properties of
pencils of algebraic curves}, Advanced Studies in Pure Mathematics,
Algebraic Geometry 2000 Azumino, 1--49.
%
\bibitem[At56]{At56}
M. Atiyah, \textit{On the {K}rull-{S}chmidt theorem with application to sheaves}. Bull. Soc. Math. France \textbf{84},
(1956), 307--317.
%
\bibitem[At57]{At57}
M. F. Atiyah, \textit{Vector bundles over an elliptic curve}. Proc.
London Math. Soc. \textbf{3} (1957), 414--452.
%
\bibitem[Ba87]{Ba87}
W. Barth, \textit{Abelian surfaces with {$(1,2)$}-polarization}.
Adv. Stud. Pure Math. Vol. \textbf{10} (1987), 41--84.
%
\bibitem[Be83]{Be83}
A. Beauville, \textit{Complex algebraic surfaces}, London
Mathematical Society Lecture Note Series, Vol. \textbf{68}, Cambridge
University Press, 1983.
%
\bibitem[BL04]{BL04}
C. Birkenhake, H. Lange, \textit{Complex abelian varieties}.
Grundlehren der Mathematischen Wissenschaften, Vol. \textbf{302},
Second edition, Springer-Verlag, Berlin, 2004.
%
\bibitem[BHPV03]{BHPV03}
W. Barth, K. Hulek, C.A.M. Peters, A. Van de Ven, \textit{Compact
Complex Surfaces}. Grundlehren der Mathematischen Wissenschaften,
Vol. \textbf{4}, Second enlarged edition, Springer-Verlag, Berlin,
2003.
%
\bibitem[BPS09]{BPS09}
F. Bastianelli, G. P. Pirola, L. Stoppino, \textit{Galois closure
and Lagrangian varieties}. Adv. Math., \textbf{225}, (2010),
3463--3501.
%
\bibitem[BCP06]{BCP06}
I. Bauer, F. Catanese, R. Pignatelli, \textit{Complex surfaces of
general type: some recent progress}. Global aspects of complex
geometry. Springer, Berlin (2006), 1--58.
%
%\bibitem[BCGP]{BCGP}
%I. Bauer, F. Catanese, F. Grunewald, R. Pignatelli,
%\textit{Quotients of a product of curves by a finite group and their
%fundamental groups}. AG/0809.3420v3.
%
\bibitem[BW74]{BW74}
D. M. Burns, J. M. Wahl, \textit{Local contributions to global
deformations of surfaces}. Invent. Math. \textbf{26} (1974), 67-88.
%
\bibitem[Ca89]{Ca89}
F. Catanese, \textit{Everywhere non reduced moduli spaces}.
Invent. Math. \textbf{98} (1989), 293-310.
%
\bibitem[Ca90]{Ca90}
F. Catanese, \textit{Footnotes to a theorem of I. Reider}. Algebraic
geometry (L'Aquila, 1988), Lecture Notes in Math., Vol.
\textbf{1417}. Berlin, 1990, 67--74.
%
\bibitem[Ca91]{Ca91}
F. Catanese, \textit{Moduli and classification of irregular Kaehler manifolds
(and algebraic varieties) with Albanese general type fibrations}. Invent. Math. \textbf{104}
(1991), 263-289.

\bibitem[CCML98]{CCML98}
F. Catanese, C. Ciliberto, M. Mendes Lopes, \textit{On the
classification of irregular surfaces of general type with
nonbirational bicanonical map}. Trans. Amer. Math. Soc.
\textbf{350}, no. 1 (1998), 275--308.
%

%\bibitem[CT07]{CT07}
%F. Catanese, F. Tonoli, \textit{Even sets of nodes on sextic
%surfaces}. J. Eur. Math. Soc. (JEMS), Vol \textbf{9}, 2007,
%705--737.
%

\bibitem[CE96]{CE96}
G. Casnati, T. Ekedahl, \textit{Covers of algebraic varieties I. A
general structure theorem, covers of degree $3$, $4$ and Enriques
surfaces}. J. of Algebraic Geometry. \textbf{5} (1996), 439--460.
%

\bibitem[CH06]{CH06}
J. Chen, C. Hacon, \textit{A surface of general type with $p_g=q=2$
and $K^2=5$}. Pacific. J. Math. \textbf{223} (2006), 219--228.
%
\bibitem[CML02]{CML02}
C. Ciliberto, M. Mendes Lopes, \textit{On surfaces with $p_g=q=2$
and non-birational bicanonical map}. Adv. Geom. \textbf{2} n. 3
 (2002), 281--300.
%
\bibitem[D82]{D82}
O. Debarre, \textit{In\'egalits num\'eriques pour les surfaces de
type g\'enral}. Bull. Soc. Math. France \textbf{110} (1982),
319-346.

%\bibitem[FM98]{FM98}
%B. Fantechi, M. Manetti, \textit{Obstruction calculus for functors
%of Artin rings. I.}. J. Algebra \textbf{202} (1998), 541-576.
%
\bibitem[F98]{F98}
R. Friedman, \textit{Algebraic surfaces and holomorphic vector
bundles}. Universitext, Springer-Verlag, New York, (1998).
%
\bibitem[H77]{H77}
R. Hartshorne, \textit{Algebraic Geometry}. Graduate Texts in
Mathematics \textbf{52}, Springer 1977.
%
\bibitem[H10]{H10}
R. Hartshorne, \textit{Deformation Theory}. Graduate Texts in
Mathematics \textbf{257}, Springer 2010.
%
\bibitem[HP02]{HP02}
C. Hacon, R. Pardini, \textit{Surfaces with $p_g=q=3$}. Trans. Amer.
Math. Soc. \textbf{354} (2002), 2631--2638.
%
%\bibitem[HM99]{HM99}
%D.W. Hahn, R. Miranda, \textit{Quadruple covers of algebraic
%varieties}. J. of Algebraic Geometry. \textbf{8}, 1999.
%

%\bibitem[HN65]{HN65}
%T. Hayashida, M. Nishi, \textit{Existence of curves of genus two on
%a product of two elliptic curves}. J. Math. Soc. Japan  \textbf{17},
%1965, 1--16.
%

\bibitem[Hor73]{Hor73}
E. Horikawa, \textit{Deformations of holomorphic maps, I}. J. Math.
Soc. Japan \textbf{25} (1973), 372-396.

\bibitem[HvM89]{HvM89}
E. Horozov,  P. van Moerbeke,  \textit{The full geometry of
Kowalewski's top and $(1,2)$-abelian surfaces}. Comm. Pure Appl.
Math. \textbf{42} (1989), no. 4, 357-407.

\bibitem[Man03]{Man03}
M. Manetti, \textit{Surfaces of Albanese General type and the Severi
conjecture}. Math. Nachr., \textbf{261/262} (2003), 105--122.
%

\bibitem[Man08]{Man08}
M. Manetti, \textit{Smoothing of singularities and deformation
types of surfaces}, Symplectic $4$-manifolds and algebraic surfaces,
Lecture Notes in Math. \textbf{1938}, Springer 2008, 169-230.


%\bibitem[MLP03]{MLP03}
%M Mendes Lopes, R. Pardini, \textit{The geography of irregular
%surfaces}, preprint  arXiv:0909.5195v1.
%

\bibitem[M85]{M85}
R. Miranda, \textit{Triple covers in algebraic geometry}. Amer. J.
of Math. \textbf{107} (1985), 1123-1158.
%
\bibitem[Mu81]{Mu81}
S. Mukai, \textit{Duality between $D(X)$ and $D(\hat X)$ with its
application to Picard sheaves}. Nagoya Math. J. \textbf{81}
 (1981), 153--175.
%
\bibitem[O71]{O71}
T. Oda, \textit{Vector bundles on abelian surfaces}. Inventiones
Math. \textbf{13} (1971), 247-260.

\bibitem[Pa89]{Pa89}
R. Pardini, \textit{Triple covers in positive characteristic}.  Ark.
Mat. \textbf{27}  (1989),  no. 2, 319--341.
%
\bibitem[Pa91]{Pa91}
R. Pardini, \textit{Abelian covers of algebraic varieties}. J. Reine
Angew. Math. \textbf{417} (1991), 191--213.
%
\bibitem[Pe09]{Pe09}
M. Penegini, \textit{The classification of isotrivial fibred
surfaces with $p_g=q=2$}, with an appendix by S. Roellenske. Collect. Math. \textbf{62}, No. 3, (2011), 239--274.
%
\bibitem[Pi02]{Pi02}
G.P. Pirola, \textit{Surfaces with $p_g=q=3$}. Manuscripta Math.
\textbf{108} no. 2 (2002), 163--170.
%
\bibitem[Ra89]{Ra89}
Z. Ran, \emph{Deformations of maps}. Algebraic curves and projective
geometry (Trento, 1988). Lecture Notes in Math. \textbf{1389}, Springer 1989, 246--253.
%
\bibitem[R]{R}
M. Reid, \textit{Problems on pencils of small genus},
http://www.warwick.ac.uk/~masda/surf/ .
%
%\bibitem[Rol09]{Rol09}
%S. Rollenske, \textit{Compact moduli for certain Kodaira
%fibrations}, e-print arXiv:0905.2536 (2009)
%
\bibitem[Se06]{Se06}
E. Sernesi, \textit{Deformations of Algebraic Schemes}. Grundlehren
der Mathematischen Wissenschaften, Vol \textbf{334},
Springer-Verlag, Berlin, 2006.
%
\bibitem[SING]{SING}
Singular: a Computer Algebra System for polynomial computations.
http://www.singular.uni-kl.de/ .
%
\bibitem[T72]{T72}
F. Takemoto, \textit{Stable vector bundles on algebraic surfaces}.
Nagoya Math. J. \textbf{47} (1972), 29--48.
%
\bibitem[T73]{T73}
F. Takemoto, \textit{Stable vector bundles on algebraic surfaces
II}. Nagoya Math. J. \textbf{52} (1973), 173--195.

\bibitem[Tan02]{Tan02}
S. L. Tan, \textit{Triple covers on smooth algebraic surfaces}.
Geometry and nonlinear partial differential equations (Hangzhou,
2001) AMS/IP Stud. Adv. Math. \textbf{29}, Amer. Math. Soc., Providence, RI,
2002.
%
\bibitem[TZ04]{TZ04}
S. L. Tan, D. Q. Zhang, \textit{The determination of integral
closures and geometric applications}.  Adv. Math.  \textbf{185}
(2004), no. 2, 215--245.
%

%\bibitem[To94]{To94}
%H. Tokunaga, \textit{On dihedral {G}alois coverings}. Canad. J.
%Math. \textbf{46}, (1994), n. 6, 1299--1317.
%

\bibitem[Yo97]{Yo97}
H, Yoshihara, \textit{Existence of curves of genus three on a
product of two elliptic curves}. J. Math. Soc. Japan
\textbf{49} (1997), 531--537.
%
\bibitem[Z03]{Z03}
F. Zucconi, \textit{Surfaces with $p_g=q=2$ and an irrational
pencil}. Canad. J. Math. \textbf{55} (2003), no. 3, 649--672.

\end{thebibliography}
\end{document}
%%%%%%%%%%%%%%%%%%%%%%%%%%%%%%%%%%%%%%%%%%%%%%%%%%%%%%%%%%%%%5
%%%%%%%%%%%%%%%%%%%%%%%%%%%%%%%%%%%%%%%%%%%%%%%%%%%%%%%%%%%%%%%%%%%%%%%%%%%%%%%%%%%%%%%%%%%%%%%%%%%%%%%%%%%%%%%%%%%%%
Parte sulle deformazioni dell'albanese
%%%%%%%%%%%%%%%%%%%%%%%%%%%%%%%%%%%%%%%%%%%%%%%%%%%%%%%%%%%%%%%%%%%%%%%%%%%%%%%%%%%%%%%%%%%%%%%%%%%%%%%%
Now we want to relate the deformations of $S$ to those of the flat
triple cover $\beta \colon S \to \BlA$ studied in the last part of
Section \ref{sec.main.thm}. The main result in this direction will
be Proposition \ref{prop.def.beta}. \\
Let $R \in |\Phi|$ be the ramification locus of $\beta$; then by
\cite[p. 162]{Se06} we have an exact sequence
\begin{equation} \label{suc.def.S}
0 \longrightarrow T_S \lr \beta^{*}T_{\BlA} \lr \mathcal{N}_{\beta}
\lr 0,
\end{equation}
where $\mathcal{N}_{\beta}$ is a locally free sheaf supported on $R$
called the \emph{normal sheaf of} $\beta$. A local computation
shows that
\begin{equation} \label{eq.N.beta}
\mathcal{N}_{\beta} = \mathcal{O}_R(2R)=\mathcal{O}_R,
\end{equation}
see \cite[Lemma 3.2]{Rol09}.

If we denote by $\EE^{\sharp}$ the Tschirnhausen bundle of $\beta$,
we have
\begin{equation} \label{eq.EE}
\beta_* \oo_S = \oo_{\BlA} \oplus \EE^{\sharp}.
\end{equation}
Notice that
\begin{equation} \label{eq.E.sharp.1}
h^0(\BlA, \, \EE^{\sharp})=0, \quad h^1( \BlA, \, \EE^{\sharp})=0,
\quad h^2(\BlA, \, \EE^{\sharp})=1.
\end{equation}

\begin{lem} \label{lem.EE.sharp}
The vector bundle $\EE^{\sharp}$ satisfies
\begin{equation*}
h^0(\BlA, \, T_{\BlA} \otimes \EE^{\sharp})=0, \quad h^1(\BlA, \,
T_{\BlA} \otimes \EE^{\sharp})=2, \quad h^2(\BlA, \, T_{\BlA}
\otimes \EE^{\sharp})=2.
\end{equation*}
\end{lem}
\begin{proof}
By \cite[p. 73]{Se06}, we have an exact sequence
\begin{equation*} \label{succ.lambda}
0 \lr T_{\BlA} \lr \sigma^*T_{\wA} \lr \oo_{\Lambda}(-\Lambda) \lr
0,
\end{equation*}
so, tensoring with $\EE^{\sharp}$ and using the fact that $T_{\wA}$ is
trivial, we obtain $H^0(\BlA, \, T_{\BlA} \otimes \EE^{\sharp})=0$ and
\begin{equation} \label{succ.coh.lambda}
\begin{split}
0 & \lr H^0(\BlA, \, \EE^{\sharp} \otimes \oo_{\Lambda} (-\Lambda))
\lr H^1(\BlA, \, T_{\BlA} \otimes \EE^{\sharp}) \lr
H^1(\BlA, \, \EE^{\sharp} \oplus \EE^{\sharp}) = 0, \\
0 & \lr H^1(\BlA, \, \EE^{\sharp} \otimes \oo_{\Lambda} (-\Lambda))
\lr H^2(\BlA, \, T_{\BlA} \otimes \EE^{\sharp}) \lr H^2(\BlA, \,
\EE^{\sharp} \oplus \EE^{\sharp}) \lr 0.
\end{split}
\end{equation}
Therefore we are reduced to compute the cohomology of $\EE^{\sharp}
\otimes \oo_{\Lambda}(-\Lambda)$. Since $\Xi=\beta^* \Lambda$, the
rank $2$ vector bundle $\EE^{\sharp} \otimes \oo_{\Lambda}$ is the
Tschirnhausen bundle associated with the triple cover $\beta|_{\Xi}
\colon \Xi \to \Lambda$. Being $\Xi$ and $\Lambda$ both isomorphic
to $\mathbb{P}^1$, we obtain
\begin{equation*}
h^0(\Lambda, \, \EE^{\sharp} \otimes \oo_{\Lambda})=0, \quad
h^1(\Lambda, \, \EE^{\sharp} \otimes \oo_{\Lambda})=0,
\end{equation*}
so the only possibility is
$\EE^{\sharp} \otimes \oo_{\Lambda} \cong \oo_{\mathbb{P}^1}(-1)
\oplus \oo_{\mathbb{P}^1}(-1)$. Since $\Lambda^2=-1$, it follows
$\EE^{\sharp} \otimes
\oo_{\Lambda}(-\Lambda) \cong \oo_{\mathbb{P}^1} \oplus
\oo_{\mathbb{P}^1}$, that is
\begin{equation*}
h^0(\BlA, \, \EE^{\sharp} \otimes \oo_{\Lambda}(-\Lambda))=2, \quad
h^1(\BlA, \, \EE^{\sharp} \otimes \oo_{\Lambda}(-\Lambda))=0, \quad
h^2(\BlA, \, \EE^{\sharp} \otimes \oo_{\Lambda}(-\Lambda))=0.
\end{equation*}
By using \eqref{succ.coh.lambda} we
conclude the proof.
\end{proof}

\begin{cor} \label{cor.beta.T}
The vector bundle $\beta^*T_{\BlA}$ satisfies
\begin{equation*}
h^0(S, \,\beta^*T_{\BlA})=0, \quad h^1(S, \,\beta^*T_{\BlA})=6,
\quad h^2(S, \,\beta^*T_{\BlA})=4.
\end{equation*}
\end{cor}
\begin{proof}
Using \eqref{eq.EE} together with  Leray
spectral sequence and projection formula, we obtain
\begin{equation} \label{eq.proj.beta}
H^i(S, \,\beta^*T_{\BlA})=H^i(\BlA, \, T_{\BlA}) \oplus H^i(\BlA,
T_{\BlA} \otimes \EE^{\sharp}), \quad i=0, \, 1, \, 2.
\end{equation}
A straightforward computation gives
\begin{equation*}
h^0(\BlA, \, T_{\BlA})=0, \quad h^1(\BlA, \, T_{\BlA})=4, \quad
h^2(\BlA, \, T_{\BlA})=2,
\end{equation*}
so we can conclude the proof by using Lemma \ref{lem.EE.sharp}.
\end{proof}

\begin{rem}
Since $H^0(S, \,\beta^*T_{\BlA})=0$, we deduce that $\beta \colon S
\to \BlA$ is rigid as a morphism with domain and target fixed, see
\cite[Proposition 24.9 p. 159]{H10}.
\end{rem}

Let now $\textrm{Def}(S, \, \beta, \, \BlA)$ be the functor of
equivalence classes of deformations of $\beta$, which is it known
to be representable by an analytic space, see \cite{Hor73} and
\cite{Ra89}. Moreover, let  us denote by $\textbf{T}^1_{\beta}$
and $\textbf{T}^2_{\beta}$ the tangent space and the obstruction
space to $\textrm{Def}(S, \, \beta, \, \BlA)$, respectively.

\begin{prop} \label{prop.def.beta}
The natural map
\begin{equation*}
\emph{Def}(S, \, \beta, \, \BlA) \lr \emph{Def}(S)
\end{equation*}
is an \'etale, finite covering. In particular, every first-order
 deformation of
$S$ is induced by a deformation of $\beta$.
\end{prop}
\begin{proof}
By using \eqref{suc.def.S}, \eqref{eq.N.beta} and
\eqref{eq.proj.beta} we obtain the long exact sequence
\begin{equation} \label{suc.coh.tang}
\begin{split}
0 & \lr H^0(R, \, \oo_R) \lr H^1(S, \, T_S) \stackrel{\psi_1}{\lr}
H^1(\BlA, \,
T_{\BlA}) \oplus H^1(\BlA, \, T_{\BlA} \otimes \EE^{\sharp}) \\
& \lr H^1(R, \, \oo_R) \lr H^2(S, \, T_S) \stackrel{\psi_2}{\lr}
H^2(\BlA, \, T_{\BlA}) \oplus H^2(\BlA, \, T_{\BlA} \otimes
\EE^{\sharp}) \lr 0.
\end{split}
\end{equation}
Since $h^1(S, \, T_S)=h^2(S, \, T_S)=4$, see Corollary
\ref{cor.dim.tang}, arguing as in Proposition \ref{prop.im.gamma}
we conclude that
\begin{itemize}
\item[$(i)$] the first component of the map $\psi_1$ has
$3$-dimensional image, whereas the second component is the zero
map; \item[$(ii)$] $\psi_2$ is an isomorphism.
\end{itemize}

Since $H^0(\BlA, \, T_{\BlA} \otimes \EE^{\sharp})=0$, see Lemma
\ref{lem.EE.sharp}, by \cite[Section 3]{Rol09} we have another
long exact sequence
\begin{equation} \label{succ.rol}
\begin{split}
 0 & \lr \textbf{T}^1_{\beta} \lr H^1(S, \, T_S) \lr
 H^1(\BlA, \, T_{\BlA} \otimes \EE^{\sharp}) \\
 & \lr \textbf{T}^2_{\beta} \lr H^2(S, \, T_S) \lr
 H^2(\BlA, \, T_{\BlA} \otimes \EE^{\sharp}) \lr 0.
\end{split}
\end{equation}
From this we deduce
\begin{itemize}
\item[$(i')$] $\textbf{T}^1_{\beta} \to H^1(S, \, T_S)$ is an
isomorphism; \item[$(ii')$] $\textbf{T}^2_{\beta} \to H^2(S, \,
T_S)$ is injective.
\end{itemize}
Now $(i')$ shows that the tangent dimension of $\textrm{Def}(S, \,
\beta, \, \BlA)$ at $S$ equals the tangent dimension of
$\textrm{Def}(S)$; this means that $\textrm{Def}(S, \, \beta, \,
\BlA) \to \textrm{Def}(S)$ is a generically finite morphism. On
the other hand, by \cite[Lemma 6.1]{FM98}, condition $(ii')$
implies that the corresponding map of deformation functors is
smooth, hence $\textrm{Def}(S, \, \beta, \, \BlA) \to
\textrm{Def}(S)$ is actually \'etale. This completes the proof.
\end{proof}

------------------------------------------------------------------

Let $\iota \colon B \to B$ and $\tilde{\iota} \colon \widetilde{F}
\to \widetilde{F}$ be the double cover involutions associated with
$\varphi$ and $\tilde{\varphi}$, respectively. Since $\varphi^*
\mathcal{Q}= \oo_B$, by \cite[Lemma 29 p. 48]{F98} we have a short
exact sequence
\begin{equation} \label{seq.N}
0 \lr \iota^* \mathcal{N} \lr \varphi^* \FF \lr \mathcal{N} \lr 0.
\end{equation}
Thus, using $\iota^* N = E_{\tilde{\iota}(a)} + \widetilde{F}_q$, we
obtain
\begin{equation*}
\textrm{Ext}^1(\mathcal{N}, \, \iota^* \mathcal{N})=H^1(B,
\oo_B(E_{\tilde{\iota}(a)} - E_a))=0,
\end{equation*}
hence \eqref{seq.N} splits, that is
\begin{equation*}
\varphi^* \FF = \mathcal{N} \oplus \iota^* \mathcal{N}=\oo_B(E_a +
\widetilde{F}_q) \oplus \oo_B(E_{\tilde{\iota}(a)} +
\widetilde{F}_q).
\end{equation*}
Therefore, since $\widetilde{F}_q = \varphi^* F_q$, we obtain
\begin{equation*}
\varphi^*(\FF(-F_q))=\varphi^*(\FF(-F_q) \otimes
\mathcal{Q})=\oo_B(E_a) \oplus \oo_B(E_{\tilde{\iota}(a)}).
\end{equation*}
Applying $\varphi_*$ and using projection formula, we deduce
\begin{equation*}
h^0(A, \, \FF(-F_q))= h^0(A, \, \FF(-F_q) \otimes \mathcal{Q})= 1.
\end{equation*}

-----------------------------------------------------------------------------